\newcommand{\C}{\mathbb C}
\newcommand{\F}{\mathbb F}
\newcommand{\Z}{\mathbb Z}
\newcommand{\Q}{\mathbb Q}
\newcommand{\N}{\mathbb N}
\newcommand{\R}{\mathbb R}
\newcommand{\T}{\mathbb T}
\newcommand{\Cal}{\EuScript}
\newcommand{\ord}{\operatorname{ord}}
\newcommand{\mlt}{\operatorname{mult}}
\newcommand{\md}{\mathbin{\mathsf{mod}}}
\newcommand{\nm}{\operatorname{\mathsf {num}}}
\newcommand{\wrd}{\operatorname{\mathsf {wrd}}}
\newcommand{\mon}{\operatorname{\mathsf {mon}}}
\DeclareMathOperator{\wt}{\mathsf{wt}}
\renewcommand{\:}{\colon}
\renewcommand{\>}{\rightarrow}
\newtheorem{thm}{Theorem}[section] 
\newtheorem{prop}[thm]{Proposition}
\newtheorem{cor}[thm]{Corollary}
\theoremstyle{definition}
\theoremstyle{remark}
   \newtheorem*{rmk}{Remark}
\theoremstyle{plain}
\newtheorem{lem}[thm]{Lemma}
\newtheorem*{OpQu*}{Open question}
\theoremstyle{definition}
\newtheorem{defn}[thm]{Definition}
\newtheorem*{defn*}{Definition}
\theoremstyle{remark}
\newtheorem{note}[thm]{Note}
\newtheorem*{note*}{Note}
\newtheorem{exmp}[thm]{Example}
\newtheorem*{exmp*}{Example}
\newtheorem*{exmps*}{Examples}
\begin{document}

\title[Quantization causes waves]{Quantization causes waves:\\
Smooth finitely computable functions are affine
}
\author{Vladimir Anashin}
\date{\today} 
\maketitle

\begin{abstract}
Given an automaton (a letter-to-letter transducer)
$\mathfrak A$  whose input and output alphabets are $\F_p=\{0,1,\ldots,p-1\}$, 
one visualizes  word
transformations performed by $\mathfrak A$ by a point set
$\mathbf P(\mathfrak
A)$ of 
real plane $\R^2$ 
as follows: 
To an $m$-letter non-empty word $v=\gamma_{m-1}\gamma_{m-2}\ldots\gamma_0$
over the alphabet $\Cal A$
put into the correspondence a rational number $0.
v$ whose base-$p$ expansion is 
$0.\gamma_{m-1}\gamma_{m-2}\ldots\gamma_0$; then
to every  $m$-letter input word $w=\alpha_{m-1}\alpha_{m-2}\cdots\alpha _0$ of the automaton $\mathfrak
A$ and to the 
respective $m$-letter output word $\mathfrak a(w)=\beta_{m-1}\beta_{m-2}\cdots \beta_0$ 
(rightmost letters are feeded to/outputted from the automaton prior to leftmost
ones) 
there corresponds a point $(0.
w;0.
{\mathfrak
a(w)})$
of the real unit square $[0,1]^2$;
denote
$\mathbf P(\mathfrak A)$  a closure  (in the topology of 
$\R^2$) of the point set $(0.
w;0.
{\mathfrak
a(w)})$
where $w$ ranges over the set $\Cal W$ of all non-empty words over the alphabet $\F_p$.

For a finite-state automaton $\mathfrak A$, it is shown that once some points of $\mathbf P(\mathfrak A)$ constitute
a smooth (of a class $C^2$)  curve in $\mathbb R^2$, the curve is a segment of a straight line
with a rational slope; and there are only finitely many straight lines whose
segments are in $\mathbf{P}(\mathfrak A)$. Moreover, when identifying $\mathbf P(\mathfrak A)$
with a subset of a  2-dimensional torus $\mathbb T^2\subset\mathbb R^3$ (under a natural
mapping of the real unit square $[0,1]^2$ onto $\mathbb T^2$)
the smooth curves  from $\mathbf P(\mathfrak A)$ constitute a collection of  torus windings. In cylindrical coordinates either of the windings can be ascribed
to a complex-valued function $\psi(x)=e^{i(Ax-2\pi B(t))}$ $(x\in\R)$ for
suitable rational $A,B(t)$. 
Since $\psi(x)$ is a standard expression for a matter wave in quantum theory
(where $B(t)=tB(t_0)$), and since transducers can be regarded as a mathematical
formalization for  causal discrete systems, 
the
main result of the paper  might serve as a
mathematical reasoning why wave phenomena are inherent in quantum systems: This is
because of causality principle and the discreteness of matter.
\end{abstract}

\section{Introduction}
\label{sec:intro}
In the paper, we examine $C^2$-smooth real functions which
can be  computed (in some new but natural meaning which is rigorously defined below) on finite automata, i.e., on  sequential machines that have only finite number of states.
We show that all these functions are affine and, moreover,  that they can
be expressed as complex functions $e^{i(Ax+B)}$ and thus can be ascribed (also in some natural rigorous meaning) to  matter waves from quantum theory.

A general problem of evaluation of real functions on abstract discrete machines naturally
arose at the very moment the first digital computers had been invented. There are a number of various mathematical statements of the problem 
which depend both on  specific mathematical model of a digital computer
(the abstract machine)  and on the representation of reals in some `digital' form.
For instance, real number computations on  Turing machines  constitute a core of
theory
of 
constructive reals and computable functions. The theory demonstrates intensive development for  during more than half
a century, see e.g. \cite{ComputAnalPhys} and references therein. 
Sequential machines (also known as Mealy automata,
or as finite-state letter-to-letter transducers) are, speaking loosely,
 Turing machines whose heads move only in one direction. Sequential machines
 are   therefore less power computers compared
to general Turing machines; however, a number of real world phenomena and processes
can be modelled by sequential machines since the latter can be considered
as (non-autonomous) discrete dynamical systems. That is why the theory of functions
computable by sequential machines, which  constitutes a substantial part of automata theory,  has numerous applications not only in mathematics itself
(e.g., in real analysis, $p$-adic analysis, number theory, complexity theory,
dynamics,
etc.) but also in computer science, physics,
linguistics and in many other sciences, see e.g. monographs 
\cite{Allouche-Shall,AnKhr,Bra,Car-Long_Automata,Eilenberg_Auto,AlgCombinWords,Yb-eng} for details and references.



The paper was motivated by empirical data  obtained during a research project related to
an applied problem which assumed
intensive computer
experiments with automata modelling of various cryptographic
primitives used in stream ciphers, hash functions, etc. Word transformations
performed by the automata where visualised, namely, represented by points
of the unit square
$\mathbb I^2=[0,1]\times[0,1]$ in  real plane $\R^2$ so that coordinates of the points  relate
numerical (radix) representations of  input words to the numerical representations
of
corresponding output words. It was noticed that once the modelled system
was finite-state,
and once input words were taken sufficiently long, 
some linear structures (looking like segments of straight lines and somewhat
resembling
pictures from a double-slit experiment in quantum physics, cf. Figures \ref{fig:Plot-16}--\ref{fig:Plot-17}
and Figure \ref{fig:2slit}) may appear in the graph, but more complicated structures
like smooth curves of higher order had never been observed. A particular aim of the paper is  to give  mathematical
explanation of the phenomenon  and to characterize these linear structures.

But during the research it became evident that the problem (which
actually is a question what smooth real functions can be modelled on finite
automata) has applications not only to cryptography (see e.g. \cite[Chapter 11]{AnKhr}) but
also may be related to mathematical formalism of quantum theory. As a matter
of fact, the latter relation (which we believe does exist) can be regarded
as 
a yet another 
answer to the following question discussed by A.~Khrennikov in a series of papers devoted to so-called Prequantum Classical Statistical Field theory, see e.g. \cite{Khren-quant-av,Khren-quick}:
\emph{Why mathematical formalism of quantum theory} (which is based on the theory of linear operators on Hilbert
spaces) \emph{is essentially linear although
a number of quantum phenomena demonstrate an extremely non-linear behavior?}
 
Thus the goal of the paper is twofold: 
\begin{itemize}
\item firstly, to characterize
real functions which can be computed by  finite automata;
and
\item secondly,  to give (using obtained description of the functions) some
mathematical
reasoning why 
wave phenomena are inherent in quantum systems.
\end{itemize} 

The major part of the paper  focuses on real functions which can be computed by
finite automata while the said mathematical reasoning 
is considered in a closing section which contains a discussion of  possible
applications of mathematical results of the paper to quantum theory. We are
not going to discuss cryptographic applications here; they will be postponed
to forthcoming papers.

In the paper, by a general automaton (whose set of states is not necessarily
finite) we mean a machine which performs
letter-by-letter transformations of words over input alphabet into 
words over output alphabet: Once a letter is feeded to the  automaton, the automaton updates its current state (which initially is  fixed and so is the same for all input words) to the next one and produces 
corresponding output letter. Both the next state and the output
letter depend both on the current state and on the input letter. 
Therefore each
letter of output
word depends \emph{only} on those letters of  input word which have already been
feeded to the automaton. An input word is a finite sequence of letters;
the letters can naturally be ascribed
to `causes' while letters of the corresponding output word can be regarded
as `effects. `Causality' just means that effects depend only on causes that `already have happened'; therefore an automaton is  an adequate mathematical formalism for a specific manifestation of causality principle once we assume that there exist only finitely many causes
and effects, cf., e.g.,\cite{Vuillem_fin,Vuillem_DigNum}.


When studying real functions that can be computed by  an automaton $\mathfrak A$ whose input/output
alphabets are $\Cal A=\{0,1,\ldots,p-1\}$ (where $p>1$ is an integer from
$\N=\{1,2,3,\ldots\}$) most  authors  
follow common approach which described in e.g. \cite[Chapter XIII, Section 4]{Eilenberg_Auto}: They
associate an infinite
word $\alpha_1\alpha_2\ldots\alpha_n\ldots$ over $\Cal A$ to a
real number whose base-$p$ expansion is 
$0.\alpha_1\alpha_2\ldots\alpha_n\ldots=\sum_{i=1}^\infty\alpha_ip^{-i}$
and consider a real function $d_{\mathfrak A}$ defined as follows: Given $x\in[0,1]$, 
take its base-$p$ expansion $x=\sum_{i=1}^\infty\alpha_ip^{-i}$; then
produce an infinite output sequence $\beta_1\beta_2\ldots\beta_n\ldots$ of $\mathfrak A$ by successfully feeding
the automaton  with the letters $\alpha_1$, $\alpha_2$, etc., and put 
$d_{\mathfrak A}(x)=\sum_{i=1}^\infty\beta_ip^{-i}$.
Being
feeded by infinite input sequence $\alpha_1\alpha_2\ldots\alpha_n\ldots$,
the automaton $\mathfrak A$
produces a unique infinite output sequence $\beta_1\beta_2\ldots\beta_n\ldots$;
therefore
the function $d_{\mathfrak A}$ is well defined everywhere on the real closed
unit  interval (segment) $\mathbb I=[0,1]$
with the exception of maybe a countable set $D\subset[0,1]$ of points; namely, of those having two
distinct base-$p$ expansions
$0.\gamma_1\gamma_2\ldots\gamma_n0\ldots0\ldots=0.\gamma_1\gamma_2\ldots\gamma_{n-1}(\gamma_n-1)(p-1)\ldots
(p-1)\ldots$.
 The point set $\mathbf M(\mathfrak A)=\{(x;d_{\mathfrak A}(x))\in\mathbb R^2\colon x\in[0,1]\}$ can be considered as
a graph of the real function $d_{\mathfrak A}$ specified by the automaton $\mathfrak A$ (every time, before being feeded by the very first letter of each infinite
input word  the automaton $\mathfrak A$ is assumed to be in a fixed state $s_0$,
the \emph{initial state}).
Indeed,
 $d_{\mathfrak A}(x)$ is defined uniquely  for $x\in [0,1]\setminus D$ and $d_{\mathfrak A}(x)$
can be ascribed to at most two values for $x\in D$; so $d_{\mathfrak A}$ can be treated a real function which is defined on the unit segment $[0,1]$ and has not more that a countable number points of discontinuity in $[0,1]$.
In the sequel we 
refer  $\mathbf M(\mathfrak A)$ as to the \emph{Monna graph} of
the automaton $\mathfrak A$, cf. Subsection \ref{ssec:plots}.

The said common approach (and its various generalisations) is utilised in numerous
papers, see e.g.
\cite{Cherep_Approx,Cherepov_Approx-contf,Konech_Aff,Lisovik_Realfunk,Shkar_AffineAuto}.
Speaking loosely, the common approach looks as if one feeds
the automaton $\mathfrak A$ 
by a base-$p$ expansion of a real number $x\in[0,1]$ so that \emph{leftmost
\textup{(i.e., the most significant)} digits are feeded to the automaton prior to rightmost ones} and observes output as real numbers since the automaton outputs accordingly
leftmost
digits
of the base-$p$ expansion of $d_{\mathfrak A}(x)\in[0,1]$ 
 prior to rightmost
ones thus ascribing to the automaton $\mathfrak
A$
the real function $d_{\mathfrak A}$. We stress  that  the function $d_{\mathfrak A}$
is well defined almost everywhere on $[0,1]$ due to \emph{namely that order} in which
digits of base-$p$ expansion are feeded to (and outputted from) the automaton
$\mathfrak A$. 

A crucial difference of  the approach used in our paper from the mentioned one  is 
that \emph{the order we feed digits to  (and read digits from) the automaton is
inverse}: Namely, 
\begin{enumerate}
\item given
a real number $x\in[0,1]$, we 
represent $x$ via base-$p$ expansion
$x=0.\alpha_1\alpha_2\ldots\alpha_n\ldots$ (we take both expansions if $x$ has
two distinct ones); 
\item from the base-$p$ expansion $0.\alpha_1\alpha_2\ldots\alpha_n\ldots$
we derive corresponding sequence $\alpha_1, \alpha_1\alpha_2,
\alpha_1\alpha_2\alpha_3, \ldots$ of words; then 
\item feeding the automaton $\mathfrak
A$ successively  by
the words $\alpha_1, \alpha_1\alpha_2,
\alpha_1\alpha_2\alpha_3, \ldots$ so that \emph{rightmost letters are feeded to $\mathfrak A$  prior
to leftmost ones} we obtain corresponding output word sequence 
$\zeta_{11},\zeta_{12}\zeta_{22},\zeta_{13}\zeta_{23}\zeta_{33},\ldots$;
\item  to the  output sequence
 we put into a correspondence the sequence $\mathcal S(x)$ of rational
numbers whose base-$p$
expansions are 
$0.\zeta_{11},0.\zeta_{12}\zeta_{22},0.\zeta_{13}\zeta_{23}\zeta_{33},\ldots$
thus
obtaining a point set $\Cal X(x)=\{(0.\alpha_1\ldots\alpha_i;
0.\zeta_{1i}\zeta_{2i}\ldots\zeta_{ii})\:i=1,2,\ldots\}$ in the real unit square $\mathbb I^2=[0,1]\times[0,1]$; after that
\item we consider the set $\mathcal F(x)$ of \emph{all} cluster points of the sequence $\Cal S(x)$; 
\item finally, we specify  a \emph{real plot} (or, briefly, a plot) of the automaton $\mathfrak
A$ as a union  
$\mathbf{P}(\mathfrak A)=\cup_{x\in[0,1],y\in\mathcal F(x)}((x;y)\cup\Cal X(x))$. 
\end{enumerate}
In other words, $\mathbf P(\mathfrak A)$ is a closure in the unit square
$\mathbb I^2$ of the union $\cup_{i=1}^\infty\mathbf L_i(\mathfrak A)$ where
$\mathbf L_i(\mathfrak A)=\{(0.\alpha_1\ldots\alpha_i;
0.\zeta_{1i}\zeta_{2i}\ldots\zeta_{ii})\: x\in\mathbb I\}$ is the \emph{$i$-th
layer} of the plot $\mathbf P(\mathfrak A)$. That is, the plot  $\mathbf
P(\mathfrak A)$ can be considered as a `limit' of the sequence  of sets 
$\cup_{i=1}^n\mathbf L_i(\mathfrak A)$, the \emph{approximate plots} at word
length $N$, while $N\to\infty$   (see more formal definitions in Subsection \ref{ssec:plots}).
Note that according to automata 0-1 law (cf. \cite[Proposition 11.15]{AnKhr}
and \cite{me:Discr_Syst})
 the plot $\mathbf P(\mathfrak A)$ of arbitrary  automaton $\mathfrak A$
can be of two
kinds only: Either $\mathbf{P}(\mathfrak A)=\mathbb I^2$ or $\mathbf{P}(\mathfrak A)$ is a (Lebesgue) measure-0 closed subset of $\R^2$.
Moreover, if the number of states of the automaton $\mathfrak A$ is finite (further
in the paper these automata are referred to as \emph{finite} ones), then the second case takes place.

We stress crucial advantage of real plots over Monna graphs: In a contrast to the Monna graph $\mathbf M(\mathfrak A)$, a real plot  $\mathbf{P}(\mathfrak A)$ is capable of showing true long-term behavior of
 automaton $\mathfrak A$ (i.e., when $\mathfrak A$ is feeded by sufficiently
long words)
rather than a short-term behaviour displayed by the Monna graph $\mathbf
M(\mathfrak A)$
since due to the very construction of the real plot the higher order (i.e.,
the most significant) digits of the real number represented by the output
word
are formed by the latest outputted letters of the output word whereas  the construction
of the Monna graph assumes that the higher order digits are formed by the
earliest outputted letters. This results in a drastically different
appearances of  the real
plot and of the Monna graph:
Real plot  clearly demonstrates that corresponding automaton is `ultimately linear' (that
is, exhibits
linear long-term behavior), cf. Figures \ref{fig:Plot-16}--\ref{fig:Plot};
whereas the Monna graph is incapable to reveal this important
feature of the automaton, cf. Figure \ref{fig:Monna}. This is the main reason why in the paper we focus  on real plots of automata rather
than on their Monna graphs.

 \begin{figure}[ht]
 \begin{minipage}[b]{.45\linewidth}
 \includegraphics[width=0.92\textwidth,natwidth=610,natheight=642]{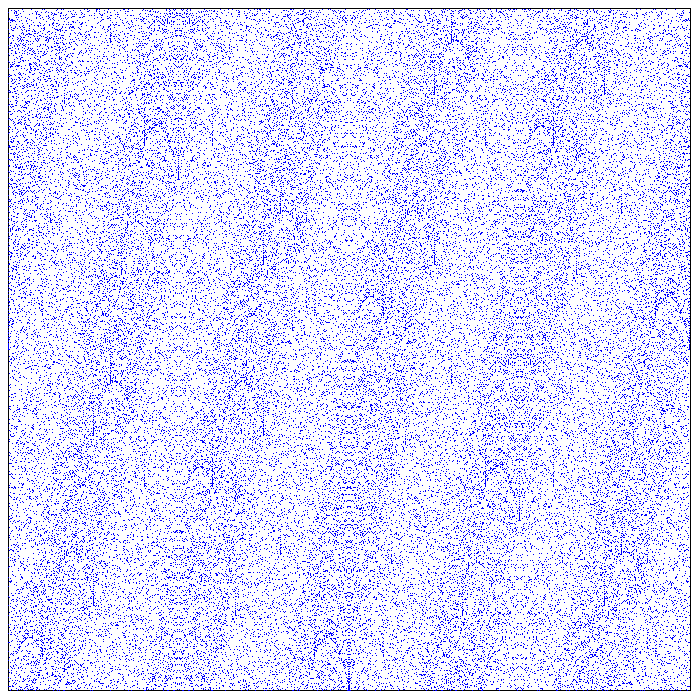}
 \caption{\quad Approximate plot of an 
 automaton at word length 16 }
 \label{fig:Plot-16}
 \end{minipage}\hfill
 \begin{minipage}[b]{.45\linewidth}
 \includegraphics[width=0.92\textwidth,natwidth=610,natheight=642]{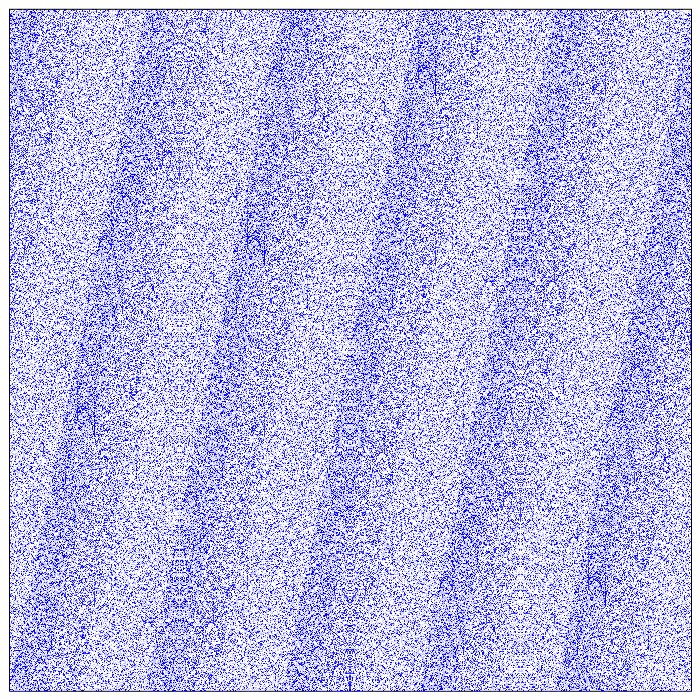}
 \caption{\quad Approximate plot of the same automaton at word length 17}
 \label{fig:Plot-17}
 \end{minipage}\hfill
 \begin{minipage}[b]{.45\linewidth}
 \includegraphics[width=0.92\textwidth,natwidth=610,natheight=642]{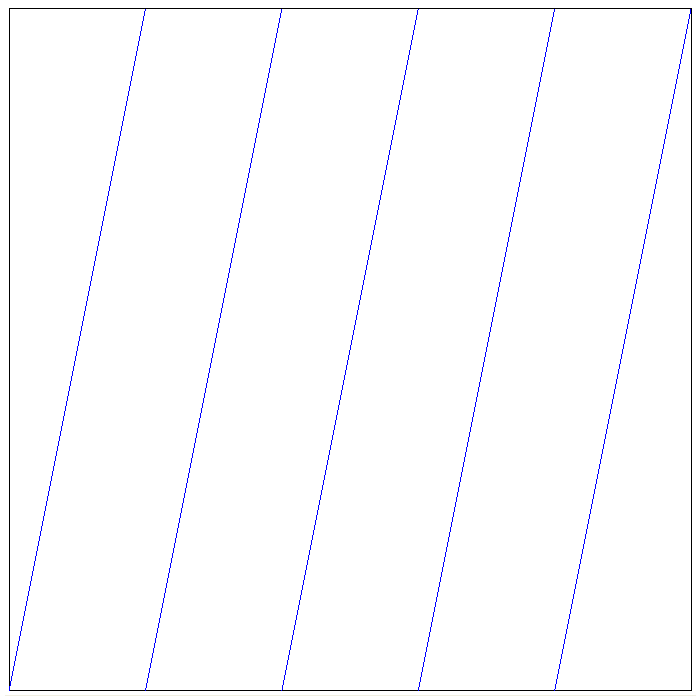}
 \caption{Cluster points of the plot of the same automaton \qquad \qquad}
 \label{fig:Plot}
 \end{minipage}\hfill
 \begin{minipage}[b]{.45\linewidth}
 \includegraphics[width=0.92\textwidth,natwidth=610,natheight=642]{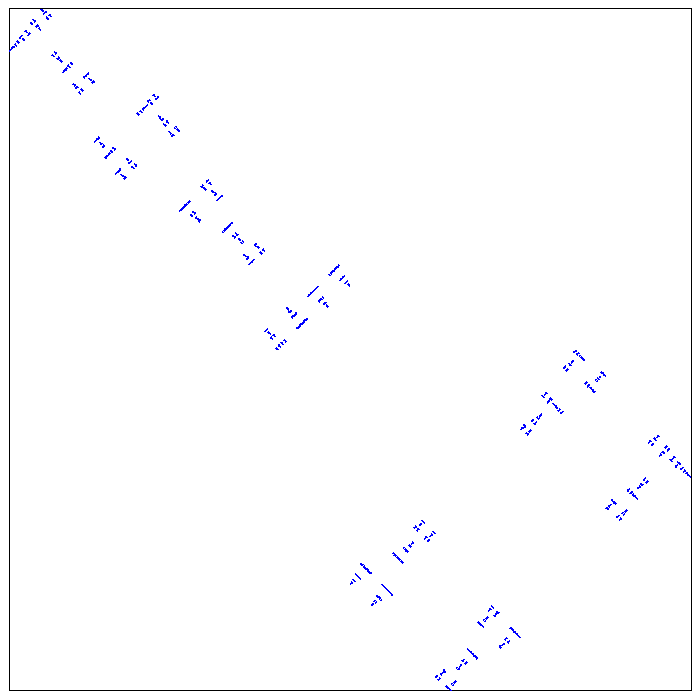} \caption{The Monna graph of the same automaton}
 \label{fig:Monna}
 \end{minipage}
 \end{figure}

Therefore when specifying a notion of computability  of a real-valued function $g\:G\>[0,1]$ (where $G\subset[0,1]$) on 
automata, at least two different approaches do exist: The first
one is to speak of the case 
when the graph $\mathbf G(g)=\{(x;g(x))\colon x\in G\}$ of the function $G$ lie completely in $\mathbf M(\mathfrak
A)$ for some automaton $\mathfrak A$ while the second one is to consider the case when $\mathbf G(g)\subset\mathbf{P}(\mathfrak A)$. Papers 
\cite{Cherep_Approx,Cherepov_Approx-contf,Konech_Aff,Lisovik_Realfunk,Shkar_AffineAuto}
mentioned above
basically deal with  the  computability in the first meaning whereas our's  paper deals
with the computability of the second kind. Note that classes of real functions which are computable
on finite automata are different depending on the meaning: For instance,  the
function $\lfloor px\rfloor$ (where $\lfloor a \rfloor$ stands for the integral
part of $a\in\R$, i.e., for the biggest integer not exceeding $a$) is not
computable in the first meaning but is computable in the second one whilst
the function $p^{-1}x$ is computable in the first meaning but is not computable
in the second one. 

To the best of our knowledge, our approach (which is based on  real plots rather than on Monna graphs) was originally used in \cite{AnKhr} and  was never considered before by other authors.

In the sequel
we refer real functions $g\:G\>[0,1]$ with domain $G\subset[0,1]$ as to \emph{finitely
computable} if there exists a finite automaton $\mathfrak A$ whose real plot
contains the graph  of the function $g$; i.e., if
$\mathbf G(g)\subset\mathbf P(\mathfrak A)$.
Main result of our paper is Theorem \ref{thm:main} which characterizes all
finitely computable   $C^2$-functions $g$ 
defined on  a sub-segment $D=[a,b)\subset[0,1]$: The theorem
yields that 
\emph{if a finitely computable function $g\:D\>[0,1]$ is twice differentiable and if its second derivative is continuous
everywhere on $D$ then $g$ is necessarily affine of the form $g(x)=Ax+B$  for suitable rational $p$-adic $A,B$} (that is, for $A$, $B$ which can be represented by irreducible fractions whose denominators are
co-prime to $p$). Moreover, this is true 
in $n$-dimensional case as well (Theorem \ref{thm:main-mult}).


In view of Theorem \ref{thm:main} it is noteworthy that despite the classes
of functions computable on finite automata are different depending on the meaning the
computability is understood, nonetheless
if a function $g\:[0,1]\>[0,1]$ is everywhere differentiable on $(0,1)$ and
$\mathbf G(g)\subset\mathbf M(\mathfrak A)$ for some finite automaton $\mathfrak
A$ with  binary input/output alphabets then $g$ is necessarily affine, see
\cite{Lisovik_Realfunk}.  In \cite{Konech_Aff} it is shown that a similar assertion holds
for multivariate continuously differentiable functions and arbitrary finite
alphabets. 
Therefore finite automata should be judged as rather `weak computers' in
all meanings  since 
only quite simple
real functions can be evaluated on these devices.  From this view, results
of the current paper are
some contribution to the theory of computable real functions.


It is worth mentioning
right now
that actually our proof reveals a basic reason why smooth functions which can be represented
by finite automata are necessarily affine: This is because \emph{squaring
can not be performed by a finite automaton}; that is, an automaton which,  being feeded by a base-$p$ expansion
of $n$, outputs a base-$p$ expansion of $n^2$ for every positive integer
$n$, can not be finite (the
latter is a well-known fact from automata theory, see e.g., \cite[Theorem
2.2.3]{Bra}).

It is also worth mentioning that the question when $\mathbf G(g)\subset
\mathbf M(\mathfrak A)$ is  somewhat easier to handle than the question when
$\mathbf G(g)\subset\mathbf{P}(\mathfrak A)$. Indeed,  in the first case
once
 the
automaton $\mathfrak A$  is feeded by an infinite word $\ldots\alpha_3\alpha_2\alpha_1$, the word is treated
as a base-$p$ expansion of a unique real number $x=0.\alpha_1\alpha_2\alpha_3\ldots\in[0,1]$,
corresponding output of $\mathfrak A$ is also an infinite word 
$\ldots\beta_3\beta_2\beta_1$ which also is treated as a base-$p$ expansion
of a unique
real
number $y=0.\beta_1\beta_2\beta_3\ldots\in[0,1]$. This results in a unique point $(x;y)$ of the unit square $\mathbb
I^2$ in the first case; whilst in the second case the automaton  $\mathfrak A$, being
feeded by the infinite word $\ldots\alpha_3\alpha_2\alpha_1$, produces
generally   an infinite  point set  of a cardinality continuum: The set is a closure of the point set 
$\{(0.\alpha_n\alpha_{n-1}\ldots\alpha_1;0.\beta_n\beta_{n-1}\ldots\beta_1)\:n=1,2,\ldots\}$
in $\mathbb I^2$.  
Due to this reason
during the proofs we have to use more complicated techniques from real analysis
which in some cases we combine with methods of  $p$-adic
analysis. Therefore some proofs are involved; but to make  general idea
of a proof as transparent as possible in the sequel we explain it in loose terms when appropriate.

Last but not least: Our approach reveals another important feature of smooth
functions which can be computed on finite automata. From Figure \ref{fig:Plot}
it can be clearly observed that limit points of the plot constitute a torus winding if one converts a unit square into torus by gluing together opposite
sides of the square. This is not occasional: Our Theorem \ref{thm:main}
yields that \emph{if the unit square $\mathbb I^2$ is mapped onto a torus
$\T^2\subset\mathbb R^3$, the smooth curves from the plot become torus windings}; and these \emph{windings after being represented in cylindrical coordinates are described
by
complex-valued functions $e^{i(Ax+B)}$} ($x\in[0,1]$), see Corollary
\ref{cor:mult-add-compl}. But in quantum theory the
latter  exponential functions are ascribed to matter waves 
(cf., de Broglie waves); therefore, since automata can be considered
as models for discrete
casual systems, the \emph{results of our paper give some mathematical evidence that  matter waves 
are inherent in quantum systems merely due to causality principle and discreteness of matter} (quantization). We discuss these possible connections
to physics in Section
\ref{sec:Concl}.

Note that for
not to overload the paper with extra calculations  we consider only automata whose input
and output alphabets consist of $p$ letters $0,1,\ldots,p-1$ where $p>1$ is
a prime number  
though our approach  can be expanded
to the case when $p$ is arbitrary integer greater than 1 (and even to
the case when $p$ is not necessarily an integer, see Section \ref{sec:Concl}). For a prime $p$,
we naturally associate when necessary letters of the alphabet $0,1,\ldots,p-1$ to residues
modulo $p$, i.e., to elements of a finite field $\F_p$.
 
The paper is organized
as follows:
\begin{itemize}
\item In Section \ref{sec:Prelm} we recall basic definitions as well as some
(mostly known) facts from combinatorics
of words, from automata theory, from $p$-adic analysis, and from knot theory. 
Also in this section we formally introduce the notion of  real
plot of  automaton and examine its basic properties.
\item In Section \ref{sec:aff} we completely describe  cluster points of
real plots of finite autonomous
automata and of finite affine
automata: We show that the points  constitute  links of torus knots.
\item In Section \ref{sec:fin-comp} we prove numerous (mostly technical)
results on finitely computable functions; that is, on real functions whose
graphs lie in plots of finite automata. Loosely speaking, in the section
we (rigorously) develop  techniques to examine real functions computed on finite automata as if the automata are feeded
by base-$p$ expansions of real arguments of the functions so that less significant
digits are feeded to  automaton prior to more significant ones.
\item Section \ref{sec:main} contains main results of the paper: We prove
that once a finitely computable function is $C^2$-smooth than it is affine
and may be associated to a finite collection of complex-valued functions $\Psi(x,\ell)=e^{i(Ax-2\pi p^\ell B)}$, $(x\in\R; \ell\in\N_0)$ for suitable
rational numbers $A,B$ which are $p$-adic integers. We prove a multivariate
version of the theorem as well.
\item In Section \ref{sec:Concl} we discuss possible connections of the main results
to informational interpretation of quantum theory. We argue  that the results
show that wave function is a mathematical consequence of two basic assumptions
which are causality principle and discreteness of matter: We show that using
$\beta$-expansions of real numbers (where $\beta=1+\tau$ and $\tau>0$ is small) rather than base-$p$
expansions for positive integer $p>1$, main results
of the paper imply that a quantum system may be considered as a finite  automaton
which calculates functions $e^{i(Ax-2\pi \beta^\ell B)}$; but the functions
are approximately equal to $a\cdot e^{i(Ax-2\pi tB)}$ when $t=\ell\tau$ since $\tau$ is small and thus $(1+\tau)^\ell\approx 1+\ell\tau$;  moreover, $\beta=1+\tau$ implies that both input and output alphabets of the automaton
must be necessarily binary, i.e., $\{0,1\}$.  Therefore one may say that the automaton produces waves
$a\cdot e^{i(Ax-2\pi tB)}$ (since variables $x,t\in\R$  may be regarded as `position' and `time' respectively)
from bits. This may serve a mathematical evidence in favour of J.~A.~Wheeler's \emph{It from bit} doctrine which suggests that all things physical (`its') are
information-theoretic in origin (`from bits'), \cite{Wheeler_IFB}.
\end{itemize}

%
%
\section{Preliminaries}
\label{sec:Prelm}
Technically the paper is a sort of interplay between real analysis and $p$-adic
analysis; but although  real analysis is the tool we mostly use in proofs, in some important places we also use $p$-adic analysis to examine specific
properties of automata maps since the maps actually are 1-Lipschitz functions
w.r.t. $p$-adic metric. 
This is why we first recall some facts about words over a finite alphabet,
$p$-adic integers, and automata.
\subsection{Few words about words}
\label{ssec:word}
An \emph{alphabet}   is just a finite non-empty set $\Cal A$; further  in the paper usually $\Cal A=\{0,1,\ldots,p-1\}=\F_p$.   Elements of $\Cal A$
 elements are called \emph{symbols},
or \emph{letters}. 
By the definition, a \emph{word of length $n$ over alphabet $\Cal A$} is a finite sequence (stretching from
right to left)
$\alpha_{n-1}\cdots\alpha_1\alpha_0$, where $\alpha_{n-1},\ldots,\alpha_1,\alpha_0\in\Cal
A$. The number $n$ is called the \emph{length} of the word $w=\alpha_{n-1}\cdots\alpha_1\alpha_0$
 and  is denoted via $\Lambda(w)$. The \emph{empty word} $\phi$ is a sequence
of length 0, that is, the one that contains no symbols. 
Given a word $w=\alpha_{n-1}\cdots\alpha_1\alpha_0$,
any word $v=\alpha_{k-1}\cdots\alpha_1\alpha_0$, $k\le n$, is called a \emph{prefix}
of the word $w$; whereas any word $u=\alpha_{n-1}\cdots\alpha_{i+1}\alpha_i$,
$0\le i\le n-1$ is called a \emph{suffix} of the word $w$. Every word $\alpha_{j}\cdots\alpha_{i+1}\alpha_i$
where $n-1\ge j\ge i\ge 0$ is called a \emph{subword} of the word $w=\alpha_{n-1}\cdots\alpha_1\alpha_0$. Given words $a=\alpha_{n-1}\cdots\alpha_1\alpha_0$
and $b=\beta_{k-1}\cdots\beta_1\beta_0$, the
\emph{concatenation} $ab$ is the following word (of length $n+k$):
\[
ab=\alpha_{n-1}\cdots\alpha_1\alpha_0\beta_{k-1}\cdots\beta_1\beta_0.
\]
Given a word $w$, its $k$-times concatenation  is denoted
via $(w)^k$:
$$
(w)^k=\underbrace{ww\ldots w}_{k\ \text{times}}.
$$
We denote via $\Cal W$ the set of all non-empty words over $\Cal A=\{0,1,\ldots,p-1\}$
and  via $\Cal W_\phi$ the set of all words including the empty word $\phi$. In the sequel the set of all $n$-letter words
over the alphabet $\F_p$ we denote as $\Cal W_n$; so $\Cal W=\cup_{n=1}^\infty
\Cal W_n$.  
To
every word $w=\alpha_{n-1}\cdots\alpha_1\alpha_0$ we put into the correspondence
a non-negative integer $\nm(w)=\alpha_0+\alpha_1\cdot p+\cdots+\alpha_{n-1}\cdot
p^{n-1}$. 
Thus $\nm$ maps the set $\Cal W$
of all non-empty finite words over the alphabet $\Cal A$ 
onto the set $\N_0=\{0,1,2,\ldots\}$ of all non-negative integers. We will
also consider a map $\rho$ of the set $\Cal W$ into the real unit half-open
interval $[0,1)$; the map $\rho$ is defined as follows: Given $w=\beta_{r-1}\ldots\beta_0\in\Cal W$, put 
\begin{equation}
\label{eq:rho}
\rho(w)=\nm(w)\cdot p^{-\Lambda(w)}=
\frac{\beta_0+\beta_1p+\cdots+\beta_{r-1}p^{r-1}}{p^r}
=0.\beta_{r-1}\ldots\beta_0\in[0,1).
\end{equation}
We also use the notation $0.w$ for $0.\beta_{r-1}\ldots\beta_0$.

Along with finite words we also consider (left-)infinite words over the alphabet
$\Cal A$; the ones are the infinite sequences of the form $\ldots\alpha_2\alpha_1\alpha_0$
where $\alpha_i\in\Cal A$, $i\in\N_0$. For infinite words the notion of a
prefix and of a subword are defined in the same way as for finite words;
whilst suffix is not defined. Let an infinite word $w$ be eventually periodic,
that is, let $w=\ldots\beta_{t-1}\beta_{t-2}\ldots\beta_0
\beta_{t-1}\beta_{t-2}\ldots\beta_0\alpha_{r-1}\alpha_{r-2}\ldots\alpha_0$
for $\alpha_i\beta_j\in\Cal A$; then the subword $\beta_{t-1}\beta_{t-2}\ldots\beta_0$
is called a \emph{period} of the word $w$ and the suffix $\alpha_{r-2}\ldots\alpha_0$
is called the pre-period of the word $w$. Note that a pre-period may be an
empty word while a period can not. We write the eventually periodic word
$w$ as $w=(\beta_{t-1}\beta_{t-2}\ldots\beta_0)^\infty\alpha_{r-1}\alpha_{r-2}\ldots\alpha_0$.
\subsection{$p$-adic numbers}
\label{ssec:p-adic}
See \cite{Gouvea:1997,Kat,Kobl} for introduction to $p$-adic analysis
or comprehensive
monographs \cite{Mah,Sch} for further reading.
 
Fix a prime number $p$ and denote respectively via $\N=\{1,2,\ldots\}$ and $\Z=\{0,\pm1,\pm
2,\ldots\}$ the set of all positive rational integers and the ring of all
rational integers. Given $n\in\N=\N_0\setminus\{0\}$, the \emph{$p$-adic
absolute value} of $n$ is $|n|_p=p^{-\ord_pn}$, where $p^{\ord_pn}$ is the
largest power of $p$ which is a factor of $n$; so $n=n^\prime \cdot p^{\ord_pn}$
where $n^\prime\in\N$ is co-prime to $p$. By putting $|0|_p=0$, $|-n|_p=|n|_p$
and $|n/m|_p=|n|_p/|m|_p$ for $n,m\in\Z$, $m\ne0$ we expand the $p$-adic
absolute value to the whole field $\Q$ of rational numbers. Given an absolute value
$|~|_p$, we  define a metric  in a standard way: $|a-b|_p$ is a \emph{$p$-adic
metric} on $\Q$.   
The field $\Q_p$ of \emph{$p$-adic
numbers} is a completion of the field $\Q$ of rational numbers w.r.t. the
$p$-adic metric while the ring $\Z_p$ of \emph{$p$-adic
integers} is a ring of integers of $\Q_p$; and the ring $\Z_p$ is a completion
of $\Z$ w.r.t. the $p$-adic metric.  The ring $\Z_p$ is compact w.r.t.
the $p$-adic metric: Actually $\Z_p$ is a ball of radius 1 centered at 0; namely $\Z_p=\{r\in\Q_p\colon |r|_p\le1\}$. Balls in $\Q_p$ are \emph{clopen};
that is, both closed and open w.r.t. the $p$-adic metric.

A $p$-adic number $r\in\Q_p\setminus\{0\}$ admits a unique
\emph{$p$-adic canonical expansion} $r=\sum_{i=k}^\infty\alpha_ip^i$ where $\alpha_i\in\{0,1,\ldots,
p-1\}$, $k\in\Z$, 
$\alpha_k\ne0$. Note that then any $p$-adic integer $z\in\Z_p$
admits a unique representation $z=\sum_{i=0}^\infty\alpha_ip^i$ for suitable
$\alpha_i\in\{0,1,\ldots,p-1\}$. 
The latter representation is
called a \emph{canonical form} (or, a \emph{canonical representation}) of the $p$-adic integer $z\in\Z_p$;
the $i$-th coefficient $\alpha_i$ of the expansion will be referred to as \emph{the $i$-th $p$-adic digit} of $z$ and denoted via $\alpha_i=\delta_i(z)$.
It is clear that once $z\in\N_0$, the $i$-th $p$-adic digit $\delta_i(z)$
of $z$ is just the $i$-th digit in  the base-$p$ expansion of $z$. Note also
that a $p$-adic integer $z\in\Z_p$ is a \emph{unity} of $\Z_p$ (i.e., has
a multiplicative inverse   $z^{-1}\in\Z_p$) if and only if $\delta_0(z)\ne 0$; so any $p$-adic number $z\in\Q_p$ has
a unique representation of the form $z=z^\prime\cdot |z|_p^{-1}$ where $z^\prime\in\Z_p$
is a unity. 

The $p$-adic integers may be associated to infinite words over the alphabet
$\F_p=\{0,1,\ldots, p-1\}$ as follows: Given a $p$-adic integer $z\in\Z_p$,
consider its canonical expansion $z=\sum_{i=0}^\infty\alpha_i\cdot p^i$;
then denote via $\wrd(z)$ the infinite word $\ldots\alpha_2\alpha_1\alpha_0$
(allowing some freedom of saying we will sometimes refer  $\wrd(z)$ as to a \emph{base-$p$ expansion of $z\in\Z_p$}). Vice versa, given a left-infinite
word $w=\ldots\alpha_2\alpha_1\alpha_0$ we denote via 
$\nm(w)=\sum_{i=0}^\infty\alpha_i\cdot p^i$ corresponding $p$-adic integer
whose base-$p$ expansion is $w$ thus expanding the mapping $\nm$ defined
in Subsection \ref{ssec:word} to the case of infinite words as well.
It is worth noticing here that addition and multiplication of $p$-adic integers
can be performed by using the same school-textbook algorithms for addition/multiplication
of non-negative integers represented via their base-$p$ expansions with the
only difference: The algorithms are applied
to infinite words that correspond to $p$-adic canonical forms of summands/multipliers
rather than to a finite words which are base-$p$ expansions
of summands/multipliers.

Given $n\in\N$ and a canonical expansion $z=\sum_{i=0}^\infty\alpha_ip^i$ for $z\in\Z_p$, denote $z\md{p^n}=\sum_{i=0}^{n-1}\alpha_ip^i$. The mapping $\md{p^n}\:z\mapsto
z\md{p^n}$ is a ring epimorphism of $\Z_p$ onto the residue ring $\Z/p^n\Z$
(under a natural representation of elements of the residue ring by the least
non-negative residues $\{0,1\ldots,p^n-1\}$).

The series in the right-hand side of the canonical form converges w.r.t.
the $p$-adic metric; that is, the sequence of partial sums $z\md{p^n}$ converges
to $z$ w.r.t. the $p$-adic metric: $\lim_{n\to\infty}^p(z\md p^n)=z$. It is worth noticing here that arbitrary infinite series $\sum_{i=0}^\infty r_i$ where
$r_i\in\Q_p$ converges in $\Q_p$ (i.e., w.r.t. $p$-adic metric) if and
only if $\lim_{i\to\infty}|r_i|_p=0$ since  $p$-adic metric is \emph{non-Archimedean};
that is, it satisfies \emph{strong triangle inequality} $|x-y|_p\le\max\{|x-z|_p,|z-y|_p\}$ for all $x,y,z\in\Q_p$.

Note that $z\in\N_0$ if and only if all but a finite number
of coefficients $\alpha_i$ in the canonical form are 0 while $z\in\{-1,-2,-3,\ldots\}$
if and only if all but a finite number of $\alpha_i$ are $p-1$.
Further we will need a special representation for \emph{$p$-adic integer
rationals}; that is, for those rational numbers $z$ which at the same time
are $p$-adic integers, i.e., for $z\in\Z_p\cap\Q$.  Note that $z\in\Z_p\cap\Q$ if and only if $z$ can be represented by an irreducible
fraction $z=a/b$, $a\in\Z, b\in\N$ where $b$ is co-prime to $p$.
The following proposition is well known, cf., e.g., \cite[Theorem 10]{Frougny_Rat-base-p}:
\begin{prop}
\label{prop:p-adic-rat}
A $p$-adic integer $z$ is rational \textup{(i.e., $z\in\Z_p\cap\Q$)}
if and only if the sequence of coefficients of its
canonical form is eventually periodic:
\begin{multline}
\label{eq:p-adic-rat}
z=\alpha_0+\alpha_1p+\cdots+\alpha_{r-1}p^{r-1}+(\beta_0+\beta_1p+\cdots+\beta_{t-1}p^{t-1})p^r+\\
(\beta_0+\beta_1p+\cdots+\beta_{t-1}p^{t-1})p^{r+t}+(\beta_0+\beta_1p+\cdots+
\beta_{t-1}p^{t-1})p^{r+2t}+\cdots
\end{multline}
for suitable $\alpha_j,\beta_i\in\{0,1,\ldots,p-1\}$, $r\in\N_0$, $t\in\N$
\textup{(the sum  $\alpha_0+\alpha_1p+\cdots+\alpha_{r-1}p^{r-1}$ is absent
in the above expression once $r=0$)}.
\end{prop}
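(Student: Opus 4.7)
The plan is to establish the two directions of this classical characterization separately.

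\emph{Sufficiency (eventually periodic $\Rightarrow$ rational).} Assuming the canonical expansion of $z$ has the form in \eqref{eq:p-adic-rat}, I would set $A=\alpha_0+\alpha_1 p+\cdots+\alpha_{r-1}p^{r-1}\in\N_0$ and $B=\beta_0+\beta_1 p+\cdots+\beta_{t-1}p^{t-1}\in\N_0$. The tail $Bp^r+Bp^{r+t}+Bp^{r+2t}+\cdots$ is a $p$-adically convergent geometric series since $|p^{kt}|_p=p^{-kt}\to0$; moreover $|1-p^t|_p=1$, so $1-p^t$ is a unit in $\Z_p$. Summing the geometric series,
\[
z\;=\;A+\frac{B p^r}{1-p^t}\;=\;\frac{A(1-p^t)+Bp^r}{1-p^t},
\]
which is a rational number whose denominator $1-p^t$ is co-prime to $p$, hence $z\in\Z_p\cap\Q$.

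\emph{Necessity (rational $\Rightarrow$ eventually periodic).} Here I would write $z=a/b$ as an irreducible fraction with $a\in\Z$, $b\in\N$, $\gcd(b,p)=1$, and let $b^{-1}$ denote the inverse of $b$ modulo $p$. The digits of the canonical expansion would then be extracted by iterating the $p$-adic long-division map: put $a_0=a$ and define recursively
\[
\alpha_i\;=\;(a_i\cdot b^{-1})\bmod p\;\in\;\{0,1,\ldots,p-1\},\qquad a_{i+1}\;=\;\frac{a_i-\alpha_i b}{p}\;\in\;\Z.
\]
The integrality of $a_{i+1}$ is guaranteed by the choice $\alpha_i b\equiv a_i\pmod p$; an immediate induction then gives the identity $z=\sum_{i=0}^{n-1}\alpha_i p^i+p^n(a_n/b)$, and since $a_n/b\in\Z_p$ and $|p^n(a_n/b)|_p\le p^{-n}\to0$, the $\alpha_i$ really are the canonical $p$-adic digits of $z$.

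It then remains to show that the integer remainder sequence $(a_i)$ is eventually periodic. Setting $M=\max\{|b|,|a_0|\}$, the crude bound $|a_{i+1}|\le(|a_i|+(p-1)|b|)/p$ yields by induction $|a_i|\le M$ for every $i\ge0$, since $M\ge|b|$ implies $(M+(p-1)|b|)/p\le M$. Consequently each $a_i$ lies in the finite set $\{-M,\ldots,M\}\cap\Z$, and the pigeonhole principle forces $(a_i)$, and hence the digit sequence $(\alpha_i)$, to be eventually periodic. The main obstacle in this direction is securing the bounded-remainders claim for the division iteration; once that is in place, the rest is routine bookkeeping with $p$-adic convergence.
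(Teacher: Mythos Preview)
Your proof is correct in both directions: the geometric-series summation for sufficiency and the bounded-remainder long-division argument for necessity are standard and sound, and your inductive bound $|a_i|\le M$ with $M=\max\{|b|,|a_0|\}$ goes through cleanly.

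However, the paper does not actually prove this proposition. It is stated as a well-known fact with a reference to \cite{Frougny_Rat-base-p}, and no argument is given in the text. So there is nothing to compare your approach against; you have simply supplied a self-contained proof where the paper relies on a citation. Your argument is the classical one and would be entirely appropriate if a proof were required.
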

In other words, once a $p$-adic integer $z$ is represented in its canonical
form, $z=\sum_{i+0}^\infty\gamma_ip^i$, the corresponding infinite word $\ldots\gamma_1\gamma_0$
is eventually periodic: $\ldots\gamma_1\gamma_0=(\beta_{t-1}\ldots\beta_{0})^\infty\alpha_{r-1}\ldots\alpha_{0}$.
It is clear that given $z\in\Z_p\cap\Q$, both $r$ and  $t$ are not unique:
For instance, 
$$
(\beta_{t-1}\ldots\beta_{0})^\infty\alpha_{r-1}\ldots\alpha_{0}=
(\beta_0\beta_{t-1}\ldots\beta_1\beta_0\beta_{t-1}\ldots\beta_1)^\infty\alpha_{r}\alpha_{r-1}\ldots\alpha_{0},
$$
where $\alpha_r=\beta_0$. But once both pre-periodic and periodic parts 
(the prefix $\alpha_{r-1}\ldots\alpha_{0}$ and the word $\beta_{t-1}\ldots\beta_{0}$
) are taken the shortest possible, both the \emph{pre-period length} $r$ and
the \emph{period length} $t$ are unique for a given $p$-adic rational integer
$z\in\Z_p\cap\Q$; we refer to $\alpha_{r-1}\ldots\alpha_{0}$ and to 
$\beta_{t-1}\beta_{t-2}\ldots\beta_{1}\beta_0$ as to \emph{pre-period} of $z$ and
\emph{period} of $z$ accordingly.

Given $z\in\Z_p\cap\Q$ we mostly  assume further that in the representation 
$z=\alpha_0+\cdots+\alpha_{r-1}p^{r-1}+(\beta_0+\cdots+\beta_{t-1}p^{t-1})\cdot\sum_{j=0}^\infty
p^{r+tj}$ (respectively, in eventually periodic infinite word 
$\wrd(z)=(\beta_{t-1}\ldots\beta_{0})^\infty\alpha_{r-1}\ldots\alpha_{0}$ that corresponds
to $z$) $r$ is a pre-period length and $t$ is a period length. Note that
a pre-period may be an empty word (i.e., of length 0) while a period can not.

Rational $p$-adic integers can also be represented as fractions of a special kind:
\begin{prop}
\label{prop:p-repr}
A $p$-adic integer $z\in\Z_p$ is rational  if and
only if there exist $t\in\N$, $c\in\Z$, $d\in\{0,1,\ldots,p^t-2\}$ 
such
that 
\begin{equation}
\label{eq:pq-frac-rep}
z=c+\frac{d}{p^t-1}.
\end{equation}
\end{prop}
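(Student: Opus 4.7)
The plan is to prove both directions by a direct calculation, using the preceding Proposition \ref{prop:p-adic-rat} (eventual periodicity of the canonical form) for the nontrivial direction.

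For the necessity direction, suppose $z\in\Z_p\cap\Q$. By Proposition \ref{prop:p-adic-rat}, $z$ has a canonical expansion \eqref{eq:p-adic-rat}. Set $A=\sum_{j=0}^{r-1}\alpha_jp^j\in\N_0$ and $B=\sum_{i=0}^{t-1}\beta_ip^i\in\{0,\ldots,p^t-1\}$. Then, using the standard $p$-adic geometric series identity $\sum_{j=0}^\infty p^{tj}=\frac{1}{1-p^t}=-\frac{1}{p^t-1}$ inside $\Z_p$ (valid since $|p^t|_p<1$), one rewrites
\[
z\;=\;A\;+\;Bp^r\sum_{j=0}^\infty p^{tj}\;=\;A\;-\;\frac{Bp^r}{p^t-1}.
\]
Now perform Euclidean division: write $Bp^r=q(p^t-1)+s$ with $q\in\Z$ and $0\le s\le p^t-2$. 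If $s=0$, take $c=A-q$ and $d=0$; otherwise set $d=p^t-1-s\in\{1,\ldots,p^t-2\}$, so that
\[
-\frac{s}{p^t-1}\;=\;-1\;+\;\frac{d}{p^t-1},
\]
and take $c=A-q-1$. In either case we obtain the desired representation \eqref{eq:pq-frac-rep}.

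For the sufficiency direction, suppose $z=c+\frac{d}{p^t-1}$ as in the statement. Then $z$ is manifestly rational. Moreover, $p^t-1\equiv -1\pmod p$, so $\gcd(p^t-1,p)=1$, i.e., $p^t-1$ is a unit in $\Z_p$; hence $\tfrac{1}{p^t-1}\in\Z_p$, so $\tfrac{d}{p^t-1}\in\Z_p$, and consequently $z\in\Z_p$. Combining the two directions gives the claim.

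The only delicate point in the argument is fitting $d$ into the half-open range $\{0,\ldots,p^t-2\}$ rather than the more natural full residue range $\{0,\ldots,p^t-1\}$; this is why the two-case split on whether $s=0$ is necessary (the value $d=p^t-1$ would correspond to $\tfrac{d}{p^t-1}=1$, which must be absorbed into the integer part $c$). Everything else is a routine manipulation of the canonical $p$-adic expansion, and no real obstacle arises.
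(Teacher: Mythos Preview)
Your proof is correct and follows essentially the same approach as the paper: both start from the eventually periodic form of Proposition~\ref{prop:p-adic-rat}, sum the geometric series to get $z=A-\dfrac{Bp^r}{p^t-1}$, and then perform an integer adjustment to force $d$ into $\{0,\ldots,p^t-2\}$. The only cosmetic difference is that the paper first rewrites $-\dfrac{Bp^r}{p^t-1}=-p^r+\dfrac{p^r(p^t-1-B)}{p^t-1}$ (making the numerator nonnegative) before Euclidean division, thereby avoiding your case split on $s=0$; your explicit treatment of the sufficiency direction is in fact slightly more complete than the paper's.
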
 
\begin{proof}
Indeed, $z\in\Z_p\cap\Q$ if and only if $z$ is of the form \eqref{eq:p-adic-rat};
therefore
\begin{multline}
\label{eq:p-repr}
z=
(\alpha_0+\alpha_1p+\cdots+\alpha_{r-1}p^{r-1}-p^r)+p^r
\left(1-\frac{\beta_0+\beta_1p+\cdots+\beta_{t-1}p^{t-1}}{p^t-1}\right)=\\
(\alpha_0+\alpha_1p+\cdots+\alpha_{r-1}p^{r-1} -p^r+q)+
\frac{\zeta_0+\zeta_1p+\cdots+\zeta_{t-1}p^{t-1}}{p^t-1}
\end{multline}
where $\zeta_0+\zeta_1p+\cdots+\zeta_{t-1}p^{t-1}$ is a base-$p$ expansion
of  the least non-negative
residue $s$ of $p^r(p^t-1-(\beta_0+\beta_1p+\cdots+\beta_{t-1}p^{t-1}))=(p^t-1)q+s$ modulo $p^t-1$.  
\end{proof}
\begin{note}
\label{note:inverse}
Recall that
$(1-p^m)^{-1}=\sum_{i=0}^\infty p^{mi}\in \Z_p$, for every $m\in\N$.
\end{note}
\begin{note}
\label{note:pq-frac-rep}
Note that once in \eqref{eq:p-repr} $r$ is a pre-period length and $t$ is
a period length of $z\in\Z_p\cap\Q$, the representation \eqref{eq:pq-frac-rep} is unique; that is, the  choice of $c$ and $d$ in \eqref{eq:pq-frac-rep}
is unique.
\end{note}
In the sequel we often use base-$p$ expansions  of $p$-adic rational
integers reduced modulo 1 (recall that if $y\in\R$ then by the definition
$y\md 1=y-\lfloor y\rfloor\in[0,1)\subset\R$)
along with their $p$-adic canonical forms. For reader's convenience, we now summarize some facts on connections
between  these representations.

It is very well known that a base-$p$ expansion of a rational number is eventually
periodic; that is, given $x\in\Q\cap[0,1]$, the base-$p$ expansion for $x$ is
\begin{multline}
\label{eq:r-repr}
x=0.\chi_0\ldots\chi_{k-1}(\xi_0\ldots\xi_{n-1})^\infty=\\
\chi_0p^{-1}+\chi_1p^{-2}+\cdots+\chi_{k-1}p^{-k}+
\xi_0p^{-k-1}+\xi_1p^{-k-2}+\cdots+\xi_{n-1}p^{-k-n}+\\
\xi_0p^{-k-1-n}+\xi_1p^{-k-2-n}+\cdots+\xi_{n-1}p^{-k-2n}+\cdots=\\
\frac{1}{p^k}(\chi_0p^{k-1}+\chi_1p^{k-2}+\cdots+\chi_{k-1})+
\frac{1}{p^k}\cdot\frac{\xi_0p^{n-1}+\xi_1p^{n-2}+\cdots+\xi_{n-1}}{p^n-1},
\end{multline}
where $\chi_i,\xi_j\in\{0,1,\ldots,p-1\}$. Note that in the base-$p$ expansions
of rational integers from $[0,1]$ we use \emph{right}-infinite words rather
than left-infinite ones that correspond to canonical expansions 
of $p$-adic
integers. 
\begin{prop}
\label{prop:r-p-repr-qz}
Given $z\in\Z_p\cap\Q$, represent $z$ in the form \eqref{eq:p-adic-rat};
then
$$z\md1=0.(\hat\beta_{t-1-\bar r}\hat\beta_{t-2-\bar r}\ldots
\hat\beta_{0}\hat\beta_{t-1}\hat\beta_{t-2}\ldots\hat\beta_{t-\bar
r})^\infty\md1,$$
where $\hat\beta=p-1-\beta$ for  $\beta\in\{0,1,\ldots,p-1\}$
and $\bar r$ is the least non-negative residue of $r$ modulo $t$ if $t>1$
or $\bar r=0$ if otherwise.
\end{prop}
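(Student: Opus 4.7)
The plan is to reduce everything to the closed-form representation of $z$ provided by Proposition~\ref{prop:p-repr} and then carry out the modular arithmetic explicitly. Write $z = c + \frac{d}{p^t-1}$ with $c\in\Z$ and $d\in\{0,1,\ldots,p^t-2\}$; since $c\in\Z$ kills under $\md 1$ and $d/(p^t-1)\in[0,1)$, one immediately gets $z\md 1 = \frac{d}{p^t-1}$. So the whole proposition reduces to identifying the digits $\zeta_0,\ldots,\zeta_{t-1}$ of $d$ (in its base-$p$ expansion of length $t$), because the geometric-series identity
\begin{equation*}
\frac{\zeta_0+\zeta_1 p+\cdots+\zeta_{t-1}p^{t-1}}{p^t-1}
= \sum_{j=1}^{\infty}\frac{\zeta_0+\zeta_1 p+\cdots+\zeta_{t-1}p^{t-1}}{p^{tj}}
\end{equation*}
is exactly the statement that the right-hand side has base-$p$ expansion $0.(\zeta_{t-1}\zeta_{t-2}\cdots\zeta_0)^\infty$.

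Next I would recall, from the proof of Proposition~\ref{prop:p-repr}, that $d$ is the least non-negative residue modulo $p^t-1$ of $p^r\hat B$, where $\hat B := p^t-1-B$ and $B=\beta_0+\beta_1 p+\cdots+\beta_{t-1}p^{t-1}$. Because each $\beta_i\le p-1$, the subtraction $p^t-1-B$ needs no borrowing, so $\hat B = \hat\beta_0+\hat\beta_1 p+\cdots+\hat\beta_{t-1}p^{t-1}$ with $\hat\beta_i=p-1-\beta_i$. Using $p^t\equiv 1\pmod{p^t-1}$ I replace $p^r$ by $p^{\bar r}$, giving $d\equiv p^{\bar r}\hat B\pmod{p^t-1}$. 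Multiplication by $p^{\bar r}$ is a left shift by $\bar r$ base-$p$ positions, and reduction mod $p^t-1$ wraps positions $\ge t$ back to the bottom; splitting the sum at index $i=t-\bar r$ and reindexing yields
\begin{equation*}
d \;=\; \sum_{k=\bar r}^{t-1}\hat\beta_{\,k-\bar r}\,p^k \;+\; \sum_{j=0}^{\bar r-1}\hat\beta_{\,j+t-\bar r}\,p^j,
\end{equation*}
so $\zeta_{t-1}=\hat\beta_{t-1-\bar r}$, $\zeta_{t-2}=\hat\beta_{t-2-\bar r}$, \ldots, $\zeta_{\bar r}=\hat\beta_0$, $\zeta_{\bar r-1}=\hat\beta_{t-1}$, \ldots, $\zeta_0=\hat\beta_{t-\bar r}$, which is precisely the period listed in the statement.

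Finally, plugging these $\zeta_j$ into the geometric-series identity of the first paragraph yields the claimed repeating expansion; the outer $\md 1$ on the right is only cosmetic except in the degenerate case $B=0$ (equivalently $\hat\beta_i=p-1$ for all $i$), where the formal string evaluates to $1$ rather than $0$ and the reduction mod $1$ is needed for the equality to hold (here $d=0$, consistent with $z\in\Z$). The case $t=1$ is handled by the convention $\bar r=0$ and collapses cleanly to $z\md 1 = \hat\beta_0/(p-1)\md 1$. I expect the only real obstacle to be bookkeeping the indices of the cyclic shift correctly; once the split at $t-\bar r$ is written down, the correspondence with the statement is mechanical.
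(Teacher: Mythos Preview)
Your argument is correct and follows essentially the same route as the paper: both identify $z\md 1$ with a fraction $d/(p^t-1)$ and then read off the periodic base-$p$ expansion after a complement ($\beta_i\mapsto\hat\beta_i$) and a cyclic shift by $\bar r$ positions. The only organizational difference is that you extract $d$ from the proof of Proposition~\ref{prop:p-repr} (so the complement is already baked in), whereas the paper works directly from the $p$-adic geometric series $\sum_j p^{r+tj}=-p^r/(p^t-1)$ and establishes the negation identity $(-0.(\gamma_{s-1}\ldots\gamma_0)^\infty)\md 1 = 0.(\hat\gamma_{s-1}\ldots\hat\gamma_0)^\infty\md 1$ explicitly before applying the shift.
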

\begin{proof}
Indeed, by Note \ref{note:inverse}, $\sum_{j=0}^\infty p^{r+tj}=-p^r(p^t-1)^{-1}$ in $\Z_p$;
so $z=u-vp^r(p^t-1)^{-1}$ where $u=\alpha_0+\alpha_1p+\cdots+\alpha_{r-1}p^{r-1}$
and $v=\beta_0+\beta_1p+\cdots+\beta_{t-1}p^{t-1}$. Therefore 
$$
z\md1=\left(-\frac{vp^r
}{p^t-1}\right)\md1.
$$ 
But $(p^t-1)^{-1}=p^{-t}+p^{-2t}+p^{-3t}+\cdots$ in $\R$; so 
$$
(p^t-1)^{-1}=0.(\underbrace{00\ldots0}_{t-1}1)^\infty
$$
and thus 
$-v\cdot(p^t-1)^{-1}=-0.(\beta_{t-1}\beta_{t-2}\ldots\beta_{0})^\infty$.

Now just note that 
$$
(p-1-\gamma_0)+(p-1-\gamma_1)p+\cdots+(p-1-\gamma_{s-1})p^{s-1}=p^s-1-
(\gamma_0+\gamma_1p+\cdots+\gamma_{s-1}p^{s-1})
$$
for $\gamma_0, \gamma_1,\ldots\in\{0,1,\ldots,p-1\}$, $s\in\N$;
so 
\begin{multline*}
\frac{(p-1-\gamma_0)+(p-1-\gamma_1)p+\cdots+(p-1-\gamma_{s-1})p^{s-1}}{p^s-1}=\\
1-\frac{\gamma_0+\gamma_1p+\cdots+\gamma_{s-1}p^{s-1}}{p^s-1}
\end{multline*}
and therefore 
\begin{equation}
\label{eq:-md1}
(-0.(\gamma_{s-1}\gamma_{s-2}\ldots\gamma_0)^\infty)\md1=
(0.(\hat\gamma_{s-1}\hat\gamma_{s-2}\ldots\hat\gamma_0)^\infty)\md1
\end{equation}
where $\hat\gamma=p-1-\gamma$ for $\gamma\in\{0,1,\ldots,p-1\}$.
\end{proof} 
Combining
\eqref{eq:r-repr} with Proposition \ref{prop:p-repr} we see that   all real
numbers whose base-$p$ expansions are purely periodic must lie in $\Z_p\cap\Q$;
therefore the following criterion is true:
\begin{cor}
\label{cor:r-repr}
A real number $x$ is in $\Z_p\cap\Q$ if and only if  base-$p$ expansion of
$x\md1$ is purely periodic: $x\md1=0.(\chi_0\ldots\chi_{n-1})^\infty$ for
suitable $\chi_0,\ldots,\chi_{n-1}\in\F_p$.
\end{cor}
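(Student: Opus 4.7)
The plan is to combine Proposition~\ref{prop:r-p-repr-qz} with the geometric-series formula~\eqref{eq:r-repr} used in its derivation; no new analytic input is needed.

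For the \emph{only if} direction, suppose $x \in \Z_p \cap \Q$. Since $\lfloor x \rfloor \in \Z \subset \Z_p \cap \Q$ and $\Z_p \cap \Q$ is closed under subtraction, $x \md 1 = x - \lfloor x \rfloor \in [0,1) \cap \Z_p \cap \Q$. Apply Proposition~\ref{prop:r-p-repr-qz} to the rational $p$-adic integer $x \md 1$: this exhibits $x \md 1$ as a purely periodic base-$p$ expression $0.(\hat\beta_{t-1-\bar r}\cdots\hat\beta_{t-\bar r})^\infty$ reduced modulo $1$. Since this purely periodic expression already lies in $[0,1]$ and $x \md 1 \in [0,1)$, the two values agree as real numbers, so $x \md 1$ admits the desired purely periodic base-$p$ expansion.

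For the \emph{if} direction, suppose $x \md 1 = 0.(\chi_0\cdots\chi_{n-1})^\infty$. Specialising~\eqref{eq:r-repr} to pre-period length $k=0$ gives
\[
x \md 1 = \frac{\chi_0 p^{n-1} + \chi_1 p^{n-2} + \cdots + \chi_{n-1}}{p^n - 1}.
\]
The denominator $p^n - 1$ is coprime to $p$, so (after possible cancellation) $x \md 1$ is a rational whose reduced denominator divides $p^n - 1$ and hence is coprime to $p$; thus $x \md 1 \in \Z_p \cap \Q$. Since $x = \lfloor x\rfloor + (x \md 1)$ with $\lfloor x\rfloor \in \Z \subset \Z_p \cap \Q$, we conclude $x \in \Z_p \cap \Q$.

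The only subtlety I anticipate is a boundary case in the forward direction: the purely periodic expression furnished by Proposition~\ref{prop:r-p-repr-qz} could in principle equal $1$ rather than a number in $[0,1)$, which happens exactly when every $\hat\beta_j = p - 1$, i.e., when every $\beta_j = 0$. In that degenerate situation the $p$-adic expansion of $x$ has a trivially zero period, forcing $x \in \Z$ and therefore $x \md 1 = 0$, whose purely periodic expansion is $0.(0)^\infty$. Once this exceptional case is noted, no further work is required.
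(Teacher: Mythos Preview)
Your proof is correct and follows essentially the same approach as the paper: the forward direction invokes Proposition~\ref{prop:r-p-repr-qz} (which immediately precedes the corollary), and the backward direction reads off from~\eqref{eq:r-repr} that a purely periodic base-$p$ expansion yields a fraction with denominator $p^n-1$ coprime to $p$, hence a $p$-adic integer---the paper phrases this last step as an appeal to Proposition~\ref{prop:p-repr} rather than arguing coprimality directly, but the content is the same. Your explicit treatment of the boundary case $0.(p-1)^\infty=1$ is a nice addition that the paper leaves implicit.
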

The following corollary expresses base-$p$ expansion of a $p$-adic rational
integer via its representation in the form given by Proposition \ref{prop:p-repr}:
\begin{cor}
\label{cor:r-p-repr-qz}
Once a $p$-adic rational integer $z\in\Z_p\cap\Q$ is represented in the form
as of Proposition \ref{prop:p-repr} then 
$z\md1=0.(\zeta_{t-1}\zeta_{t-2}\ldots\zeta_{0})^\infty$ where
$d=\zeta_0+\zeta_1p+\cdots+\zeta_{t-1}p^{t-1}$. 
\end{cor}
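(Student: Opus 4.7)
The plan is to reduce $z\md 1$ to the fractional part of the single rational $d/(p^t-1)$ and then expand that fraction as a purely periodic base-$p$ series by means of the geometric identity already recorded in the proof of Proposition~\ref{prop:r-p-repr-qz}.

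First I would use the representation $z=c+d/(p^t-1)$ afforded by Proposition~\ref{prop:p-repr}. Since $c\in\Z$ contributes nothing to $z\md 1$, and since the hypothesis $0\le d\le p^t-2$ implies $0\le d/(p^t-1)<1$, one obtains at once
\begin{equation*}
z\md 1=\frac{d}{p^t-1}.
\end{equation*}
Next I would invoke the real-analytic identity
\begin{equation*}
\frac{1}{p^t-1}=\sum_{j=1}^{\infty}p^{-tj}=0.(\underbrace{00\ldots 0}_{t-1}1)^\infty,
\end{equation*}
which is exactly the one already displayed (in reciprocal form) inside the proof of Proposition~\ref{prop:r-p-repr-qz}.

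The key step is to observe that multiplying this series by $d$ produces a base-$p$ expansion with no carries between the successive length-$t$ blocks. Indeed, writing $d=\zeta_0+\zeta_1 p+\cdots+\zeta_{t-1}p^{t-1}$ with each $\zeta_i\in\{0,1,\ldots,p-1\}$, the inequality $d\le p^t-2<p^t$ guarantees that the product $d\cdot p^{-tj}$ occupies exactly the positions $p^{-tj-1},p^{-tj-2},\ldots,p^{-tj-t}$ and nothing more, namely
\begin{equation*}
d\cdot p^{-tj}=0.\underbrace{0\ldots 0}_{t(j-1)}\zeta_{t-1}\zeta_{t-2}\ldots\zeta_0.
\end{equation*}
Summing these disjoint blocks for $j=1,2,\ldots$ yields the purely periodic expansion
\begin{equation*}
\frac{d}{p^t-1}=0.(\zeta_{t-1}\zeta_{t-2}\ldots\zeta_0)^\infty,
\end{equation*}
which is the claimed formula.

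The only delicate point, and the one I would check explicitly, is the absence of carry propagation between the blocks; this is precisely where the upper bound $d\le p^t-2$ rather than $d\le p^t-1$ is used (the excluded value $d=p^t-1$ would make every $\zeta_i$ equal to $p-1$ and collapse the expansion to $0.(p-1)^\infty=1$, inconsistent with $z\md 1\in[0,1)$). Once this is settled, the corollary follows and the expansion is seen to be consistent with the representation of $z\md 1$ derived in Proposition~\ref{prop:r-p-repr-qz} after a cyclic shift corresponding to the pre-period $\alpha_0,\ldots,\alpha_{r-1}$.
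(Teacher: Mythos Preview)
Your proof is correct and follows essentially the same route as the paper's: reduce $z\md 1$ to $d/(p^t-1)$ using $c\in\Z$, then expand via the geometric series $(p^t-1)^{-1}=\sum_{j\ge 1}p^{-tj}$ in $\R$. Your version is simply more explicit about why no carries propagate between the length-$t$ blocks, which the paper leaves to the reader.
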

\begin{proof}
Indeed, under notation of Proposition \ref{prop:p-repr}, $z\md1=(d\cdot(p^t-1)^{-1})\md
1$
and the result follows since $(p^t-1)^{-1}=p^{-t}+p^{-2t}+p^{-3t}+\cdots$ in $\R$. 
\end{proof}
Now we can find a period length of $z\in\Z_p\cap\Q$ provided $z$ is represented
as an irreducible fraction $z=a/b$, where $a\in\Z$, $b\in\N$.
\begin{prop}
\label{prop:mult-per}
Once a $p$-adic rational integer $z\ne 0$ is represented as an irreducible
fraction $z=a/b$, and if  $b>1$, then the period length $t$ of $z$ is equal to the multiplicative
order of $p$ modulo $b$ \textup{(i.e., to the smallest $\ell\in\N$ such that
$p^\ell\equiv 1\pmod b$)}. 
\end{prop}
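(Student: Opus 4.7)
The plan is to show both divisibilities $\ord_b(p)\mid t$ and $t\mid\ord_b(p)$, using the characterisation of rational $p$-adic integers from Proposition \ref{prop:p-repr}.

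For $\ord_b(p)\mid t$: I would apply Proposition \ref{prop:p-repr} with $t$ equal to the minimal period length of the canonical expansion of $z$, obtaining $z=c+d/(p^t-1)$ for suitable $c\in\Z$, $d\in\{0,\ldots,p^t-2\}$. Substituting $z=a/b$ and clearing denominators yields $(a-bc)(p^t-1)=bd$. Since $\gcd(a,b)=1$ also implies $\gcd(a-bc,b)=1$, the divisibility $b\mid p^t-1$ must hold; i.e.\ $p^t\equiv 1\pmod b$, whence $\ord_b(p)\mid t$.

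For $t\mid\ord_b(p)$: Let $\tau=\ord_b(p)$, so $b\mid p^\tau-1$; set $m=(p^\tau-1)/b$ and write $am=c(p^\tau-1)+d$ with $0\le d\le p^\tau-2$ (division with remainder). Then
\[
z=\frac{a}{b}=\frac{am}{p^\tau-1}=c+\frac{d}{p^\tau-1}.
\]
Reading the derivation in the proof of Proposition \ref{prop:p-repr} (equation \eqref{eq:p-repr}) in reverse translates this into a representation of $z$ in the form \eqref{eq:p-adic-rat} with period length $\tau$ (not necessarily minimal); equivalently, Corollary \ref{cor:r-p-repr-qz} gives $z\md1=0.(\zeta_{\tau-1}\ldots\zeta_0)^\infty$, and Proposition \ref{prop:r-p-repr-qz} shows that this real periodicity on the mod-$1$ side exactly mirrors eventual periodicity of period $\tau$ on the canonical $p$-adic side. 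Since every period of an eventually periodic sequence is a multiple of the shortest period, $t\mid\tau=\ord_b(p)$.

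The main obstacle is the closing step of the second direction: confirming that a representation $z=c+d/(p^\tau-1)$ really does witness $\tau$ as a period (possibly non-minimal) of the canonical $p$-adic expansion. This bookkeeping is the content of Note \ref{note:pq-frac-rep} combined with the reversible algebraic manipulation in \eqref{eq:p-repr}, and is independently backed by the correspondence between $p$-adic and real periodicities in Corollary \ref{cor:r-p-repr-qz} and Proposition \ref{prop:r-p-repr-qz}; once that link is in hand, the two divisibilities combine immediately to give $t=\ord_b(p)$.
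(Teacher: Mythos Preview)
Your proof is correct, but it is organized differently from the paper's. You establish the equality $t=\ord_b(p)$ by showing the two divisibilities $\ord_b(p)\mid t$ and $t\mid\ord_b(p)$ separately; the paper instead identifies a single quantity that both $t$ and $\ord_b(p)$ equal, namely the least $s\in\N$ with $p^s z\equiv z\pmod 1$. Concretely, the paper observes (i) that $p^s(a/b)\equiv a/b\pmod 1$ is equivalent to $b\mid p^s-1$, so the minimal such $s$ is $\ord_b(p)$; and (ii) that by Corollary~\ref{cor:r-p-repr-qz} the minimal such $s$ is also the period length $t$. Two lines, no case analysis.

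What your route buys is a more algebraic first half: the implication $\ord_b(p)\mid t$ drops straight out of the representation $z=c+d/(p^t-1)$ without ever passing to the real $\md 1$ side. The cost is the bookkeeping you flag in the second half---confirming that $z=c+d/(p^\tau-1)$ really forces the $p$-adic expansion to have $\tau$ as a (possibly non-minimal) period. Your appeal to reversing \eqref{eq:p-repr} together with Proposition~\ref{prop:r-p-repr-qz} and Corollary~\ref{cor:r-p-repr-qz} does the job; a slightly cleaner way to phrase that step is to note that $z=c+d/(p^\tau-1)$ gives $p^\tau z-z\in\Z$, and since $z\notin\Z$ (because $b>1$) adding an integer to $z$ changes only finitely many $p$-adic digits, whence $\delta_i(z)=\delta_{i-\tau}(z)$ for all large $i$. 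The paper's common-characterization approach sidesteps this obstacle entirely, which is why its proof is shorter.
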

\begin{proof}
Note that the multiplicative order $\ell$ of $p$ modulo $b$ is the smallest
positive integer such that $p^\ell(a/b)\equiv a/b\pmod 1$.
Indeed, $p^\ell=eb+1$ for a suitable $e\in \Z$; so $p^\ell (a/b)=ea+(a/b)$.
On the other hand, if $p^s(a/b)=m+(a/b)$ for some $m\in\Z$ then $a(p^s-1)=mb$
and thus $p^s-1\equiv 0\pmod b$ since $a$ is co-prime to $b$ (as the fraction
$a/b$ is supposed to be irreducible). 

Now, from Corollary \ref{cor:r-p-repr-qz} it immediately follows that $(p^t z)\md1=z\md1$
once $t$ is a period length of $z$ and that $t$ is the smallest positive
integer with that property. Finally we conclude that $\ell=t$. 
\end{proof}
Now given $b\in\N$, $b$ co-prime to $p$, we denote via $\mlt_bp$ the multiplicative
order of $p$ modulo $b$  if $b>1$ or put $\mlt_bp=1$ once $b=1$. Then $\mlt_bp$
is the period length of $z\in\Z_p\cap\Q$ once $z$ is represented as an irreducible
fraction $z=a/b$ where $a\in\Z$ and $b\in\N$. Note that we consider here
only infinite words that correspond to $p$-adic rational integers; thus to,
e.g.,  0 there corresponds a word $(0)^\infty$ (so a period of 0 is 0 and a pre-period
is empty) and  the respective base-$p$ expansion
of 0 is $0.(0)^\infty$. Also, $1=1+0\cdot p+0\cdot p^2+\cdots$, the corresponding
infinite word is $(0)^\infty1$; therefore
$1$ is a pre-period of 1, $0$ is a period of 1, and the representation of 1 in the form \eqref{eq:pq-frac-rep} is $1=1+(0/p-1)$.
\begin{exmp}
Let $p=2$; then $1/3=1\cdot 1+1\cdot 2+0\cdot 4+1\cdot8+0\cdot16+\cdots=1-2\cdot
3^{-1}$ is a canonical 2-adic expansion of $1/3$; so  the corresponding infinite binary word is $(01)^\infty1$.
Therefore the period length of $1/3$ is 2 (and note that the multiplicative order of $2$ modulo $3$ is indeed $2$), the period is $01$, the pre-period
is $1$. Also,  $c=0$ and $d=1$ once $1/3$ is represented in the form of
Proposition \ref{prop:p-repr}; $1/3=0.(01)^\infty$ is a base-2 expansion
of $1/3$, cf. Proposition \ref{prop:r-p-repr-qz} and Corollary \ref{cor:r-p-repr-qz}.

\end{exmp}

\subsection{Automata: Basics}
\label{ssec:auto}
Here we remind some basic facts from automata theory (see e.g. monographs
\cite{Bra,Car-Long_Automata,Eilenberg_Auto}). 

By the definition, a (non-initial) automaton is a 5-tuple $\mathfrak A=\langle\Cal I,\Cal S,\Cal O,S,O\rangle$
where $\Cal I$ is a finite set, the \emph{input alphabet}; $\Cal O$  is a finite set, the \emph{output alphabet}; $\Cal S$ is a non-empty (possibly, infinite) set of \emph{states}; 
$S\colon\Cal I\times\Cal S\to \Cal S$ is  a \emph{state transition function}; $O\colon\Cal I\times\Cal S\to \Cal O$ is an \emph{output function}.  An automaton
where both input alphabet $\Cal I$ and output alphabet $\Cal O$ are non-empty
is called a \emph{transducer}, see e.g.
\cite{Allouche-Shall,Car-Long_Automata}.
The \emph{initial automaton} $\mathfrak A(s_0)=\langle\Cal I,\Cal S,\Cal O,S,O, s_0\rangle$ is an automaton
$\mathfrak A$ where one state $s_0\in\Cal S$ is fixed; it is called the \emph{initial
state}.  We
stress that the definition of  an initial automaton $\mathfrak A(s_0)$ is nearly the same as the one of
\emph{Mealy automaton} (see e.g. \cite{Bra,Car-Long_Automata})
with the only important difference: 
the set of states
$\Cal S$ of $\mathfrak A(s_0)$ is \emph{not necessarily finite}. Note also
that in literature the automata we consider in the paper are also referred
to as \emph{(letter-to-letter) transducers};  in the sequel we use terms `automaton'
and `transducer' as synonyms. 

Given an input word $w=\chi_{n-1}\cdots\chi_1\chi_0$ over the alphabet $\Cal
I$, an initial transducer $\mathfrak A(s_0)=\langle\Cal I,\Cal S,\Cal O,S,O, s_0\rangle$
transforms $w$ to output word $w^\prime=\xi_{n-1}\cdots\xi_1\xi_0$ over the
output alphabet $\Cal O$ as follows (cf. Figure \ref{fig:Transd-sc}): 
Initially the transducer  $\mathfrak A(s_0)$ is at the state
$s_0$; accepting the input symbol $\chi_0\in\Cal I$, the transducer outputs the
symbol $\xi_0=O(\chi_0,s_o)\in\Cal O$ and reaches the state
$s_1=S(\chi_0,s_0)\in\Cal S$; then the transducer accepts the next input symbol
$\chi_1\in\Cal I$, reaches the state
$s_2=S(\chi_1,s_1)\in\Cal S$, outputs $\xi_1=O(\chi_1,s_1)\in\Cal O$, and the routine repeats. This way the transducer $\mathfrak A=\mathfrak A(s_0)$
defines a mapping $\mathfrak a=\mathfrak a_{s_0}$  of the set $\Cal W_n(\Cal I)$ of all $n$-letter words over
the input alphabet $\Cal I$ to the set  $\Cal W_n(\Cal O)$ of all $n$-letter words over
the output alphabet $\Cal O$; thus $\mathfrak A$ defines a map of the set
$\Cal W(\Cal I)$ of all  non-empty words over the alphabet $\Cal I$ to the
set $\Cal W(\Cal O)$ of all  non-empty words over the alphabet $\Cal O$.
We will denote the latter map by the same symbol $\mathfrak a$ (or by $\mathfrak a_{s_0}$ if we want to stress what initial state is meant), and when it is
clear from the context what alphabet $\Cal A$ is meant we use notation
$\Cal W$ rather than $\Cal W(\Cal A)$.

\begin{figure}[h]
\begin{quote}\psset{unit=0.5cm}
 \begin{pspicture}(1,2.5)(20,13)
\pscircle[fillstyle=solid,
linewidth=2pt](12,10){1}
\pscircle[fillstyle=solid,
linewidth=2pt](12,4){1}
  \psline(12,11)(12,12)
  \psline{->}(18,7)(13,7)
  \psline{->}(6,10)(11,10)
  \psline{->}(6,4)(11,4)
  \psline{->}(4,7)(6,7)
  \psline(6,4)(6,10)
  \psline(18,12)(18,7)
  \psline(12,12)(18,12)
  \psline{->}(12,8)(12,9)
  \psline(12,3)(12,2)
  \psline{->}(12,2)(18,2)
  \psline{<-}(12,5)(12,6)
  \psframe[fillstyle=solid,
  linewidth=2pt](11,6)(13,8)
  \uput{0}[180](12.4,7){$s_i$}
  \uput{0}[180](3.5,7){$\cdots\chi_{i+1}\chi_i$}
  \uput{0}[90](12,9.7){$S$}
  \uput{1}[90](12,2.7){$O$}
  \uput{1}[0](11.7,11.3){$s_{i+1}=S(\chi_i,s_i)$}
 \uput{1}[0](11.7,12.7){{\sf state transition}}
 \uput{1}[0](0,8){{\sf input}}
  \uput{1}[0](11.5,2.5){$\xi_i=O(\chi_i,s_i)$}
  \uput{1}[0](17.5,2){$\xi_i\xi_{i-1}\cdots\xi_0$}
  \uput{1}[0](18,3){{\sf output}}
 \end{pspicture}
\end{quote}
\caption{Initial transducer, schematically}
\label{fig:Transd-sc}
\end{figure}
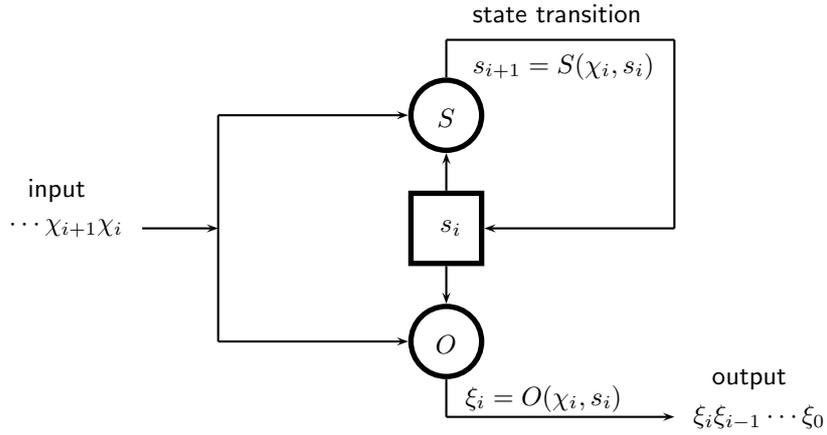
Throughout the paper, `automaton' mostly stands for `initial automaton';
 we make  corresponding remarks if not. Further in the paper we mostly
 consider transducers. Furthermore, throughout the paper  we
 consider \emph{reachable transducers} only; that is, \emph{we assume 
that all states of the initial transducer $\mathfrak A(s_0)$  are 
\emph{reachable
from} the initial state $s_0$}: 
Given $s\in\Cal S$, there exists
input word $w$ over alphabet $\Cal I$ such that after the word $w$ has been
feeded to the automaton $\mathfrak A(s_0)$, the automaton reaches the state $s$. A reachable transducer is called \emph{finite}\ if its set $\Cal S$ of
states is finite, and  transducer is called \emph{infinite} if otherwise.

To  
 the initial automaton
$\mathfrak A(s_0)$ we put into a correspondence a family $\Cal F(\mathfrak
A)$ of all \emph{sub-automata} $\mathfrak
A(s)=\langle\Cal I,\tilde{\Cal S},\Cal O,\tilde S,\tilde O, s\rangle$, $s\in\Cal S$, where
$\tilde{\Cal S}=\tilde{\Cal S}(s)\subset\Cal S$ is the set of all states that are reachable from the state
$s$ and $\tilde S,\tilde O$ are respective restrictions of the state transition and
output functions $S, O$ on $\Cal I\times \tilde{\Cal S}$. A sub-automaton
$\mathfrak A(s)$ is called \emph{proper} if  the set $\tilde{\Cal S}$ of all its states is a proper subset
of $\Cal S$. A sub-automaton 
$\mathfrak A(s)$ 
is called \emph{minimal} if it contains no proper sub-automata. It is obvious that a finite sub-automaton is minimal if and only if every its state is reachable from any other its state. The set of all states of a minimal sub-automaton
of the automaton $\mathfrak A$ is called an \emph{ergodic component} of the (set of all states) of the automaton $\mathfrak A$. It is clear that once
the automaton is in a state that belongs to an ergodic component, all its
further states will also be in the same ergodic component. Therefore all
states of a finite automaton  are of two types only: The \emph{transient states}
which belong to no ergodic component, and \emph{ergodic states} which belong
to ergodic components. It is clear that the set of all ergodic states is
a disjoint union of ergodic components. Note that we use the term `minimal
automaton'  in a different meaning  compared to the one used in automata
theory, see, e.g., \cite{Eilenberg_Auto}: Our terminology here is from the
theory of
Markov chains, see, e.g., \cite{Kem-Snell_FinMaCh} (since to the graph of
state transitions of every automaton
there corresponds a Markov chain).

Hereinafter in the paper the word `automaton' stands for a letter-to-letter initial transducer whose input and output alphabet consists of $p$ symbols,
and we mostly assume that $p$ is
a prime. 
 Thus, for every $n=1,2,3,\ldots$ the automaton
$\mathfrak A(s_0)=\langle\F_p,\Cal S,\F_p,S,O,s_0\rangle$ maps $n$-letter words over $\F_p$ to $n$-letter words over
$\F_p$ according to the procedure described above,
cf. Figure \ref{fig:Transd-sc}. Given two such automata $\mathfrak A=\mathfrak
A(s_0)$ and $\mathfrak B=\mathfrak B(t_0)$, their  \emph{sequential
composition} (or briefly, a \emph{composition}) $\mathfrak C=\mathfrak B\circ\mathfrak
A$ can be defined in a natural way via sending output of the automaton
$\mathfrak A$ to input of the automaton $\mathfrak B$  so that  the mapping $\mathfrak c\:\Cal W\to\Cal W$
the automaton $\mathfrak C$ performs is just a composite mapping $\mathfrak
b\circ\mathfrak a$ (cf. any of  monographs \cite{Bra,Car-Long_Automata,Eilenberg_Auto}
for exact definition and further facts mentioned in the subsection). Note that a composition of finite automata
is a finite automaton.  

In a similar manner one can consider automata with multiply inputs/outputs;
these can be also treated as automata whose input/output alphabets are Cartesian
powers of $\F_p$: For instance, and automaton with $m$ inputs and $n$ outputs
over alphabet $\F_p$ can be considered as an automaton with a single input
over the alphabet $\F_p^m$ and a single output over the alphabet $\F_p^n$.
Moreover, as the letters of the alphabet $\F_p^k$ are in a one-to-one correspondence
with residues modulo $p^k$; the automaton with $m$ inputs and $n$ outputs
can be considered (if necessary)  as an automaton with a single input over
the alphabet $\Z/p^m\Z$ and a single output over alphabet $\Z/p^n\Z$. 

is an automaton with 2 inputs and 1 output which   


Compositions  of automata with multiple inputs/outputs can also be naturally defined: For instance, given automata $\mathfrak A_1$, $\mathfrak A_2$, and $\mathfrak
A_3$ with $m_1,m_2,m_3$ inputs and $n_1,n_2,n_3$ outputs respectively, in
the case when $m_3=n_1+n_2$ one can consider a composition of these automata
by connecting every output of automata $\mathfrak A_1$ and $\mathfrak A_2$
to some input of the automaton $\mathfrak A_3$ so that  every input of the
automaton $\mathfrak A_3$ is connected to a unique output which belongs either
to $\mathfrak A_1$ or to $\mathfrak A_2$ but not to the both. This way one
obtains various compositions of automata   $\mathfrak A_1$ and $\mathfrak A_2$, with the automaton $\mathfrak
A_3$, and either of these compositions is an automaton with $m_1+m_2$ inputs
and $n_3$ outputs. Moreover, either of the compositions is a finite automaton
if all three automata  $\mathfrak A_1$, $\mathfrak A_2$,  $\mathfrak
A_3$ are finite.


Automata can be considered as  (generally) non-autonomous dynamical systems
on different configuration spaces (e.g., $\Cal W_n$, $\Cal W$, etc.);
the system is autonomous  when neither the state transition function $\Cal S$ nor
the output function $\Cal O$ depend on input; in this case the automaton
is called \emph{autonomous} as well. For purposes of the paper it is convenient
to consider automata with input/output alphabets $\Cal A=\F_p$ as dynamical systems on the space $\Z_p$ of $p$-adic integers, i.e., to relate an automaton
$\mathfrak A$
to a special map $f_\mathfrak A\:\Z_p\to\Z_p$. In the next subsection we recall some facts about the map $f_\mathfrak A$.

\subsection{Automata maps: the $p$-adic view}
\label{ssec:a-map}
We identify $n$-letter words over $\F_p$
with non-negative integers in a natural way: Given an $n$-letter word 
$w=\chi_{n-1}\chi_{n-2}\cdots\chi_0$ (i.e., $\chi_i\in\F_p$ for $i=0,1,2,\ldots,n-1$),
we consider $w$ as a base-$p$ expansion of the number $\nm(w)=\chi_0+\chi_1\cdot
p+\cdots+\chi_{n-1}\cdot p^{n-1}\in\N_0$. In turn, the latter number can be considered
as an element of the residue ring $\Z/p^n\Z$ modulo $p^n$. We denote via $\wrd_n$ an inverse mapping to $\nm$. The mapping $\wrd_n$ is a bijection
of the set $\{0,1\ldots,p^n-1\}\subset\N_0$ onto the set $\Cal W_n$ of all
$n$-letter words over $\F_p$. 

As the set $\{0,1\ldots,p^n-1\}$ is the set of all non-negative residues
modulo $p^n$ , to \emph{every
automaton $\mathfrak A=\mathfrak A(s)$ there corresponds a map $f_{n,\mathfrak A}$ from $\Z/p^n\Z$ to $\Z/p^n\Z$}, for every $n=1,2,3,\ldots$. Namely, for
$r\in\Z/p^n\Z$ put  $f_{n,\mathfrak A}(r)=\mathfrak
\nm(\mathfrak a(\wrd_n(r)))$,  where $\mathfrak a$ is a word transformation of $\Cal
W_n$ performed by the automaton $\mathfrak A$, cf. Subsection \ref{ssec:auto}. 

Speaking
less formally, the mapping $f_{n,\mathfrak A}$ can be defined as follows: given $r\in\{0,1,\ldots,p^n-1\}$, consider a base-$p$ expansion of
$r$, read it as a $n$-letter word over $\F_p=\{0,1,\ldots,p-1\}$ (put additional
zeroes
on higher order positions if necessary) and then feed the word to the automaton
so that letters that are on lower order positions (`less significant digits')
are feeded prior to
ones on higher order positions (`more significant digits'). Then read the corresponding output $n$-letter
word as a base-$p$ expansion of a number from $\N_0$ keeping  the same order, i.e.
when the earliest outputted
letters correspond to lowest order digits in the base-$p$ expansion. 

We stress the following determinative property of the mapping $f_{n,\mathfrak A}$ which follows directly from the definition: Given $a,b\in\{0,1,\ldots,p^n-1\}$, \emph{whenever $a\equiv b\pmod{p^k}$ for
some $k\in\N$ then necessarily $f_{n,\mathfrak A}(a)\equiv f_{n,\mathfrak A}(b)\pmod{p^k}$}. This implication may be re-stated in  terms of $p$-adic metric as follows:
\begin{equation}
\label{eq:n-compat}
|f_{n,\mathfrak A}(a)-f_{n,\mathfrak A}(b)|_p\le|a-b|_p.
\end{equation}

Furthermost, \emph{every automaton $\mathfrak A=\mathfrak A(s_0)$ defines  a mapping $f_{\mathfrak A}$ from  $\Z_p$ to $\Z_p$} which can be specified in a manner similar to the one of the mapping $f_{n,\mathfrak A}$:
Given an infinite word $w=\ldots\chi_{n-1}\chi_{n-2}\cdots\chi_0$ (that is,
an infinite  sequence) over $\F_p$ we consider
a $p$-adic integer whose $p$-adic canonical expansion is $z=z(w)=\chi_0+\chi_1\cdot
p+\cdots+\chi_{n-1}\cdot p^{n-1}+\cdots$; so, by the definition,  for every $z\in\Z_p$ we put
\begin{equation}
\label{eq:auto-f-def}
\delta_i(f_{\mathfrak A}(z))=O(\delta_i(z),s_i)\qquad  (i=0,1,2,\ldots),
\end{equation}
where $s_i=S(\delta_{i-1}(z),s_{i-1})$, $i=1,2,\ldots$, and $\delta_i(z)$
is the
$i$-th $p$-adic digit of $z$; that is,  the $i$-th term coefficient
in the $p$-adic canonical representation of $z$: $\delta_i(z)=\chi_i\in\F_p$,
$i=0,1,2,\ldots$ (see Subsection \ref{ssec:p-adic}).
The so defined map $f_{\mathfrak A}$ is called the \emph{automaton function}\index{automata function}
(or, the \emph{automaton map})
of the automaton $\mathfrak A$. Note that from \eqref{eq:auto-f-def} it follows
that
\begin{equation}
\label{eq:f-coord}
\delta_i(f_\mathfrak A(z))=\Phi_i(\delta_0(z),\ldots,\delta_i(z)),
\end{equation}
where $\Phi_i$ is a map  from the $(i+1)$-th Cartesian power $\F_p^{i+1}$ of $\F_p$
into  $\F_p$.

More formally,  given $z\in\Z_p$, define $f_{\mathfrak A}(z)$ as follows:
Consider a sequence $(z\md p^n)_{n=1}^\infty$ and a corresponding sequence
$(f_{n,\mathfrak A}(z\md p^n))_{n=1}^\infty$; then, as the sequence $(z\md p^n)_{n=1}^\infty$ converges
to $z$ w.r.t. $p$-adic metric (cf. Subsection \ref{ssec:p-adic}), the sequence $(f_{n,\mathfrak A}(z\md p^n))_{n=1}^\infty$
in view \eqref{eq:n-compat} also converges w.r.t. the $p$-adic metric (since
the latter sequence is fundamental and $\Z_p$ is closed in $\Q_p$ which is a complete metric
space). Now we just put $f_{\mathfrak A}(z)$ to be a limit point of the sequence
$(f_{n,\mathfrak A}(z\md p^n))_{n=1}^\infty$. Thus, the mapping $f_{\mathfrak
A}$ is a well-defined  function with domain  $\Z_p$ and values in $\Z_p$; by \eqref{eq:n-compat}
the function $f_{\mathfrak
A}$
satisfies Lipschitz condition with a constant 1
w.r.t. $p$-adic metric.


The point is that \emph{the class of all automata functions that correspond to
automata with $p$-letter input/output alphabets coincides with the
class of all  maps from $\Z_p$ to $\Z_p$ that satisfy the $p$-adic Lipschitz condition
with a constant 1} (the 1-Lipschitz maps, for brevity), cf., e.g., \cite{me-auto_fin}.
We note that the claim can also be derived from a more
general result on asynchronous automata \cite[Proposition 3.7]{Grigorch_auto}; for $p=2$ the claim was proved in  \cite{Vuillem_circ}. 

Further we need more detailed information about \emph{finite automata functions},
that is, about functions $f_{\mathfrak A}\:\Z_p\>\Z_p$ where $\mathfrak A=\mathfrak
A(s_0)$
is a finite automaton (i.e., with a finite set $\Cal S$ of states). It is
well known (cf. previous subsection \ref{ssec:auto}) that \emph{the class of finite automata functions is
closed w.r.t. composition of functions and a sum of functions}: Once $f,g\:\Z_p\>\Z_p$
are finite automata functions, either of mappings $x\mapsto f(g(x))$ and $x\mapsto
f(x)+g(x)$ $(x\in\Z_p)$ is a finite automaton function. Another important
property of finite automata functions is that \emph{any finite automaton
function maps $\Z_p\cap\Q$ into itself}. In view of \eqref{eq:p-adic-rat}, the latter property is just a re-statement of a
a well-known property of finite automata which yields that any finite automaton
feeded by an eventually periodic  sequence outputs
an eventually periodic sequence, cf., e.g., \cite[Corollary 2.6.9]{Bra},
\cite[Chapter XIII, Theorem 2.2.]{Eilenberg_Auto}.
Since further we often use that property of finite automata, we state it
as a lemma for future references:
\begin{lem}
\label{le:per}
If a finite automaton $\mathfrak A$ is  being feeded
by a left-infinite periodic word $w^\infty$, where  $w\in\Cal W$ is a
finite non-empty word, then  the corresponding
output left-infinite word is  eventually periodic; i.e., it is of the form
$u^\infty v$, where $u\in\Cal W$, $v\in\Cal W_\phi$. To put it in other words, if a
finite automaton is being feeded by an eventually periodic finite
word $(w)^kt$, where  $w\in\Cal W$, $t\in\Cal W_\phi$, and $k\in\N$ is sufficiently large, then the output word is of the form $r(u)^\ell v$, where $\ell\in\N$, $u\in\Cal
W$, $r,v\in\Cal W_\phi$ and $r$ is either empty or a prefix of $u$: $u=hr$
for a suitable $h\in\Cal W_\phi$. Therefore the output word is of the form
$(\bar u)^\ell v^\prime$, where $\bar u$ is a cyclically shifted word $u$.
\end{lem}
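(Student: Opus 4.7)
The plan is to apply the pigeonhole principle to the finite state set $\Cal S$ of $\mathfrak A$. Since rightmost letters are fed first and every copy of $w$ has the same length $m=\Lambda(w)$, I will track the automaton only at the boundaries between successive copies of $w$. Let $s_0$ denote the initial state (or, in the second formulation, the state reached after the suffix $t$ has been consumed), and for $n\ge 1$ let $s_n$ denote the state reached after $n$ further copies of $w$ are processed. Because the output block $u_{n+1}\in\Cal W_m$ produced while reading the $(n{+}1)$-th copy of $w$ is a function only of $s_n$ and of $w$, the entire output sequence is determined by $(s_n)_{n\ge 0}$.

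Among $s_0,s_1,\ldots,s_{|\Cal S|}$ at least two indices must coincide by pigeonhole; pick $i<j$ with $s_i=s_j$ and set $T=j-i$. A straightforward induction using determinism of the transition function gives $s_{n+T}=s_n$ for every $n\ge i$, and hence $u_{n+T}=u_n$ for every $n\ge i+1$. In the left-infinite case, the output word reads $\cdots u_{i+T+1}u_{i+T}\cdots u_{i+1}\cdot u_i\cdots u_1$; setting $u=u_{i+T}u_{i+T-1}\cdots u_{i+1}$ and $v=u_i u_{i-1}\cdots u_1$ (with $v=\phi$ when $i=0$) puts this in the claimed form $u^\infty v$, proving the first half of the lemma.

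For the finite input $(w)^k t$ with $k\ge|\Cal S|$, the automaton first consumes $t$, producing some $v_t\in\Cal W_\phi$ and arriving at the state I relabel $s_0$. The same pigeonhole argument yields the same $i$ and $T$. Writing $k-i=T\ell+q$ with $\ell\ge 0$ and $0\le q<T$, periodicity identifies the leftmost (partial) output block with $r=u_{i+q}u_{i+q-1}\cdots u_{i+1}$, which is a suffix of $u$; writing $u=hr$ with $h=u_{i+T}\cdots u_{i+q+1}$, the full output takes the form $r\cdot u^\ell\cdot v$ where $v=u_i u_{i-1}\cdots u_1 v_t$. The cyclic-shift reformulation is then the elementary word identity $r(hr)^\ell=(rh)^\ell r$: putting $\bar u=rh$, a cyclic conjugate of $u$, gives $r\,u^\ell v=\bar u^\ell(rv)$.

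The only point requiring care is translating state-level periodicity into a word-level statement while keeping the reading direction (rightmost fed first, rightmost outputted first) straight; the underlying logic is pigeonhole together with the determinism of $\mathfrak A$, so no genuine analytic or combinatorial obstacle appears.
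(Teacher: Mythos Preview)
Your pigeonhole argument is correct and is the standard proof of this fact. The paper does not actually prove Lemma~\ref{le:per}; it merely states it as a well-known property of finite automata, citing \cite[Corollary 2.6.9]{Bra} and \cite[Chapter XIII, Theorem 2.2]{Eilenberg_Auto}. So there is no paper proof to compare against, and your argument fills in exactly the routine verification those references would supply.

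Two cosmetic remarks. First, where you write ``$\ell\ge 0$'' you want $\ell\ge 1$ to match the lemma's $\ell\in\N$; this follows from $k\ge|\Cal S|$ since then $k\ge j=i+T$, hence $k-i\ge T$. Second, be aware that the paper uses a nonstandard convention in which the \emph{prefix} of a word is its rightmost part (see Subsection~\ref{ssec:word}); your use of ``suffix'' for $r$ is correct in ordinary terminology and agrees with the decomposition $u=hr$ the lemma writes down, but clashes with the paper's local vocabulary.
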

To study finite automata functions it is convenient sometimes to represent  1-Lipschitz maps from $\Z_p$ to $\Z_p$ as  special
convergent $p$-adic series, the \emph{van der Put series}. 
Details about the latter series may be found in, e.g., \cite{Mah,Sch};
here we only briefly recall some basic facts.
Given a continuous 
function 
$f\: \Z_p\rightarrow \Z_p$, 
there exists 
a unique sequence 
$B_0,B_1,B_2, \ldots $ 
of $p$-adic integers such that 

\begin{equation}
\label{vdp}
f(z)=\sum_{m=0}^{\infty}
B_m \chi(m,z) 
\end{equation}
for all $z \in \Z_p$, where 
\begin{displaymath}
\chi(m,z)=\left\{ \begin{array}{cl}
1, &\text{if}
\ \left|z-m \right|_p \leq p^{-n} \\
0, & \text{otherwise}
\end{array} \right.
\end{displaymath}
and $n=1$ if $m=0$; $n$ is uniquely defined by the inequality 
$p^{n-1}\leq m \leq p^n-1$ otherwise. The right side series in \eqref{vdp} is called the \emph{van der Put series} of the function $f$. Note that
the sequence $B_0, B_1,\ldots,B_m,\ldots$ of \emph{van der Put coefficients} of
the function $f$ tends $p$-adically to $0$ as $m\to\infty$, and the series
converges uniformly on $\Z_p$. Vice versa, if a sequence $B_0, B_1,\ldots,B_m,\ldots$
of $p$-adic integers tends $p$-adically to $0$ as $m\to\infty$, then the the series in the right
part of \eqref{vdp} converges uniformly on $\Z_p$ and thus define a continuous
function $f\colon \Z_p\to\Z_p$.

The number $n$ in the definition of $\chi(m,z)$ has a very natural meaning;
it is just the number of digits in a base-$p$ expansion of $m\in\N_0$: 
\[
\left\lfloor  \log_p m \right\rfloor 
=
\left(\text{the number of digits in a base-} p \;\text{expansion for} \;m\right)-1;
\]
therefore $n=\left\lfloor  \log_p m \right\rfloor+1$ for all $m\in\N_0$ (that
is why we assume  $\left\lfloor  \log_p 0 \right\rfloor=0$). 

 Note that 
coefficients $B_m$ are
related to the values of the function $f$ in the following way:
Let 
$m=m_0+ \ldots +m_{n-2} p^{n-2}+m_{n-1} p^{n-1}$ be a base-$p$ expansion
for $m$, i.e., 
 $ m_j\in \left\{0,\ldots ,p-1\right\}$, $j=0,1,\ldots,n-1$ and $m_{n-1}\neq 0$, then
\begin{equation}
\label{eq:vdp-coef}
B_m=
\begin{cases}
f(m)-f(m-m_{n-1} p^{n-1}),\ &\text{if}\ m\geq p;\\
f(m),\ &\text{if otherwise}.
 
\end{cases}
\end{equation}
It worth noticing  also that $\chi (m,z)$ is merely  a characteristic function of the ball $\mathbf B_{p^{-\left\lfloor  \log_p m \right\rfloor-1}}(m)=m+p^{\left\lfloor  \log_p m \right\rfloor-1}\Z_p$
of radius $p^{-\left\lfloor  \log_p m \right\rfloor-1}$ centered at $m\in\N_0$:
\begin{equation}
\label{eq:chi}
\chi(m,z)=\begin{cases}
1,\ &\text{if}\ z\equiv m \pmod{p^{\left\lfloor  \log_p m \right\rfloor+1}};\\
0,\ &\text{if otherwise}
 
\end{cases}
 =
\begin{cases}
1,\ &\text{if}\ x\in\mathbf B_{p^{-\left\lfloor  \log_p m \right\rfloor-1}}(m);\\
0,\ &\text{if otherwise}
 
\end{cases}
\end{equation}

\begin{thm}[
cf. \cite{AKY-DAN}]
\label{thm:vdp-comp}
A function $f\: \Z_p\rightarrow \Z_p$ is 1-Lipschitz \textup{(that is, an
automaton function)} if and only if
$f$ can be represented as
\begin{equation}
\label{eq:vdp-comp}
f(z)=\sum_{m=0}^{\infty}b_m
p^{\left\lfloor \log_p m \right\rfloor} \chi(m,z),
\end{equation}
where $b_m\in \Z_p$ for $m=0,1,2,\ldots$
\end{thm}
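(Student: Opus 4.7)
\bigskip

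\noindent\textbf{Proof plan for Theorem \ref{thm:vdp-comp}.}

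The plan is to use the already-recalled van der Put representation \eqref{vdp} together with the explicit formula \eqref{eq:vdp-coef} for the coefficients $B_m$, and then read the $1$-Lipschitz condition directly off those coefficients. Throughout, I write $\lambda(m)=\lfloor\log_p m\rfloor$ for brevity, so $\lambda(m)$ is (one less than) the number of base-$p$ digits of $m$, with the convention $\lambda(0)=0$.

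First I would prove the forward implication. Assume $f\colon\Z_p\to\Z_p$ is $1$-Lipschitz. Since every $1$-Lipschitz function is continuous, $f$ admits its van der Put expansion $f(z)=\sum_{m=0}^\infty B_m\chi(m,z)$. For $m\ge p$, write the base-$p$ expansion $m=m_0+m_1p+\cdots+m_{n-1}p^{n-1}$ with $m_{n-1}\ne 0$ and $n=\lambda(m)+1$. By \eqref{eq:vdp-coef},
\[
B_m = f(m)-f(m-m_{n-1}p^{n-1}),
\]
and since $m_{n-1}\ne 0$ we have $|m-(m-m_{n-1}p^{n-1})|_p=|m_{n-1}p^{n-1}|_p=p^{-(n-1)}=p^{-\lambda(m)}$. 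The $1$-Lipschitz property then forces $|B_m|_p\le p^{-\lambda(m)}$, so there exists $b_m\in\Z_p$ with $B_m=b_m\,p^{\lambda(m)}$. For $0\le m<p$ (where $\lambda(m)=0$) one simply puts $b_m=B_m=f(m)\in\Z_p$. Substituting these into \eqref{vdp} yields \eqref{eq:vdp-comp}.

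For the converse, suppose $f$ has the form \eqref{eq:vdp-comp} with $b_m\in\Z_p$. The series converges uniformly on $\Z_p$ because $|b_mp^{\lambda(m)}|_p\le p^{-\lambda(m)}\to 0$. Take $z,z'\in\Z_p$ and let $k\in\N_0$ with $|z-z'|_p\le p^{-k}$, i.e.\ $z\equiv z'\pmod{p^k}$. I would split the series into the ranges $m<p^k$ and $m\ge p^k$. In the first range, $\lambda(m)+1\le k$, so $p^{\lambda(m)+1}\mid p^k$; hence the congruence $z\equiv z'\pmod{p^k}$ forces $z\equiv z'\pmod{p^{\lambda(m)+1}}$, and by \eqref{eq:chi} the indicator $\chi(m,\cdot)$ of the ball $\mathbf B_{p^{-\lambda(m)-1}}(m)$ takes the same value at $z$ and at $z'$. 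So every term with $m<p^k$ contributes $0$ to the difference $f(z)-f(z')$. For $m\ge p^k$ we have $\lambda(m)\ge k$, whence the coefficient $b_m p^{\lambda(m)}$ has $p$-adic absolute value at most $p^{-k}$, and the same bound holds for $b_mp^{\lambda(m)}(\chi(m,z)-\chi(m,z'))$. By the strong triangle inequality,
\[
|f(z)-f(z')|_p\ \le\ \sup_{m\ge p^k}\bigl|b_mp^{\lambda(m)}(\chi(m,z)-\chi(m,z'))\bigr|_p\ \le\ p^{-k},
\]
proving the $1$-Lipschitz inequality \eqref{eq:n-compat} on all of $\Z_p$.

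I do not expect a serious obstacle here: once the van der Put formula \eqref{eq:vdp-coef} and the ball description \eqref{eq:chi} of $\chi(m,\cdot)$ are in place, both directions reduce to tracking $\lambda(m)$ correctly. The only mildly delicate point is the bookkeeping of the edge cases $m=0$ and $0<m<p$ (where $\lambda(m)=0$ so the factor $p^{\lambda(m)}$ disappears), which must be handled by the separate clause in \eqref{eq:vdp-coef}; this is a matter of a convention rather than a real difficulty.
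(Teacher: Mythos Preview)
Your proof is correct; both directions are handled cleanly via the explicit formula \eqref{eq:vdp-coef} for the van der Put coefficients and the ball description \eqref{eq:chi} of $\chi(m,\cdot)$. Note that the paper itself does not prove this theorem but only cites it (the ``cf.\ \cite{AKY-DAN}'' in the heading), so there is no in-paper proof to compare against; your argument is the standard one and matches what one finds in the cited source.
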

By using the  van der Put series it is possible to determine whether a mapping
$f\:\Z_p\>\Z_p$ is an automaton function of a finite automaton. We first remind some notions and facts from the theory of automata sequences following
\cite{Allouche-Shall}.

An infinite sequence $\mathbf a=(a_i)_{i=0}^\infty$ over a finite alphabet $\Cal A$, $\#\Cal A=L<\infty$, is called \emph{$p$-automatic} if there
exists a finite transducer $\mathfrak T=\langle\F_p,\Cal S,\Cal A,S,O,s_0\rangle$ such
that for all $n=0,1,2,\ldots$, if $\mathfrak T$ is feeded by the word $\chi_k\chi_{k-1}\cdots\chi_0$
which is a base-$p$ expansion of $n=\chi_0+\chi_1p+\cdots\chi_kp^k$, $\chi_k\ne
0$ if $n\ne 0$, then the $k$-th output symbol of $\mathfrak T$ is $a_n$;
or, in other words, such that $\delta_{k}^{\Cal A}(f_\mathfrak T(n))=a_n$ for all $n\in\N_0$,
where $k=\lfloor\log_p n\rfloor$ 
and $\delta_k^{\Cal A}(r)$ stands for the $k$-th digit in the base-$L$ expansion of $r\in\N_0$.

A \emph{$p$-kernel} of the sequence $\mathbf a$ is a set $\ker_p(\mathbf
a)$ of all subsequences $(a_{jp^m+t})_{j=0}^\infty$, $m=0,1,2,\ldots$,
$0\le t<p^m$. 
\begin{thm}[Automaticity criterion, cf.
\protect{\cite[Theorem 6.6.2]{Allouche-Shall}}] 
\label{thm:p-ker}
A sequence $\mathbf a$ is $p$-automatic if and only if its $p$-kernel
is finite.
\end{thm}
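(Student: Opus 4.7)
The plan is to prove the criterion in the classical two-step fashion: finite $p$-kernel $\Leftrightarrow$ $p$-automatic, by showing each implication separately. The main technical care will be in keeping track of the paper's convention that digits are fed to the transducer starting from the least significant one, so the output at position $k=\lfloor\log_p n\rfloor$ (the last output produced) equals $a_n$.

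For the forward direction ($p$-automatic $\Rightarrow$ finite kernel), fix a finite transducer $\mathfrak{T}=\langle\F_p,\Cal S,\Cal A,S,O,s_0\rangle$ computing $\mathbf{a}$. Given $m\in\N_0$ and $0\le t<p^m$, feed the unique length-$m$ base-$p$ expansion of $t$ (with leading zeros if necessary) to $\mathfrak{T}$ starting from $s_0$; call the resulting state $s(m,t)\in\Cal S$. For any $j\ge 1$, the base-$p$ expansion of $jp^m+t$ begins with exactly these $m$ digits of $t$ followed by the digits of $j$, so after feeding $t$ the transducer sits in $s(m,t)$ and then processes $j$; the last output produced is $a_{jp^m+t}$. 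Hence the tail $(a_{jp^m+t})_{j\ge 1}$ depends only on $s(m,t)$, and together with the initial term $a_t\in\Cal A$ the entire kernel subsequence $(a_{jp^m+t})_{j=0}^\infty$ is determined by the pair $(s(m,t),a_t)\in\Cal S\times\Cal A$. Consequently $\#\ker_p(\mathbf{a})\le|\Cal S|\cdot|\Cal A|<\infty$.

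For the reverse direction (finite kernel $\Rightarrow$ $p$-automatic), I build a transducer whose states are the kernel sequences themselves. Set
\[
\mathfrak{T}=\langle\F_p,\,\ker_p(\mathbf{a}),\,\Cal A,\,S,\,O,\,\mathbf{a}\rangle,
\]
where for $\mathbf{b}=(b_j)_{j=0}^\infty\in\ker_p(\mathbf{a})$ and $d\in\F_p$ put
\[
S(d,\mathbf{b})=(b_{jp+d})_{j=0}^\infty,\qquad O(d,\mathbf{b})=b_d.
\]
The transition lands in the kernel again: if $\mathbf{b}=(a_{jp^m+t})_j$ then $S(d,\mathbf{b})=(a_{jp^{m+1}+(dp^m+t)})_j$, still of the required form. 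Correctness follows by induction on $k$: after reading the prefix $\chi_0\chi_1\cdots\chi_{k-1}$ corresponding to $n'=\sum_{i<k}\chi_ip^i$, the current state is $(a_{jp^k+n'})_j$; feeding $\chi_k$ outputs $a_{\chi_kp^k+n'}=a_n$ (which is the last, i.e.\ $k$-th, output once $\chi_k$ is the leading nonzero digit of $n$) and advances to $(a_{jp^{k+1}+n})_j$. Thus $\mathfrak{T}$ certifies that $\mathbf{a}$ is $p$-automatic, and it is finite because $\ker_p(\mathbf{a})$ was assumed finite.

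The only mild obstacle is bookkeeping around ordering and padding. In the forward direction it is essential that the first $m$ digits fed are the $m$-digit representation of $t$ (including leading zeros), not the shorter natural expansion of $t$, since that is what determines $s(m,t)$; the two coincide only when $t\ge p^{m-1}$. In the reverse direction, one must verify that $S(d,\cdot)$ indeed maps $\ker_p(\mathbf{a})$ into itself (done via the identity $(jp+d)p^m+t=jp^{m+1}+(dp^m+t)$) so that the state space is genuinely finite, and that the inductive hypothesis is stated with the correct parameter $m=k$ rather than the shifted one $k-1$. Once these index-tracking issues are dispatched, both directions are routine.
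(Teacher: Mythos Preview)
Your proof is correct. Note, however, that the paper does not supply its own proof of this theorem: it is quoted as a known result with a reference to \cite[Theorem 6.6.2]{Allouche-Shall}, so there is no in-paper argument to compare against. Your argument is the classical one found in that reference---in the forward direction, bounding the kernel by the pairs $(s(m,t),a_t)\in\Cal S\times\Cal A$, and in the reverse direction taking the kernel itself as the state set with the decimation transition $S(d,\mathbf b)=(b_{jp+d})_j$---and the bookkeeping you flag (padding $t$ to $m$ digits, closure of $\ker_p(\mathbf a)$ under $S$) is handled correctly.
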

\begin{thm}[Finiteness criterion, cf. \cite{me-auto_fin}]
\label{thm:fin-auto}
Let a 1-Lipschitz function $f\:\Z_p\>\Z_p$ be represented by van der Put series \eqref{eq:vdp-comp}. 
The function $f$ is a  finite automaton function 
if and only if the following conditions hold simultaneously:
\begin{enumerate}
\item all coefficients $b_m$, $m=0,1,2,\ldots$, constitute a finite subset $B_f\subset\Q\cap\Z_p$,
and
\item the $p$-kernel of the sequence $(b_m)_{m=0}^\infty$ is finite.
\end{enumerate}
\end{thm}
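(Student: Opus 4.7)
The plan is to derive both directions from an explicit description of the van der Put coefficients in terms of the automaton's state-transition graph, combined with the finiteness of the state set for the forward implication and Theorem \ref{thm:p-ker} for the backward one. Recall that by \eqref{eq:vdp-coef}, for $m \geq p$ with base-$p$ expansion $m = m_0 + m_1 p + \cdots + m_{n-1}p^{n-1}$, $m_{n-1}\neq 0$, one has $B_m = f(m) - f(m - m_{n-1}p^{n-1})$; since $f$ is 1-Lipschitz, $B_m \equiv 0 \pmod{p^{n-1}}$, so $b_m = B_m/p^{n-1} \in \Z_p$.

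\textbf{Forward direction.} Suppose $f = f_{\mathfrak A}$ with $\mathfrak A = \mathfrak A(s_0)$ finite. Feeding the two zero-padded inputs $m$ and $m - m_{n-1}p^{n-1}$ to $\mathfrak A$, the two computations agree through step $n-2$, branch at step $n-1$ (digit $m_{n-1}$ versus $0$), and thereafter both continue on zero input from states $S(m_{n-1}, s_{n-1})$ and $S(0, s_{n-1})$, where $s_{n-1}$ is the state reached after processing $m_0, \ldots, m_{n-2}$. Hence $b_m$ is determined by the pair $(s_{n-1}, m_{n-1}) \in \Cal S \times \F_p$ alone, bounding $|B_f| \leq p\cdot|\Cal S| < \infty$; Lemma \ref{le:per} makes each such $b_m$ eventually periodic, so $b_m \in \Q \cap \Z_p$, giving (i). For (ii), the subsequence $(b_{jp^k+t})_{j \geq 0}$ is controlled, for $j \geq 1$, by the state $s_k^{(t)}$ reached after $\mathfrak A$ processes the digits of $t$: the pair $(s_{n-1}, m_{n-1})$ governing $b_{jp^k+t}$ depends only on $s_k^{(t)}$ together with the digits of $j$. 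Finiteness of $\Cal S$ then yields only finitely many distinct subsequences, so the $p$-kernel is finite.

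\textbf{Backward direction.} Assume (i) and (ii). Viewing $(b_m)$ as a sequence over the finite alphabet $B_f$, Theorem \ref{thm:p-ker} furnishes a finite transducer $\mathfrak T_b$ that outputs $b_m$ on reading the base-$p$ expansion of $m$. Since $B_f \subset \Q \cap \Z_p$, each $b \in B_f$ has an eventually periodic $p$-adic expansion, with pre-period length at most some $L$ and period dividing a common $T$. Build a finite automaton $\mathfrak C$ computing $f$ whose state at time $k$ packages: (a) the current state of $\mathfrak T_b$, so that the next input digit $\delta_k(z)$ yields $\beta^{(k)} := b_{z \md p^{k+1}}$; (b) a bounded-size register encoding the pending future contribution from all past levels $\beta^{(j)}$, $j < k$ — finite-dimensional because each $\beta^{(j)} \in B_f$ has eventually periodic $p$-adic expansion, so past levels sharing the same $(b, \text{phase})$ signature can be lumped together; and (c) the $p$-adic carry from the addition $\sum_j \beta^{(j)} p^j = f(z)$. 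Each input digit advances all three components by a finite amount and emits the output digit $\delta_k(f(z))$.

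The main technical obstacle lies in ingredient (b): since $\beta^{(k)} p^k$ contributes to \emph{every} future position, a naive bookkeeping of past levels would be unbounded. The resolution hinges on the strengthened form of (i) — each $b \in B_f$ is $p$-adic rational, not merely a $p$-adic integer — which makes contributions from past levels cycle with period $T$ once beyond their pre-periodic regime of length $\leq L$. This collapses the unbounded history into a finite-state summary, while (ii) ensures $\mathfrak T_b$ itself is finite. Together the two conditions force $\mathfrak C$ to have a finite state set, completing the proof.
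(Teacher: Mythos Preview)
The paper does not contain a proof of this theorem: it is stated with the citation ``cf.\ \cite{me-auto_fin}'' and used as a black box, so there is no in-paper argument to compare your proposal against.

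On the merits of your proposal itself: the forward direction is essentially correct. Your observation that for $m\ge p$ the coefficient $b_m$ depends only on the pair $(s_{n-1},m_{n-1})\in\Cal S\times\F_p$ is right (both $f(m)$ and $f(m-m_{n-1}p^{n-1})$ continue on the zero tail from states $S(m_{n-1},s_{n-1})$ and $S(0,s_{n-1})$, and Lemma~\ref{le:per} makes each resulting $p$-adic integer rational), and your reduction of the $p$-kernel to the data $(b_t,\,s_k^{(t)})$ is sound.

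The backward direction, however, is only a sketch, and the gap you yourself flag in ingredient~(b) is genuine. What you need is that the ``carry'' $p$-adic integer
\[
Q_\ell \;=\; p^{-\ell-1}\Bigl(\sum_{k\le\ell}\beta^{(k)}p^k \;-\; \bigl(\text{digits already output}\bigr)\Bigr)\;\in\;\Z_p
\]
ranges over a finite set as $\ell$ varies. Periodicity of each $\beta^{(k)}$'s expansion is necessary but not by itself sufficient: one must show that $Q_\ell$ stays within a fixed finite subset of $\Q\cap\Z_p$ (for instance, that $Q_\ell$ always lies in $\{c+\frac{d}{p^T-1}:c,d$ bounded$\}$ for a common period $T$). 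Your ``lump by $(b,\text{phase})$'' idea points in the right direction, but as written it does not establish the required bound --- the number of past levels in each phase class is itself unbounded, and you have not argued why only their contribution modulo a fixed modulus matters. Making this precise is exactly the content of the proof in \cite{me-auto_fin}.
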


\begin{note} 
Condition (ii) of the theorem is equivalent to the condition that
the  sequence $(b_m)_{m=0}^\infty$ is $p$-automatic, cf. Theorem \ref{thm:p-ker}.
\end{note}
Criteria to determine if an automaton function  is finite which are based on  expansions other than van der Put are
also known, cf. \cite{Smyshl-fin-auto,Vuillem_fin}.

In literature, automata with multiple inputs and outputs over the same alphabet
are also studied.
We remark that in the case when the alphabet is $\F_p$, the automata can
be considered as automata whose input/output  alphabets are Cartesian powers $\F_p^n$ and $\F_p^m$, for suitable $m,n\in\N$. For these automata a theory
similar to that of automata with a single input/output can be developed: Corresponding automata function are then 1-Lipshitz mappings from $\Z_p^n$ to $\Z_p^m$ w.r.t. $p$-adic metrics. Recall that $p$-adic absolute value
on $\Z_p^k$ is defined as follows: Given $(z_1,\ldots,z_k)\in\Z_p^k$, put
$|(z_1,\ldots,z_k)|_p=\max\{|z_i|_p\colon i=1,2,\ldots,k\}$. The so defined
absolute value (and
the corresponding metric) are non-Archimedean as well.  The main theorem
of the paper holds
(after a proper re-statement) for these automata as well, see Theorem \ref{thm:main-mult}.

It is worth recalling  here a well-known fact (which also can be proved by
using Theorem \ref{thm:fin-auto}) that \emph{addition of two $p$-adic integers
can be performed by a finite automaton with two inputs and one output}: Actually
the automaton just finds successively (digit after digit)  the sum  by
a standard addition-with-carry algorithm which
is used to find a sum of two non-negative integers represented by base-$p$
expansions thus calculating the sum with arbitrarily high accuracy w.r.t.
the $p$-adic metric.
 On the contrary, \emph{no finite automaton can perform multiplication
of two  arbitrary $p$-adic integers} since it is well known that no finite automaton can calculate a
base-$p$ expansion of a square of an arbitrary non-negative integer given
a base-$p$ expansion of the latter, cf., e.g., \cite[Theorem
2.2.3]{Bra}.

From these remarks combined with  Theorem \ref{thm:fin-auto} 
the following properties of finite automata
functions can  be deduced:
\begin{prop}
\label{prop:fin-auto}
Let $\mathfrak A,\mathfrak B$ be  finite automata, let $a,b\in\Z_p\cap \Q$ be $p$-adic rational integers. Then the following is true:
\begin{enumerate}
\item the mapping $z\mapsto f_{\mathfrak A}(z)+f_{\mathfrak B}(z)$ of $\Z_p$
into $\Z_p$ is a finite automaton function;
\item a composite function $f(z)=a\cdot f_{\mathfrak A}(z)+b$, $(z\in\Z_p)$,
is a finite automaton
function;
\item a constant function $f(z)=c$ is a finite automaton function if and
only if $c\in\Z_p\cap\Q$;
\item  an affine mapping $f(z)=c\cdot z+d$ is a finite automaton function
if and only if $c,d\in\Z_p\cap\Q$.
\end{enumerate}
\end{prop}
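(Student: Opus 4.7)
The plan is to establish the four parts in the order (iii), (i), an auxiliary lemma that scalar multiplication $g_a\colon z\mapsto az$ by an arbitrary $a\in\Z_p\cap\Q$ is a finite automaton function, and finally (ii) and (iv), which will then fall out by composition. Part (iii) is nearly immediate: if $c\in\Z_p\cap\Q$, Proposition \ref{prop:p-adic-rat} gives an eventually periodic canonical expansion, and one builds a finite automaton that ignores its input and cycles through the digits of $c$, using one state per position of the pre-period and one per position of the period. For the converse, $c=f(0)$ and finite automata preserve $\Z_p\cap\Q$, so $c\in\Z_p\cap\Q$. For (i), I would use the product construction hinted at in the text preceding the proposition: run $\mathfrak{A}$ and $\mathfrak{B}$ in parallel on a common input, with the product state space $\Cal S_{\mathfrak{A}}\times\Cal S_{\mathfrak{B}}$, and pipe the pair of output streams into the finite two-input addition automaton described in that same text.

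The main obstacle is the scalar-multiplication lemma: for every $a\in\Z_p\cap\Q$ the map $g_a(z)=az$ is a finite automaton function. For this I would invoke Theorem \ref{thm:fin-auto} applied to the van der Put series of $g_a$. A direct calculation from \eqref{eq:vdp-coef} shows $b_0=0$ and that, for $m\ge 1$ with leading base-$p$ digit $m_{n-1}$, the normalised coefficient equals $b_m=a\cdot m_{n-1}$; hence all $b_m$ lie in the finite subset $\{0,a,2a,\ldots,(p-1)a\}\subset\Z_p\cap\Q$, verifying condition (i) of Theorem \ref{thm:fin-auto}. For condition (ii), I would observe that whenever $j\ge 1$, $s\ge 1$, and $0\le t<p^s$, the leading base-$p$ digit of $jp^s+t$ coincides with that of $j$; consequently each subsequence $(b_{jp^s+t})_{j\ge 0}$ in the $p$-kernel agrees, from its second entry onward, with one and the same sequence independent of $s$ and $t$, so the $p$-kernel contains at most $p$ distinct elements and is finite. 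Producing this explicit van der Put expansion and controlling the $p$-kernel is where the real content of the proof lies.

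Once $g_a$ is in hand, (ii) follows at once: $a\cdot f_{\mathfrak{A}}(z)=g_a(f_{\mathfrak{A}}(z))$ is a finite automaton function by the closure of the class under composition, and adding the constant $b$ (which is a finite automaton function by (iii)) via (i) delivers $a\cdot f_{\mathfrak{A}}(z)+b$. For (iv), the ``if'' direction is (ii) specialised to $\mathfrak{A}$ being the one-state identity automaton computing $z\mapsto z$; for the ``only if'' direction, $d=f(0)\in\Z_p\cap\Q$ and $c=f(1)-f(0)\in\Z_p\cap\Q$, because $\Z_p\cap\Q$ is a subring of $\Z_p$ and finite automata map it into itself.
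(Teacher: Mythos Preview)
Your proposal is correct and follows essentially the same route as the paper: the paper also writes down the van der Put expansion of the identity (whose normalised coefficient at $m$ is exactly the leading base-$p$ digit of $m$, i.e.\ your $b_m=a\cdot m_{n-1}$ after scaling by $a$) and of the constant, and then appeals to Theorem~\ref{thm:fin-auto} together with the closure of finite automata functions under sum and composition. The only cosmetic difference is that for (iii) you build the constant automaton by an explicit finite state machine whereas the paper reads it off from the van der Put series \eqref{eq:vdp-const}; your version has the advantage of making the $p$-kernel argument for $z\mapsto az$ fully explicit, which the paper leaves to the reader.
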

\begin{proof}
Note that the van der Put expansion of the constant function $z\mapsto c$ is
\begin{equation}
\label{eq:vdp-const}
c=c\chi(0,z)+c\chi(1,z)+\cdots+c\chi(p-1,z)+0\chi(p,z)+0\chi(p+1,z)+\cdots,
\end{equation}
while the van der Put expansion for the identity function $z\mapsto z$ is
\begin{equation}
\label{eq:vdp-id}
z=\sum_{i=0}^\infty\delta_{\lfloor\log_pi\rfloor}(i)p^{\lfloor\log_pi\rfloor}\chi(i,z),
\end{equation}
where $\delta_j(i)$ stands for the $j$-th  digit in the base-$p$ expansion
of $i$. Now all  statements of the proposition follow immediately from
Theorem \ref{thm:fin-auto} and the above mentioned facts from finite automata theory.
\end{proof}
Note that the statement of Proposition \ref{prop:fin-auto} is known: For
instance, it can  be deduced
from the old work \cite{Lunts} of A.~G.~Lunts.  To our best knowledge, Lunts
was the first who revealed connections between automata theory and $p$-adic analysis.
It is worth noticing that Lunts defines automata functions in a slightly different way than
we do: In his work, an automaton function is a 1-Lipschitz function $F\:\Q_p\>\Q_p$
such that $F(pz)=pF(z)$ for all $z\in\Q_p$. Also, Lunts' methods of proofs are completely different form the ones of
Proposition \ref{prop:fin-auto}. Unfortunately, most automata theorists seem
to be
unaware of the paper \cite{Lunts}   since it was never
translated into English and even was never reviewed by Mathematical Reviews.


Concluding the subsection, we remark that in literature (finite) automata functions are also
known under  names of \emph{(bounded) determinate functions}, or \emph{(bounded)
deterministic
functions}, cf., e.g., \cite{Yb-eng,Cherep_Approx,Cherepov_Approx-contf,Smyshl-fin-auto}.
\subsection{Real plots of automata functions vs Monna graphs.}
\label{ssec:plots}
Further in the paper we consider special representation of automata functions by
point sets of real and complex spaces. As we have already mentioned in previous
section,   several representations
of this sort were considered: Via the so-called limit sets (see e.g. \cite{Haeseler_Barbe}),
via the Monna graphs (see e.g. \cite{Cherep_Approx,Cherepov_Approx-contf,Konech_Aff,Lisovik_Realfunk,Shkar_AffineAuto} ) and via real plots which were originally introduced in \cite[Chapter 11]{AnKhr}.
In the paper we focus on real plots; however we will start this subsection with saying few
words about Monna graphs since in some meaning they are counterpart of real plots; and we will
not touch limit sets at all since they are  standing somewhat apart.

The Monna graphs are based on the Monna's representation of $p$-adic integers
via real numbers of the unit closed segment $[0,1]$ originally suggested
by Monna in \cite{Monna}: Given a canonical expansion $z=\sum_{i=0}^\infty\alpha_ip^i$ of  $p$-adic integer $z\in\Z_p$ (cf. Subsection \ref{ssec:p-adic}), consider
a real number $\mon(z)=\sum_{i=0}^\infty\alpha_ip^{-i-1}\in[0,1]\subset\R$. It is
clear that $\mon$ maps $\Z_p$ onto $[0,1]$, however, $\mon$ is not bijective:
The only points from the open interval $(0,1)$ that have more than one (actually, exactly two) pre-image w.r.t. $\mon$ are rational numbers of the form $\sum_{i=0}^\infty\alpha_ip^{-i-1}$
where $\alpha_i=p-1$ for some $i\ge i_0$ since 
\begin{gather}
\label{eq:discont}
\sum_{i=0}^\infty\alpha_ip^{-i-1}=\sum_{i=0}^\infty\beta_ip^{-i-1},\ \text{where}\\
\notag
\beta_j=\begin{cases}
         \alpha_j,&\text{if}\ j\le i_0-2;\\
         (\alpha_{i_0-1}+1)\md p,&\text{if}\ j=i_0;\\
         0,&\text{if}\ j\ge i_0+1         
         \end{cases}
\end{gather}
where $\alpha_j=\beta_j$ for all $j\le i_0-2$, $\beta_j=0$ for all $j\ge
i_0$ and $\beta_{i_0-1}=(\alpha_{i_0-1}+1)\md p$. We can naturally
associate the segment $[0,1]$ (or a half-open interval $[0,1)$) to the real circle $\mathbb S$ by \emph{reducing
$[0,1]$ modulo 1}; that is, by taking fractional parts of reals from $[0,1]$:
$\mathbb S=[0,1]\md 1$. Then in a similar manner we may consider a mapping of $\Z_p$ onto $\mathbb S$; we will
denote the mapping also via $\mon$ since there is no risk of misunderstanding.
Note that w.r.t. the latter mapping the point $0=1\in\mathbb S$ has exactly two pre-images
since
$\sum_{i=0}^\infty 0\cdot p^{-i-1}=0=1=\sum_{i=0}^\infty (p-1)\cdot p^{-i-1}$
in $\mathbb S$.

Now, given an automaton $\mathfrak A=\mathfrak A(s_0)$, we define the \emph{Monna graph} of
$\mathfrak A$ as follows: Let $f=f_{\mathfrak A}$ be a corresponding automaton
function, cf. Subsection \ref{ssec:a-map} (that is, $f\:\Z_p\>\Z_p$ is a
1-Lipschitz function w.r.t. $p$-adic metric). Then  the Monna graph $\mathbf
M(\mathfrak A)=\mathbf M(f)$ (or, which is the same, of the automaton function
$f$) is the point set $\mathbf M(\mathfrak A)=\mathbf M(f)=\{(\mon(z),\mon(f(z)))\:z\in\Z_p\}$.
Note that we can consider the Monna graph when convenient either as a subset of the unit real square $\mathbb I^2$,
a Cartesian square of a unit segment,
$\mathbb I^2=[0,1]\times[0,1]$, or as a subset of a 2-dimensional real torus $\mathbb T^2=\mathbb
S\times\mathbb S$, a Cartesian square of a real unit circle $\mathbb S$.
A Monna graph can be considered as a graph of a real function $f^{\mathfrak
A}$ 
defined on $[0,1]$ and valuated in $[0,1]$. Indeed, given a point $x\in[0,1]$,
which is not of the form \eqref{eq:discont}, there is a unique $z\in\Z_p$ such
that $\mon(z)=x$. Therefore,  $f^{\mathfrak A}$ is well defined at $x$ since there exists a unique $y\in[0,1]$ such that $y=\mon(f_{\mathfrak
A}(z))$; so we just put $f^{\mathfrak A}(x)=y$. Once $x$ is of the form \eqref{eq:discont},
then there exist exactly two $z_1,z_2\in\Z_p$, $z_1\ne z_2$ such that $\mon(z_1)=\mon(z_2)=x$.
As $f_{\mathfrak A}(z_1)$ is not necessarily equal to $f_{\mathfrak A}(z_2)$,
then $f^{\mathfrak A}$  may be not well defined at $x$: One have to assign
to $f^{\mathfrak A}(x)$ both $\mon(f_{\mathfrak A}(z_1))$ and $\mon(f_{\mathfrak A}(z_2))$ which may happen to be non-equal. To make $f^{\mathfrak A}$ well defined on
$[0,1]$  a usual way is to admit only representations of one (of two) types 
for $x$ of the form \eqref{eq:discont}; say, only those with finitely many
non-zero terms, cf., e.g., \cite{Cherep_Approx,Cherepov_Approx-contf}. In this case the function $f^{\mathfrak A}$ becomes well-defined
everywhere on $[0,1]$
and having points of discontinuity at maybe the points of type \eqref{eq:discont}
only.
A typical Monna graph of the function $f^{\mathfrak A}$  looks like the  one represented by Figure \ref{fig:Monna}.

Now we are going to introduce a notion of the \emph{real plot} of an automaton function; the latter notion is somehow `dual' to  the notion of  Monna
graph. Given an automaton $\mathfrak A=\mathfrak
A(s_0)$, we associate to an $m$-letter non-empty word $v=\gamma_{m-1}\gamma_{m-2}\ldots\gamma_0$
over the alphabet $\F_p$
 a rational number $0.
 v$ whose base-$p$ expansion is 
$$0.
v=0.\gamma_{m-1}\gamma_{m-2}\ldots\gamma_0=\sum_{i=0}^{m-1}\gamma_{m-i-1}p^{-i-1};$$ then
to every  $m$-letter input word $w=\alpha_{m-1}\alpha_{m-2}\cdots\alpha _0$ of the automaton $\mathfrak
A$ and to the 
respective $m$-letter output word $\mathfrak a(w)=\beta_{m-1}\beta_{m-2}\cdots \beta_0$ 
(rightmost letters are feeded to/outputted from the automaton prior to leftmost
ones) 
there corresponds a point $(0.
w;0.
{\mathfrak
a(w)})$
of the real unit square $\mathbb I^2$; then
we define
$\mathbf P(\mathfrak A)$ as a closure  in 
$\R^2$ of the point set $(0.
w;0.
{\mathfrak
a(w)})$
where $w$ ranges over the set $\Cal W$ of all finite non-empty words over the alphabet $\F_p$. 

Given an automaton function $f=f_{\mathfrak A}\:\Z_p\>\Z_p$ define a  set
$\mathbf P(f_{\mathfrak A})$ of points of
the real plane $\R^2$ 
as follows: For $k=1,2,\ldots$ denote
\begin{equation}
\label{eq:plot}
E_k(f)=\left\{
\left({\frac{{z\md p^k}
}{p^k};\frac{{f(z)\md p^k}
}{p^k}
}\right)\in\mathbb I^2\: z\in \Z_p\right\}
\end{equation} 
a point set in a unit real square $\mathbb I^2=[0,1]\times[0,1]$ and take a
union $E(f)=\cup_{k=1}^\infty E_k(f)$; then $\mathbf
P(f)$ is a closure (in topology of $\R^2$) of the set $E(f)$.
Note that if $z=\sum_{i=0}^\infty\gamma_ip^i$ is a $p$-adic canonical
expansion of $z\in\Z_p$ then $p^{-m}(z\md p^m)=0.\gamma_{m-1}\gamma_{m-2}\ldots\gamma_0$,
c.f. \eqref{eq:plot}; so $\mathbf P(\mathfrak A)\supset\mathbf P(f_{\mathfrak
A})$. Moreover, $\mathbf P(\mathfrak A)=\mathbf P(f_{\mathfrak
A})$, see further  Note \ref{note:plot-auto-in}.
\begin{defn}[Automata plots]
\label{def:plot-auto}
Given an 
automaton $\mathfrak A$,
we call a \emph{plot of the automaton} $\mathfrak A$ 
the set $\mathbf P(\mathfrak A)$. We call a \emph{limit plot} 
of the automaton $\mathfrak A$
the point set $\mathbf{LP}(\mathfrak A)$ 
which is defined as follows: A point $(x;y)\in\R^2$ lies in $\mathbf{LP}(\mathfrak A)$ if and only if there exist $z\in\Z_p$ and a strictly increasing
infinite sequence $k_1<k_2<\ldots$ of numbers from $\N$ such that simultaneously
\begin{equation}
\label{eq:def-LP}
\lim_{i\to\infty}\frac{z\md p^{k_i}}{p^{k_i}}=x;\ \lim_{i\to\infty}\frac{f_\mathfrak
A(z)\md p^{k_i}}{p^{k_i}}=y.
\end{equation}  
\end{defn}
%
\begin{note}
\label{note:plot-auto}
Further in the paper we consider $\mathbf{LP}(\mathfrak A)$ (as well as 
$\mathbf{P}(\mathfrak A)$ and $\mathbf{P}(f)$) either as a subset of the unit square $\mathbb I^2\subset\R^2$ or as a subset
of the unit torus $\mathbb T^2=\R^2/\Z^2$ when appropriate. Note that when
considering the plot on the unit torus we  reduce  coordinates of
the points modulo 1, that is, we just `glue together' 0 and 1 of the unit
segment $\mathbb I$ thus transforming it into the unit circle $\mathbb S$
(whose points we usually identify with the points of the half-open segment $[0,1)$ via a natural one-to-one correspondence, say, $\omega\leftrightarrow\sin^2(\omega/2)$). Also, sometimes we consider $\mathbf{LP}(\mathfrak A)$ (as well as
$\mathbf{P}(\mathfrak A)$ and $\mathbf{P}(f)$)
as a subset of the cylinder $\mathbb I\times\mathbb S$ or of the cylinder $\mathbb S\times\mathbb I$
by reducing modulo 1 either $y$- or $x$-coordinate respectively. We 
denote the corresponding plot via $\mathbf{LP}_\mathbb M(\mathfrak A)$ by using the subscript $\mathbb M\in\{\mathbb I^2,\mathbb T^2,\mathbb I\times\mathbb
S,\mathbb S\times\mathbb I\}$ and we omit the subscript when it is clear
(or when it is no difference)
on which of the  surfaces the plot is considered.
\end{note}

We take a moment to recall some well-known topological notions and to introduce
some notation. In the sequel, given a subset $S$ of a topological (in particular,
metric) space $\mathbb M$ which satisfies the Hausdorff axiom we denote via $\mathbf{AP}_\mathbb
M(S)$ the set of all accumulation points of $S$. Recall that the point $x\in\mathbb
M$
is called an \emph{accumulation point of} $S\subset \mathbb M$ once every neighborhood of
$x$ contains infinitely many points from $S$; and a point $y\in \mathbb M$
is called \emph{isolated  point of} $S$ (or, the point isolated from $S$;
or, the point isolated w.r.t. $S$) once there exists a neighborhood $U\ni y$ such that $U$
contains no points from $S$ other than (maybe) $y$. We may omit the subscript and use  notation $\mathbf{AP}(S)$ when it is clear from the context what metric
 space is meant. 
 
 We also write $\mathbf{AP}((a_i)_{i=0}^\infty)$ (or briefly
 $\mathbf{AP}(a_i)$, or $\mathbf{AP}(\Cal C))$ for the set of all limit
 points of the sequence $\Cal C=(a_i)_{i=0}^\infty$ over $\mathbb M$. Recall that
 a point $x\in\mathbb M$ is called a limit (or, cluster) point of the sequence $(a_i)_{i=0}^\infty$
 if every neighbourhood of $x$ contains infinitely many members of the sequence
 $(a_i)_{i=0}^\infty$; that is, given any neighborhood $U$ of $x$, the number
 of $i$ such that $a_i\in U$ is infinite (note that the very  $a_i\in U$ 
 are not assumed  to be pairwise distinct points of $\mathbb M$; some, or
 even all of them may be identical). Note that in topology the terms `accumulation point of a set' and `limit point of a set'
are used as synonyms; however to avoid possible misunderstanding we reserve
the term `limit point' only for sequences while for sets we use the term
`accumulation point'. 

\begin{note}
\label{note:plot-auto-in}
The definition of $\mathbf P(\mathfrak A)$  immediately implies that  $(x;y)\in\mathbf{P}(\mathfrak A)$ if and only if there
exists a sequence $(w_i)_{i=0}^\infty$ of finite non-empty words $w_i\in\Cal W$ such that $\Lambda(w_i)=k_i$
for all $i=0,1,2,\ldots$ and $\lim_{i\to\infty}\rho(w_i)=x$, 
$\lim_{i\to\infty}\rho(\mathfrak a(w_i))=y$. Note that once $(x;y)\in\mathbf{LP}(\mathfrak A)$ then there
exists a sequence $(w_i)_{i=0}^\infty$ of words such that the sequence $(\Lambda(w_i)=k_i)_{i=0}^\infty$
of their lengths is strictly increasing:
One just  may take 
$w_i=\wrd_{k_i}(z\md p^{k_i})$, cf. \eqref{eq:rho} and Subsection \ref{ssec:a-map}.
Therefore $\mathbf{LP}(\mathfrak A)\subset\mathbf{AP}(\mathbf P(f_{\mathfrak A}))$. Moreover, from Definition \ref{def:plot-auto} it readily follows that
$\mathbf{AP}(\mathbf P(f_{\mathfrak A}))=\mathbf{AP}(E(f_{\mathfrak A}))=\mathbf{AP}(\mathbf P({\mathfrak A}))$ since given a finite non-empty word $w$ and taking any $z\in\Z_p$ such that the prefix of the corresponding infinite word is $w$ (i.e., such that $w=\wrd_{\Lambda(w)}(z\md p^{\Lambda(w)})$) we see that 
$\rho(\mathfrak a(w))=((f_{\mathfrak A}(z))\md p^{\Lambda(w)})/p^{\Lambda(w)}$.
This implies that $\mathbf P({\mathfrak A})=\mathbf P(f_{\mathfrak A})$ since
$\mathbf P(f_{\mathfrak A})=E(f_{\mathfrak A})\cup\mathbf{AP}(E(f_{\mathfrak A}))=\mathbf P({\mathfrak A})$; so in the sequel we do not differ automata
plots from the plots of automata functions and use both $\mathbf P({\mathfrak A})$ and $\mathbf P(f_{\mathfrak A})$ as notation for the plot of the automaton $\mathfrak
A$. Also we may use notation $\mathbf{LP}(f_{\mathfrak A})$ along with $\mathbf{LP}(\mathfrak
A)$ to denote the limit plot of the automaton $\mathfrak A$.
\end{note}
We stress here once again a crucial difference in the construction of plots and of Monna graphs of automata: Given a canonical expansion of  $p$-adic integer 
$z=\sum_{i=0}^\infty\gamma_ip^i$ we put into a correspondence to $z$ a
\emph{single} real number $\mon(z)=\sum_{i=0}^\infty\gamma_ip^{-i-1}$ while constructing
Monna graphs; whereas in the construction of plots we put into a correspondence
to $z$ a \emph{whole set of all limit points} of the sequence
$(p^{-m}(z\md p^m))_{m=1}^\infty$, and the latter set may not consist of a
single point; moreover, `usually' the set never consists of a single point since with a probability 1 the set is 
a whole segment $[0,1]$. Therefore to study structure of plots we
need to work with  sets of all limit points of (usually non-convergent)
sequences rather
than with limits of convergent sequences as in the case of Monna maps.

\begin{prop}
\label{prop:no-iso}
Let $\mathfrak A$ be an arbitrary automaton; then
$\mathbf{LP}(\mathfrak A)$ contains no points isolated w.r.t. $E(f_\mathfrak
A)$
\textup{(cf. \eqref{eq:plot}
and the text thereafter)}. 
\end{prop}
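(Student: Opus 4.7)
The plan is to start from the defining $\mathbf{LP}(\mathfrak A)$-witness for $(x,y)$ and extract nearby points of $E(f_{\mathfrak A})$ that are distinct from $(x,y)$. By Definition~\ref{def:plot-auto}, there exist $z\in\Z_p$ and a strictly increasing sequence $k_1<k_2<\cdots$ in $\N$ such that
\[
P_i := \bigl(p^{-k_i}(z\md p^{k_i}),\ p^{-k_i}(f_{\mathfrak A}(z)\md p^{k_i})\bigr) \in E_{k_i}(f_{\mathfrak A}) \subseteq E(f_{\mathfrak A}),\qquad P_i \to (x,y).
\]
If infinitely many of the $P_i$ differ from $(x,y)$, then by convergence every open neighborhood of $(x,y)$ contains one of them, and $(x,y)$ is not isolated from $E(f_{\mathfrak A})$.

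The only remaining case is $P_i=(x,y)$ for all $i$ sufficiently large. I plan to show this forces the degenerate situation $(x,y)=(0,0)$, $z=0$, $f_{\mathfrak A}(0)=0$. The reduction is an elementary $p$-adic calculation: the equality $p^{-k_i}(z\md p^{k_i})=p^{-k_j}(z\md p^{k_j})=x$ for $i<j$ large, together with $\gcd(p^{k_j-k_i}-1,\,p^{k_i})=1$, forces $z\md p^{k_i}=0$ for all such $i$; letting $k_i\to\infty$ yields $z=0$ and $x=0$, and the identical argument on the second coordinate gives $f_{\mathfrak A}(0)=0$ and $y=0$.

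It then remains to show that $(0,0)$ is never isolated from $E(f_{\mathfrak A})$ when $f_{\mathfrak A}(0)=0$---this is the main obstacle. Here I would invoke the toroidal interpretation of the plot from Note~\ref{note:plot-auto}: on $\mathbb T^2=\R^2/\Z^2$ the witness $z^*=-1\in\Z_p$ produces points $((p^k-1)/p^k,\,(f_{\mathfrak A}(-1)\md p^k)/p^k)\in E_k(f_{\mathfrak A})$ whose first coordinate $1-p^{-k}$ wraps to $0$, and a parallel selection of $z^*$ with small $p$-adic absolute value (using the $p$-adic 1-Lipschitz inequality $|f_{\mathfrak A}(z^*)|_p\le|z^*|_p$ to confine the second coordinate to a small residue class) yields a sequence of points of $E(f_{\mathfrak A})\setminus\{(0,0)\}$ accumulating at $(0,0)$ on the torus. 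The toroidal identification is essential: for $f_{\mathfrak A}(z)=-z$, all of $E(f_{\mathfrak A})\setminus\{(0,0)\}$ lies on $x+y=1$ at Euclidean distance at least $1/\sqrt 2$ from $(0,0)$ in $\mathbb I^2$, so the proposition is only true when the plot is viewed on $\mathbb T^2$.
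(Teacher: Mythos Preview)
Your reduction to the case $z=0$, $f_{\mathfrak A}(0)=0$, $(x,y)=(0,0)$ is correct and is exactly what the paper does. Your observation that $f_{\mathfrak A}(z)=-z$ gives a counterexample on $\mathbb I^2$ is also correct, and it in fact exposes a genuine defect in the paper's own argument: the paper handles the residual case by passing to an auxiliary automaton $\mathfrak B$ with $\delta_0(f_{\mathfrak B}(z))\equiv 1$ and asserting that $E(f_{\mathfrak A})$ and $E(f_{\mathfrak B})$ have the same isolated points, but for $f_{\mathfrak A}(z)=-z$ the point $(0,0)$ is isolated from $E(f_{\mathfrak A})$ (all other points lie on $x+y=1$) while it is an accumulation point of $E(f_{\mathfrak B})$ (the points $(0,p^{-k})\in E_k(f_{\mathfrak B})$, coming from $z=0$, converge to it). So the proposition as stated is already false on $\mathbb I^2$, and the paper's proof does not save it.

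Your attempted rescue on $\mathbb T^2$ does not go through either. The idea ``use $z^*=-1$ for the first coordinate and the 1-Lipschitz bound for the second'' only yields $f_{\mathfrak A}(z^*)\equiv 0\pmod{p^N}$ when $|z^*|_p\le p^{-N}$, which constrains the \emph{low}-order digits of $f_{\mathfrak A}(z^*)$ and says nothing about the high-order digits that govern the real value $p^{-k}(f_{\mathfrak A}(z^*)\md p^k)$. Indeed the proposition is still false on $\mathbb T^2$: take $p=2$ and the three-state automaton with states $s_0,s_e,s_o$ and transitions $s_0\xrightarrow{0/0}s_0$, $s_0\xrightarrow{1/1}s_e$, $s_e\xrightarrow{*/0}s_o$, $s_o\xrightarrow{*/1}s_e$. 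Then $f_{\mathfrak A}(0)=0$, so $(0,0)\in\mathbf{LP}(\mathfrak A)$, but for every $a\in\{1,\dots,2^k-1\}$ one computes that the second coordinate of the corresponding point of $E_k$ lies in $[1/4,1/3)\cup[1/2,2/3)$; hence $(0,0)$ is isolated from $E(f_{\mathfrak A})$ on $\mathbb T^2$ as well. The upshot is that the proposition, as phrased for arbitrary automata, is simply false; its sole use downstream (Theorem~\ref{thm:AP=LP}) is for finite \emph{minimal} automata, and that extra hypothesis is what one should try to exploit to repair the argument.
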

\begin{proof}[Proof of Proposition \ref{prop:no-iso}]
Let  $(x;y)\in\mathbf{LP}(\mathfrak A)$
be a point isolated w.r.t. $E(f_\mathfrak A)$. As $(x;y)\in\mathbf{LP}(\mathfrak A)$, let $z=\sum_{j=0}^\infty\zeta_j\cdot p^j$ be a $p$-adic canonical representation
of the $p$-adic integer $z\in\Z_p$ mentioned in Definition \ref{def:plot-auto};
and let $f_\mathfrak A(z)=\sum_{j=0}^\infty\gamma_j\cdot p^j$ be a $p$-adic canonical
expansion of the $p$-adic integer $f_\mathfrak A(z)$.
Then as the point $(x;y)$ is isolated, there exists $I\in\N$ such that $z\md p^{k_i}/p^{k_i}=x$ and $f_\mathfrak A(z)\md p^{k_i}/p^{k_i}=y$ for all $i\ge
I$, cf. \eqref{eq:def-LP} (if otherwise, the point $(x,y)$ is not  isolated
w.r.t. $E(f_\mathfrak A)$).
Put $w_{i}=
\wrd_{k_i}\left(z\md p^{k_i}/p^{k_i}\right)=\zeta_{k_i-1}\zeta_{k_i-2}\ldots\zeta_0$, $u_{i}=
\wrd_{k_i}\left(f_\mathfrak A(z)\md p^{k_i}/p^{k_i}\right)=\gamma_{k_i-1}\gamma_{k_i-2}\ldots\gamma_0$; then
\begin{align}
\label{eq:mark-up-aa}
0.\zeta_{k_i-1}\ldots\zeta_{0}&=
x;\\
\label{eq:mark-up-bb}
0.\gamma_{k_i-1}\ldots\gamma_{0}&= 
y,
\end{align}
for all $i\ge I$. We claim that then necessarily both $z=0$ and $f_\mathfrak
A(z)=0$ (whence both $x=0$ and $y=0$). 

Indeed,
as the sequence $\Cal K=(k_i)_{i=0}^\infty$ is infinite and strictly increasing,
then taking $i=I$ in \eqref{eq:mark-up-aa} we conclude that necessarily
$\zeta_0=\zeta_1=\cdots=\zeta_{k_{I+M}-k_{I}-1}=0$  for all $M\in\N$. Therefore,
taking $M$ large enough so that $k_{I+M}-k_{I}\ge k_I$ (which is always
possible since the sequence $\Cal K$ is strictly increasing) we see that
$\zeta_0=\zeta_1=\cdots=\zeta_{k_{I}-1}=0$
and thus $\zeta_i=0$ for all $i\in\N_0$ since 
$0.\zeta_{k_i-1}\ldots\zeta_{0}=0.\zeta_{k_I-1}\ldots\zeta_{0}$
for all $i\ge I$ by \eqref{eq:mark-up-aa}. But this implies that $z=0$ (whence $x=0$). The same argument combined with \eqref{eq:mark-up-bb}
shows that $f_\mathfrak A(0)=0$ and  $y=0$.

Consider now an automaton $\mathfrak B$ whose automaton function  $f_\mathfrak
B$ is defined as follows: Given a $p$-adic canonical representation 
$z=\sum_{j=0}^\infty\zeta_j\cdot p^j$, let $\delta_0(f_\mathfrak B(z))=1$
and $\delta_j(f_\mathfrak B(z))=\delta_j(f_\mathfrak A(z))$ for $j>0$. Such
an automaton $\mathfrak B$ exists  since the so defined function $f_\mathfrak
B$ satisfies \eqref{eq:f-coord} and thus is 1-Lipschitz, cf. Subsection \ref{ssec:a-map}.
Actually the automaton being feeded by the input word $\ldots\zeta_2\zeta_1\zeta_0$
just put $1$ as the first output letter and put $\gamma_j$ for the $j$-th
output letter for $j>0$ where $\ldots\gamma_2\gamma_1\gamma_0$ is the output
word of the automaton $\mathfrak A$ feeded by the input word $\ldots\zeta_2\zeta_1\zeta_0$;
that is, $\mathfrak B$ outputs $\ldots\gamma_2\gamma_11$ being feeded by 
$\ldots\zeta_2\zeta_1\zeta_0$.

From \ref{eq:plot} and Definition \ref{def:plot-auto} it immediately follows
that $\mathbf{LP}(\mathfrak A)=\mathbf{LP}(\mathfrak B)$ and that a point
$(x;y)\in\R^2$ is an isolated point of $E(f_\mathfrak B)$ if and only if
it is an isolated point of $E(f_\mathfrak A)$. But by the claim we have proved
above, once $(x;y)$ is an isolated point of $E(f_\mathfrak B)$, then 
necessarily $f_\mathfrak B(0)=0$. But the first  letter of any output word
of  automaton $\mathfrak B$ is 1 by the construction of $f_\mathfrak B$;
thus
$\delta_0(f_\mathfrak B(0))=1$ and so $f_\mathfrak B(0)\ne 0$. From the
claim we have proved at the beginning of the proof it follows now that 
$\mathbf{LP}(\mathfrak B)$ cannot contain
isolated points of $E(f_\mathfrak B)$; thus $\mathbf{LP}(\mathfrak A)$ cannot contain
isolated points of $E(f_\mathfrak A)$.
\end{proof}
\begin{rmk}
Note that Proposition \ref{prop:no-iso} only states that $\mathbf{LP}(\mathfrak
A)$ contains no points isolated from $E(f_\mathfrak A)$, but of course $\mathbf{LP}(\mathfrak
A)$ may contain isolated points w.r.t. itself. For instance, let $\mathfrak
A$ be a $p$-adic odometer; that is, $f_\mathfrak
A(z)=z+1$ (the automaton $\mathfrak A$ may be taken a finite then). Then
the point $(1;0)\in\mathbb I^2$ is an isolated point of $\mathbf{LP}_{\mathbb I^2}(\mathfrak
A)$ w.r.t. $\mathbf{LP}_{\mathbb I^2}(\mathfrak
A)$; however $\mathbf{LP}_{\mathbb T^2}(\mathfrak
A)$ contains no points isolated w.r.t. $\mathbf{LP}_{\mathbb T^2}(\mathfrak
A)$.
\end{rmk}

\begin{thm}
\label{thm:AP=LP}
If 
automaton $\mathfrak A$ is finite and minimal then $\mathbf{AP}(E(f_\mathfrak
A))=\mathbf{LP}(\mathfrak A)$. 
\end{thm}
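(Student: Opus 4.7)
The inclusion $\mathbf{LP}(\mathfrak A)\subset\mathbf{AP}(E(f_\mathfrak A))$ is already recorded in Note \ref{note:plot-auto-in}, so the task is to prove $\mathbf{AP}(E(f_\mathfrak A))\subset\mathbf{LP}(\mathfrak A)$ under the finite/minimal hypothesis. Take $(x;y)\in\mathbf{AP}(E(f_\mathfrak A))$ and pick a sequence of pairwise distinct points $(x_n;y_n)\in E_{k_n}(f_\mathfrak A)$ with $(x_n;y_n)\to(x;y)$. Each $E_k(f_\mathfrak A)$ contains at most $p^k$ points, so the $k_n$ are unbounded; after passing to a subsequence we may assume $k_1<k_2<\cdots$. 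Let $w_n$ be the $k_n$-letter word with $\rho(w_n)=x_n$; then $\rho(\mathfrak a(w_n))=y_n$.

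The idea is to glue the words $w_n$ into a single $z\in\Z_p$ whose successive prefixes (in the low-to-high $p$-adic digit order) have the $w_n$ sitting at their \emph{high-order} end, where they dominate the real value $\rho(\cdot)$. Minimality of the finite automaton supplies the gluing: since every state is reachable from every other state and $\Cal S$ is finite, there is $L\in\N$ such that from every state $s\in\Cal S$ one can reach $s_0$ in at most $L$ transitions. Fix for each $s$ a word $v_s$ of length $\ell_s\le L$ which drives the automaton from $s$ back to $s_0$ (with $v_{s_0}$ empty).

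Now build $z$ iteratively. Set $K_0=0$, and suppose the digits $\delta_0(z),\ldots,\delta_{K_{n-1}-1}(z)$ are already chosen; let $s'$ be the state reached after processing them from $s_0$. Append the digits of $v_{s'}$ at positions $K_{n-1},\ldots,K_{n-1}+\ell_{s'}-1$ (returning to state $s_0$), then append the digits of $w_n$ at the next $k_n$ positions, and set $K_n=K_{n-1}+\ell_{s'}+k_n$. The sequence $(K_n)$ is strictly increasing; by construction, the state just before processing the $w_n$-block is $s_0$, so the $k_n$ output letters of $f_\mathfrak A(z)$ at positions $K_n-k_n,\ldots,K_n-1$ are exactly the letters of $\mathfrak a(w_n)$.

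The verification is then a straightforward estimate: the $w_n$-block contributes exactly $\rho(w_n)$ to $(z\md p^{K_n})/p^{K_n}$, and the remaining $K_n-k_n$ low-order digits contribute an error of absolute value at most $p^{-k_n}-p^{-K_n}<p^{-k_n}$, with the analogous statement for $f_\mathfrak A(z)$:
\[
\left|\tfrac{z\md p^{K_n}}{p^{K_n}}-\rho(w_n)\right|<p^{-k_n},\qquad
\left|\tfrac{f_\mathfrak A(z)\md p^{K_n}}{p^{K_n}}-\rho(\mathfrak a(w_n))\right|<p^{-k_n}.
\]
Since $k_n\to\infty$ and $\rho(w_n)\to x$, $\rho(\mathfrak a(w_n))\to y$, this yields the two limits in \eqref{eq:def-LP} along $(K_n)$, so $(x;y)\in\mathbf{LP}(\mathfrak A)$. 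The only non-routine step is seeing that the $w_n$-block must be placed at the high-order end of the prefix (so it dominates $\rho$) while the state at the beginning of that block must be forced to $s_0$ (so the output block equals $\mathfrak a(w_n)$); this is precisely what minimality, together with finiteness providing the uniform bound $L$, makes possible.
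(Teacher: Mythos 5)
Your argument is correct, and it reaches the hard inclusion $\mathbf{AP}(E(f_\mathfrak A))\subset\mathbf{LP}(\mathfrak A)$ by a genuinely shorter route than the paper's. The paper first runs a stabilization/diagonal argument on the approximating words $h_i$: it extracts fixed digit strings $\zeta_0\zeta_1\ldots$ and $\gamma_0\gamma_1\ldots$ with $x=0.\zeta_0\zeta_1\ldots$, $y=0.\gamma_0\gamma_1\ldots$, then applies a pigeonhole to the states reached after the low-order tails of the $h_i$ to produce a single state $s$ with $\mathfrak a_s(\zeta_0\ldots\zeta_\ell)=\gamma_0\ldots\gamma_\ell$ for all $\ell$, and only then glues the nested prefixes $w_\ell=\zeta_0\ldots\zeta_\ell$ into one $z\in\Z_p$ by inserting connector words that return the automaton to $s$. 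You skip the stabilization and the pigeonhole entirely: you glue the original $k_n$-letter approximating words $w_n$ themselves, using minimality only to supply connectors back to the fixed initial state $s_0$, and the elementary estimate $\left|\tfrac{z\md p^{K_n}}{p^{K_n}}-\rho(w_n)\right|<p^{-k_n}$ does the rest. Both proofs use minimality in the same essential place (the connectors); finiteness enters differently (the paper needs it for the pigeonhole on states, you need it for the finiteness of each $E_k$, which forces $k_n\to\infty$, and of the connector family). What the paper's longer route buys is the intermediate output \eqref{eq:mark-up_A}--\eqref{eq:mark-up_B} --- a single state $s$ computing the digit string of $y$ from that of $x$ --- which is quoted verbatim later in the proofs of Corollary \ref{cor:AP=LP} and Lemma \ref{le:qint-LP}; your shortcut proves the theorem but would not by itself support those later references. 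Two minor remarks: the easy inclusion you attribute to Note \ref{note:plot-auto-in} really rests on Proposition \ref{prop:no-iso} (one must exclude the possibility that all points $\bigl(\tfrac{z\md p^{k_i}}{p^{k_i}};\tfrac{f_\mathfrak A(z)\md p^{k_i}}{p^{k_i}}\bigr)$ coincide with an isolated $(x;y)$), which is how the paper cites it; and the uniform bound $L$ on connector lengths is superfluous, since your error bound depends only on $k_n$.
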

\begin{proof}[Proof of Theorem \ref{thm:AP=LP}]
By Proposition
\ref{prop:no-iso}, $\mathbf{AP}(E(f_\mathfrak A))\supset\mathbf{LP}(\mathfrak A)$; we need to prove that the inverse inclusion also holds.  Let $(x;y)\in\mathbf{AP}(E(f_\mathfrak A))$; then there exists a sequence $(z_i)_{i=0}^\infty$ of $p$-adic integers
and a sequence $(k_i)_{i=0}^\infty$ of integers from $\N$ such that 
all the points 
$$
\mathbf p_i=\left({\frac{{z_i\md p^{k_i}}
}{p^{k_i}};\frac{{f_\mathfrak A(z_i)\md p^{k_i}}
}{p^{k_i}}
}\right)\in\mathbb R^2
$$ 
are pairwise distinct and
\begin{align*}
\lim_{i\to\infty}\frac{z_i\md p^{k_i}}{p^{k_i}}&=x;\\
\lim_{i\to\infty}\frac{f_\mathfrak A(z_i)\md p^{k_i}}{p^{k_i}}&=y.
\end{align*}
We may assume that the sequence $(k_i)_{i=0}^\infty$ is  increasing
since otherwise in the point sequence $(\mathbf p_i)_{i=0}^\infty$ there
are only finitely many pairwise distinct points. Moreover, we may assume
that the sequence $(k_i)_{i=0}^\infty$ is  strictly increasing; we consider
corresponding infinite point subsequence of $(\mathbf p_i)_{i=0}^\infty$
if otherwise. So we see that there exists an infinite sequence of words
$h_i=\wrd_{k_i}(z_i\md p^{k_i})$ of strictly increasing lengths $k_i$ such
that
\begin{align}
\label{eq:AP=LP-1}
\lim_{i\to\infty}0.h_i&=x;\\
\label{eq:AP=LP-2}
\lim_{i\to\infty}0.\mathfrak a(h_i)&=y.
\end{align}

That is, there exists a sequence 
$\left(h_i\right)_{i=0}^\infty$ of
words $h_i=\alpha_{k_i-1}^{(i)}\ldots\alpha_{0}^{(i)}$ of strictly increasing lengths $1\le k_0<k_1<k_2<\ldots$ such that
$\lim_{i\to\infty}0.\alpha_{k_i-1}^{(i)}\ldots\alpha_{0}^{(i)}=x$. From here
it follows that once $i$ is sufficiently large (say, once $i\ge M_0\in\N_0$)
then $\alpha_{k_i-1}^{(i)}=\zeta_{0}$
for a suitable $\zeta_0\in\F_p$. By the same reason, $\alpha_{k_i-2}^{(i)}=\zeta_{1}$
for a suitable $\zeta_1\in\F_p$ once $i$ is large enough (say, once $i\ge
M_1\in\N_0$), etc. Moreover, we may assume that the sequence $(M_\ell)_{\ell=0}^\infty$
is strictly increasing.
Therefore, $x=0.\zeta_0\zeta_1\ldots$. Applying a similar argument to 
the sequence $\beta_{k_i-1}^{(i)}\beta_{k_i-2}^{(i)}\ldots\beta_0^{(i)}=\mathfrak a(\alpha_{k_i-1}^{(i)}\alpha_{k_i-2}^{(i)}\ldots\alpha_0^{(i)})$
 ($i=0,1,2,\ldots$) 
 we conclude
that there exists a strictly increasing sequence $(N_\ell)_{\ell=0}^\infty$
such that $\beta_{k_i-\ell-1}^{(i)}=\gamma_{\ell}\in\F_p$ once $i\ge N_\ell$
and therefore $y=0.\gamma_0\gamma_1\ldots$. Moreover, by the construction
of the sequences $(M_\ell)_{\ell=0}^\infty$ and $(N_\ell)_{\ell=0}^\infty$
we may assume that
$M_\ell=N_\ell$ for all $\ell\in\N_0$. Thus we have shown that
\begin{align*}
\alpha_{k_i-1}^{(i)}\ldots\alpha_{0}^{(i)}&=\zeta_0\zeta_1\ldots\zeta_{\ell}w^{(i)}_\ell\
\text{if}\ i\ge M_\ell\ (\ell=0,1,2,\ldots);\\
\beta_{k_i-1}^{(i)}\ldots\beta_{0}^{(i)}&=\gamma_0\gamma_1\ldots\gamma_{\ell}u^{(i)}_\ell\
\text{if}\ i\ge M_\ell\ (\ell=0,1,2,\ldots),
\end{align*}
where $w^{(i)}_\ell,u^{(i)}_\ell\in\Cal W_\phi$ and 
$\gamma_0\gamma_1\ldots\gamma_{\ell}u^{(i)}_\ell=\mathfrak a(\zeta_0\zeta_1\ldots\zeta_{\ell}w^{(i)}_\ell)$,
($\ell=0,1,2,\ldots$). Let $s_\ell^{(i)}$ be a state the automaton $\mathfrak A$ reaches
after being feeded by the input word $w^{(i)}_\ell$ (note that $s_\ell^{(i)}=s_0$, the initial
state, once $w^{(i)}_\ell$ is empty word). As the number of states of $\mathfrak
A$ is finite, at least one state $s\in\Cal S$ repeats in the sequence  
$\left(s_\ell^{(M_\ell)}\right)_{\ell=0}^\infty$ infinitely many times. 
Therefore
\begin{align}
\label{eq:mark-up_A}
\lim_{\ell\to\infty}
0.\zeta_0\ldots\zeta_{\ell}
&=
x;\ \text{whence}\ x=0.\zeta_0\zeta_1\zeta_2\ldots\\
\label{eq:mark-up_B}
\lim_{\ell\to\infty}
0.\mathfrak a_s(\zeta_0\ldots\zeta_{\ell})&= 
y; \text{whence}\ y=0.\gamma_0\gamma_1\gamma_2\ldots.
\end{align}
Denote $w_\ell=\zeta_0\ldots\zeta_{\ell}$, $v_\ell=\gamma_0\ldots\gamma_{\ell}$,
($\ell=0,1,2,\ldots$). As every state of the  automaton $\mathfrak A$ is reachable from the initial
state $s_0$, there exists a word $t_0\in\Cal
W_\phi$ such that the if the automaton $\mathfrak A$ (which is initially at
the state $s_0$) has been feeded by the word $t_0$, then $\mathfrak A$ outputs
the word $\bar t_0=\mathfrak a(t_0)$ and reaches the state $s$. Thus $\mathfrak
a(w_0t_0)=\gamma_0\bar t_0$, and the automaton $\mathfrak A$ after being  feeded
by the word $w_0t_0$ reaches the state $r^{(0)}$. As the automaton $\mathfrak
A$ is minimal, there exists a word $t_1\in\Cal W_\phi$ such that once the
automaton $\mathfrak A_1=\mathfrak A(r^{(0)})$ has been feeded by the word
$t_1$, the automaton reaches the state $s$. Now being feeded by the word $w_1$, the automaton
$\mathfrak A_s=\mathfrak A(s)$ outputs the word $v_1$ and reaches a state
$r^{(1)}$. By the minimality of $\mathfrak A$, there exists a word $t_2\in\Cal
W_\phi$ such that after $\mathfrak A(r^{(1)})$ has been feeded by the word
$t_2$, the automaton reaches the state $s$. Now after $\mathfrak A_s$ has been feeded by the word $w_2$,
the automaton $\mathfrak A_s$ reaches the state $r^{(2)}$, and we can find
a word $t_3$ in a manner similar to that of described. Now being feeded  by the
so constructed left-infinite word $\ldots w_2t_2w_1t_1w_0t_0$, the
automaton $\mathfrak A$ outputs the left-infinite word $\ldots v_2\bar t_2v_1\bar
t_1v_0\bar t_0$ where $\bar t_j=\mathfrak a_{r^{(j-1)}}(t_j)$, $j=1,2,3,\ldots$.
Now consider $p$-adic integers $z=\sum_{i=0}^\infty\chi_i\cdot p^i$ and 
$\bar z=\sum_{i=0}^\infty\xi_i\cdot p^i$ which correspond to  infinite
words $\ldots w_2t_2w_1t_1w_0t_0$ and $\ldots v_2\bar t_2v_1\bar
t_1v_0\bar t_0$ accordingly; that is,  $\ldots\chi_2\chi_1\chi_0=\ldots w_2t_2w_1t_1w_0t_0$ and $\ldots\xi_2\xi_1\xi_0=\ldots v_2\bar t_2v_1\bar
t_1v_0\bar t_0$. Then, by the construction we have that $\bar z=f_\mathfrak A(z)$, and from \eqref{eq:mark-up_A}--\eqref{eq:mark-up_B} it
follows that
\begin{align}
\label{eq:mark-up_AA}
\lim_{j\to\infty}\frac{z\md p^{K_j}}{p^{K_j}}&=x;\\
\label{eq:mark-up_BB}
\lim_{j\to\infty}\frac{f_\mathfrak A(z)\md p^{K_j}}{p^{K_j}}&=y,
\end{align}
where $K_j=\sum_{i=0}^j\Lambda(w_i)+\sum_{i=0}^j\Lambda(t_i)$. As the sequence
$(K_j)_{j=0}^\infty$ is strictly increasing, from \eqref{eq:mark-up_AA}--\eqref{eq:mark-up_BB}
it follows now that $(x;y)\in\mathbf{LP}(\mathfrak A)$ in view of Definition
\ref{def:plot-auto}.



\end{proof}
It is well known (see e.g. \cite[Ch.2, Exercise 2]{RealAnalys}) that the set of all accumulation
points of  a Hausdorff topological  space (the \emph{derived set} of the space)
is a closed subset of the space. From Theorem \ref{thm:AP=LP} it follows
 that once a finite automaton  is minimal then its limit plot is a derived set of its plot (whence, closed):
\begin{cor}
\label{cor:AP=LP}
Let an automaton $\mathfrak A$ be finite and minimal; then the set $\mathbf{LP}(\mathfrak
A)$ is a derived set of $\mathbf P(\mathfrak A)$ and therefore is closed in $\R^2$. A point $(x;y)\in\R^2$ belongs to $\mathbf{LP}(\mathfrak A)$ if and only if there exists a sequence $\left(\alpha_{k_i-1}^{(i)}\ldots\alpha_{0}^{(i)}\right)_{i=0}^\infty$
of finite non-empty words of strictly increasing lengths $k_0<k_1<k_2<\cdots$ such that the sequence
$\left(0.\alpha_{k_i-1}^{(i)}\alpha_{k_i-2}^{(i)}\ldots\alpha_0^{(i)}\right)_{i=0}^\infty$ tends to $x$
 and the corresponding sequence 
 $\left(0.\beta_{k_i-1}^{(i)}\beta_{k_i-2}^{(i)}\ldots\beta_0^{(i)}\right)_{i=0}^\infty$
 tends to $y$ as $i\to\infty$,
 where $\beta_{k_i-1}^{(i)}\ldots\beta_0^{(i)}$ are respective output words of the automaton
 $\mathfrak A$ that correspond
 to input words $\alpha_{k_i-1}^{(i)}\ldots\alpha_{0}^{(i)}$ 
 \textup{(i.e., $\beta_{k_i-1}^{(i)}\beta_{k_i-2}^{(i)}\ldots\beta_0^{(i)}=\mathfrak a(\alpha_{k_i-1}^{(i)}\alpha_{k_i-2}^{(i)}\ldots\alpha_0^{(i)})$,
 $i=0,1,2,\ldots$)}.
\end{cor}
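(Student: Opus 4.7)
The plan is to derive the corollary essentially by packaging Theorem~\ref{thm:AP=LP} together with Note~\ref{note:plot-auto-in} and one elementary fact from point-set topology. First, I would recall the standard fact that in any Hausdorff space the set of accumulation points of a subset $S$ (its derived set) coincides with the set of accumulation points of its closure $\overline S$, and that this derived set is always closed. Applied to $S=E(f_{\mathfrak A})$, whose closure in $\R^2$ is $\mathbf P(f_{\mathfrak A})=\mathbf P(\mathfrak A)$ by Note~\ref{note:plot-auto-in}, this gives at once that the derived set of $\mathbf P(\mathfrak A)$ equals $\mathbf{AP}(E(f_{\mathfrak A}))$.

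Next, I would invoke Theorem~\ref{thm:AP=LP}, which, under the hypotheses that $\mathfrak A$ is finite and minimal, asserts precisely $\mathbf{AP}(E(f_{\mathfrak A}))=\mathbf{LP}(\mathfrak A)$. Combining the two identifications yields that $\mathbf{LP}(\mathfrak A)$ is the derived set of $\mathbf P(\mathfrak A)$ and is closed in $\R^2$. This settles the first assertion of the corollary without further work.

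For the sequential characterization of membership in $\mathbf{LP}(\mathfrak A)$, I would unwind the definition of accumulation point: $(x;y)\in\mathbf{AP}(E(f_{\mathfrak A}))$ precisely when there exists a sequence of pairwise distinct points $\mathbf p_i=(p^{-k_i}(z_i\md p^{k_i});\,p^{-k_i}(f_{\mathfrak A}(z_i)\md p^{k_i}))\in E(f_{\mathfrak A})$ converging to $(x;y)$. Setting $\alpha^{(i)}_{k_i-1}\ldots\alpha^{(i)}_0:=\wrd_{k_i}(z_i\md p^{k_i})$, the corresponding output word of $\mathfrak A$ on this input is exactly $\beta^{(i)}_{k_i-1}\ldots\beta^{(i)}_0=\wrd_{k_i}(f_{\mathfrak A}(z_i)\md p^{k_i})$, and the convergence of $\mathbf p_i$ to $(x;y)$ translates into the two limits stated in the corollary.

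The only subtlety is arranging for the lengths $k_i$ to be \emph{strictly} increasing: this is the main (and only) obstacle. I would handle it by the same pigeonhole observation used inside the proof of Theorem~\ref{thm:AP=LP}: for each fixed length $k$ there are only $p^k$ many points in $E_k(f_{\mathfrak A})$, so if the $k_i$ stayed bounded, the $\mathbf p_i$ could take only finitely many values, contradicting their being pairwise distinct; hence an infinite subsequence has $k_i\to\infty$, and by passing to a further subsequence we may assume $k_0<k_1<k_2<\cdots$. Conversely, given any sequence of words with the two stated convergence properties and strictly increasing lengths, the associated points of $E(f_{\mathfrak A})$ have distinct $x$-coordinates (they are rationals of pairwise different denominators, modulo finitely many coincidences) and therefore witness $(x;y)\in\mathbf{AP}(E(f_{\mathfrak A}))=\mathbf{LP}(\mathfrak A)$, completing the `if and only if'.
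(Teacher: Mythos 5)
Your treatment of the first assertion (derived set, closedness) and of the necessity direction is sound and essentially the paper's own route: identify the derived set of $\mathbf P(\mathfrak A)$ with $\mathbf{AP}(E(f_{\mathfrak A}))$ via Note \ref{note:plot-auto-in} and invoke Theorem \ref{thm:AP=LP}; for necessity the paper is even more direct, reading the words off Definition \ref{def:plot-auto} as $\wrd_{k_i}(z\md p^{k_i})$ without passing through accumulation points at all.

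The sufficiency direction, however, has a genuine gap. Your claim that the points $\bigl(0.\alpha^{(i)}_{k_i-1}\ldots\alpha^{(i)}_0;\,0.\beta^{(i)}_{k_i-1}\ldots\beta^{(i)}_0\bigr)$ have pairwise distinct $x$-coordinates ``modulo finitely many coincidences'' is false: appending zeros at the right end of a word (the end fed to the automaton first) does not change its value under $\rho$, so e.g.\ the all-zero words $0^{k_i}$ of strictly increasing lengths all give $x$-coordinate $0$, and the whole point sequence can be eventually constant. In that degenerate case the sequence does not witness $(x;y)\in\mathbf{AP}(E(f_{\mathfrak A}))$, and you cannot repair this by appealing to Proposition \ref{prop:no-iso} without already knowing $(x;y)\in\mathbf{LP}(\mathfrak A)$ --- which is what you are trying to prove. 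The conclusion is nevertheless true, but it requires the constructive half of the proof of Theorem \ref{thm:AP=LP}: starting from \eqref{eq:AP=LP-1}--\eqref{eq:AP=LP-2} one extracts the stabilized digit sequences $\zeta_0\zeta_1\ldots$ and $\gamma_0\gamma_1\ldots$, uses finiteness of $\Cal S$ to find a state recurring infinitely often, and uses minimality to splice the words $w_\ell$ with connecting words $t_\ell$ into a single $z\in\Z_p$ and a strictly increasing sequence $(K_j)$ satisfying \eqref{eq:def-LP}. That argument never needs the points to be distinct; this is exactly why the paper's proof of the corollary simply reruns that portion of the theorem's proof rather than arguing through accumulation points.
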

We stress once again that
words $\alpha_{k_i-1}\ldots\alpha_{0}$ are feeded to the automaton $\mathfrak
A$ from right
to left; i.e. the letter $\alpha_0$ is feeded  to $\mathfrak A$ first, then the letter $\alpha_1$
is feeded to $\mathfrak A$, etc.
\begin{proof}[Proof of Corollary \ref{cor:AP=LP}]
By the definition, the  set $\mathbf{AP}(E(f_\mathfrak A))=\mathbf{AP}(\mathbf
P({\mathfrak A}))$ is a derived set of $\mathbf P(\mathfrak A)$; whence by Theorem \ref{thm:AP=LP} the set $\mathbf{LP}(\mathfrak
A)$ is a derived (thus, closed) set of $\mathbf P(\mathfrak
A)$. 

The necessity of conditions of the corollary follows immediately from Definition \ref{def:plot-auto} since once $(x;y)\in\mathbf{LP}(\mathfrak
A)$ then there exist a $p$-adic integer $z=\sum_{i=0}^\infty\alpha_i\cdot
p^i$ and a strictly increasing sequence $1\le k_1<k_2<\ldots$ over $\N$ such
that \eqref{eq:def-LP} holds; that is, we just put  
$\alpha_{k_i-1}^{(i)}\ldots\alpha_{0}^{(i)}=\wrd_{k_i}(z\md p^{k_i})$ and
$\beta_{k_i-1}^{(i)}\ldots\beta_{0}^{(i)}=\wrd_{k_i}(f(z)\md p^{k_i})$, where $f$
is an automaton function of $\mathfrak A$, cf. Note \ref{note:plot-auto-in}.

To prove sufficiency of the conditions note that the conditions just yield
that there exists an infinite sequence of words
$h_i=\alpha_{k_i-1}^{(i)}\ldots\alpha_{0}^{(i)}$ of strictly increasing lengths $k_i$ such
that \eqref{eq:AP=LP-1}--\eqref{eq:AP=LP-2} hold. The argument that follows
\eqref{eq:AP=LP-1}--\eqref{eq:AP=LP-2} of the proof of Theorem \ref{thm:AP=LP}
now proves the sufficiency.
\end{proof}
It is worth noticing here that the limit plot of a finite minimal automaton does not depend
on what state of the automaton is taken as initial:
\begin{note}
\label{note:AP=LP}
If $s,t$ are states of a finite minimal automaton $\mathfrak A$, $s\ne t$, then $\mathbf{LP}(\mathfrak
A(s))=\mathbf{LP}(\mathfrak A(t))$.
\end{note} 
Indeed, due to the minimality, every state of $\mathfrak A$ is reachable
from any other state of $\mathfrak A$. Therefore  if $(x;y)\in\mathbf{LP}(\mathfrak
A(t))$ then by Definition \ref{def:plot-auto} there exist $z\in\Z_p$ and a strictly increasing
infinite sequence $k_1<k_2<\ldots$ of numbers from $\N$ such that \eqref{eq:def-LP}
holds. By the minimality of $\mathfrak A$, there exists a finite word $w$ of length $K>0$
such that after the automaton $\mathfrak A(s)$ has been feeded by $w$, it
reaches the state $t$. Now by substituting in Definition \ref{def:plot-auto} $p^K\cdot z+\nm(w)$ for $z$ and $k_1+K<k_2+K<\ldots$ for $k_1<k_2<\ldots$
we see that \eqref{eq:def-LP} holds and therefore $(x;y)\in\mathbf{LP}(\mathfrak
A(s))$. 

Using an idea similar to that of Note \ref{note:AP=LP} it can be easily demonstrated
that if $\mathfrak B$ is a sub-automaton of $\mathfrak A$ then $\mathbf
P(\mathfrak B)\subset\mathbf P(\mathfrak A)$ since every state of
the automaton $\mathfrak A$ is reachable from its initial state:
\begin{note}
\label{note:sub-auto}
Let $\mathfrak B=\mathfrak B(s)$ be a sub-automaton
of the automaton $\mathfrak A$.  As the initial
state $s$ of the automaton $\mathfrak B$ is reachable from the initial state $s_0$ of the
automaton $\mathfrak A$, from the definition of the respective sets it immediately
follows that $\mathbf P(\mathfrak B)\subset\mathbf P(\mathfrak A)$, $\mathbf{LP}(\mathfrak B)\subset\mathbf{LP}(\mathfrak A)$,
and $\mathbf{AP}(\mathfrak B)\subset\mathbf{AP}(\mathfrak A)$.
\end{note}

The following useful lemma is a sort of a counter-part of Lemma \ref{le:per}
in terms of points from $\mathbf{LP}(\mathfrak A)$ rather than in terms of
words.
\begin{lem}
\label{le:qint-LP}
Given a finite automaton $\mathfrak A$ and a point $x\in\Z_p\cap\Q$, if $(x;y)\in\mathbf{LP}(\mathfrak
A)$ for some $y\in\R$ then $y\in\Z_p\cap\Q$.
\end{lem}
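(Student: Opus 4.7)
The argument will combine the characterization in Corollary~\ref{cor:r-repr} of the real numbers belonging to $\Z_p\cap\Q$ via purely periodic base-$p$ expansions with Lemma~\ref{le:per} on eventually periodic output of finite automata on periodic input: the assumed periodic structure of $x$ has to be propagated through the automaton to produce a periodic structure for $y$.

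First, unwind Definition~\ref{def:plot-auto} to obtain $z\in\Z_p$ and a strictly increasing sequence $k_1<k_2<\cdots$ with $(z\md p^{k_i})/p^{k_i}\to x$ and $(f_\mathfrak A(z)\md p^{k_i})/p^{k_i}\to y$. Since $x\in\Z_p\cap\Q$ and $x$ lies in $[0,1]$ as a limit of numbers from $[0,1)$, Corollary~\ref{cor:r-repr} yields a purely periodic base-$p$ expansion $x\md 1=0.(\chi_0\chi_1\ldots\chi_{n-1})^\infty$ (taking the form $0.(p-1)^\infty$ in the special case $x=1$). Convergence in the real topology forces the $p$-adic digits of $z$ at positions $k_i-1,k_i-2,\ldots$ to agree with $\chi_0,\chi_1,\ldots$ on longer and longer initial segments: defining $L(i)$ as the largest integer for which $\delta_{k_i-1-j}(z)=\chi_{j\md n}$ holds for every $0\le j<L(i)$, one has $L(i)\to\infty$.

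Next, split the input word $h_i=\wrd_{k_i}(z\md p^{k_i})$ as a concatenation $h_i=P_iQ_i$, where $P_i$ consists of the $L(i)$ leftmost letters (belonging to the periodic pattern of $x$) and $Q_i$ of the $k_i-L(i)$ rightmost ones (the ``junk''). The letters of $Q_i$ are fed to $\mathfrak A$ first and leave the automaton in some state $s_i'$; since $\Cal S$ is finite, passing to a subsequence gives a constant $s_i'=s^*$. Thereafter $\mathfrak A(s^*)$ receives, in processing order, $\chi_{(L(i)-1)\md n},\chi_{(L(i)-2)\md n},\ldots,\chi_1,\chi_0$, which is the length-$L(i)$ initial segment of the processing-order readout of the left-infinite periodic word $w^\infty$ with $w=\chi_{n-1}\chi_{n-2}\ldots\chi_0$. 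Lemma~\ref{le:per} applied to $\mathfrak A(s^*)$ fed $w^\infty$ now shows that the sequence $\tau_0,\tau_1,\tau_2,\ldots$ of output letters produced in processing order is eventually periodic: $\tau_{m+T}=\tau_m$ for all $m\ge M$, for suitable $M\in\N_0$ and $T\in\N$.

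Finally, read off the real expansion of $y$. The $L(i)$ most significant base-$p$ digits of $(f_\mathfrak A(z)\md p^{k_i})/p^{k_i}$ are $\tau_{L(i)-1},\tau_{L(i)-2},\ldots,\tau_0$, while the output produced while $Q_i$ is being processed occupies only digit positions $\ge L(i)$ and so contributes at most $p^{-L(i)}\to 0$. Passing to a further subsequence along which $L(i)\md T$ takes a constant value $L^*$, for any fixed $j\ge 0$ one eventually has $L(i)-1-j\ge M$, whence $\tau_{L(i)-1-j}$ takes the fixed value $\gamma_j=\tau_{M+((L^*-1-j-M)\md T)}$ depending only on $j\md T$. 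Thus $(\gamma_j)_{j=0}^\infty$ is periodic of period $T$ and $y\md 1=0.(\gamma_0\gamma_1\ldots\gamma_{T-1})^\infty$ is purely periodic in base $p$; Corollary~\ref{cor:r-repr} then yields $y\in\Z_p\cap\Q$. The main obstacle is bookkeeping the unknown prefix $Q_i$ (whose length grows, whose content is uncontrolled, and whose final state $s_i'$ is arbitrary); all of this is absorbed into the finite-valued choices of $s^*$ and of $L(i)\md T$ (each made constant on a subsequence by finiteness of $\Cal S$ and of $\{0,1,\ldots,T-1\}$ respectively), with the corresponding output confined to a $p^{-L(i)}$ tail.
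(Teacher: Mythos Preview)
Your approach is essentially the paper's: extract the purely periodic expansion of $x$ via Corollary~\ref{cor:r-repr}, absorb the uncontrolled tail $Q_i$ by passing to a subsequence with constant exit state $s^*$, feed the periodic prefix to $\mathfrak A(s^*)$, invoke Lemma~\ref{le:per}, and read off a purely periodic expansion for $y$. The paper executes the same plan, handling the phase alignment by restricting the periodic prefix to lengths that are exact multiples $mt$ of the period before selecting states.

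There is, however, a genuine bookkeeping slip in your version. You assert that the processing-order input to $\mathfrak A(s^*)$, namely $\chi_{(L(i)-1)\md n},\chi_{(L(i)-2)\md n},\ldots,\chi_1,\chi_0$, is the length-$L(i)$ initial segment of the processing-order readout of $w^\infty$ with $w=\chi_{n-1}\chi_{n-2}\ldots\chi_0$. But that readout begins $\chi_0,\chi_1,\chi_2,\ldots$ (the rightmost letter of $w^\infty$ is $\chi_0$), whereas your sequence begins with $\chi_{(L(i)-1)\md n}$; these agree only when $L(i)\equiv 1\pmod n$. Since the starting phase varies with $i$, the output letters you call $\tau_0,\tau_1,\ldots$ are not well-defined independently of $i$, and the later passage to $L(i)\md T$ constant does not repair this, because $T$ was obtained from a sequence that does not match the actual inputs. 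The fix is easy and is exactly what the paper does: \emph{before} appealing to Lemma~\ref{le:per}, pass to a further subsequence on which $L(i)\md n$ is constant. Then for every surviving $i$ the input to $\mathfrak A(s^*)$ is a prefix of one fixed left-infinite periodic word (a cyclic shift of $w^\infty$), the $\tau_m$ are unambiguously defined, and your final paragraph goes through as written (one may take $L(i)\md\operatorname{lcm}(n,T)$ constant to handle both alignments at once).
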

\begin{proof}[Proof of Lemma \ref{le:qint-LP}]
As $(x;y)\in\mathbf{LP}(\mathfrak A)$ then there exist $z\in\Z_p$ and a
strictly increasing sequence $k_0<k_1<\ldots$ over $\N$ such that \eqref{eq:def-LP}
holds.  Therefore there exists an infinite sequence of words
$h_i=\wrd_{k_i}(z\md p^{k_i})$ of strictly increasing lengths $k_i$ such
that \eqref{eq:AP=LP-1}--\eqref{eq:AP=LP-2} hold simultaneously. Now repeating
for the case $z_i=z$
the argument that follows \eqref{eq:AP=LP-1}--\eqref{eq:AP=LP-2} in the proof
of Theorem \ref{thm:AP=LP} we conclude that \eqref{eq:mark-up_A}--\eqref{eq:mark-up_B}
hold in our case as well (note that nowhere in the mentioned argument from the proof of Theorem \ref{thm:AP=LP} we used that $\mathfrak A$ is minimal).
Moreover, in the notation of the argument, there exists a strictly increasing
sequence $(M_\ell)_{\ell=0}^\infty$ over $\N$ such that
\begin{align}
\label{eq:qint-LP-1}
\alpha_{k_i-1}^{(i)}\ldots\alpha_{0}^{(i)}&=\zeta_0\zeta_1\ldots\zeta_{\ell}w^{(i)}_\ell\
\text{if}\ i\ge M_\ell\ (\ell=0,1,2,\ldots);\\
\label{eq:qint-LP-2}
\beta_{k_i-1}^{(i)}\ldots\beta_{0}^{(i)}&=\gamma_0\gamma_1\ldots\gamma_{\ell}u^{(i)}_\ell\
\text{if}\ i\ge M_\ell\ (\ell=0,1,2,\ldots);
\end{align}
But $\alpha_j^{(i)}, \beta_j^{(i)}$ do not depend on $i$  since in our case
$z_i=z=\sum_{n=0}^\infty\alpha_np^n$
(where $\alpha_0,\alpha_1,\ldots\in\F_p$) for
all $i$; therefore $\alpha_n^{(i)}=\alpha_n$ for all $n,i\in\N_0$.
As $x=0.\zeta_0\zeta_1\ldots$ (cf. \eqref{eq:mark-up_A}) and $x\in\Z_p\cap\Q$
then the right-infinite word $\zeta_0\zeta_1\ldots$ must be purely periodic
(cf. Corollary \ref{cor:r-repr}) with a period $\chi_0\ldots\chi_{t-1}$ of length $t>0$: that is, $\zeta_0\zeta_1\ldots=(\chi_0\ldots\chi_{t-1})^\infty$.
Now in \eqref{eq:qint-LP-1} put $\ell=mt-1$; then
for every $m\in\N$ we  have that 
$\alpha_{k_i-1}\ldots\alpha_{k_i-mt}=(\chi_0\ldots\chi_{t-1})^m$ 
for all $i\ge M_{mt}$. Now denote via $s_m^{(i)}$
the state the automaton $\mathfrak A$ reaches after have been feeded by the
word $w_{mt}^{(i)}$. Fix $m\in\N$ and denote $s_m$ a state which occurs in
the sequence $(w_{mt}^{(i)})_{i=M_{mt}}^\infty$ infinitely often; due to
the finiteness of the automaton $\mathfrak A$ such state exists. Denote $K_m$
the smallest $i\ge M_{mt}$ such that $s_m=s_m^{(i)}$ (therefore $K_m\ge M_{mt}$). 
And again due to the finiteness of the automaton $\mathfrak A$ in the sequence  $(s_m)_{m=1}^\infty$ some state (say, $s$) occurs
infinitely often. Let $(m_j)_{j=0}^\infty$ be the corresponding infinite
(thus, strictly increasing)
subsequence, i.e., $s_{m_j}=s$; then
as the sequence $(M_\ell)_{\ell=0}^\infty$ is strictly increasing and as
$K_m\ge M_{mt}$, in the sequence $(K_{m_j})_{j=0}^\infty$ there exists a
strictly increasing subsequence, say $(K_{m_{j_r}})_{r=0}^\infty$ (note that
the sequence $({m_{j_r}})_{r=0}^\infty$ is also strictly increasing). Now from
\eqref{eq:qint-LP-1}--\eqref{eq:qint-LP-2} it follows that once being feeded
successfully by purely periodic words 
$w_r=\alpha_{k_i-1}\ldots\alpha_{k_i-m_{j_r}t}=(\chi_0\ldots\chi_{t-1})^{m_{j_r}}$
for $i=K_{m_{j_r}}$, $r=0,1,2,\ldots$, the automaton $\mathfrak A(s)$ 
outputs the words $v_r=\gamma_0\gamma_1\ldots\gamma_{t\cdot
m_{j_r}-1}$. Now by combining Lemma \ref{le:per} with Corollary \ref{cor:r-repr} we conclude  that if $z^\prime\in\Z_p$ is such that 
$\wrd z^\prime=(\chi_0\ldots\chi_{t-1})^{\infty}$ then 
$\lim_{r\to\infty} (z^\prime\md p^{m_{j_r}})/p^{m_{j_r}}=x$ and 
$\lim_{r\to\infty} ((\mathfrak a_{s}(z^\prime))\md p^{m_{j_r}})/p^{m_{j_r}}=y\in\Z_p\cap\Q$.

\end{proof}

Yet one more  property of automata plots is their invariance with respect
to $p$-shifts. That is, \emph{given a point $(x;y)\in\mathbf{P}(\mathfrak
A)$, take base-$p$ expansions $x=0.\chi_1\chi_2\chi_3\ldots$, 
$y=0.\xi_1\xi_2\xi_3\ldots$ of coordinates $x,y$; then 
$(0.\chi_2\chi_3\ldots;0.\xi_2\xi_3\ldots)\in\mathbf{P}(\mathfrak
A)$.} To put it in other words, the following proposition is true:
\begin{prop}
\label{prop:p-shift}
For an arbitrary 
automaton $\mathfrak A$, if $(x;y)\in\mathbf{P}(\mathfrak
A)\subset\T^2$ \textup{(}resp., $(x;y)\in\mathbf{LP}(\mathfrak
A)\subset\T^2$\textup{)} then $((px)\md1;(py)\md1)\in\mathbf{P}(\mathfrak
A)$ \textup{(}resp., $((px)\md1;(py)\md1)\in\mathbf{LP}(\mathfrak
A)$\textup{)}.
\end{prop}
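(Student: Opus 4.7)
The plan is built around one clean combinatorial identity. For a word $w=\alpha_{k-1}\alpha_{k-2}\cdots\alpha_0$ of length $k\ge 2$, let $w^{-}=\alpha_{k-2}\cdots\alpha_0$ denote the suffix obtained by dropping the leftmost (i.e.\ highest-order) letter. A direct base-$p$ computation gives
\[
\rho(w^{-})\;=\;p\,\rho(w)-\alpha_{k-1}\;=\;(p\,\rho(w))\bmod 1,
\]
because the base-$p$ expansion of $\rho(w)$ is $0.\alpha_{k-1}\alpha_{k-2}\ldots\alpha_0$ and multiplication by $p$ is the shift. The crucial second ingredient is that, by the author's right-to-left convention, the automaton $\mathfrak A$ started at $s_0$ reads $\alpha_0,\alpha_1,\ldots,\alpha_{k-2}$ in exactly the same order whether its input is $w$ or $w^{-}$; therefore $\mathfrak a(w^{-})$ is literally the rightmost $k-1$ letters of $\mathfrak a(w)$, so dropping the leftmost output letter gives $\rho(\mathfrak a(w^{-}))=(p\,\rho(\mathfrak a(w)))\bmod 1$ by the same arithmetic.

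For the $\mathbf{LP}$ case I would take the witnesses from Definition \ref{def:plot-auto}: a $p$-adic integer $z\in\Z_p$ and a strictly increasing sequence $k_1<k_2<\cdots$ with $p^{-k_i}(z\bmod p^{k_i})\to x$ and $p^{-k_i}(f_{\mathfrak A}(z)\bmod p^{k_i})\to y$. With $w_i=\wrd_{k_i}(z\bmod p^{k_i})$, the suffix $w_i^{-}$ is exactly $\wrd_{k_i-1}(z\bmod p^{k_i-1})$, and the identity above yields
\[
\frac{z\bmod p^{k_i-1}}{p^{k_i-1}}=p\cdot\frac{z\bmod p^{k_i}}{p^{k_i}}-\delta_{k_i-1}(z),
\qquad
\frac{f_{\mathfrak A}(z)\bmod p^{k_i-1}}{p^{k_i-1}}=p\cdot\frac{f_{\mathfrak A}(z)\bmod p^{k_i}}{p^{k_i}}-\delta_{k_i-1}(f_{\mathfrak A}(z)).
\]
Since the leading digits $\delta_{k_i-1}(z)$ and $\delta_{k_i-1}(f_{\mathfrak A}(z))$ take finitely many values in $\F_p$, by passing to a subsequence I may assume each is eventually constant; the two right-hand sides then converge in $\mathbb R$ to reals that agree with $(px)\bmod 1$ and $(py)\bmod 1$ modulo $1$. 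Dropping finitely many indices so that $k_i-1\ge 1$ and renaming, the strictly increasing sequence $k'_i=k_i-1$ together with the same $z$ witnesses $((px)\bmod 1;(py)\bmod 1)\in\mathbf{LP}(\mathfrak A)\subset\mathbb T^2$.

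For the $\mathbf P$ case I would invoke Note \ref{note:plot-auto-in}: there exist non-empty words $w_i\in\Cal W$ with $\rho(w_i)\to x$ and $\rho(\mathfrak a(w_i))\to y$ in $\mathbb T$. Pass to a subsequence along which $\Lambda(w_i)\ge 2$ (this is always possible: if the lengths were bounded, by finiteness of each $\Cal W_k$ the pair $(\rho(w_i),\rho(\mathfrak a(w_i)))$ would be eventually constant, in which case I can instead work directly with the single witness word, appealing to the length-$\ge 2$ case, or replace $w_i$ by longer words with the same $\rho$-values by appending $0$'s on the right). Then by the identity,
\[
\bigl(\rho(w_i^{-}),\,\rho(\mathfrak a(w_i^{-}))\bigr)\;=\;\bigl((p\rho(w_i))\bmod 1,\,(p\rho(\mathfrak a(w_i)))\bmod 1\bigr),
\]
and since the map $\mathbb T\to\mathbb T$, $t\mapsto (pt)\bmod 1$, is continuous, these images converge in $\mathbb T^2$ to $((px)\bmod 1,(py)\bmod 1)$, placing that point in $\mathbf{P}(\mathfrak A)\subset\mathbb T^2$.

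The main obstacle is purely bookkeeping: the reduction $\bmod 1$ is not continuous on $\mathbb R$ at integer points, so the limit computation must be done on $\mathbb T$. This is exactly why the statement is formulated on $\mathbb T^2$ and not on $\mathbb I^2$; technically one must either pass to a subsequence on which the leading digits stabilise (the $\mathbf{LP}$ argument) or appeal to the continuity of $t\mapsto pt$ on $\mathbb T$ (the $\mathbf P$ argument). Everything else is the combinatorial identity whose validity is guaranteed by the automaton's right-to-left reading order.
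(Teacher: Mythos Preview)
Your proof is correct and follows the same approach as the paper's: both drop the leftmost letter from the witness words and observe that this implements $t\mapsto(pt)\bmod 1$ on both coordinates simultaneously, thanks to the right-to-left reading convention. You are commendably more explicit than the paper about the torus subtlety (the paper simply writes $(px)\bmod 1=\lim_i (z\bmod p^{k_i-1})/p^{k_i-1}$ without pausing over the discontinuity of $\bmod 1$), and your subsequence-where-leading-digits-stabilise manoeuvre for $\mathbf{LP}$ is a clean way to make that step rigorous in $\mathbb R$ before descending to $\mathbb T$.

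One small slip in your $\mathbf P$ parenthetical: ``appending $0$'s on the right'' does \emph{not} preserve $\rho(\mathfrak a(w))$, because the automaton reads that new rightmost $0$ first and this generally perturbs its subsequent trajectory. This length-$1$ edge case is equally glossed over in the paper's own proof (which tacitly assumes the truncated words $\alpha_{k_i-1}\cdots\alpha_0$ are non-empty), and it is inconsequential for the paper's applications, so it does not undermine your argument.
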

\begin{proof}[Proof of Proposition \ref{prop:p-shift}]
The first statement follows immediately from Note  \ref{note:plot-auto-in} since once 
$$(0.\alpha_{k_i}\ldots\alpha_0;0.\beta_{k_i}\ldots\beta_0)
\to(0.\chi_1\chi_2\ldots;0.\xi_1\xi_2\ldots)$$ as $i\to\infty$ 
then necessarily
$$(0.\alpha_{k_i-1}\ldots\alpha_0;0.\beta_{k_i-1}\ldots\beta_0)
\to(0.\chi_2\chi_3\ldots;0.\xi_2\xi_3\ldots)$$ as $i\to\infty$. 

To prove the second statement, let $f=f_\mathfrak A\:\Z_p\>\Z_p$ be  an automaton function of the automaton
$\mathfrak A$. As $(x;y)\in\mathbf{LP}(\mathfrak A)$, there exists $z\in\Z_p$
and a strictly increasing sequence $(k_i)_{i=0}^\infty$ over $\N$ such that
$x=\lim_{i\to\infty}(z\md p^{k_i})/p^{k_i}$ and $y=\lim_{i\to\infty}(f(z)\md p^{k_i})/p^{k_i}$, cf. Definition  \ref{def:plot-auto}. Therefore $(px)\md1=
(p\lim_{i\to\infty}(z\md p^{k_i})/p^{k_i})\md1=\lim_{i\to\infty}(p(z\md p^{k_i})/p^{k_i})\md1=
\lim_{i\to\infty}(z\md p^{k_i-1})/p^{k_i-1}$ as $(z\md p^{k_i})/p^{k_i}=
\zeta_{k_i-1}p^{-1}+\zeta_{k_i-2}p^{-2}+\cdots+\zeta_{0}p^{-k_i}$
once $z=\zeta_0+\zeta_1p+\cdots+\zeta_{k_i-1}p^{k_i-1}+\cdots$ is a $p$-adic
canonical representation for $z\in\Z_p$. By the same reason, 
$(py)\md1=
(p\lim_{i\to\infty}(f(z)\md p^{k_i})/p^{k_i})\md1=\lim_{i\to\infty}(p(f(z)\md p^{k_i})/p^{k_i})\md1=
\lim_{i\to\infty}(f(z)\md p^{k_i-1})/p^{k_i-1}$. Therefore 
$((px)\md1;(py)\md1)\in\mathbf{LP}(\mathfrak
A)$ by Definition \ref{def:plot-auto}. 
\end{proof}

It is known that the plot $\mathbf P(\mathfrak A)\subset\mathbb I^2$ of the
automaton $\mathfrak A$ can be of two types only; namely,  \emph{given
an automaton $\mathfrak A$, the set $\mathbf P(\mathfrak A)$ either coincides
with the whole unit square $\mathbb I^2$ or $\mathbf P(\mathfrak A)$ is nowhere dense in $\mathbb
I^2$}: Being closed in $\R^2$, the set $\mathbf P(\mathfrak A)$
is measurable w.r.t.  Lebesgue measure on $\R^2$, and the measure of $\mathbf P(\mathfrak A)$ is 1 if and only if $\mathbf P(\mathfrak A)=\mathbb I^2$
and is 0 if otherwise: The later assertion is a statement of  automata 0-1 law, cf. \cite[Proposition 11.15]{AnKhr} and \cite{me:Discr_Syst}. Moreover,
\emph{once an automaton $\mathfrak A$ is finite, the measure of $\mathbf
P(\mathfrak A)$ is 0 and  $\mathbf P(\mathfrak A)$ is nowhere dense in $\mathbb
I^2$} (cf. op. cit.). Therefore, \emph{plots of finite automata are  Lebesgue
measure 0 nowhere
dense closed subsets of the unit square $\mathbb I^2$}; thus they can not contain sets of positive
measure, but they may contain lines. The goal of the paper is to prove
that if $\mathfrak A$ is a finite automaton then smooth curves which lies completely
in  $\mathbf
P(\mathfrak A)$ (thus in $\mathbf{LP}(\mathfrak A)$, cf. further Theorem \ref{thm:main}) can only be straight lines. Moreover, we
will prove that \emph{if
finite automata plots are considered as subsets of  the unit torus $\T^2$ in $\R^3$ then smooth
curves lying in the plots 
can only be torus windings}. For this purpose we will need some extra information
(which follows)
about torus knots.
\subsection{Torus knots, torus links and linear flows on torus}
\label{ssec:knot}
Further in the paper we will need only few concepts concerning torus
knots theory; details may be found in numerous books on knot theory, see
e.g. \cite{Fox,Mansurov-knots}. For our purposes it is enough to recall only
two notions, the knot and the link. Recall that a knot is a smooth embedding of a circle
$\mathbb S$ into $\R^3$ and a link is a smooth embedding of several disjoint
circles in $\R^3$, cf. \cite{Mansurov-knots}. We will  consider only special
types of knots and links, namely, torus knots and torus links. Informally, a torus knot is a smooth
closed curve without intersections which lies completely in the surface of
a torus $\mathbb T^2\subset \R^3$, and a link (of torus knots)  is a collection
 of (possibly knotted) torus  knots, see e.g. \cite[Section 26]{DuFomNov_ModGeo}
for formal definitions.  

We also need a notion of a cable
of torus. Formally, a cable of torus is  any geodesic on torus. Recall that geodesics on
torus $\T^2$ are images of straight lines in $\R^2$ under the mapping $(x;y)\mapsto(x\md1;y\md1)$
of $\R^2$ onto $\T^2=\R^2/\Z\times\Z$,
cf., e.g., \cite[Section 5.4]{Misch-Fom}. 
\begin{defn}[Cable of the torus]
\label{def:cable}
A \emph{cable of the torus} is an image of a straight line in $\R^2$ under
the map $\md 1\colon (x;y)\mapsto (x\md 1;y\md 1)$ of the Euclidean plain
$\R^2$ onto the 2-dimensional real torus $\mathbb T^2=\R^2/\Z\times\Z=\mathbb
S\times\mathbb S\subset \R^3$. If the line is defined by the equation $y=ax+b$ we say
that $a$ is a \emph{slope} of the cable $\mathbf C(a,b)$. We denote via  $\mathbf
C(\infty,b)$ a cable which corresponds to the line $x=b$, the \emph{meridian},
and say that the
slope is $\infty$ in this case. Cables $\mathbf C(0,b)$ of slope 0 (i.e., the ones that correspond
to straight lines $y=b$) are called  \emph{parallels}.
\end{defn}
In dynamics, cables of torus $\T^2$ are  viewed as orbits of \emph{linear
flows on torus}; that is, of dynamical systems on $\T^2$ defined by a pair
of differential equations of the form $\frac{dx}{dt}=\beta;\frac{dy}{dt}=\alpha$ on $\T^2$, whence,
by a pair of parametric equations $x=(\beta t+\tau)\md1; y=(\alpha t+\sigma)\md1$
in Cartesian coordinates, cf. e.g. \cite[Subsection
4.2.3]{Hass-Katok_First}.
\begin{note}
\label{note:cable}
It is well known that a cable defined by the straight line $y=ax+b$ is dense in  $\T^2$ if and
only if $-\infty<a<+\infty$ and the slope $a=\frac{\alpha}{\beta}$ is irrational, see e.g.
\cite[Proposition 4.2.8]{Hass-Katok_First} or \cite[Section 5.4]{Misch-Fom}. 
\end{note}
Given a Cartesian coordinate system $XYZ$ of $\R^3$, a torus   can be obtained by rotation around $Z$-axis of a circle which lies in the plain $XZ$. If a radius  of the circle is $r$ and the circle is centered
at a point lying in $X$-axis at a distance $R$ from the origin, then in cylindrical coordinates 
$(r_0,\theta,z)$ of $\R^3$ (where $r_0$ is a radius-vector in Cartesian
coordinate system $XY$, $\theta$ is  an angle of the radius-vector
in coordinates $XY$, $z$ is a $Z$-coordinate in Cartesian coordinate system $XYZ$) the torus
is defined by the equation
$(r_0-R)^2 + z^2=r^2$   and a cable (with a rational slope $\frac{\alpha}{\beta}$
where $\alpha\in\Z$ and $\beta\in\N$) of the torus
is defined by the system of parametric equations (with parameter $t\in\R$)
of the form
\begin{equation}
\label{eq:note_cable}
\left[ \begin{array}{c} r_{0} \\ \theta  \\ z \end{array} \right]=\left[ \begin{array}{c} R+r\cos \left( \frac{\alpha}{\beta}t+\omega
\right) \\ t
\\ r\sin \left( \frac{\alpha}{\beta}t+\omega
\right) \end{array} \right],\; t\in\R.
\end{equation}
The cable defined by the above equations  winds $\beta$ times around $Z$-axis and $|\alpha|$ times around
a circle in the interior of the torus (the sign of $\alpha$ determines whether
the rotation is clockwise or counter-clockwise), see for an example of the
corresponding torus knot Figures \ref{fig:lin-R}
and  \ref{fig:lin-T} where $\alpha=5$ and $\beta=3$. Letting $\omega$ in
the above equations take a finite number of values we get an example of  torus
link, see e.g. Figures \ref{fig:link-square} and \ref{fig:link-torus} which
illustrate a link consisting of a pair of torus knots whose slopes 
are $\frac{3}{5}$. Note that Figures \ref{fig:2link-square} and \ref{fig:2link-torus}
illustrate  a union of two distinct torus links (of two and of three
knots respectively)  rather than a single torus
link of 5 knots. Finally, due to the above representation of a torus link
in the form of equations in cylindrical coordinates, we naturally associate
the torus link consisting of $N$ cables with slopes $\frac{\alpha}{\beta}$
to a family of complex-valued functions $\psi_k\:\R\>\C$ of real variable $t\in\R$
$$
\left\{\psi_j(t)=e^{i(\frac{\alpha}{\beta}t+\omega_j)}\:
j=0,1,2,\ldots,N-1\right\},
$$
where $i$ stands for imaginary unit $i\in\C$: $i^2=-1$.

\begin{figure}[ht]
\begin{minipage}[b]{.45\linewidth}
\begin{quote}\psset{unit=0.3cm}
 \begin{pspicture}(0,0)(15,15)
 \newrgbcolor{emerald}{.31 .78 .47}
 \newrgbcolor{el-blue}{.49 .98 1}
 \newrgbcolor{baby-blue}{.54 .81 .94}
 \newrgbcolor{periwinkle}{.8 .8 1}
  \psgrid[unit=0.359cm,gridwidth=1.2pt,subgridwidth=1.2pt,gridcolor=yellow,subgridcolor=yellow,griddots=0,
  subgriddiv=3,gridlabels=0pt](-4,0)(11,15)
  \psline[unit=0.359cm,linecolor=periwinkle,linewidth=2pt](-4,10)(-1,15)
  \psline[unit=0.359cm,linecolor=periwinkle,linewidth=2pt](-4,5)(2,15)
  \psline[unit=0.359cm,linecolor=periwinkle,linewidth=2pt](-4,0)(5,15)
  \psline[unit=0.359cm,linecolor=periwinkle,linewidth=2pt](-1,0)(8,15)
  \psline[unit=0.359cm,linecolor=periwinkle,linewidth=2pt](2,0)(11,15)
  \psline[unit=0.359cm,linecolor=periwinkle,linewidth=2pt](5,0)(11,10)
  \psline[unit=0.359cm,linecolor=periwinkle,linewidth=2pt](8,0)(11,5)
 \end{pspicture}
\end{quote}
\caption{%
{A limit plot of the function $f(z)= \frac{5}{3}z$, $z\in\Z_2$, in $\mathbb R^2$}}
\label{fig:lin-R}
\end{minipage}\hfill
\begin{minipage}[b]{.45\linewidth}
\includegraphics[width=0.92\textwidth,natwidth=610,natheight=642]{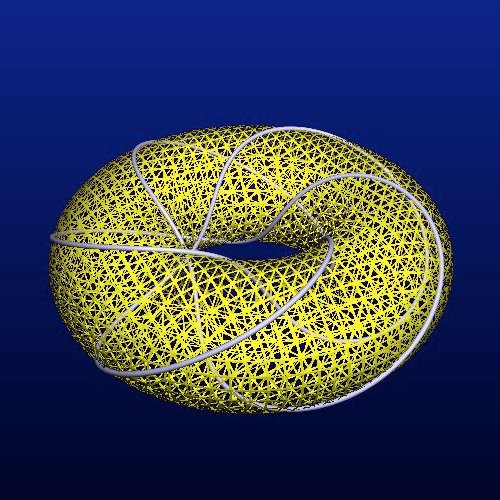}
\caption{A limit plot of the same function on the torus $\mathbb T^2$ \qquad \qquad \qquad}
\label{fig:lin-T}
\end{minipage}
\end{figure}

\section{Plots of  finite automaton functions: Constant and affine cases}
\label{sec:aff}
In this section we completely describe limit plots of finite automata maps of the
forms $z\mapsto c$ (constant maps), $z\mapsto az$ (linear maps) and $z\mapsto az+b$ (affine maps), where $a,b,c$ are
some (suitable) $p$-adic integers and the variable $z$ takes values in $\Z_p$.
\subsection{Limit plots of constants}
\label{ssec:const}
Recall that an automaton $\mathfrak A(s_0)=\langle\Cal I,\Cal S,\Cal O,S,O, s_0\rangle$ is called \emph{autonomous} once neither its state update function
$S$ nor its output function $O$ depend on input; i.e., when $s_{i+1}=S(s_i),
\xi_i=O(\chi_i,s_i)=O(s_i)$ ($i=0,1,2,\ldots$), cf. Fig. \ref{fig:Transd-sc}.

It is clear that an autonomous automaton function is a constant; however
a  limit plot of this function is not necessarily a straight line. For instance, the limit
plot of a constant $c\in\Z_p$  is the whole unit square $\mathbb I^2$ once $c=\sum_{i=0}^\infty\alpha_ip^i$
where 
the infinite word $u=\ldots\alpha_2\alpha_1\alpha_0$ over $\F_p$ is such that
every non-empty finite word $w=\gamma_{k-1}\gamma_{k-2}\ldots\gamma_{0}$ over $\F_p$ occurs
as a subword in $u$; that is, if there exist a finite word $v$ and an infinite
word $s$ over $\F_p$ such that $u$ is a concatenation of $v$, $w$ and $s$:
$u=swv$, cf. \cite{me:Discr_Syst}.

On the other hand,
once an autonomous automaton $\mathfrak A$ is finite, the corresponding infinite output
word must necessarily be eventually periodic. That is, $c=\alpha_0+\alpha_1p+\cdots+\alpha_{r-1}p^{r-1}+(\beta_0+\beta_1p+\cdots+\beta_{t-1}p^{t-1})\cdot
\sum_{j=0}^\infty p^{r+tj}$ for suitable $\alpha_i,\beta_j\in\F_p$; therefore a finite autonomous
automaton function is a rational constant, i.e., $c\in\Z_p\cap\Q$, cf. Propositions \ref{prop:p-adic-rat}
and \ref{prop:fin-auto}.

Furthermore, the numbers that correspond to  (sufficiently long) finite output words are then  all  the form 
$$
0.\beta_k\beta_{k-1}\ldots\beta_{0}\beta_{t-1}\beta_{t-2}\ldots\beta_{0}\beta_{t-1}\beta_{t-2}\ldots\beta_{0}
\ldots\beta_{t-1}\beta_{t-2}\ldots\beta_{0}\alpha_{r-1}\alpha_{r-2}\ldots\alpha_{0}
$$
for $k=0,1,\ldots,t-1$. Consequently, the limit plot of the automaton (in $\R^2$)
consists of $t$ pairwise
parallel straight lines which correspond to the numbers 
$$
0.\beta_k\beta_{k-1}\ldots\beta_{0}\beta_{t-1}\beta_{t-2}\ldots\beta_{0}\beta_{t-1}\beta_{t-2}\ldots\beta_{0}\ldots=
0.\beta_k\beta_{k-1}\ldots\beta_{0}(\beta_{t-1}\beta_{t-2}\ldots\beta_{0})^\infty
$$
where $k=0,1,\ldots,t-1$, cf. Subsection \ref{ssec:plots}; or (which is the same) to the numbers
$0.(\beta_k\beta_{k-1}\ldots\beta_{0}\beta_{t-1}\beta_{t-2}\ldots\beta_{k+1})^\infty$.
That is, all the lines from the limit plot are $y=p^\ell h\md 1$, $\ell\in\N_0$,
for any line $y=h$ belonging to the limit plot; thus the number of lines in the
limit plot does not exceed $t$. Respectively, being considered as a point set on the torus $\T^2$, the limit plot
consists of not more than $t$ parallels, cf., e.g., Figures \ref{fig:parallel-square} and \ref{fig:parallel-torus}.

Now we present a  more formal argument and derive a little bit more information about the number
of lines in the limit plot. Given $q\in\Z_p\cap\Q$, represent $q$ as an irreducible
fraction $q=a/b$ for suitable $a\in\Z$, $b\in\N$. Note that $p\nmid b$ since
$q\in\Z_p$. 
Denote
\begin{multline}
\label{eq:def_C}
\mathbf C(a/b)=\text{limit points of}
\left\{\left(p^\ell\cdot\left(1-\frac{a}{b}\right)\right)\md1\colon\ell=0,1,2,\ldots\right\}
=\\
\text{limit points of}\left\{\left(-p^\ell\cdot\frac{a}{b}\right)\md1\colon\ell=0,1,2,\ldots\right\}.
\end{multline}
Since $a/b\in\Z_p\cap\Q$, by Proposition \ref{prop:p-adic-rat} a $p$-adic canonical form of $a/b$ is
\begin{equation}
\label{eq:q-can}
a/b=\alpha_0+\alpha_1p+\cdots+\alpha_{r-1}p^{r-1}
+(\beta_0+\beta_1p+\cdots+\beta_{t-1}p^{t-1})\cdot
\sum_{j=0}^\infty p^{r+tj}
\end{equation}
for suitable $\alpha_i,\beta_m\in\{0,1,\ldots,p-1\}$, or, in other words,
the infinite word that corresponds to $a/b$ is 
$(\beta_{t-1}\ldots\beta_{0})^\infty\alpha_{r-1}\ldots\alpha_{0}$.
Then from Proposition \ref{prop:r-p-repr-qz} it follows that 
\begin{multline*}
(a/b)\md 1=(p^r\cdot0.(\hat\beta_{t-1}\ldots\hat\beta_0)^\infty)\md1=\\
0.(\hat\beta_{t-1-\bar r}\hat\beta_{t-2-\bar r}\ldots
\hat\beta_{0}\hat\beta_{t-1}\hat\beta_{t-2}\ldots\hat\beta_{t-\bar
r})^\infty\md1,
\end{multline*}
where $\hat\beta_i=p-1-\beta_i$ , $i=0,1,2,\ldots,t-1$, 
and $\bar r$ is the least non-negative residue of $r$ modulo $t$ if $t>1$
or $\bar r=0$ if otherwise. From here in view of \eqref{eq:-md1} we deduce that
$$
(-a/b)\md1=0.(\beta_{t-1-\bar r}\beta_{t-2-\bar r}\ldots
\beta_{0}\beta_{t-1}\beta_{t-2}\ldots\beta_{t-\bar
r})^\infty\md1
$$
and thus
\begin{multline*}
\mathbf C(a/b)=\\
\left\{0.(\beta_{t-1-\ell}\beta_{t-2-\ell}\ldots
\beta_{0}\beta_{t-1}\beta_{t-2}\ldots\beta_{t-\ell})^\infty\md1\colon\ell=0,1,2,\ldots,t-1 \right\}=\\
\left\{\frac{\nm(\upsilon)}{p^t-1}\:
\upsilon\in\{\hat\zeta_{t-1}\hat\zeta_{t-2}\ldots\hat\zeta_{0},
\hat\zeta_{t-2}\hat\zeta_{t-3}\ldots\hat\zeta_{0}\hat\zeta_{t-1},
\hat\zeta_{t-3}\hat\zeta_{t-4}\ldots\hat\beta_{0}\hat\zeta_{t-1}\hat\zeta_{t-2},
\ldots\}\right\},
\end{multline*}
where $(a/b)\md1=(\zeta_0+\zeta_1\cdot p+\cdots+\zeta_{t-1}\cdot p^{t-1})(p^t-1)^{-1}$
(cf. Proposition \ref{prop:p-repr} and Corollary \ref{cor:r-p-repr-qz}).
Now we can suppose that $t$ is a period length of the rational $p$-adic integer
$a/b\in\Z_p\cap\Q$ (cf. Subsection \ref{ssec:p-adic});  then  in view of Proposition \ref{prop:mult-per} we conclude that
\begin{multline}
\label{eq:C}
\mathbf C({a}/{b})=\left\{(-p^{\ell}\cdot({a}/{b}))\md1\colon \ell=0,1,\ldots,(\mlt_bp)-1\right\}=\\
\left\{0.(w)^\infty\md1\colon w \ \text{runs through all cyclic shifts of the word
}\ \beta_{(\mlt_bp)-1}\ldots\beta_0\right\}=\\
\left\{0.(v)^\infty\md1\colon v \ \text{runs through all cyclic shifts of the word
}\ \hat\zeta_{(\mlt_bp)-1}\ldots\hat\zeta_0\right\}=\\
\left\{\left(-p^{\ell}\cdot\frac{d}{p^{\mlt_bp}-1}\right)\md1\colon \ell=0,1,\ldots,(\mlt_bp)-1\right\}
\end{multline}
since 
\begin{align*}
1-\frac{\zeta_0+\zeta_1p+\cdots+\zeta_{t-1}p^{t-1}}{p^t-1}&=
\frac{\hat\zeta_{t-1}+\hat\zeta_0p+\hat\zeta_1p^2+\cdots+\hat\zeta_{t-2}p^{t-1}}{p^t-1}\
\text{and}\\
p\cdot\frac{\hat\zeta_0+\hat\zeta_1p+\cdots+\hat\zeta_{t-1}p^{t-1}}{p^t-1}&=\hat\zeta_{t-1}+
\frac{\hat\zeta_{t-1}+\hat\zeta_0p+\hat\zeta_1p^2+\cdots+\hat\zeta_{t-2}p^{t-1}}{p^t-1}.
\end{align*}Note that $0.(w)^\infty\md1=0.(w)^\infty$ except of the case
when $t=1$ and $w$ is a single-letter word that consists of the only letter
$p-1$ (in the latter case $0.(w)^\infty=1$ and thus $0.(w)^\infty\md1=0$).
Similarly, $0.(v)^\infty\md1=0.(v)^\infty$ except of the case when $a/b\in\Z$
and thus $\zeta_0=\ldots=\zeta_{t-1}=0$ 
(so $\hat\zeta_0=\ldots=\hat\zeta_{t-1}=p-1$ and  $0.(v)^\infty=1$). But this case happens if and only if 
$a/b
\in\Z$; i.e., when  $\mathbf C(a/b)=\{0\}$. 

We now summarize all these considerations in a proposition:
\begin{prop}
\label{prop:const}
Let $f_{\mathfrak A}\colon z\mapsto q$ be an automaton function of a finite
automaton $\mathfrak A$ \textup{(therefore $q\in\Z_p\cap\Q$ by Proposition
\ref{prop:fin-auto})}; 
then $\mathbf{LP}(\mathfrak A)\subset\T^2$ is a disjoint union
of 
$t$ parallels $\mathbf C(0,e)$, $e\in\mathbf C(q)$, and $t$ is a period length
of $q$ \textup{(cf. \eqref{eq:def_C} and
\eqref{eq:C})}.



\end{prop}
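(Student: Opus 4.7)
Since $f_{\mathfrak A}(z)=q$ for every $z\in\Z_p$, the output word produced by $\mathfrak A$ on any input word of length $k$ is exactly $\wrd_k(q\md p^k)$, independent of the input. Consequently
\[
E_k(f_{\mathfrak A})=\left\{\left(\frac{z\md p^k}{p^k};\,\frac{q\md p^k}{p^k}\right)\colon z\in\Z_p\right\}
=\left\{\left(\frac{j}{p^k};\,y_k\right)\colon j=0,1,\ldots,p^k-1\right\},
\]
where $y_k=(q\md p^k)/p^k$ depends only on $k$. Thus the whole problem is reduced to analysing the two sequences: the sequence of $x$-coordinates $\{j/p^k\}$ (which is automatic) and the single sequence $(y_k)_{k\ge 1}$.

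The plan is then to describe the accumulation set $\mathbf{AP}((y_k))$ explicitly and then, for each accumulation value $e$, produce the whole parallel $\mathbf C(0,e)$. First I would use Proposition \ref{prop:p-adic-rat} to write the canonical expansion of $q$ as in \eqref{eq:q-can} with $t$ being the (minimal) period length. For $k=r+\ell t+s$ with $0\le s<t$, the word $\wrd_k(q\md p^k)$ is (reading the digits of $q$ from position $0$ up to position $k-1$) a concatenation of the pre-period, $\ell$ full periods, and the first $s$ digits of the period. Reading this as $0.\wrd_k(q\md p^k)$ one obtains a real number that, as $\ell\to\infty$ with $s$ fixed, converges to a purely periodic base-$p$ number whose period is the $s$-th cyclic shift of the period $\beta_{t-1}\ldots\beta_{0}$. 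Using Proposition \ref{prop:r-p-repr-qz} and the identity \eqref{eq:-md1}, each of these $t$ limits is exactly one of the values enumerated in \eqref{eq:C}, so $\mathbf{AP}((y_k))=\mathbf C(q)$.

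Next, for a fixed residue class $s$ modulo $t$, let $e_s\in\mathbf C(q)$ denote the corresponding limit of $(y_k)$ along the subsequence $k_\ell=r+\ell t+s$. Given any target $x\in[0,1]$ (equivalently, any point $x\in\mathbb S$ in the torus picture), one may choose integers $j_\ell\in\{0,\ldots,p^{k_\ell}-1\}$ with $j_\ell/p^{k_\ell}\to x$, and then take any $z_\ell\in\Z_p$ with $z_\ell\md p^{k_\ell}=j_\ell$. By Definition \ref{def:plot-auto} the point $(x;e_s)$ belongs to $\mathbf{LP}(\mathfrak A)$; thus $\mathbf{LP}(\mathfrak A)\supset\bigsqcup_{e\in\mathbf C(q)}\mathbf C(0,e)$. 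The reverse inclusion is immediate: if $(x;y)\in\mathbf{LP}(\mathfrak A)$, then by definition there is a strictly increasing sequence $(k_i)$ with $y=\lim (q\md p^{k_i})/p^{k_i}\in\mathbf{AP}((y_k))=\mathbf C(q)$.

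The step I expect to be the only genuinely subtle one is verifying that the $t$ parallels appearing on the torus are pairwise disjoint, which amounts to showing that the $t$ limit values $e_0,\ldots,e_{t-1}\in\mathbf C(q)$ are distinct modulo $1$. This is where the \emph{minimality} of the period length $t$ is used: if two cyclic shifts of $\beta_{t-1}\ldots\beta_{0}$ produced the same purely periodic real number modulo $1$, then the period would be a proper power of a shorter word, contradicting the minimality of $t$ (the unique boundary case where a cyclic shift equals $0.(p-1)^\infty=1$ coincides with $0=0.(0)^\infty$ on $\mathbb T^2$, which occurs precisely when $q\in\Z$, in which case $t=1$ and there is only one parallel anyway, namely $\mathbf C(0,0)$). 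This gives the disjoint union description and completes the proof.
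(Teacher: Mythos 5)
Your proposal follows essentially the same route as the paper: since the output is input-independent, the $y$-coordinates of the layers form the single sequence $\bigl((q\md p^k)/p^k\bigr)_k$, whose limit points along the residue classes of $k$ modulo the period length $t$ are exactly the cyclic shifts of the period, i.e.\ the set $\mathbf C(q)$ of \eqref{eq:C}, while the $x$-coordinate is unconstrained; your disjointness argument via minimality of $t$ is equivalent to the paper's use of Proposition \ref{prop:mult-per}.

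One technical slip: to place $(x;e_s)$ in $\mathbf{LP}(\mathfrak A)$ you take a \emph{different} $z_\ell$ for each $\ell$, but Definition \ref{def:plot-auto} requires a \emph{single} $z\in\Z_p$ and one strictly increasing sequence $(k_i)$ satisfying both limits in \eqref{eq:def-LP}; choosing unrelated $z_\ell$ only shows membership in $\mathbf{AP}(E(f_{\mathfrak A}))$, and since an autonomous automaton need not be minimal you cannot invoke Theorem \ref{thm:AP=LP} to close that gap. The repair is immediate for a constant map: given $x=0.\chi_1\chi_2\ldots$, build one $z$ whose left-infinite digit string is $\ldots u_3u_2u_1$ with $u_\ell=\chi_1\ldots\chi_\ell$, pad the block lengths so that the cumulative lengths $k_\ell$ lie in the desired residue class modulo $t$, and note that the output limit is automatic because $f_{\mathfrak A}(z)=q$ for every $z$. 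With that adjustment the proof is complete and matches the paper's.
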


\begin{note}
\label{note:const}
In conditions of Proposition \ref{prop:const} the constant $q\in\Z_p\cap\Q$
can be represented as an irreducible fraction $q=a/b$ where $a\in\Z$, $b\in\N$,
$p\nmid b$ (we put $b=1$ and $a=0$ if $q=0$). Then
the limit plot $\mathbf{LP}(\mathfrak A)\subset\T^2$ is a torus link that consists of $t=\mlt_bp$
trivial torus cables (parallels) with slopes $0$; to the link
there corresponds a collection of $t$ complex constants (which are $b$-th roots of 1) 
$$
\left\{\psi_\ell=e^{-2\pi i p^\ell q
}\:\ell=0,1,\ldots,(\mlt_bp)-1\right\},
$$
where $i$ stands for imaginary unit $i\in \C$: $i^2=-1$ (cf. Subsection \ref{ssec:knot}).

Being considered in the unit real square $\mathbb I^2$, the limit plot $\mathbf{LP}(\mathfrak
A)$ is a collection of $t=\mlt_bp$ segments of straight lines $y=c(t,k,u)$
that cross $\mathbb I^2$,
where
\begin{multline}
\label{eq:const}
c(t,k,u)=\left(-p^k\cdot\frac{u}{p^t-1}\right)\md1=\\
0.(\hat\zeta_{t-1-k}\hat\zeta_{t-2-k}\ldots
\hat\zeta_{0}\hat\zeta_{t-1}\hat\zeta_{t-2}\ldots\hat\zeta_{t-k})^\infty\md1;\
k=0,1,\ldots,t-1.
\end{multline}
Here $q\md1=u(p^t-1)^{-1}$, $0\le u\le p^t-2$, and a base $p$-expansion
of $u$ is $u=\zeta_0+\zeta_1\cdot p+\cdots+\zeta_{t-1}\cdot p^{t-1}$
(cf. Proposition \ref{prop:p-repr}); $\hat\zeta=p-1-\zeta$ for $\zeta\in\{0,1,\ldots,p-1\}$.
In other words, all the constants $c(t,k,u)$ are of the form
\begin{equation}
\label{eq:const-frac} 
c(t,k,u)=0.\upsilon^\infty\md1=\frac{\nm(v)}{p^t-1}\md1,
\end{equation}
where $v$ runs
trough all
cyclic shifts of the word $\hat\zeta_{t-1}\hat\zeta_{t-2}\ldots\hat\zeta_{0}$; that is,
$v\in\{\hat\zeta_{t-1}\hat\zeta_{t-2}\ldots\hat\zeta_{0}, 
\hat\zeta_{t-2}\hat\zeta_{t-3}\ldots\hat\zeta_{0}\hat\zeta_{t-1},\ldots\}$.

If $q$ is represented in a $p$-adic canonical form \eqref{eq:q-can} rather
than in a form of Proposition \ref{prop:p-repr},
then all the lines of the limit plot can be represented as 
\begin{equation}
\label{eq:const-r}
y=0.(\beta_{t-1-\ell}\beta_{t-2-\ell}\ldots
\beta_{0}\beta_{t-1}\beta_{t-2}\ldots\beta_{t-\ell})^\infty\md1;\ \ell=0,1,2,\ldots,t-1.
\end{equation}
Note that we may omit $\md 1$ in \eqref{eq:const-frac} and in \eqref{eq:const-r} in all cases but the
case when simultaneously the length
$t$ of the period is 1 and $\hat\zeta_0=p-1$ (respectively, $\beta_0=p-1$); but $q\in\Z$ in that case and therefore $\mathbf C(q)=\{0\}$.

\end{note}
The following property of the set $\mathbf C(q)$ will be used in further
proofs:
\begin{cor}
\label{cor:const}
Given $q_1,q_2\in\Z_p\cap\Q\cap[0,1)$, the following alternative holds: Either
$\mathbf C(q_1)=\mathbf C(q_2)$ or $\mathbf C(q_1)\cap\mathbf C(q_2)=\emptyset$.
\end{cor}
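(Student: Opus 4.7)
The plan is to reinterpret the sets $\mathbf C(q)$ as finite orbits of the doubling-type map $T\colon[0,1)\to[0,1)$, $T(x)=(px)\md 1$, and then invoke the elementary fact that two orbits of a map either coincide or are disjoint.

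First I would observe, directly from \eqref{eq:def_C}, that
$$
\mathbf C(q)=\{T^\ell((-q)\md 1)\colon \ell=0,1,2,\ldots\},
$$
because $T^\ell(x)=(p^\ell x)\md 1$ for every $x\in[0,1)$ and every $\ell\in\N_0$. By \eqref{eq:C} (combined with Proposition \ref{prop:mult-per}), if $q=a/b$ is written as an irreducible fraction with $p\nmid b$, this orbit is finite of length exactly $t=\mlt_bp$; equivalently, $T^t$ fixes $(-q)\md 1$ and no smaller positive power does. Thus $T$ restricted to $\mathbf C(q)$ is a cyclic permutation of order $t$, and in particular every element of $\mathbf C(q)$ generates the whole $\mathbf C(q)$ under iteration of $T$.

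Now suppose $\mathbf C(q_1)\cap\mathbf C(q_2)\ne\emptyset$, and pick any $c$ in the intersection. By the preceding paragraph there exist $\ell_1,\ell_2\in\N_0$ such that
$$
c=T^{\ell_1}((-q_1)\md 1)=T^{\ell_2}((-q_2)\md 1).
$$
Applying $T^k$ for arbitrary $k\in\N_0$ and using that $\mathbf C(q_j)$ is a cyclic $T$-orbit containing $(-q_j)\md 1$ ($j=1,2$), the values $T^{\ell_1+k}((-q_1)\md 1)$ exhaust $\mathbf C(q_1)$ as $k$ varies over $\{0,1,\ldots,t_1-1\}$, and similarly $T^{\ell_2+k}((-q_2)\md 1)$ exhaust $\mathbf C(q_2)$. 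Since $T^{\ell_1+k}((-q_1)\md 1)=T^{\ell_2+k}((-q_2)\md 1)$ for every $k$, this forces $\mathbf C(q_1)\subseteq\mathbf C(q_2)$ and $\mathbf C(q_2)\subseteq\mathbf C(q_1)$, hence equality.

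The only mild subtlety is to make sure that the $\md 1$ reduction does not spoil the identity $T^\ell((-q)\md 1)=(-p^\ell q)\md 1$ used above; this is just the fact that $T(y\md 1)=(py)\md 1$ for every $y\in\R$, which is immediate. Alternatively, one can give a bare-hands version using the explicit description from Note \ref{note:const}: each point of $\mathbf C(q)$ equals $0.v^\infty\md 1$ for $v$ a cyclic shift of the word $\hat\zeta_{t-1}\ldots\hat\zeta_0$ attached to $q$, and $\mathbf C(q)$ is exactly the set of all such cyclic-shift values; so if two such sets share a single point, the underlying periodic word is common (up to cyclic shift), forcing the two sets to coincide. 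No real obstacle arises; the whole argument is the dichotomy between identity and disjointness of $T$-orbits.
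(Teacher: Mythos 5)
Your argument is correct. The identification $\mathbf C(q)=\{T^{\ell}((-q)\md 1)\colon \ell\in\N_0\}$ for $T(x)=(px)\md 1$ is legitimate (the sequence $((-p^{\ell}q)\md 1)_{\ell}$ is purely periodic because $p$ is invertible modulo the denominator of $q$, so its value set coincides with its set of limit points, which is exactly what \eqref{eq:C} records), the orbit is a single finite cycle of length $\mlt_bp$, and the dichotomy ``two cycles of a single-valued map either coincide or are disjoint'' finishes the job. This is a genuinely different route from the paper's formal proof: there, one writes $q_i=a_i/b_i$ as irreducible fractions, extracts from a common point the relation $p^{\ell_1}q_1\equiv p^{\ell_2}q_2\pmod 1$, and by divisibility bookkeeping produces a single $q$ with $q_1=p^{s}q$ and $q_2=p^{\ell_1-\ell_2+s}q$, so that both sets sit inside (and by the invariance $\mathbf C(p^{\ell}q)=\mathbf C(q)$ equal) $\mathbf C(q)$. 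Your dynamical phrasing is cleaner and closer in spirit to the paper's own informal preamble about cyclic shifts of the period word (indeed $T$ acts as the shift on purely periodic base-$p$ expansions); what the paper's computation buys in exchange is an explicit common generator $q$, which the corollary itself does not require. The one point worth stating explicitly in a final write-up is that $\mathbf C(q)$ consists of the \emph{values} of the sequence and not merely its limit points; you do cite \eqref{eq:C} for this, so no gap remains.
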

\begin{proof}[Proof of Corollary \ref{cor:const}]
The result is clear enough since the numbers that constitute $\mathbf C(q)$
are exactly all  numbers whose base-$p$ expansions are of the form $0.(u)^\infty$
where $u$ runs through all cyclic shifts of the finite word $w$ which is the (shortest)
period of $q\md1$, cf. Note \ref{note:const}; nonetheless we give a formal
proof which follows.

Given $q_i\in\Z_p\cap\Q\cap[0,1)$, $i=1,2$, represented as irreducible
fractions $q_i=a_i/b_i$ whose denominators $b_i$ are co-prime to $p$, let
$\mathbf C(q_1)\cap\mathbf C(q_2)\ne\emptyset$; then 
$p^{\ell_1}(a_1/b_1)=p^{\ell_2}(a_2/b_2)$ for suitable $\ell_1,ell_2\in\N_0$.
If $\ell_1=\ell_2$ then $a_1/b_1=a_2/b_2$ and thus $\mathbf C(q_1)=\mathbf C(q_2)$. Let $\ell_1>\ell_2$, then  $p^{\ell_1-\ell_2}a_1b_2=a_2b_1$; so
since $\gcd(b_1,p)=1$ we conclude that 
$a_2=p^{\ell_1-\ell_2+s}a_2^\prime$ for a suitable $s\in\N_0$ and $a^\prime_2\in\Z$
such that $\gcd(a_2^\prime,p)=1$. Therefore necessarily $a_1=p^sa^\prime_1$
where $\gcd(a^\prime_1,p)=1$ since $\gcd(b_1,p)=\gcd(b_2,p)=1$. But then
we conclude that 
$p^{\ell_1-\ell_2+s}a_1^\prime b_2= p^{\ell_1-\ell_2}a_1b_2=a_2b_1=p^{\ell_1-\ell_2+s}a_2^\prime
b_1$ and therefore $q_1=p^sq$, $q_2=p^{\ell_1-\ell_2+s}q$ where $q=a_1^\prime/b_1=a_2^\prime/b_2$.
Hence $\mathbf C(q_1),\mathbf C(q_2)\subset \mathbf C(q)$; the inverse inclusion
also holds since $\mathbf C(p^\ell q)=\mathbf C(q)$ for any $q\in\Z_p\cap\Q$
by. e.g., \eqref{eq:const-r}.
\end{proof}
\begin{exmp}
\label{ex:const}
Let $p=2$ and $q=2/7$. Then $\mlt_72=3$ and the limit plot consists of 3 lines. The binary infinite word that corresponds to the 2-adic canonical representation
of $2/7$ is $(011)^\infty10$, so the period of $2/7$ is $011$, the pre-period is
$01$, and  $u=2=0+1\cdot 2+0\cdot 2^2$.  Therefore the  tree lines of the
limit plot are: $y=0.(101)^\infty=5/7=(-2/7)\md1=c(3,0,2)$, $y=0.(011)^\infty=6/7=(-1/7)\md1=c(3,2,2)$, $y=0.(110)^\infty=3/7=(-4/7)\md1=c(3,1,2)$. The limit plot (on the unit square
and on the torus) is illustrated by Figures \ref{fig:parallel-square} and
\ref{fig:parallel-torus} accordingly.
\end{exmp}

\begin{figure}[ht]
\begin{minipage}[b]{.45\linewidth}
\begin{quote}\psset{unit=0.3cm}
 \begin{pspicture}(0,0)(15,15)
 \newrgbcolor{emerald}{.31 .78 .47}
 \newrgbcolor{el-blue}{.49 .98 1}
 \newrgbcolor{baby-blue}{.54 .81 .94}
 \newrgbcolor{erin}{0.498 1 .247}
 \newrgbcolor{ch-green}{0 1 .247}
 \newrgbcolor{spring-bud}{0.655 .988 0}
 \newrgbcolor{lemon}{1 .968 0}
   \psframe[
  linewidth=0.5pt,linecolor=green](-4.8,0)(13.2,18)
  \pscircle[fillstyle=solid,fillcolor=yellow,linewidth=.4pt](-4.8,0.1){.3}
  \psline[unit=0.359cm,linecolor=baby-blue,linewidth=2pt](-4,12.857)(11,12.857)
\psline[unit=0.359cm,linecolor=spring-bud,linewidth=2pt](-4,12.143)(11,12.143)
\psline[unit=0.359cm,linecolor=lemon,linewidth=2pt](-4,6.43)(11,6.43)
 \end{pspicture}
\end{quote}
\caption{%
{A limit plot of the constant function $f(z)= \frac{2}{7}$ ($z\in\Z_2$), in $\mathbb R^2$}}
\label{fig:parallel-square}
\end{minipage}\hfill
\begin{minipage}[b]{.45\linewidth}
\includegraphics[width=0.92\textwidth,natwidth=610,natheight=642]{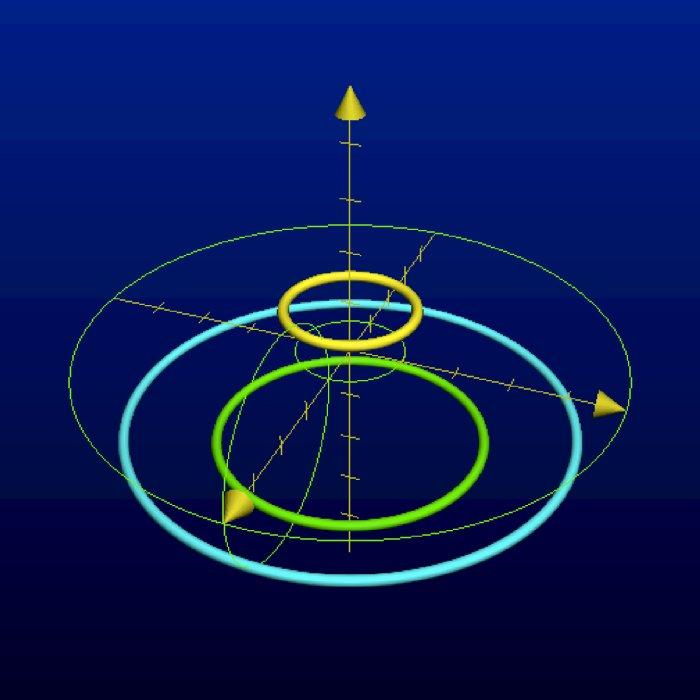}
\caption{A limit plot of the same function on the torus $\mathbb T^2$\quad \qquad \qquad \qquad \qquad \qquad}
\label{fig:parallel-torus}
\end{minipage}
\end{figure}

\subsection{Limit plots of linear maps} 
\label{ssec:linear}
In this subsection we consider limit plots of
linear maps $z\mapsto cz$ ($z\in\Z_p$) which are finite automaton functions.
By Proposition \ref{prop:fin-auto}, the latter takes place if and and only
if $c\in\Z_p\cap\Q$.
\begin{prop}
\label{prop:mult-p}
Given  $c\in\Z_p\cap\Q$, represent $c=a/b$, where $a\in\Z$, $b\in\N$, $a,b$
are coprime, $p\nmid b$. If  $\mathfrak A$ is an automaton such that 
$f_{\mathfrak A}(z)=cz$ $(z\in\Z_p)$ then 
$\mathbf{LP}(\mathfrak A)=\{(x\md1;(cx)\md1)\:x\in\R\}
=\mathbf C(c,0)$ is a cable \textup(with a slope $c$\textup) of the unit 2-dimensional real torus $\mathbb T^2$.
For every $c\in\Z_p\cap\Q$ the automaton $\mathfrak A$ may
be taken a finite.
\end{prop}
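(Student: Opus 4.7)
The finiteness claim is immediate from Proposition \ref{prop:fin-auto}(iv): for $c\in\Z_p\cap\Q$, the map $z\mapsto cz$ is a finite automaton function. It remains to identify the limit plot on $\T^2$. Write $c=a/b$ as an irreducible fraction with $\gcd(b,p)=1$, and assume $c>0$ for concreteness (the case $c=0$ reduces to Subsection \ref{ssec:const}; $c<0$ is analogous). Parametrising the line $y=cx$ by $x=X+n$, $n\in\Z$, and using $\gcd(a,b)=1$ so that $\{(an)/b\md 1 : n\in\Z\}=(1/b)\Z/\Z$, one obtains the explicit description
\[
\mathbf{C}(c,0)=\{(X;Y)\in\T^2 : aX-bY\in\Z\}.
\]

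\textbf{Paragraph 2 (forward inclusion $\mathbf{LP}(\mathfrak A)\subseteq\mathbf{C}(c,0)$).} Fix $(X;Y)\in\mathbf{LP}(\mathfrak A)$ with witnesses $z\in\Z_p$ and $k_1<k_2<\cdots$, and set $n_i:=z\md p^{k_i}$ and $r_i:=(cz)\md p^{k_i}$. Since $|cz-cn_i|_p\le p^{-k_i}$ one has $r_i=(cn_i)\md p^{k_i}$; because $b$ is a $p$-adic unit, $r_i$ is the unique integer in $[0,p^{k_i})$ satisfying $br_i\equiv an_i\pmod{p^{k_i}}$. Thus there is a unique $s_i\in\Z$ with $an_i=br_i+p^{k_i}s_i$; setting $X_i:=n_i/p^{k_i}$ and $Y_i:=r_i/p^{k_i}$ and dividing by $bp^{k_i}$ yields
\[
Y_i=cX_i-s_i/b.
\]
The constraint $r_i\in[0,p^{k_i})$ confines $s_i$ to $(-b,a]$, a bounded set of integers. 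Since $s_i/b=cX_i-Y_i\to cX-Y$ but every $s_i/b$ lies in the discrete set $(1/b)\Z$, the sequence is eventually constant, say $s_i=s$. Hence $aX-bY=s\in\Z$ and $(X;Y)\in\mathbf{C}(c,0)$.

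\textbf{Paragraph 3 (reverse inclusion and the main obstacle).} Conversely, given $(X;Y)\in\mathbf{C}(c,0)$ with $X,Y\in[0,1)$, set $s:=aX-bY\in\Z$, so that $X\in[s/a,s/a+1/c)$. Put $T:=\mlt_b p$ (so $p^T\equiv 1\pmod b$) and $k_i:=iT$; I build $z\in\Z_p$ inductively by concatenating blocks of $p$-adic digits. Start with $n_1\in[0,p^{k_1})$ satisfying $n_1\equiv a^{-1}s\pmod b$ (the inverse being taken mod $b$, which makes sense as $\gcd(a,b)=1$) and $n_1/p^{k_1}$ near $X$; then at step $i+1$ take $n_{i+1}=n_i+p^{k_i}m$ with $m\in[0,p^{k_{i+1}-k_i})$ chosen as the multiple of $b$ nearest to $Xp^{k_{i+1}-k_i}$. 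Since $p^{k_i}\equiv 1\pmod b$ and $m\equiv 0\pmod b$, the congruence $n_{i+1}\equiv a^{-1}s\pmod b$ persists; and because multiples of $b$ are spaced by $b$, one obtains $|n_{i+1}/p^{k_{i+1}}-X|=O(bp^{-(k_{i+1}-k_i)})\to 0$. For $i$ large, $X_i\in[s/a,s/a+1/c)$ (approach the lower boundary from the right, the upper boundary from the left if necessary), so $r_i:=(an_i-p^{k_i}s)/b\in[0,p^{k_i})$; this integer is exactly $(cz)\md p^{k_i}$, giving $Y_i=cX_i-s/b\to Y$. The crux is precisely this constructive step, where three constraints must be harmonised at once: $p$-adic compatibility of the $(n_i)$, the congruence $n_i\equiv a^{-1}s\pmod b$, and the real approximation of $X$. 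The choice $k_i\equiv 0\pmod{\mlt_b p}$ decouples the first two, while $\gcd(b,p)=1$ guarantees that multiples of $b$ are uniformly distributed in $[0,p^{k_{i+1}-k_i})$, enabling the third.
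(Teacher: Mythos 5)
Your Paragraph 2 is correct and takes a genuinely different, and cleaner, route than the paper's for that direction: the paper computes $(cz)\md p^{k_i}/p^{k_i}$ explicitly via the representation $c=u+v/(p^t-1)$ and the base-$p^t$ digit sum of $z$, whereas your observation that $aX_i-bY_i=s_i$ is a bounded integer sequence whose image $s_i/b$ converges in the discrete set $\frac1b\Z$ settles the inclusion $\mathbf{LP}(\mathfrak A)\subseteq\mathbf C(c,0)$ in a few lines. The description $\mathbf C(c,0)=\{(X;Y):aX-bY\in\Z\}$ is also correct.

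The reverse inclusion, however, has a genuine gap. With $k_i:=iT$ the block length $k_{i+1}-k_i=T$ is \emph{constant}, so your own error estimate $|n_{i+1}/p^{k_{i+1}}-X|=O\bigl(bp^{-(k_{i+1}-k_i)}\bigr)=O(bp^{-T})$ does not tend to $0$. Concretely, $n_{i+1}/p^{k_{i+1}}=m/p^{T}+n_i/p^{k_{i+1}}$ with $m\in[0,p^T)$ and $0\le n_i/p^{k_{i+1}}<p^{-T}$, so every truncation $X_{i+1}$ lies within $p^{-T}$ of one of the finitely many points $jb/p^T$; for a generic $X$ the sequence $X_i$ therefore stays at distance bounded below from $X$ and never converges to it. The underlying reason is that in the truncation $n_i/p^{k_i}$ the most recently appended $p$-adic digits become the \emph{most significant} real digits, so each new block must reproduce the leading base-$p$ digits of $X$ to ever greater depth --- which forces the block lengths to grow. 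The repair is easy and leaves the rest of your argument intact: take the $k_i$ to be multiples of $T$ with $k_{i+1}-k_i\to\infty$ (for instance $k_{i+1}-k_i=iT$). Then $p^{k_i}\equiv1\pmod b$ still holds, so the congruence $n_i\equiv a^{-1}s\pmod b$ still propagates and $r_i=(an_i-p^{k_i}s)/b$ is still an integer, while the multiple of $b$ in $[0,p^{k_{i+1}-k_i})$ nearest to $Xp^{k_{i+1}-k_i}$ now gives $|X_i-X|=O\bigl(bp^{-(k_{i+1}-k_i)}\bigr)\to0$. With that single change your construction goes through and the proof is complete.
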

\begin{proof}[Proof of Proposition \ref{prop:mult-p}]
By Proposition \ref{prop:fin-auto},
the map $
z\mapsto cz$ on $\Z_p$ is an automaton function of a finite  
automaton if and only if $c\in\Z_p\cap\Q$.



Given $x\in[0,1)$, take $z\in\Z_p$ such that
$\lim_{i\to\infty}z\md p^{k_i}/p^{k_i}=x$ 
for a suitable strictly increasing sequence $k_1,k_2,\ldots\in\N$.
As $c\in\Z_p\cap\Q$, 
then 
$c=u+v/(p^t-1)$ for suitable $u\in\Z$, $t\in\N$, $v\in\{0,1,2,\ldots,p^t-2\}$,
by Proposition \ref{prop:p-repr}. If $t>1$, then considering residues of $k_i$
modulo $t$ we see that at least one residue (say, $\ell\in\{0,1,\ldots,t-1\}$)
occurs in the sequence $k_1,k_2,\ldots$ infinitely many times. Therefore
$\lim_{j\to\infty}z\md p^{r_jt+\ell}/p^{r_jt+\ell}=x$ for a respective strictly
increasing sequence $r_1,r_2,\ldots\in\N$. The latter equality trivially holds when $t=1$: one just takes $r_j=k_j$ and $\ell=0$. So further we assume
that $k_i=r_it+\ell$, $i=1,2,\ldots$.

For $i=1,2,\ldots$ we have that
\begin{multline}
\label{eq:fr_mod}
\frac{(cz)\md p^{k_i}}{p^{k_i}}=\frac{1}{p^{k_i}}((c\md p^{k_i})(z\md p^{k_i}))\md{p^{k_i}}=\\
\left(c\md p^{k_i}\cdot\frac{z\md p^{k_i}}{p^{k_i}}\right)\md 1
\end{multline}
As $0\le\ell<t$ and $k_i=r_it+\ell$, we have that
\begin{equation}
\label{eq:fr_mod1}
c\md p^{k_i}=\left(u+\frac{v}{p^t-1}\right)\md p^{k_i}=
\left(u-v\cdot\frac{p^{(r_i+1)t}-1}{p^t-1}\right)\md p^{k_i}
\end{equation}
Note that the argument of $\md$ in the right-hand side of \eqref{eq:fr_mod1}
is negative once $i$ is sufficiently large; therefore  once $i$ is large enough then
$$
\left(u-v\cdot\frac{p^{(r_i+1)t}-1}{p^t-1}\right)\md p^{k_i}=
Lp^{k_i}+u-v\cdot\frac{p^{(r_i+1)t}-1}{p^t-1}
$$ 
for a suitable $L\in\N$
which does not depend on $i$ (actually it is not
difficult to see that $L=\lceil
vp^{t-\ell}(p^t-1)^{-1}\rceil$). Thus,
\begin{multline}
\label{eq:fr_mod2}
\left(c\md p^{k_i}\cdot\frac{z\md p^{k_i}}{p^{k_i}}\right)\md 1=\\
\left(\left(Lp^{k_i}+u-v\cdot\frac{p^{(r_i+1)t}-1}{p^t-1}\right)\cdot\frac{z\md p^{k_i}}{p^{k_i}}\right)\md
1=\\
\left(L\cdot z\md p^{k_i}+u\cdot\frac{z\md p^{k_i}}{p^{k_i}}+
\frac{v}{p^t-1}\cdot\frac{z\md p^{k_i}}{p^{k_i}}
-\frac{vp^{t-\ell}}{p^t-1}\cdot z\md p^{k_i}
\right)\md 1=\\
\left(c\cdot\frac{z\md p^{k_i}}{p^{k_i}}
-\frac{vp^{t-\ell}}{p^t-1}\cdot z\md p^{k_i}
\right)\md 1
\end{multline}


Firstly we note that given $w\in\N_0$, $r\in\N$
\begin{equation}
\label{eq:recurr}
\frac{wp^{rt}}{p^t-1}\md1=\left(w\cdot\frac{p^{rt}-1}{p^t-1}+\frac{w}{p^t-1}\right)\md1=
\left(\frac{w}{p^t-1}\right)\md1
\end{equation} 
as $p^t-1$ is a factor of $p^{rt}-1$. 

Secondly, put $\bar z=p^{t-\ell}z$, then $p^{t-\ell}(z\md p^{r_it+\ell})=\bar
z\md{p^{(r_i+1)t}}$ and
$$
x=\lim_{i\to\infty}\frac{z\md p^{k_i}}{p^{k_i}}=\lim_{i\to\infty}\frac{\bar z\md p^{r_it}}{p^{r_it}},
$$
so in \eqref{eq:fr_mod2}
$$
\frac{vp^{t-\ell}}{p^t-1}\cdot z\md p^{r_it+\ell}=\frac{v}{p^t-1}\cdot \bar z\md p^{(r_i+1)t}
$$
and $\bar z=zp^{t-\ell}\in\Z_p$ 
(recall that $k_i=r_it+\ell$ where $\ell\in\{0,1,\ldots,t-1\}$).

Let $\bar z=\zeta_0+\zeta_1p^t+\zeta_2p^{2t}+\cdots$ be a base-$p^t$ representation
of $\bar z$ (that is, $\zeta_j\in\{0,1,\ldots,p^t-1\}$); then by combining
\eqref{eq:fr_mod} and \eqref{eq:fr_mod2} with \eqref{eq:recurr} we get
\begin{multline}
\label{eq:fr_mod_wt}
\frac{(cz)\md p^{k_i}}{p^{k_i}}=\left(c\cdot\frac{z\md p^{k_i}}{p^{k_i}}
-\frac{v}{p^t-1}\cdot 
\wt_{p^t}(\bar z\md p^{(r_i+1)t})
\right)\md1=\\
\left(c\cdot\frac{z\md p^{k_i}}{p^{k_i}}
-\frac{v}{p^t-1}\cdot (\wt_{p^t}(\bar z\md p^{(r_i+1)t}))\md(p^t-1)
\right)\md1
\end{multline}
where $\wt_{p^t}$ stands for a $p^t$-weight of a natural number, that is, the sum
of digits of the number in its base-$p^t$ representation; i.e., $\wt_{p^t}(\bar
z\md{p^{(r_i+1)t}})= \zeta_0+\zeta_1+\cdots+\zeta_{i}\in\N_0$.
Therefore every limit point of the sequence $((cz)\md p^{k_i}/p^{k_i})_{i=1}^\infty$
is of the form
\begin{equation}
\label{eq:lim}
\left(cx
+\frac{vw}{p^t-1}
\right)\md1
\end{equation}
for a suitable $w\in\{0,1,\ldots,p^t-2\}$.

We claim that, on the other hand, given $x\in[0,1)$ 
and $h\in\{0,v, 2v\md{(p^t-1)},\ldots,((p^t-2)v)\md{(p^t-1)}\}$ (that is,
$h$ lying in the ideal $\langle v\rangle$ of the residue ring $\Z/(p^t-1)\Z$ generated by $v$) there exists $z\in\Z_p$ and a strictly increasing sequence $k_1,k_2,\ldots,\in\N$ such that 
\begin{align*}
x&=\lim_{i\to\infty}\frac{z\md p^{k_i}}{p^{k_i}}, \ \text{and}\\
\frac{(cz)\md p^{k_i}}{p^{k_i}}&=\left(c\cdot\frac{z\md p^{k_i}}{p^{k_i}}
+\frac{h}{p^t-1}
\right)\md1.
\end{align*}
Indeed, take $z\in\Z_p$ and $k_i=r_it+\ell$ as above; then  all limit points of the sequence
$((cz)\md p^{k_i}/p^{k_i})_{i=1}^\infty$ are of the form \eqref{eq:lim} for, say, 
$w=w_1,\ldots,w_s\in\{0,1,\ldots,p^t-2\}$. If $h\equiv vw_j\pmod{(p^t-1)}$
for some $j=1,2,\ldots, s$, 
then there is nothing to prove; if $h\not\equiv vw_j\pmod{(p^t-1)}$
for all $j=1,2,\ldots, s$ then we tweak $z$ as follows. As the point of the
form \eqref{eq:lim} for $w=w_1$ is a limit point of the sequence  $((cz)\md p^{k_i}/p^{k_i})_{i=1}^\infty$
then $(-vw_1)\md{(p^t-1)}$ occurs in the sequence  $((-v((\wt_{p^t}\bar z)\md p^{(r_i+1)t}))\md{(p^t-1)})_{i=1}^\infty$
infinitely many times (cf. \eqref{eq:fr_mod_wt}); so some $\bar w\in\{0,1\ldots,p^t-2\}$ such that $v\bar w\equiv vw_1\pmod{(p^t-1)}$ occurs in the
sequence $((\wt_{p^t}\bar z)\md p^{(r_i+1)t})\md{(p^t-1)}$ infinitely many times:
$$
\bar w=((\wt_{p^t}\bar z)\md p^{(r_i+1)t})\md{(p^t-1)}=(\zeta_0+\zeta_1+\cdots+\zeta_{i})\md
(p^t-1)
$$
for $i=i_1, i_2,\ldots$ ($1<i_1<i_2<\ldots$).

As $h\in\langle v\rangle$, then $h\equiv
-v\tilde
w\pmod{(p^t-1)}$ for a suitable $\tilde w\in\{0,1,\ldots,p^t-2\}$.
Now put $\tilde z=\zeta_0+\tilde\zeta_1p^t+\zeta_2 p^{2t}+\zeta_3 p^{3t}+\cdots$,
where $\tilde\zeta_1\equiv\zeta_1-\bar w+\tilde w\pmod{(p^t-1)}$; then 
$\tilde w=(\wt_{p^t}\tilde z)\md p^{(r_i+1)t})\md{(p^t-1)}$. But 
$\lim_{i\to\infty}(z\md p^{k_i}/p^{k_i})=\lim_{i\to\infty}(\tilde z\md p^{k_i}/p^{k_i})=x$; so
finally we conclude by \eqref{eq:fr_mod_wt} that 
$$
\lim_{j\to\infty}\frac{(c\tilde z)\md p^{r_{i_j}t+\ell}}{p^{r_{i_j}t+\ell}}=
\left(cx
+\frac{h}{p^t-1}
\right)\md1.
$$
Thus we have shown that
\begin{equation}
\label{eq:au_cable}
\mathbf{LP}(\mathfrak A)=\left\{\left(x;\left(cx
+\frac{e}{p^t-1}
\right)\md1\right)\colon x\in[0,1), e\in\langle v\rangle \right\}.
\end{equation}
But the right-hand side in \eqref{eq:au_cable} is a cable of torus with
slope $c$ since
\begin{equation}
\label{eq:cable}
\left\{\left(x;\left(cx
+\frac{h}{p^t-1}
\right)\md1\right)\colon x\in[0,1), h\in\langle v\rangle \right\}=
\left\{\left(y\md 1;\left(cy
\right)\md1\right)\colon y\in\R\right\}.
\end{equation}
Indeed, if $y_1=y+n$ for some $n\in\Z$ then $y_1\md 1=y\md 1$ and
$(cy_1)\md1=(c(y+n))\md1=(((u+v(p^t-1)^{-1})(y+n))\md1=(cy+vn(p^t-1)^{-1})\md1
=(cy+((vn)\md(p^t-1))\cdot(p^t-1)^{-1})\md1$,
and \eqref{eq:cable} follows. 
This concludes
the proof. 
%
\end{proof}
\begin{exmp}
\label{exmp:lin}
Take $p=2$ and $c=5/3$. Figures \ref{fig:lin-R} and \ref{fig:lin-T} illustrate
the limit plot of the  function $f(z)= (5/3)\cdot z$ 
in $\mathbb I^2$ and in $\T^2$  respectively.
\end{exmp}

\subsection{Limit plots of affine maps}
\label{ssec:plot-affine}
In this subsection  we combine  the  above two cases (constant maps and  linear maps)
into a single one to describe limit plots  of finite automata whose functions
are affine, i.e., of the form $z\mapsto c\cdot z+q$ ($z\in\Z_p$). It is evident that the limit plot  should be a torus link consisting of several disjoint
cables with slopes $c$ since the limit plot of the constant $q$ is a collection
of parallels, cf. Propositions \ref{prop:mult-p} and \ref{prop:const}. 
We will give a formal proof of this claim and find the number of knots in the
link. 

Recall that by Proposition \ref{prop:fin-auto}
the map $z\mapsto c\cdot z+q$ of $\Z_p$ into itself is an automaton function
of some  finite
automaton if and only if $c,q\in\Z_p\cap\Q$. The following proposition shows
that we do not alter the limit plot of the map  once we replace $q$ by $q+n$
for arbitrary $n\in\Z$.
\begin{prop}
\label{prop:add-p-md1}
Given $f\:z\mapsto cz+q$ $(z\in\Z_p)$ where $c,q\in\Z_p\cap\Q$, denote $\bar q=q\md1$, $\bar f\:z\mapsto cz+\bar q$.  Then $\mathbf{LP}(f)=\mathbf{LP}(\bar
f)$.
\end{prop}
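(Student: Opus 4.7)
\medskip
\noindent\textbf{Proof plan for Proposition \ref{prop:add-p-md1}.}

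The driving observation is that $q-\bar q=\lfloor q\rfloor=:n\in\Z$ is an ordinary integer, so $f$ and $\bar f$ differ by an additive integer constant:
\[
f(z)=\bar f(z)+n \qquad (z\in\Z_p).
\]
Consequently, what I need to check is that adding a fixed integer to the output of an automaton does not alter its limit plot, because the shift $n$, divided by $p^{k}\to\infty$, vanishes in the limit (up to a possible unit translation in $y$ that disappears on $\T^2$).

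First I would reduce the statement to a pointwise computation. For each $k\in\N$ large enough that $|n|<p^{k}$, writing $N_k=\bar f(z)\md p^{k}\in\{0,\dots,p^{k}-1\}$, one checks that
\[
f(z)\md p^{k}=N_k+n-\varepsilon_k(z)\cdot p^{k}
\quad\text{with } \varepsilon_k(z)\in\{-1,0,1\},
\]
where $\varepsilon_k(z)$ is determined by which of the intervals $[0,p^{k})$, $[p^{k},2p^{k})$, $[-p^{k},0)$ contains $N_k+n$. Dividing by $p^{k}$ gives
\[
\frac{f(z)\md p^{k}}{p^{k}}-\frac{\bar f(z)\md p^{k}}{p^{k}}=\frac{n}{p^{k}}-\varepsilon_k(z),
\]
with $n/p^{k}\to 0$ and $\varepsilon_k(z)$ bounded.

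Next I would push any witness sequence for $\mathbf{LP}(\bar f)$ through to $\mathbf{LP}(f)$. Given $(x;y)\in\mathbf{LP}(\bar f)$ witnessed by $z\in\Z_p$ and a strictly increasing $(k_i)$ as in Definition \ref{def:plot-auto}, pass to a subsequence along which $\varepsilon_{k_i}(z)$ takes a constant value $\varepsilon\in\{-1,0,1\}$ (possible by pigeonhole). Then $z\md p^{k_i}/p^{k_i}\to x$ is unchanged, while $f(z)\md p^{k_i}/p^{k_i}\to y-\varepsilon$. Since both coordinates of any plot point lie in $[0,1]$, we must have $y-\varepsilon\in[0,1]$, so $(x;y-\varepsilon)\in\mathbf{LP}(f)$. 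If $\varepsilon=0$ we are done. The only residual case is $\varepsilon=\pm 1$, which forces $\{y,y-\varepsilon\}=\{0,1\}$; on $\T^2$ these are identified, and in $\mathbb I^{2}$ one recovers the missing boundary point by choosing a different witness sequence (for instance, if $(x;0)\in\mathbf{LP}(f)$ arose with $y=1,\varepsilon=1$, then replacing $z$ by a nearby $p$-adic integer with one higher-order digit decremented produces a sequence along which $\varepsilon$ becomes $0$ and the limit is $(x;1)$).

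The reverse inclusion $\mathbf{LP}(\bar f)\subseteq\mathbf{LP}(f)$ follows by the symmetric argument applied to $\bar f(z)=f(z)-n$. The main (and only) technical annoyance is this boundary reconciliation at $y\in\{0,1\}$; it is essentially bookkeeping, and disappears entirely if one interprets the plots on $\T^{2}$ as permitted by Note \ref{note:plot-auto}. Everything else is a direct consequence of the vanishing of $n/p^{k}$ and of the elementary Euclidean-division identity for $f(z)\md p^{k}$.
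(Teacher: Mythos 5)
Your proposal is correct and follows essentially the same route as the paper: both reduce the claim to the observation that $f-\bar f=\lfloor q\rfloor=n\in\Z$ and that an additive integer shift of the output is invisible in the limit plot modulo $1$, the paper phrasing this via $\lim_{k\to\infty}(n\md p^k)/p^k\in\{0,1\}$ while you phrase it via the Euclidean-division carry $\varepsilon_k\in\{-1,0,1\}$ and a pigeonhole subsequence. Your explicit treatment of the wrap-around at $y\in\{0,1\}$ is the same boundary identification the paper absorbs by working on $\T^2$ (i.e.\ by the final reduction modulo $1$), so the two arguments coincide in substance.
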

\begin{proof}[Proof of Proposition \ref{prop:add-p-md1}]
Indeed,
once $n\in\Z$ then $\lim_{k\to\infty}n \md p^k/p^k\in\{0,1\}$; the limit is
$1$ if and only if $n$ is negative since given a canonical $p$-adic representation
$n=\alpha_0+\alpha_1p+\cdots$ of a negative $n\in\Z$, all $\alpha_i=p-1$  if $i$ is large enough, cf. Subsection \ref{ssec:p-adic}.
Therefore ($\lim_{k\to\infty}(z+n)\md p^k/p^k)\md1=(\lim_{k\to\infty}(z\md p^k+n\md
p^k)\md p^k/p^k)\md1=\lim_{k\to\infty}(z\md p^k/p^k+n\md
p^k/p^k)\md 1=
(\lim_{k\to\infty}z\md p^k/p^k)\md1$ for all $z\in\Z$. 
\end{proof}
Note that  the map $z\mapsto
cz+\bar q$ from the statement of Proposition \ref{prop:add-p-md1} is an automaton function for a suitable finite automaton $\mathfrak
B$ and $\mathbf{LP}(\mathfrak A)=\mathbf{LP}(\mathfrak B)$, where $\mathfrak
A$ is a finite automaton whose automaton function is $f$.

Now we  describe  limit plot of  a special  affine map with $c=1, q\ne 0$.
\begin{lem}
\label{lem:add-p}
Given a finite automaton
$\mathfrak A$ whose automaton function is $f(z)=z+q$ \textup{($q\in\Z_p\cap\Q$
then
)}, the limit plot $\mathbf{LP}(\mathfrak A)\subset\mathbb T^2$ is a link of a finite number of torus knots which
are cables $\mathbf C(1,e)$ where  $e$ is running over $\mathbf C(q)$.
\end{lem}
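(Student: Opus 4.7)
The plan is to reduce the affine case $f(z)=z+q$ to the already settled constant case (Proposition~\ref{prop:const}) and identity case (Proposition~\ref{prop:mult-p} with $c=1$). The bridge is the elementary additive identity
\[
\frac{(z+q)\md p^k}{p^k}=\left(\frac{z\md p^k}{p^k}+\frac{q\md p^k}{p^k}\right)\md 1,
\]
valid because $((z\md p^k)+(q\md p^k))\md p^k=(z+q)\md p^k$, with at most one unit of carry between the two terms. Combining this with the fact, extracted from the proof of Proposition~\ref{prop:const}, that the cluster points of the sequence $\bigl((q\md p^k)/p^k\bigr)_{k=1}^\infty$ are \emph{exactly} the set $\mathbf C(q)$, with each $e\in\mathbf C(q)$ attained along a fixed arithmetic progression of $k$'s modulo the period length $t$ of $q$, the limit plot decomposes as claimed.

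For the inclusion $\mathbf{LP}(\mathfrak A)\subseteq\bigcup_{e\in\mathbf C(q)}\mathbf C(1,e)$, take $(x,y)\in\mathbf{LP}(\mathfrak A)$ with witnessing $z\in\Z_p$ and strictly increasing $(k_i)$ as in Definition~\ref{def:plot-auto}. By the compactness of $[0,1]$, after passing to a subsequence we may assume $(q\md p^{k_i})/p^{k_i}\to e$ for some $e\in\mathbf C(q)$. The additive identity then forces $y=(x+e)\md 1$, so $(x,y)\in\mathbf C(1,e)$. The cables $\mathbf C(1,e_1)$ and $\mathbf C(1,e_2)$ for distinct $e_1,e_2\in\mathbf C(q)\subset[0,1)$ are disjoint, because a common point would force $e_1\equiv e_2\pmod 1$, contradicting $e_1,e_2\in[0,1)$ and $e_1\neq e_2$.

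The reverse inclusion is the main obstacle and requires a direct construction. Given $e\in\mathbf C(q)$ and $x\in[0,1)$, we must exhibit $z\in\Z_p$ and a strictly increasing $(k_i)$ along which simultaneously $(z\md p^{k_i})/p^{k_i}\to x$ and $(q\md p^{k_i})/p^{k_i}\to e$. By the explicit description in \eqref{eq:C}, the second condition is secured by choosing each $k_i$ in the fixed residue class $\ell\pmod t$ corresponding to $e=(-p^\ell q)\md 1$. For the first condition we use a diagonal construction: write $x=0.\xi_1\xi_2\xi_3\ldots$ in base $p$, pick any $K_i\to\infty$, and pick $k_i=\ell+N_it$ with $N_i$ growing fast enough that $k_{i+1}-K_{i+1}>k_i$ for every $i$. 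Then define $z\in\Z_p$ by setting $\delta_{k_i-j}(z)=\xi_j$ for $j=1,\ldots,K_i$ at each stage $i$ (the remaining digits, including those in the gaps and below $k_1$, may be taken to be $0$); the non-overlap condition on the $k_i$ ensures no conflict across stages. At stage $i$ the low-order digits below position $k_i-K_i$ contribute at most $p^{-K_i}$ to $(z\md p^{k_i})/p^{k_i}$, while the top block matches $x$ to within $p^{-K_i}$, so $(z\md p^{k_i})/p^{k_i}\to x$, as required.

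Finally, finiteness of $\mathfrak A$ enters only through Proposition~\ref{prop:fin-auto}, which ensures $q\in\Z_p\cap\Q$ and hence (via Proposition~\ref{prop:const}) that $\mathbf C(q)$ is finite of cardinality $t=\mlt_b p$ when $q=a/b$ in lowest terms; the number of knots in the resulting torus link is therefore exactly $t$.
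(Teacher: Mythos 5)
Your proposal is correct and rests on the same two pillars as the paper's proof: the additive identity $\frac{(z+q)\md p^k}{p^k}=\bigl(\frac{z\md p^k}{p^k}+\frac{q\md p^k}{p^k}\bigr)\md 1$, and the fact that the cluster points of $\bigl((q\md p^k)/p^k\bigr)_{k}$ are exactly $\mathbf C(q)$, each element being the limit along a fixed residue class of $k$ modulo the period length $t$. The forward inclusion via a convergent subsequence is the paper's argument almost verbatim. Where you genuinely differ is the reverse inclusion: the paper feeds the automaton the truncations $n_i\approx\lfloor p^{k_i}x\rfloor$ of $x$, and these are not the reductions $z\md p^{k_i}$ of any single $p$-adic integer (their low-order digits are inconsistent across $i$), so as written that argument exhibits $(x;(x+e)\md 1)$ as an accumulation point of $E(f)$ rather than producing the single witness $z$ demanded by Definition~\ref{def:plot-auto}; closing that gap requires either minimality plus Theorem~\ref{thm:AP=LP} or a construction like yours. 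Your block construction --- placing a long prefix of the digit string of $x$ just below each position $k_i$, with $k_i\equiv\ell\pmod t$ and the blocks kept disjoint --- supplies exactly such a $z$ and is therefore the more literal (and slightly more robust) reading of the definition of $\mathbf{LP}$, at the cost of a little extra bookkeeping. Your disjointness remark for the slope-one cables and the count $\#\mathbf C(q)=t=\mlt_b p$ are both consistent with Note~\ref{note:const} and Theorem~\ref{thm:mult-add-p}. I see no gap.
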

\begin{proof}[Proof of Lemma \ref{lem:add-p}]
We will prove  that once $\mathfrak A$ is a finite automaton such that
$f_{\mathfrak A}(z)=f(z)=z+q$ then 
\begin{equation}
\label{eq:add-p-main}
\mathbf{LP}(\mathfrak A)=\bigcup_{e\in\mathbf C(q)}\mathbf C(1,e).
\end{equation}
Note  that if $e=0$ and $e\in\mathbf C(q)$ then $\mathbf C(q)=\{0\}$ by Proposition
\ref{prop:const} and there
is nothing to prove. So further we assume that $e\in\mathbf C(q)$ and $e\ne
0$. 

By Proposition \ref{prop:add-p-md1} we may assume that $q\in\Z_p\cap\Q\cap[0,-1)$ then $q=d\cdot(p^t-1)^{-1}-1$ for  suitable $d\in\{0,1\ldots,p^t-2\}$, cf. Proposition \ref{prop:p-repr};
that is, 
$d=\zeta_{t-1}+\zeta_{t-2}p+\cdots+\zeta_{0}p^{t-1}$, where $\zeta_0,\ldots,\zeta_{t-1}\in\{0,1,\ldots, p-1\}$ and therefore
\begin{multline*}
q=
-(\zeta_{t-1}+\zeta_{t-2}p+\cdots+\zeta_{0}p^{t-1})(1+p^t+p^{2t}+\cdots)-1=\\
((p-1-\zeta_{t-1})+(p-1-\zeta_{t-2})p+\cdots+(p-1-\zeta_{0})p^{t-1})(1+p^t+p^{2t}+\cdots)
\end{multline*}
as $(p^t-1)^{-1}=-(1+p^t+p^{2t}+\cdots)$ in $\Z_p$ by Note \ref{note:inverse}. 
Therefore, in $\Z_p$ the rational number $q$ can be represented as  
\begin{multline}
\label{eq:add-p-a}
q=
(\eta_0+\eta_1p+\cdots+\eta_{t-1}p^{t-1})\cdot(1+p^t+p^{2t}+\cdots),
\end{multline}
where 
$\eta_j=p-1-\zeta_{t-1-j}$, $j=0,1,\ldots,t-1$.

 Given $x\in[0,1)$ take a sequence $n_i\in\N_0$, $i=1,2,\ldots$, and a strictly increasing sequence
$k_i\in\N$, $i=1,2,\ldots$, such that $k_i\ge\lfloor\log_pn_i\rfloor+1$, $\lim_{i\to\infty}n_i/p^{k_i}=x$,
and $k_i\md t= s\in\{0,1,\ldots,t-1\}$ for all $i=1,2,\ldots$. This
is always possible since if, e.g., $x=\xi_1p^{-1}+\xi_{2}p^{-2}+\cdots$ for
suitable $\xi_1,\xi_2,\ldots\in\{0,1,\ldots,p-1\}$ then one takes 
$n_i=\xi_1p^{j-1}+\xi_2p^{j-2}+\cdots+
\xi_{j-1}$ where  $j=it+s$ and  put $k_i=it+s$ for $i=1,2,\ldots$.

Considering a sequence $(n_i+q)_{i=0}^\infty$ in $\Z_p$, we see that 
\begin{equation}
\label{eq:add-p}
\frac{(n_i+q)\md p^{k_i}}{p^{k_i}}=\left(\frac{n_i}{p^{k_i}}+\frac{q\md p^{k_i}}{p^{k_i}}\right)\md1
\end{equation}
But $k_i=it+s$, $s\in\{0,1\ldots,t-1\}$; thus 
\begin{multline}
\label{eq:add-p-am}
\left(\lim_{i\to\infty}\left(\frac{q\md p^{k_i}}{p^{k_i}}\right)\right)\md1=
(\lim_{i\to\infty}((\eta_0+\eta_1p+
\cdots+\eta_{s-1}p^{s-1})p^{-s}+\\
(\eta_0+\eta_1p+
\cdots+\eta_{t-1}p^{t-1})\cdot
(p^{-it-s}+p^{(-i+1)t-s}+\cdots+p^{-t-s}))
)\md1=\\
0.(\eta_{s-1}\eta_{s-2}\ldots\eta_0\eta_{t-1}\eta_{t-2}\ldots\eta_{s})^\infty\md1
\end{multline}
if $s\ne 0$, or
\begin{multline}
\label{eq:add-p-am1}
\left(\lim_{i\to\infty}\left(\frac{q\md p^{k_i}}{p^{k_i}}\right)\right)\md1=\\
(\lim_{i\to\infty}(
(\eta_0+\eta_1p+
\cdots+\eta_{t-1}p^{t-1})\cdot
(p^{-it}+p^{(-i+1)t}+\cdots+p^{-t}))
)\md1=\\
0.(\eta_{t-1}\eta_{t-2}\ldots\eta_0)^\infty\md1
\end{multline}
if $s=0$. From \eqref{eq:add-p-am1} and \eqref{eq:add-p-am} it follows that 
$\lim_{i\to\infty}
(q\md p^{k_i}/p^{k_i})\md1
\in\mathbf C(q)$ by \eqref{eq:const} of Note \ref{note:const}.
Thus we have proved that given $x\in[0,1)$ and $e\in\mathbf
C(q)$, necessarily $(x,(x+e)\md1)\in\mathbf{LP}(\mathfrak A)$; so $\mathbf{LP}(\mathfrak A)\supset\mathbf C(1,e)$ for every $e\in\mathbf C(b)$. 

On the other hand, given $z\in\Z_p$ and a strictly increasing sequence $k_1,k_2,\ldots\in\N$,
by combining \eqref{eq:add-p-am1} and \eqref{eq:add-p-am} with \eqref{eq:const} of Note \ref{note:const} we conclude
that all limit points of the sequence $q\md p^{k_i}/p^{k_i}$, $i=1,2,\ldots$, are in $\mathbf
C(q)$ by an argument similar
to the above one. Therefore, limit points of the sequence $(z\md
p^{k_i}/p^{k_i}+q\md p^{k_i}/p^{k_i})\md1$, $i=1,2,\ldots$, are all of the form $(x+e)\md1$, where
$x$ is an limit point of the sequence $z\md p^{k_i}/p^{k_i}$ and $e\in\mathbf
C(q)$. This proves that $\mathbf{LP}(\mathfrak A)\subset\cup_{e\in\mathbf
C(q)}\mathbf C(1,e)$ and that \eqref{eq:add-p-main} is true.

\end{proof}
Now we are ready to prove the main claim of the Section.
\begin{thm}
\label{thm:mult-add-p}
Given $c,q\in\Z_p$, a map $z\mapsto cz+q$ of $\Z_p$ into itself is an automaton function of a finite automaton
if and only if $c,q\in\Z_p\cap\Q$.
Given a finite automaton
$\mathfrak A$ whose automaton function is $f(z)=cz+q$ for 
$c,q\in\Z_p\cap\Q$, represent $c,q$ as irreducible fractions 
$c=a/b$, $q=a^\prime/b^\prime$, where $a,a^\prime\in\Z$,
$b,b^\prime\in\N$ and $\gcd(a,b)=\gcd(a^\prime,b^\prime)=\gcd(b,p)=\gcd(b^\prime,p)=1$; then
the limit plot $\mathbf{LP}(\mathfrak A)\subset\mathbb T^2$ is a link of $\mlt_mp$ 
torus knots,  where $m=b^\prime/\gcd(b,
b^\prime)$, and every knot of the link is 
a cable $\mathbf C(c,e)$ for $e\in\mathbf C(q)$:
\begin{equation}
\label{eq:thm:mult-add-p}
\mathbf{LP}(\mathfrak A)
=
\left\{\left(y\md 1;\left(cy+e
\right)\md1\right)\colon y\in\R, e\in\mathbf C(q)\right\}.
\end{equation}
Moreover, $\mathbf
C(c,e_1)=\mathbf C(c,e_2)$ for 
$e_1,e_2\in\mathbf C(q)$ 
if and only if $r_1\equiv r_2\pmod m$ where $e_i=(-p^{r_i}q)\md
1$, $i=1,2$, cf. \eqref{eq:const}.  
\begin{note}
\label{note:mult-add-p}
Once $m=1$, i.e., once $b^\prime\mid b$, the congruence $r_1\equiv r_2\pmod
m$ holds trivially, $\mlt_1p=1$ and the link consists of a single knot; so
in that case $\mathbf C(c,e_1)=\mathbf C(c,e_2)$ for all $e_1,e_2\in\mathbf C(q)$.
\end{note}
\end{thm}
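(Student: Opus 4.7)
The plan is to reduce to two cases already treated in this section and then to perform a separate counting argument. The ``iff'' for being a finite automaton function is immediate from Proposition \ref{prop:fin-auto}(iv). By Proposition \ref{prop:add-p-md1} we may replace $q$ by $q\md 1$ without altering $\mathbf{LP}(\mathfrak A)$, so assume $0\le q<1$; let $t=\mlt_{b'}p$ be the period length of $q$.

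First I would prove the inclusion $\mathbf{LP}(\mathfrak A)\subseteq\bigcup_{e\in\mathbf C(q)}\mathbf C(c,e)$. Take $(x,y)\in\mathbf{LP}(\mathfrak A)$ with witness $z\in\Z_p$ and strictly increasing $(k_i)$. The split
$$\frac{(cz+q)\md p^{k_i}}{p^{k_i}}=\left(\frac{(cz)\md p^{k_i}}{p^{k_i}}+\frac{q\md p^{k_i}}{p^{k_i}}\right)\md 1$$
decouples the linear and constant contributions. After passing to a sub-subsequence on which both summands converge --- which is possible since $k_i\md t$ must stabilize along some infinite subsequence --- Proposition \ref{prop:mult-p} forces the first limit to lie on $\mathbf C(c,0)$, while the explicit computation carried out in the proof of Lemma \ref{lem:add-p} (see equations \eqref{eq:add-p-am}--\eqref{eq:add-p-am1}) forces the second limit to belong to $\mathbf C(q)$. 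Hence $(x,y)\in\mathbf C(c,e)$ for some $e\in\mathbf C(q)$. For the reverse inclusion I would construct a joint witness: given $x\in[0,1)$ and a prescribed $e=(-p^\ell q)\md 1$, fix $k_i\md t=\ell$ and then exploit the freedom in the $p^t$-weight of $z$ in the proof of Proposition \ref{prop:mult-p} to hit any prescribed point of $\mathbf C(c,0)$; the same sequence $(k_i)$ automatically delivers the offset $e$ in the constant part.

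Finally, for the count of distinct knots I would observe that $\mathbf C(c,e_1)=\mathbf C(c,e_2)$ iff $e_1-e_2\equiv cn\pmod 1$ for some $n\in\Z$, which with $c=a/b$ and $\gcd(a,b)=1$ is equivalent to $b(e_1-e_2)\in\Z$. Writing $e_i=(-p^{r_i}a'/b')\md 1$ and using $\gcd(a',b')=1$, this rearranges to $b'\mid b(p^{r_1}-p^{r_2})$; with $d=\gcd(b,b')$, $m=b'/d$, and $\gcd(b/d,m)=1$, it simplifies to $p^{r_1}\equiv p^{r_2}\pmod m$, i.e.\ to $\mlt_m p\mid(r_1-r_2)$. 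Since $\mlt_m p$ divides $t$, the $t$ indices of $\mathbf C(q)$ fall into $\mlt_m p$ equivalence classes of equal size, yielding precisely $\mlt_m p$ pairwise disjoint torus knots. The main technical obstacle is the joint-realizability in the reverse inclusion: the residue $k_i\md t$ simultaneously selects the offset $e$ and constrains the free parameters $z$, $(k_i)$ used in the construction of Proposition \ref{prop:mult-p}, so one must verify that both demands can be met along a single strictly increasing sequence --- which reduces to the elementary fact that the $p^t$-weight of $z$ can be adjusted inside a fixed arithmetic progression of indices without disturbing the residue class $k_i\md t$.
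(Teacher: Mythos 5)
Your proposal is correct and follows the paper's global strategy (decouple the linear and constant summands of $\left((cz+q)\md p^{k_i}\right)/p^{k_i}$, invoke Proposition \ref{prop:mult-p} and Lemma \ref{lem:add-p}/Proposition \ref{prop:const} for the two inclusions, then count coinciding cables), but it deviates in two local places, both defensible. First, for the joint realizability in the inclusion $\supseteq$, the paper explicitly aligns the two arithmetic progressions by a gcd/Bezout computation, introducing a shift $z\mapsto p^m z$ so that $m+\ell+\bar r_s t=d+j_sT$ can be solved; you instead fix the residue of $k_i$ modulo the period of $q$ to select $e$ and then extract, by pigeonhole, a sub-residue modulo the period of $c$, relying on the fact that the weight-tweaking freedom in Proposition \ref{prop:mult-p} hits every point of $\mathbf C(c,0)$ regardless of which residue class gets selected. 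This works, but be careful that there are \emph{two} periods in play --- $t_c=\mlt_b p$ for $c$ and $T=\mlt_{b'}p$ for $q$ --- and your single symbol $t$ conflates them: the ``$p^t$-weight'' of Proposition \ref{prop:mult-p} is taken with respect to $t_c$, not with respect to the period of $q$, so your closing remark about adjusting the weight ``without disturbing the residue class $k_i\md t$'' should be restated with the two moduli kept separate (the point being that a common infinite refinement of the two congruence conditions exists and the weight adjustment is insensitive to it). Second, your criterion $\mathbf C(c,e_1)=\mathbf C(c,e_2)\iff e_1-e_2\equiv cn\pmod 1$ for some $n\in\Z$ is obtained directly from the fact that two lines of equal slope have the same image on $\T^2$ iff they are $\Z^2$-translates; the paper instead compares the accumulation sets $A(c,e)$ of meridian crossings. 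Your route is shorter and lands on the same congruence $b(e_1-e_2)\in\Z$, and your subsequent reduction to $\mlt_m p\mid r_1-r_2$ is the same arithmetic as the paper's. Note that what you (and the paper's own proof) actually establish is the equivalence $r_1\equiv r_2\pmod{\mlt_m p}$, not $r_1\equiv r_2\pmod m$ as the theorem's displayed statement reads; your count of $\mlt_m p$ distinct knots is the correct one and agrees with the proof in the paper.
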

\begin{proof}[Proof of Theorem \ref{thm:mult-add-p}]
The first statement of the theorem is already proved, see Proposition \ref{prop:fin-auto}.

Given $q,c\in\Z_p\cap\Q$, we have that 
\begin{align}
\label{eq:c-expr}
c&=u+\frac{v}{p^t-1},\\ 
\label{eq:b-expr}
q&=\frac{w}{p^T-1} 
\end{align}
for suitable $u\in\Z$, $t,T\in\N$, $v\in\{0,1,2,\ldots,p^t-2\}$,
$w\in\{0,1\ldots,p^T-2\}$,
by Proposition \ref{prop:p-repr}. Note that we may assume that $0<q<1$ since  the set of  all limit points  of the
sequence $((z+q)\md p^k/p^k)_{k=1}^\infty$ is the same as that of the sequence $((z+q\md1)\md p^k/p^k)_{k=1}^\infty$ by Proposition \ref{prop:add-p-md1}
and the case $q=0$ is already considered, cf. Proposition \ref{prop:mult-p}.

Now we will prove that $\mathbf{LP}(\mathfrak A)\supset \mathbf C(c,e)$
for $e\in\mathbf C(q)$.  As $q\in\Z_p\cap\Q\cap(0,1)$, the canonical $p$-adic representation of $q$ is
eventually periodic and the period length of $q$ is $T$, cf. Subsection \ref{ssec:p-adic}. Now fix $e\in\mathbf C(q)$, take corresponding $d\in\{0,1,\ldots,T-1\}$ and consider a
sequence $n_j=d+iT\in\N$ $(j=1,2,\ldots)$; then $\lim_{j\to\infty}q\md p^{n_j}/p^{n_j}=e$, cf. the proof of Lemma
\ref{lem:add-p}. Given $x\in[0,1)$ take $z\in\Z_p$ and a sequence 
$\Cal K=(k_i=\ell+r_it)_{i=1}^\infty$ as in the proof
of Proposition \ref{prop:mult-p}; so $x=\lim_{i\to\infty}z\md p^{k_i}/p^{k_i}$.
Note that if $\bar z=p^mz$ for some $m\in\N_0$ then $x=\lim_{i\to\infty}\bar
z\md p^{k_i+m}/p^{k_i+m}$; so  the proof of Proposition \ref{prop:mult-p}
remains valid if one substitutes
$\bar z$  for $z$ and any strictly increasing subsequence $(\check k_i)$ of the sequence
$\bar{\Cal K}=(\bar
k_i=m+\ell+r_it)$ for the sequence $\Cal K$.  

We claim that for some $m\in\N_0$ there exist an increasing sequence $j_s\in\N$ and a subsequence $(\bar r_s)_{s=1}^\infty$ of the sequence
$(r_i)_{i=1}^\infty$ such that 
\begin{equation}
\label{eq:T-t}
m+\ell+\bar r_st=d+j_sT\  \text{for all}\ s=1,2,3,\ldots.
\end{equation}
Indeed, let $D=\gcd(T,t)$ be the greatest common divisor of $T$ and $t$; then $T=\check
TD$, $t=\check tD$, $\check t$ and $\check T$ are co-prime.  As the infinite sequence $(r_i)_{i=1}^\infty$
is strictly increasing, there exists  $\check n\in\{0,1,\ldots,\check
t-1\}$ such that $r_i+\check n\equiv 0\pmod{\check T}$ for infinitely many $i\in\N$,
say, for $i=i_1,i_2,\ldots$. Put $\bar r_s=r_{i_s}$; $s=1,2,3,\ldots$.
 
Take the smallest $\bar n=\check n+n\check T$, $n\in\N_0$, such that $d-\ell+\bar n\check tD\ge 0$, then put $m=d-\ell+\bar n\check tD$
and find $j_s$ from the equation \eqref{eq:T-t} which now is equivalent to
the equation $(\bar n+\bar r_s)\check t=j_s\check T$: As $\bar n+\bar r_s=h_s\check
T$ for a suitable $s\in\N$ by the definition of $\bar n$, one sees that 
$j_s=\check th_s$ for  $s=1,2,3,\ldots$ thus proving our claim.

We conclude now that given arbitrary $y\in\R$ 
 and $e\in\mathbf
C(q)$ there exist $\bar z\in\Z_p$ and a sequence $\check{\Cal K}=(\check k_s=\bar r_st+\ell+m=d+j_sT)$
such that
\begin{align}
\label{eq:lim-1.1}
y\md1=x&=\lim_{s\to\infty}\frac{\bar z\md p^{\check k_s}}{p^{\check k_s}},\\
\label{eq:lim-1.2}
(cy)\md1&=\lim_{s\to\infty}\frac{(c\bar z)\md p^{\check k_s}}{p^{\check k_s}},\\
\label{eq:lim-1.3}
e&=\lim_{s\to\infty}\frac{q\md p^{\check k_s}}{p^{\check k_s}}; 
\end{align}
cf. \eqref{eq:au_cable}, \eqref{eq:cable} and Proposition \ref{prop:const}.
Therefore, $\lim_{s\to\infty}(c\bar z+q)\md p^{\check k_s}/p^{\check k_s}=
\lim_{s\to\infty}((c\bar z)\md p^{\check k_s}/p^{\check k_s}+q\md p^{\check k_s}/p^{\check k_s})\md1=(cy+e)\md1$ and so the point $(y\md1,(cy+e)\md1)\in\mathbf C(c,e)$
is in $\mathbf{LP}(\mathfrak A)$. Thus we have
proved
that $\mathbf{LP}(\mathfrak A)\supset\mathbf C(c,e)$ for every $e\in\mathbf
C(q)$.

On the other hand, given arbitrary $z\in\Z_p$ and arbitrary  strictly increasing
sequence $k_1,k_2,\ldots\in\N$, limit points of the point sequence $(z\md p^{k_i}/p^{k_i};
(cz)\md
p^{k_i}/p^{k_i})$ are all in $\mathbf C(c,0)$ by Proposition \ref{prop:mult-p}
whereas limit points of the sequence $q\md p^{k_i}/p^{k_i}$ are all
in $\mathbf C(q)$ by Proposition \ref{prop:const}. Therefore limit
points of the point sequence $((z\md p^{k_i}/p^{k_i};
(cz+q)\md
p^{k_i}/p^{k_i}))_{i=1}^\infty=((z\md p^{k_i}/p^{k_i},
((cz)\md
p^{k_i}/p^{k_i}+q\md p^{k_i}/p^{k_i})\md1))_{i=1}^\infty$ are all in $\cup_{e\in\mathbf
C(q)}\mathbf C(c,e)$. Finally we conclude that $\mathbf{LP}(\mathfrak A)=\cup_{e\in\mathbf
C(q)}\mathbf C(c,e)$; or (which is the same) that
\begin{equation}
\label{eq:mult-add-p}
\mathbf{LP}(\mathfrak A)=
\left\{\left(y\md 1;\left(cy+e
\right)\md1\right)\colon y\in\R, e\in\mathbf C(q)\right\}
\end{equation}
Note that it may happen that $\mathbf C(c,q)=\mathbf C(c,q_1)$ even if $q\ne q_1$ (and even $q\notin
\mathbf C(q_1)$): For instance, \eqref{eq:au_cable} shows that $\mathbf C(c,q)=\mathbf C(c,0)$ for some $q\ne 0$. Therefore to finish the proof we must now calculate the number of
pairwise distinct cables $\mathbf C(c,e)$ when $e\in\mathbf C(q)$.

During the proof of Proposition \ref{prop:mult-p} we have shown that (in
the notation of the proposition under the proof)
$$
\left\{\left(y\md 1;\left(cy
\right)\md1\right)\colon y\in\R\right\}=
\left\{\left(y\md 1;\left(cy+\frac{j}{b}
\right)\md1\right)\colon y\in\R\right\}
$$
for every $j\in\Z$, cf. equation \eqref{eq:cable} and the text which follows it. Therefore $\mathbf C(c,e_1)=\mathbf C(c,e_2)$  if $e_1-e_2\equiv
(j/b)\md 1$ for some $j\in\Z$. The converse statement is also true:
if $\mathbf C(c,e_1)=\mathbf C(c,e_2)$ then $e_1-e_2\equiv
(j/b)\md 1$ for some $j\in\Z$. 

To prove this, for $h\in\mathbf C(q)$ let $A(c,h )$ be
a set of all points where the cable $\mathbf C(c,h)$ crosses zero meridian of
the torus $\T^2$; that is, 
$$
A(c,h)=\mathbf{AP}\left(\left\{\left(0;\left(\frac{(cz)\md
p^{s_r}}{p^{s_r}}+h\right)\md1\right)\:z\in\Z_p, \lim_{r\to\infty}\frac{z\md
p^{s_r}}{p^{s_r}}=0\right\}\right),
$$
where  
$s_1, s_2,\ldots\in\N$, $s_1<s_2<\ldots$; 
therefore by \eqref{eq:lim} 
\begin{multline}
\label{eq:acc-point}
\mathbf{AP}\left(\left(\left(\frac{(cz)\md
p^{s_r}}{p^{s_r}}+h\right)\md1\right)_{r=0}^\infty\right)=\\
\left\{\left(\frac{j}{b}+h\right)\md 1\: j=0,1,2,\ldots
\right\}.
\end{multline}
%

Finally, as $\mathbf C(c,e_1)=\mathbf C(c,e_2)$ if and only if $A(c,e_1)=A(c,e_2)$
since the both cables cross zero meridian at a same angle (which is equal
to $\arctan c$), this means that $\mathbf C(c,e_1)=\mathbf C(c,e_2)$ if and only if
$e_1- e_2\equiv jb^{-1}\pmod 1$ for some $j\in\N_0$, as claimed.

Now we are able to calculate the number of torus knots (cables) which constitutes
the link $\mathbf{LP}(\mathfrak A)$.  Let for some $j_1,j_2\in\{0,1,\ldots,b-1\}$, ($j_1\ne j_2$) and $e_1,e_2\in\mathbf
C(q)$ the following equality holds:
\begin{equation}
\label{eq:congr-md1}
\left(\frac{j_1}{b}+e_1\right)\md 1=\left(\frac{j_2}{b}+ e_2\right)\md 1.
\end{equation}
We see that $e_i=-p^{r_i}(\frac{a^\prime}{b^\prime})\md 1$ for suitable $r_i\in\{0,1,\ldots,(\mlt_{b^\prime}p)-1\}$   by Note \ref{note:const}
($i=1,2$). Therefore \eqref{eq:congr-md1}
is equivalent to the 
congruence
$$
p^{r_1}\frac{a^\prime}{b^\prime}-p^{r_2}\frac{a^\prime}{b^\prime}\equiv
\frac{j}{b}\pmod 1\ 
$$
for a suitable $j\in\{0,1,\ldots, b-1\}$; but the latter congruence
in turn is equivalent to the congruence
\begin{equation}
\label{eq:congr}
p^{r_2}\left(p^{r_1-r_2}-1\right)a^\prime n\equiv jm\pmod{nmd},
\end{equation}
where $d=\gcd(b^\prime,b)$, $m=b^\prime/d$, $n=b/d$ (we assume
that $r_1>r_2$ since the case $r_1=r_2$ is trivial). From here it follows that
$p^{r_2}\left(p^{r_1-r_2}-1\right)a^\prime n\equiv 0\pmod{m}$ once $m\ne
1$; therefore necessarily $r_1\equiv r_2\pmod{\mlt_mp}$ since $\gcd(b^\prime,b)=\gcd(p,b)=\gcd(p,b^\prime)=1$. So $\left(p^{r_1-r_2}-1\right)=mh$ for a suitable
$h\in\N$ and thus \eqref{eq:congr} is equivalent to the congruence $p^{r_2}h
a^\prime n\equiv j\pmod{nd}$, and the latter congruence gives the value of $j$ (modulo $b=nd$) so that \eqref{eq:congr-md1} is satisfied.
This means that when $m\ne 1$, \ref{eq:congr-md1}
holds if and only if $r_1\equiv r_2\pmod{\mlt_mp}$
Thus, if $m\ne 1$ (that is, if $b^\prime$ is not
a factor of $b$) then the number of pairwise distinct torus knots
in the link is $\mlt_m p$.

In the remaining case when $m=1$ (i.e., when $b^\prime$ divides $b$) \eqref{eq:congr}
always
holds: If $p^{r_1-r_2}\equiv 1\pmod{d}$ then we can take $j=0$ to satisfy
\eqref{eq:congr}; otherwise  the left-hand side of \eqref{eq:congr}  just gives an expression for a unique residue $j$ modulo $b=nd$ (which thus satisfies \eqref{eq:congr}). Therefore the link consist of a unique cable; so the number of pairwise distinct cables in the link is $1=\mlt_1p$
in this case as well.
This concludes the proof. 
\end{proof}
\begin{note}
\label{note:mult-add-p-1}
In conditions of Theorem \ref{thm:mult-add-p} note that $b^\prime\vert b$ is the only case when  the link $\mathbf{LP}(\mathfrak A)$
consists  of a single cable . 
Note also that from  the proof of Theorem \ref{thm:mult-add-p} it is clear that if the number $\#\mathbf C(q)$ of points in $\mathbf C(q)$ is 1 then the link necessarily consists of a single
cable.  By note \ref{note:const},  $\#\mathbf C(q)=1$ if and only if the
period
length of $q$ is 1 and therefore
$q\md1=0.(\xi)^\infty\md1$ for some  $\xi\in\{0,1,\ldots,p-1\}$).
\end{note}
\begin{exmp}
\label{exmp:mult-add-p}
Let $p=2$ and $f(z)=(3/5)\cdot z+(1/3)$. Then in conditions of Theorem \ref{thm:mult-add-p}
we have that $m=3$ and therefore the link consists of $\mlt_32=2$ cables
with slopes $3/5$, cf. Figures \ref{fig:link-square} and  \ref{fig:link-torus}.
\end{exmp} 
\begin{figure}[ht]
\begin{minipage}[b]{.45\linewidth}
\begin{quote}\psset{unit=0.3cm}
 \begin{pspicture}(0,0)(15,15)
 \newrgbcolor{emerald}{.31 .78 .47}
 \newrgbcolor{el-blue}{.49 .98 1}
 \newrgbcolor{baby-blue}{.54 .81 .94}
   \psframe[
  linewidth=0.5pt,linecolor=green](-4.8,0)(13.2,18)
  \pscircle[fillstyle=solid,fillcolor=yellow,linewidth=.4pt](-4.8,0.1){.3}
  \psline[unit=0.359cm,linecolor=baby-blue,linewidth=2pt](-4,14)(-2.4,15)
  \psline[unit=0.359cm,linecolor=baby-blue,linewidth=2pt](-4,11)(2.6,15)
  \psline[unit=0.359cm,linecolor=emerald,linewidth=2pt](-4,13)(-0.7,15)
  \psline[unit=0.359cm,linecolor=baby-blue,linewidth=2pt](-4,5)(11,14.2)
  \psline[unit=0.359cm,linecolor=emerald,linewidth=2pt](-4,4)(11,13.2)
  \psline[unit=0.359cm,linecolor=emerald,linewidth=2pt](-4,7)(9,15)
  \psline[unit=0.359cm,linecolor=baby-blue,linewidth=2pt](-4,8)(7.5,15)
  \psline[unit=0.359cm,linecolor=emerald,linewidth=2pt](-4,10)(4.2,15)
  \psline[unit=0.359cm,linecolor=baby-blue,linewidth=2pt](-4,2)(11,11.2)
  \psline[unit=0.359cm,linecolor=emerald,linewidth=2pt](-4,1)(11,10.2)
  \psline[unit=0.359cm,linecolor=baby-blue,linewidth=2pt](2.5,0)(11,5.2)
  \psline[unit=0.359cm,linecolor=baby-blue,linewidth=2pt](-2.4,0)(11,8.2)
  \psline[unit=0.359cm,linecolor=emerald,linewidth=2pt](9,0)(11,1.2)
  \psline[unit=0.359cm,linecolor=emerald,linewidth=2pt](4.1,0)(11,4.2)
  \psline[unit=0.359cm,linecolor=emerald,linewidth=2pt](-0.75,0)(11,7.2)
  \psline[unit=0.359cm,linecolor=baby-blue,linewidth=2pt](7.4,0)(11,2.2)
 \end{pspicture}
\end{quote}
\caption{%
{Limit plot of the function $f(z)= \frac{3}{5}z+\frac{1}{3}$, $z\in\Z_2$, in $\mathbb R^2$}}
\label{fig:link-square}
\end{minipage}\hfill
\begin{minipage}[b]{.45\linewidth}
\includegraphics[width=0.92\textwidth,natwidth=610,natheight=642]{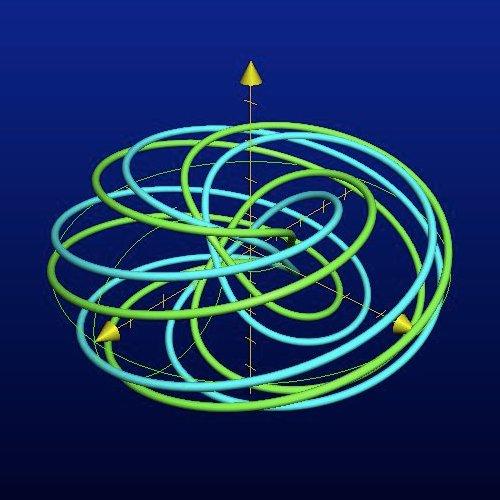}
\caption{Limit plot of the same function on the torus $\mathbb T^2$\qquad \qquad}
\label{fig:link-torus}
\end{minipage}
\end{figure}
\begin{cor}
\label{cor:mult-add-compl}
There is a one-to-one correspondence between maps of the
form $f\:z\mapsto\frac{a}{b}z+\frac{a^\prime}{b^\prime}$ on $\Z_p$ \textup(where
$\frac{a}{b},\frac{a^\prime}{b^\prime}\in\Z_p\cap\Q$; $a,a^\prime\in\Z$; $b,b^\prime\in\N$\textup) and collections of $\mlt_{m}p$
complex-valued exponential
functions $\psi_k\:\R\>\C$ of real variable $y\in\R$ 
$$
\left\{\psi_k(y)=e^{i(\frac{a}{b}y-2\pi p^k\frac{a^\prime}{b^\prime})}\: 
k=0,1,2,\ldots,(\mlt_{m}p)-1\right\}.
$$
Here $i\in\C$ is imaginary unit and $m=b^\prime/\gcd(b,b^\prime)$.
\end{cor}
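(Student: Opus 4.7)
The plan is to deduce the corollary directly from Theorem \ref{thm:mult-add-p} by translating its geometric description of $\mathbf{LP}(\mathfrak{A})$ (as a union of torus cables) into the cylindrical-coordinate/complex-exponential representation of torus windings recalled in Subsection \ref{ssec:knot}. There is essentially no new computation; the content is to match parametrizations and verify that the passage from $f$ to $\{\psi_k\}$ and back is bijective.

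First I would invoke Theorem \ref{thm:mult-add-p} with $c=a/b$ and $q=a'/b'$: the finite-automaton function $f(z)=cz+q$ is well defined precisely when the given fractions lie in $\Z_p\cap\Q$, and its limit plot on $\T^2$ is the disjoint union
$$
\mathbf{LP}(\mathfrak{A})\;=\;\bigcup_{k=0}^{\mlt_m p-1}\mathbf C(c,e_k),
\qquad e_k\;=\;(-p^k q)\md 1,
$$
the indexing being faithful by the last clause of the theorem (distinct residues $k$ modulo $\mlt_m p$ give distinct cables). Thus the map $f$ canonically determines a set of exactly $\mlt_m p$ torus cables, each of common slope $c$ and phase $e_k$.

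Next I would translate each cable into its complex-exponential form. By \eqref{eq:note_cable} and the discussion immediately following it, a torus cable of slope $c=\alpha/\beta$ and phase offset $\omega$ is naturally ascribed to the complex-valued function $y\mapsto e^{i(cy+\omega)}$ on $\R$. For the cable $\mathbf C(c,e_k)=\{(y\md 1,(cy+e_k)\md 1)\colon y\in\R\}$, converting the meridional coordinate $(cy+e_k)\md 1$ to a cylindrical angular coordinate by multiplication by $2\pi$ gives phase offset $\omega_k=2\pi e_k=-2\pi p^k q\pmod{2\pi}$. Substituting back yields
$$
\psi_k(y)\;=\;e^{i(cy+2\pi e_k)}\;=\;e^{i\bigl(\tfrac{a}{b}y-2\pi p^k\tfrac{a'}{b'}\bigr)},
\qquad k=0,1,\ldots,\mlt_m p-1,
$$
as required.

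Finally, I would check bijectivity. Going from $f$ to $\{\psi_k\}$ is well defined: replacing $a'$ by $a'+nb'$ leaves every $\psi_k$ unchanged, since $p^k n\in\Z$ gives $e^{-2\pi i p^k n}=1$, which is consistent with Proposition \ref{prop:add-p-md1} (which shows the limit plot depends only on $q\md 1$). In the reverse direction, from the collection $\{\psi_k\}$ one reads off the slope $a/b$ as the common coefficient of $y$ (hence $c$ and thereby $a,b$ under the standing irreducibility/coprimality assumptions), and the phases $\{-2\pi p^k(a'/b')\md 2\pi\}$ together with Corollary \ref{cor:const} recover $(a'/b')\md 1$, hence the map $f$ up to the equivalence already identified. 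The only delicate point — and the step I would expect to require the most care — is this last identification: one must verify that the correspondence is literally bijective on the level of pairs $(a/b,\,a'/b')$ with $a',b'$ coprime and $p\nmid b'$, accounting for the two harmless ambiguities (adding integers to $q$, and the indexing of the $k$'s, which is canonical once one fixes the $p$-shift action). Both are resolved by appealing to Proposition \ref{prop:mult-per}, Corollary \ref{cor:const}, and the last sentence of Theorem \ref{thm:mult-add-p}.
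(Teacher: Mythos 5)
Your proposal is correct and follows essentially the same route as the paper: both deduce the corollary from Theorem \ref{thm:mult-add-p} by expressing each cable of the link in the cylindrical-coordinate form \eqref{eq:note_cable} with phase $\omega=2\pi e$ for $e\in\mathbf C(q)$, and then counting the distinct knots via the coincidence criterion $e_1\equiv e_2$ modulo the slope (equivalently \eqref{eq:congr-md1}), which yields exactly $\mlt_m p$ functions. Your extra attention to the well-definedness and invertibility of the correspondence is a slightly more careful treatment of the ``one-to-one'' claim than the paper gives, but it is not a different argument.
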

\begin{proof}[Proof of Corollary \ref{cor:mult-add-compl}]
Indeed, embedding the unit torus $\T^2$ into a 3-dimensional Euclidean space $\R^3$ and using cylindrical coordinates  as in Note
\ref{note:cable}, in view of Theorem \ref{thm:mult-add-p} every knot from the link can be expressed 
in the form \eqref{eq:note_cable} with $\omega=2\pi e$ for $e\in\mathbf
C(q)$ since $\cos\omega$ and $\sin\omega$ specifies  position of the point
where the knot crosses zero meridian of the torus (i.e., when $\theta\equiv0\pmod
{2\pi}$ in \eqref{eq:note_cable}). But $q=a^\prime/b^\prime$ and thus $\mathbf
C(q)=\left\{(-p^{\ell}\cdot({a^\prime}/{b^\prime}))\md1\colon \ell=0,1,\ldots,(\mlt_{b^\prime}p)-1\right\}$
by \eqref{eq:C}. As two such knots (with accordingly $\omega_i=2\pi e_i$, $i=1,2$) coincide if an only if $\omega_1\equiv\omega_2\pmod{2\pi\cdot(a/b)}$ by \eqref{eq:note_cable},
i.e., if and only if $e_1\equiv e_2\pmod{a/b}$. But the latter congruence
is equivalent to \eqref{eq:congr-md1}; so finally the assertion follows from
Theorem \ref{thm:mult-add-p}.
\end{proof}



%
\section{Finite computability}
\label{sec:fin-comp}
In this section we introduce central notion of the paper, the \emph{finite computability}, and prove some technical results which will be needed further
during the proof of main result of the paper, the affinity of finitely
computable smooth functions, cf. further Section \ref{sec:main}.
\begin{defn}
\label{def:eval-s}
A non-empty point set $S\subset\mathbb I^2$ ($S\subset\mathbb T^2$, $S\subset\mathbb I\times \mathbb S$, $S\subset\mathbb S\times\mathbb I$) is  called  \emph{\textup(ultimately\textup)
finitely computable} (or, (ultimately) computable by a finite automaton) if
there exists a finite automaton $\mathfrak A$ such that $S$ is a subset of
$\mathbf
{P}(\mathfrak A)$
(of $\mathbf
{LP}(\mathfrak A)$). 
We say that the automaton $\mathfrak A$  \emph{\textup(ultimately\textup)
computes}
the set $S$; and $\mathfrak A$  is called
an \emph{\textup(ultimate\textup) computing automaton} of the set $S$.
\end{defn}
In most further cases given a real function $g\colon D\>\R$ with the domain $D\subset \R$
by the
\emph{graph 
of the function} (on the torus $\mathbb T^2$) we mean
the point subset $\mathbf G_D(g)=\{(x\md1;g(x)\md1)\colon x\in D\}\subset\mathbb
T^2$.
However, given  a function $g\:D\>T$ where either
$D\subset[0,1]$ or
$D\subset\mathbb S$  and $T$ is either $[0,1]$ or  $\mathbb
S$, we call a graph $\mathbf G_D$ of the function $g$ the set 
$\{(\bar x;\overline{g(x)})\:x\in D\}$ where either $\bar x=x$ if $D\subset[0,1]$
or $\bar x=x\md1$ if $D\subset\mathbb S$ and accordingly either $\overline{g(x)}=g(x)$
if $T=[0,1]$ or $\overline{g(x)}=(g(x))\md1$ if $T=\mathbb S$. In the sequel
we  always explain what is meant by $\mathbf G_D(g)$ if this is not clear from
the context. Also, we may omit the subscript $D$ when it is clear what is the domain.

\begin{defn}
\label{def:eval}
%

Given a
real function $g\:D\>\R$ with domain $D\subset\R$ and an automaton $\mathfrak
A$, the function $g$ is called
\emph{\textup(ultimately\textup) computable by $\mathfrak A$ at the point $x\in D$} if 
$(x\md1;g(x)\md1)\in\mathbf{P}(\mathfrak A)\subset\mathbb T^2$ ($(x\md1;g(x)\md1)\in\mathbf{LP}(\mathfrak A)\subset\mathbb T^2$). 
Also, if either
$D\subset[0,1]$ or
$D\subset\mathbb S$ and $g\:D\>T$ where  either $T=[0,1]$ or  $T=\mathbb
S$ 
we will say that $\mathfrak A$ \emph{\textup(ultimately\textup) computes $g$ at the point $x\in D$}
if $(\bar x;\overline{g(x)})\in\mathbf{LP}(\mathfrak A)$ where either $\bar x=x$ if $D\subset[0,1]$
or $\bar x=x\md1$ if $D\subset\mathbb S$ and accordingly either $\overline{g(x)}=g(x)$
if $T=[0,1]$ or $\overline{g(x)}=(g(x))\md1$ if $T=\mathbb S$ (cf. Note \ref{note:plot-auto}) 

Given a
real function $g\:D\>\R$ with domain $D\subset\R$, the function $g$ is called
\emph{\textup(ultimately\textup) finitely computable} (or, \emph{\textup(ultimately\textup)  computable by
a finite automaton}) if there exists a finite automaton $\mathfrak A$ such
that $\mathbf G(g)\subset\mathbf{P}(\mathfrak A)\subset\mathbb T^2$ ($\mathbf G(g)\subset\mathbf{LP}(\mathfrak A)\subset\mathbb T^2$).
The automaton $\mathfrak A$ which \emph{\textup(ultimately\textup) computes}
the function $g$ is called
the \emph{\textup(ultimate\textup) computing automaton} of the function $g$. In a similar manner we
define these notions for the cases when $g\:D\>T$ and $D$, $T$ are as above.
\end{defn}

\subsection{The mark-ups} 
\label{ssec:mark-up}
In loose terms, when assigning a real-valued function $f^\mathfrak A\: [0,1]\>[0,1]$  to automaton $\mathfrak
A$ via Monna map  $\mon:\Z_p\to\mathbb R$ (cf. subsection \ref{ssec:plots}) one feeds the automaton  by  a base-$p$-expansion
of argument $x\in[0,1]$  and considers the output as a base-$p$ expansion of $f^\mathfrak A(x)$:
A base-$p$ expansion specifies a unique right-infinite word in the alphabet
$\F_p$ and the automaton `reads the word from head to tail', i.e., 
is feeded by  digits of the base-$p$ expansion from left to right (i.e., digits
on more
significant positions are feeded prior to digits on less significant positions);
and the output word specifies a base-$p$ expansion of a unique real
number from $[0,1]$.  

To examine functions computed
by automata in the meaning of Definition \ref{def:eval} it would also be convenient to work with base-$p$ expansions of real numbers; 
but the problem is that we need feed the automaton 
by
a right-infinite word in the inverse order `from tail (which is at infinity)
to head': Digits
on less 
significant positions (the rightmost ones) should be feeded prior to digits on more significant positions
(the leftmost ones). So straightforward inversion is impossible since it is unclear which letter should be the first when feeding
the automaton this way; thus  output word  is undefined and so is the real number whose base-$p$ expansion is the output word. In this subsection we  rigorously
specify this inversion and develop some techniques needed  in further proofs.

Let  a function $g\:D\>\mathbb S$ (or $g\:D\>[0,1]$) whose domain $D$ is  either
a subset of a real
unit
circle $\mathbb S$ or a subset of a unit segment $[0,1]$ be ultimately computable by a finite automaton $\mathfrak A=\mathfrak A(s_0)$; that
is, for any $x\in D$ there exists $x\in\Z_p$ such that $x$ is a limit
point of the sequence $(z\md p^k/p^k)_{k=1}^\infty$ and $g(x)$ is a limit
point of the sequence $((f_{\mathfrak A}(z))\md p^k/p^k)_{k=1}^\infty$, where
$f_{\mathfrak A}\:\Z_p\>\Z_p$ is automaton function of the automaton $\mathfrak
A$, cf.  Definition \ref{def:eval} and Definition \ref{def:plot-auto}.
As said,  further  to examine  finitely computable real functions  it is however more convenient to work with automata
maps as maps  of reals into reals
rather than to consider automata functions on $p$-adic
integers and then represent $x\in\R$ and $g(x)\in\R$ as limit
points of the sequences $(z\md p^k/p^k)_{k=1}^\infty$ and $((f_{\mathfrak A}(z))\md p^k/p^k)_{k=1}^\infty$, respectively. 

Further in this subsection we are
going to show that  once $x\in D$ and
once $x=0.\chi_1\chi_2\ldots$ is a base-$p$ expansion of $x$, we can find
a state $s=s(x)\in\Cal S$ of the automaton $\mathfrak A$ and  a strictly increasing infinite sequence of indices $1\le k_1<k_2<\ldots$ such that the
sequence $(0.\mathfrak a_s(\chi_1\chi_2\ldots\chi_{k_j}))_{j=1}^\infty$ tends
to $(g(x))\md1$ (recall that $\mathfrak a_s(\zeta_1\zeta_2\ldots\zeta_\ell)$
is an $\ell$-letter output word of the automaton $\mathfrak A(s)$ whose initial
state is $s$ once the automaton has been feeded
by the $\ell$-letter input word $\zeta_1\zeta_2\ldots\zeta_\ell$, cf. Subsection
\ref{ssec:auto}). This means, loosely speaking, that once we feed the automaton
$\mathfrak A(s)$ with  approximations  $0.\chi_1\chi_2\ldots\chi_{k_j}$ of
$x$,
the automaton outputs the sequence of approximations 
$0.\mathfrak a_s(\chi_1\chi_2\ldots\chi_{k_j})$ of $g(x)$, and 
these
sequences tend to $x$ and to $g(x)$ accordingly while  $j\to\infty$. Moreover,
we will show that if the function $g$ is continuous then there exists a state $s\in\Cal S$ such that all $x\in D$ for which $s(x)=s$ constitute a dense
subset in $D$. 

Recall that given $x\in(0,1)$, there exists a (right-)infinite word $w=\gamma_0\gamma_1\ldots$
over $\{0,1,\ldots,p-1\}$ such that 
\begin{equation}
\label{eq:base-p}
x=0.\gamma_0\gamma_1\ldots=0.w
=\sum_{i=0}^\infty\gamma_ip^{-i-1},
\end{equation}
the \emph{base-$p$ expansion} of $x$. If $x$  is not of the form $x=n/p^k$
for some $n=\alpha_0+\alpha_1p+\cdots+\alpha_{\ell} p^{\ell}\in\{0,1,\ldots,p^k-1\}$,
where $\ell=\mathbf{le}(n)=\lfloor\log_pn\rfloor+1$ is the length of the
base-$p$
expansion of $n\in\N_0$ (recall that we put $\lfloor\log_p0\rfloor=0$, cf.
Subsection \ref{ssec:a-map}),
$\alpha_0,\alpha_1,\ldots\alpha_\ell\in\{0,1,\ldots,p-1\}$, then the right-infinite
word 
$\wrd(x)=\gamma_0\gamma_1\ldots$ over $\{0,1,\ldots,p-1\}$
is uniquely defined (and the corresponding $x$ is said to have a \emph{unique}
base-$p$ expansion); else there are exactly two infinite words,
\begin{align}
\label{eq:base-p_r}
\wrd^r(x)&=\alpha_0\alpha_1\ldots\alpha_{\ell-1}\alpha_{\ell}00\ldots=\alpha_0\alpha_1\ldots\alpha_{\ell-1}\alpha_{\ell}(0)^\infty\\
\label{eq:base-p_l}
\wrd^l(x)&=\alpha_0\alpha_1\ldots\alpha_{\ell-1}(\alpha_{\ell}-1)(p-1)(p-1)\ldots=\alpha_0\alpha_1\ldots\alpha_{\ell-1}(\alpha_{\ell}-1)(p-1)^\infty,
\end{align}
where $\alpha_\ell\ne 0$, such that $x=0.\wrd_r(x)=0.\wrd_l(x)$. In that case  $x$ is said to have a \emph{non-unique}
base-$p$ expansion; the corresponding base-$p$ expansions are called \emph{right}
and \emph{left} respectively.
Both 0 and 1 are assumed to have  unique base-$p$ expansions since $0=0.00\ldots$, $1=0.(p-1)(p-1)\ldots$; so $
\wrd(0)=00\ldots$,
$
\wrd(1)=(p-1)(p-1)\ldots$. This way we define $
\wrd(x)$ for all $x\in[0,1]$; and to $x=n/p^k$ we will usually put into the correspondence both infinite
words $\wrd^l(x)$ and $\wrd^r(x)$ if converse is not stated explicitly.
The only difference in considering a unit circle $\mathbb S$ rather than
the unit segment $\mathbb I=[0,1]$  is that we identify 0 and
1 and thus have two representations for 0, $0.(0)^\infty$ and $0=1\md1=0.(p-1)^\infty$.

Given a finite word  $w=\alpha_{m-1}\alpha_{m-2}\cdots\alpha _0$, we denote
via  $\overrightarrow{w}$ the (right-)infinite word $
\overrightarrow{w}= \alpha_{m-1}\alpha_{m-2}\cdots\alpha _0(0)^\infty$ and we put
$0.\overrightarrow{w}=0.\alpha_{m-1}\alpha_{m-2}\cdots\alpha _0(0)^\infty\ldots$
(note that  then $0.\overrightarrow{w}=\rho(w)$). Of course,  
$0.\overrightarrow{w}=0.w=\sum_{i=0}^{m-1}\alpha_ip^{-m+i}$; but we use notation
$0.\overrightarrow{w}$ if we want to stress that we deal with infinite base-$p$
expansion. To unify our notation, we
also may write $\overrightarrow{w}=\zeta_1\zeta_2\ldots$ for a (right-)infinite
word $w=\zeta_1\zeta_2\ldots$; then $0.\overrightarrow{w}=0.w=0.\zeta_1\zeta_2\ldots$.

Let $\overrightarrow{w}=\gamma_0\gamma_1\ldots$ be a (right-)infinite word
over $\F_p=\{0,1,\ldots,p-1\}$. 
Given
an automaton $\mathfrak A$ with the initial state $s$, we further denote via $\mathfrak a_s(\overrightarrow{w})$
the set of all limit points of the sequence $(\rho(\mathfrak a_s(\gamma_0\gamma_1\ldots\gamma_{k})))_{k=0}^\infty$.
We may omit the subscript $s$ if it is clear from the context what is the
initial state of the automaton. 

Given $x\in[0,1]$  further $\mathfrak a_s(x)$ stands for 
$\mathfrak a_s(\overrightarrow{w}(x))$ if $x$ admits a unique base-$p$
expansion, 
and  $\mathfrak a_s(x)=\mathfrak a_s(\overrightarrow{w}(x)_l)\cup\mathfrak a_s(\overrightarrow{w}(x)_r)$
if the expansion is non-unique (thus, if $x$ admits both left and right base-$p$ expansions). We also consider $\mathfrak a_s(x)$ for  $x\in\mathbb S$ rather
that  for $x\in[0,1]$; in
that case we take for $0$ both base-$p$ expansions  $0.(0)^\infty$ and
$0.(p-1)^\infty$ (since $0=(0.(p-1)^\infty)\md1$) and reduce modulo 1 all limit
points of all sequences $(\rho(\mathfrak a_s(\gamma_0\gamma_1\ldots\gamma_{k})))_{k=0}^\infty$.
We further use the same symbol $\mathfrak a_s(x)$ independently of whether we consider
$x\in[0,1]$ or $x\in\mathbb S$; we make special remarks when this may cause
a confusion.

We stress that   $\mathfrak a_s(w)$ is
a uniquely defined finite word whenever $w\in\Cal W$ is a finite word (and
therefore $\rho(\mathfrak
a_s(w))$ consists of a single number), but in the case when $w$ is an infinite
word or $w$ is a real number from $[0,1]$ (or $w\in\mathbb S$), the set $\mathfrak a_s(w)$ may
contain more than one element.

Given $x\in\Q\cap[0,1]$, in view of Lemma \ref{le:per} it is clear that if
the automaton $\mathfrak A$ is finite then $\mathfrak
a(x)\in Q\cap[0,1]$ since a real number is rational if and only if its base-$p$ expansion  is eventually periodic. The following propositions reveals some
more details about $\mathfrak a(x)$ for a rational $x$; and especially for $x=0$.
\begin{prop}
\label{prop:auto-Q}
If $\mathfrak A$ is finite, $x\in\Q\cap[0,1]$ then $\mathfrak a(x)\subset\Q\cap[0,1]$
and $\mathfrak a(x)$ is a finite set.
Moreover, if $x\in\Z_p\cap\Q\cap[0,1]$ 
then
$\mathfrak a(x)\subset\Z_p\cap\Q\cap[0,1]$.  In particular, if $x=0\in\mathbb
S$ then $\mathfrak a(x)=\mathbf C(q_1)\cup\mathbf C(q_2)$ for suitable $q_1,q_2\in\Z_p\cap\Q\cap[0,1)$
\textup{(cf. Subsection \ref{ssec:const})}. Let a $\mathfrak A$-computable
function $g\:D\>\mathbb S$ be defined on the  domain $D\subset\mathbb S$ and  continuous at $0\in D$.  If the domain $D$ is open then
there exists $q\in\Z_p\cap\Q\cap[0,1)$ such taht  $\mathfrak a(0.(0)^\infty)=\mathfrak a(0.(p-1)^\infty)\in\mathbf C(q)$; and    either $\mathfrak a(0.(0)^\infty)\in\mathbf C(q)$ or $\mathfrak a(0.(p-1)^\infty)\in\mathbf C(q)$ if the domain $D$ is
half-open and $x$ is a boundary of $D$.  
\end{prop}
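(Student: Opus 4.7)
The plan is to exploit eventual periodicity of base-$p$ expansions of rationals together with Lemma~\ref{le:per}, and then to identify the limit set for $x=0$ as a cable in the sense of Subsection~\ref{ssec:const}. For $x\in\Q\cap[0,1]$ with eventually periodic expansion $x=0.\chi_0\ldots\chi_{k-1}(\xi_0\ldots\xi_{n-1})^\infty$, the finite truncation of length $K=k+mn$, read in the ``rightmost first'' feeding convention, feeds the reversed period $\xi_{n-1}\ldots\xi_0$ to $\mathfrak A$ a total of $m$ times before feeding the reversed pre-period $\chi_{k-1}\ldots\chi_0$. Since $\mathfrak A$ is finite, the state trajectory during this long periodic phase enters a cycle, and the argument underlying Lemma~\ref{le:per} shows the output word decomposes (in paper's convention) as $r(\bar u)^\ell v$ with $\bar u$ a cyclic shift of the eventual output period, $v$ the initial transient, and $r$ produced while processing $\chi_{k-1}\ldots\chi_0$. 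Applying $\rho$ (which reverses words) and letting $\ell\to\infty$ along sub-sequences with fixed $m\md T$ (for $T$ the orbit period), $\rho(\mathfrak a(\cdot))$ converges to $0.\widetilde r(\widetilde{\bar u})^\infty$, an eventually periodic, hence rational, number in $[0,1]$. Only finitely many such sub-sequences produce distinct limits, so $\mathfrak a(x)\subset\Q\cap[0,1]$ is finite. For the second claim ($x\in\Z_p\cap\Q$), Corollary~\ref{cor:r-repr} forces the base-$p$ expansion of $x\md 1$ to be purely periodic; no pre-period is fed, so after the transient the output is purely periodic and the $\rho$-limits have purely periodic base-$p$ expansions, hence lie in $\Z_p\cap\Q$ by the same corollary.

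For $x=0\in\mathbb S$, one treats the two expansions $0.(0)^\infty$ and $0.(p-1)^\infty$ separately. Feeding zeros from $s_0$ produces output letters $(o_i)_{i\ge 0}$ equal to the canonical $p$-adic digits of $q_1:=f_{\mathfrak A}(0)\in\Z_p\cap\Q$ (rational by Proposition~\ref{prop:fin-auto}). Writing $q_1$ with pre-period $\alpha_{r-1}\ldots\alpha_0$ and period $\beta_{t-1}\ldots\beta_0$, and decomposing $K=r+tN+j$ with $j\in\{0,\ldots,t-1\}$, a direct computation reduces $\rho(\mathfrak a((0)^K))=0.o_{K-1}\ldots o_0$ to $0.\beta_{j-1}\ldots\beta_0(\beta_{t-1}\ldots\beta_0)^N\alpha_{r-1}\ldots\alpha_0$, which tends as $N\to\infty$ to the purely periodic number $0.(\beta_{j-1}\ldots\beta_0\beta_{t-1}\ldots\beta_j)^\infty$. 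Comparison with equation~\eqref{eq:const-r} of Note~\ref{note:const} identifies this as an element of $\mathbf C(q_1)$; running $j$ over $\{0,\ldots,t-1\}$ exhausts all cyclic shifts, giving $\mathfrak a(0.(0)^\infty)=\mathbf C(q_1)$. The symmetric analysis for $(p-1)$-feeding corresponds to the $p$-adic integer $-1=\sum_i(p-1)p^i$ and yields $\mathfrak a(0.(p-1)^\infty)=\mathbf C(q_2)$ with $q_2:=f_{\mathfrak A}(-1)$, so $\mathfrak a(0)=\mathbf C(q_1)\cup\mathbf C(q_2)$.

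For the continuity clause, suppose $D$ is open and $g$ is continuous at $0\in D$. Take $x_n\in D$ with $x_n\to 0^+$ on $\mathbb S$; the plot points $(x_n\md 1,g(x_n)\md 1)\in\mathbf P(\mathfrak A)$ accumulate at $(0,g(0)\md 1)\in\mathbf P(\mathfrak A)$ by closedness. The approximations of $x_n$, fed in rightmost-first order, end with $N_n$ zeros ($N_n\to\infty$ since $x_n<p^{-N_n}$), so the most significant digits of the $\rho$-value of the output are controlled by the state trajectory during this zero-tail. A pigeonhole argument over the finite state set, refined by the observation that as $x_n\to 0$ the non-zero prefix of $x_n$'s expansion becomes asymptotically negligible, aligns the surviving eventual cycle with the $s_0$-initialised zero-orbit and forces $g(0)\md 1\in\mathbf C(q_1)$. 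The mirror argument with $x_n\to 0^-$ gives $g(0)\md 1\in\mathbf C(q_2)$, and Corollary~\ref{cor:const}'s disjoint-or-equal dichotomy then forces $\mathbf C(q_1)=\mathbf C(q_2)=:\mathbf C(q)$. The half-open case is handled identically using only the one side available in $D$.

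The main technical obstacle is the ``alignment with $s_0$'' step in the continuity argument: one must rigorously justify that the state entering the zero-tail can be forced to $s_0$ asymptotically, rather than an arbitrary reachable state, by combining reachability of states, pigeonhole over the finite state set, and the topological constraint $x_n\to 0$ in $\mathbb S$. Handling the two distinct base-$p$ representations of $0\in\mathbb S$ and the non-unique expansions at points $n/p^k$ also demands careful bookkeeping throughout.
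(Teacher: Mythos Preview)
Your approach matches the paper's almost exactly: exploit eventual periodicity of the base-$p$ expansion, apply Lemma~\ref{le:per} to get eventually periodic output, and read off the $\rho$-limits as rationals; pure periodicity for $x\in\Z_p\cap\Q$ kills the pre-period; for $x=0$ the two expansions $(0)^\infty$ and $(p-1)^\infty$ give the two sets $\mathbf C(q_1),\mathbf C(q_2)$. Your explicit identification $q_1=f_{\mathfrak A}(0)$, $q_2=f_{\mathfrak A}(-1)$ is correct and slightly more concrete than the paper, which derives the cyclic-shift structure directly from Lemma~\ref{le:per} without naming the constants. One small omission: you only consider truncations of length $K=k+mn$, whereas $\mathfrak a(x)$ collects limit points over \emph{all} truncation lengths; the paper handles this by allowing a partial period $r\in\{\xi_0,\xi_0\xi_1,\ldots,\xi_0\ldots\xi_{n-1}\}$ to be fed first (rightmost) before the full periods, and you should insert the same device.

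For the continuity clause the paper is far terser than you: it simply asserts that $g(0)\in\mathfrak a(0.(0)^\infty)\cap\mathfrak a(0.(p-1)^\infty)$ and then invokes the disjoint-or-equal dichotomy of Corollary~\ref{cor:const}. The ``alignment with $s_0$'' difficulty you flag is genuine --- the paper does not justify why the state from which the zero-tail is fed can be taken to be $s_0$ rather than an arbitrary reachable state, and in general it need not be (distinct reachable states can have genuinely different zero-orbit cycles). The paper is effectively anticipating Proposition~\ref{prop:mark-up} and Note~\ref{note:mark-up}, which establish the existence of a mark-up for each expansion of $0$, but even there the mark-up is with respect to \emph{some} state $s$, not $s_0$ specifically. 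Your pigeonhole-and-alignment sketch is therefore aimed at the right gap, but the particular mechanism you propose (forcing the entering state to $s_0$ asymptotically) cannot be made to work in general; the honest conclusion is that $g(0)\in\mathfrak a_s(0.(0)^\infty)$ for some reachable $s$, and the clause as literally stated (with $\mathfrak a=\mathfrak a_{s_0}$) is slightly stronger than what either proof actually delivers in full detail.
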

\begin{proof}[Proof of Proposition \ref{prop:auto-Q}]
Follows from Lemma \ref{le:per} and Proposition \ref{prop:const} (see the
proof of the latter). 

Given $x\in\Q\cap[0,1]$,  a base-$p$ expansion of $x$ is eventually periodic,
cf. \eqref{eq:r-repr}:
$x=0.\chi_0\ldots\chi_{k-1}(\xi_0\ldots\xi_{n-1})^\infty$.
Given $x$, take $n,k$ the smallest possible (note that then the word  $v=\chi_0\ldots\chi_{k-1}$
may be empty). From the definition of $\mathfrak a(x)$ it follows that $\mathfrak a(x)$ consists of all limit points of the sequences $\Cal K(r)=(\rho(\mathfrak(v(u)^kr))_{k=K}^\infty$,
where $K$ is large enough and $r\in\{\xi_{0},\xi_{0}\xi_{1},\ldots,\xi_0\xi_1\ldots\xi_{n-1}\}$
are  all suffixes of the word $u=\xi_0\ldots\xi_{n-1}$, for every right-infinite
word $w$ which corresponds to a base-$p$ expansion of $x=0.w$. As the automaton $\mathfrak A=\mathfrak A(s_0)$ is finite, the number
of states it reaches after being feeded by   either of words $r$, where 
is finite
; say, these states are $s_1,\ldots,s_N\in\Cal S$. By the same reason, being feeded by the words $(u)^K$ where $K$ is large enough, the automata $\mathfrak A_1=\mathfrak A(s_1), \ldots,\mathfrak A_N=\mathfrak A(s_N)$ output respectively words $v_1(u_1)^{K_1}t_1, \ldots, v_N(u_N)^{K_N}t_N$,
where the words $u_1,\ldots,u_N, t_1,\ldots,t_N$ do not depend on $K$, each
of the words $v_1,\ldots,v_N$ is
 either empty or a prefix  of the respective word $w_1,\ldots,w_N$, and all the output words  $v_1(u_1)^{K_1}t_1, \ldots, v_N(u_N)^{K_N}t_N$ have the same length as the one of the input word $(u)^K$, cf. Lemma \ref{le:per}.
That is, the output words of automata $\mathfrak A_i$
are all of the form $(\bar u)^Lt$, where $\bar u$ stands for a cyclically shifted word $u$, and after outputting the word $(\bar u)^Lt$, each automaton
$\mathfrak A_i$, $i=1,2,\ldots, N$, reaches some of finitely many its states,
say, $s^\prime_{i1},\ldots,s^\prime_{iM(i)}$. After reaching  respective state,
the
automaton is feeded by the word $v$ and outputs the corresponding output word
$v^\prime_{i,j}$, $j=1,2,\ldots,M(i)$. Therefore, all limit points of  the sequences $\Cal K(r)$ are of the
form $0.
{v^\prime_{i,j}(\bar u_i)^\infty}$ where  $\bar u_i$ runs through a (sub)set of all cyclic shifts
of the word $u_i$, $i=1,2,\ldots,N$,
$j=1,2,\ldots,M(i)$. But there are only finite number of points of that form;
therefore, given a base-$p$ expansion of $x=0.\overrightarrow w$, the set
$\mathfrak a(0.\overrightarrow w)$ is a union of  a finite number points of the said form. As
every $x\in\Q$ has at most two base-$p$ expansions, this proves the first
claim of the proposition.

If $x\in\Z_p\cap\Q\cap[0,1]$, then  base-$p$ expansion of $x$ is purely periodic
by Proposition \ref{prop:r-p-repr-qz}: $x=0.(\chi_0\ldots\chi_{n-1})^\infty$. Therefore once the automaton $\mathfrak A$ is being feeded by finite words of the form $w^Lt$, where $w=\chi_0\ldots\chi_{n-1}$,
$t$ is a suffix of $w$ or empty and $L$ is large enough, by Lemma \ref{le:per}
the corresponding
output word will be of the form $(u_t)^{N(t)}v_t$, and the number of different
words $u_t$ is finite since the number of different words $t$ is finite.
Applying the same argument as above we conclude that all limit points
of corresponding sequences are of the form $0.
{(u)^\infty}$
where $u$ runs through a finite number of finite words.  But by Corollary
\ref{cor:r-repr}, all these points are in $\Z_p\cap\Q\cap[0,1]$. This proves
the second claim of the proposition.

To prove the final claim, we must consider both base-$p$ representations
of zero point of the unit circle $\mathbb S$: $0=0.(0)^\infty$ and $0=0.(p-1)^\infty$.
Sending a left-infinite zero sequence to the automaton $A$, the output sequence
will be of the form $w^\infty t$ by suitable finite words $w,t$ by Lemma
\ref{le:per}; so $\mathfrak a(0^\infty)$ consists of all points of the form
$0.(u)^\infty$, where $u$ runs through all cyclic shifts of the word 
$w=\chi_{n-1}\ldots\chi_0$; therefore $\mathfrak a(0^\infty)=\mathbf C(q_1)$
for a suitable $q_1\in\Z_p\cap\Q\cap[0,1)$, cf. Note \ref{note:const}.  By
the same reason, $\mathfrak a((p-1)^\infty)=\mathbf C(q_2)$
for a suitable $q_2\in\Z_p\cap\Q\cap[0,1)$.  In the case when the $\mathfrak
A$-computable function $g\:D\>\mathbb S$ is continuous at $0$ and there
exists an open neighborhood $U$ of $0$ such that $U\subset D$
then  necessarily
$g(0)\in\mathfrak a(0.0^\infty)$ and $g(0)\in\mathfrak a(0.(p-1)^\infty)$;
so $\mathbf C(q_1)\cap\mathbf C(q_2)\ne\emptyset$ and therefore $\mathbf C(q_1)=\mathbf C(q_2)$ by Corollary \ref{cor:const}. If no such neighborhood
$U$ exists then the domain is half-open and $0$ is a boundary point; thus due
to the continuity of $g$ at $0$ we see that either $g(0)\in\mathfrak a(0.0^\infty)$ or $g(0)\in\mathfrak a(0.(p-1)^\infty)$ and the conclusion follows. 

%
\end{proof}
\begin{cor}
\label{cor:auto-Q}
Let $\mathfrak A$ be a finite automaton, let $(x;y)\in\mathbf P(\mathfrak A)\subset\T^2$, and let $x\in\Z_p\cap
\Q\setminus\{0\}$; then $y\in\Z_p\cap\Q$. If $x=0$ then $y\in[0,1)\cap\Q$;
moreover, there exists $y\in\Z_p\cap\Q$ such that $(0;y)\in\mathbf P(\mathfrak
A)$.
\end{cor}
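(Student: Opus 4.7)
The plan is to prove the three claims of the corollary in turn, with the main work going into the first.

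First, assume $x \in (\Z_p \cap \Q) \setminus \{0\}$, taken in $[0,1)$. By Corollary \ref{cor:r-repr} the base-$p$ expansion of $x$ is purely periodic and non-trivial, so $x \ne m/p^k$ for any $k \ge 0$, $0 \le m < p^k$. Since every point of $E_k(f_\mathfrak A)$ has first coordinate of this form, $(x;y) \notin E(f_\mathfrak A)$, and by Note \ref{note:plot-auto-in} there is a sequence of non-empty words $(w_i) \subset \Cal W$ with $\rho(w_i) \to x$ and $\rho(\mathfrak a(w_i)) \to y$ in $\T$; passing to a subsequence, the lengths $k_i = \Lambda(w_i)$ may be taken strictly increasing, because the set of $\rho(w)$ with $\Lambda(w)$ bounded is finite and does not contain $x$. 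From here I would replay the stabilization argument of the proof of Lemma \ref{le:qint-LP}: extract further subsequences on which the rightmost $\ell$ letters of $w_i$ and of $\mathfrak a(w_i)$ stabilize for every $\ell$, producing right-infinite words $\zeta_0\zeta_1\ldots$ and $\gamma_0\gamma_1\ldots$ with $x = 0.\zeta_0\zeta_1\ldots$ and $y \equiv 0.\gamma_0\gamma_1\ldots \pmod{1}$. By Corollary \ref{cor:r-repr} again, $\zeta_0\zeta_1\ldots$ is purely periodic with some period $(\chi_0\ldots\chi_{t-1})$. Finiteness of $\mathfrak A$ then yields, via pigeonhole, a state $s \in \Cal S$ reached infinitely often after the automaton has processed input prefixes of lengths aligned to $t$; feeding $\mathfrak A(s)$ with the purely periodic infinite input $(\chi_0\ldots\chi_{t-1})^\infty$ produces, by Lemma \ref{le:per}, an eventually periodic output. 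Taking the subsequence of $\rho$-values aligned to the recurring state yields that $y \md 1$ has purely periodic base-$p$ expansion, so Corollary \ref{cor:r-repr} forces $y \in \Z_p \cap \Q$.

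Next, for $x = 0$: if $(0;y) \in E(f_\mathfrak A)$ then $y = (f_\mathfrak A(z) \md p^k)/p^k = m/p^k \in \Q \cap [0,1)$ trivially; otherwise $(0;y)$ is an accumulation point, and the same stabilization argument applies, except that the stabilized input-tail is now either all-zero or all-$(p-1)$ (corresponding to the two base-$p$ expansions of $0 \in \T$) and the stabilized output-tail is only \emph{eventually} periodic, so $y \md 1$ has eventually periodic base-$p$ expansion and $y \in [0,1) \cap \Q$ (this weaker conclusion is the natural one, since there is no purely periodic input to exploit). For the final existence claim, take $z = 0$: then $(0; (f_\mathfrak A(0) \md p^k)/p^k) \in E_k(f_\mathfrak A) \subset \mathbf P(\mathfrak A)$ for every $k$. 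Since $f_\mathfrak A(0) \in \Z_p \cap \Q$ by Proposition \ref{prop:fin-auto}, restricting to a subsequence $k_r$ aligned with the period of the canonical expansion of $f_\mathfrak A(0)$ gives convergence of the second coordinates to some $y^* \in \mathbf C(f_\mathfrak A(0))$; this set is contained in $\Z_p \cap \Q$ by Note \ref{note:const}, and closedness of $\mathbf P(\mathfrak A)$ yields $(0;y^*) \in \mathbf P(\mathfrak A)$, completing the proof.

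The principal obstacle is in the first case. Lemma \ref{le:qint-LP} is stated for points in $\mathbf{LP}(\mathfrak A)$, which by Definition \ref{def:plot-auto} presupposes a single $p$-adic integer $z$ whose truncations realize both convergences. We do not know that the points accumulating to $(x;y) \in \mathbf P(\mathfrak A)$ arise from a common such $z$, and we cannot invoke Theorem \ref{thm:AP=LP} to manufacture one without assuming $\mathfrak A$ minimal. Hence the pigeonhole/finiteness step must be executed directly on the sequence of distinct words $w_i$ (potentially drawn from distinct $z_i \in \Z_p$), in the spirit of the argument of Theorem \ref{thm:AP=LP} truncated before its use of minimality; the purely periodic input $(\chi_0\ldots\chi_{t-1})^\infty$ constructed at the end then plays the role of the single $p$-adic integer required for Lemma \ref{le:per} and Corollary \ref{cor:r-repr} to apply.
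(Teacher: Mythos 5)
Your proposal is correct and follows essentially the same route as the paper's proof: exploit the purely periodic base-$p$ expansion of $x$ to write the approximating words as a long periodic block followed by a suffix, pigeonhole on the finitely many states reached after the suffixes, and invoke Lemma \ref{le:per} (via the argument of Proposition \ref{prop:auto-Q}) to get purely periodic limit points for the output, with the bounded-length zero-word case handled separately for $x=0$ and the existence claim read off from $\mathfrak a(0)$. The only quibbles are cosmetic: the stabilizing letters of $w_i$ are the \emph{leftmost} ones (the most significant digits of $0.w_i$), not the rightmost, and the fact $f_{\mathfrak A}(0)\in\Z_p\cap\Q$ comes from finite automata preserving $\Z_p\cap\Q$ rather than from Proposition \ref{prop:fin-auto} itself.
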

\begin{proof}[Proof of Corollary \ref{cor:auto-Q}]
As $x\in\Z_p\cap\Q$ and $x\in[0,1)$ then by Corollary \ref{cor:r-p-repr-qz}
the base-$p$ expansion is purely periodic; that is, $x=0.w$, where $w\in\Cal
W^\infty$ is a right-infinite periodic word: $w=(v)^\infty$ for a suitable
finite non-empty word
$v\in\Cal W$. As $(x,y)\in\mathbf P(\mathfrak A)$ then by Note \ref{note:plot-auto-in}
there exists a sequence $(u_i)_{i=0}^\infty$ of finite non-empty words such
that $\lim_{i\to\infty}0.u_i=x$ and simultaneously $\lim_{i\to\infty}0.\mathfrak
a(u_i)=y$. Let $x\ne 0$; then $v$ is not a 1-letter zero word: $v\ne0$. Therefore
since $\lim_{i\to\infty}0.u_i=x$,  for all sufficiently large $i$ the words $u_i$ must be of the form $u_i=(v)^{L_i}\bar u_i$ where $L_i$ increases unboundedly
while $i\to\infty$. Therefore we may assume that the sequence $L_i$ is strictly
increasing (we consider a strictly increasing subsequence of the sequence $(L_i)$ if otherwise) and that all $u_i$ are of the form $u_i=(v)^{L_i}\bar
u_i$. Let $s(i)$ be a state the automaton $\mathfrak A$ reaches after being
feeded by the word $\bar u_i$ ($s(i)=s_0$ if $\bar u_i=\phi$ is empty). As  the automaton $\mathfrak A$ is finite, then there are only finitely many
pairwise distinct $s(i)$, say these are $s^\prime(1),\ldots, s^\prime(n)$. Now we consider automata $\mathfrak A(s^\prime(1)),\ldots, \mathfrak A(s^\prime(n))$ and apply the same argument as in the proof
of the second statement of Proposition \ref{prop:auto-Q} thus proving that
$y\in\Z_p\cap\Q$.  The same argument can be applied for the case when $x=0$
but there exists an infinite sequence of words $u_i$ whose lengths are increasing
unboundedly while $i\to\infty$. Therefore the only rest case is now $x=0$
and once the word sequence $(u_i)_{i=0}^\infty$ is such that $\lim_{i\to\infty}0.u_i=x$
and lengths of $u_i$ are bounded; $\Lambda(u_i)\le K$ for all $i\in\N_0$.
But this just means that for all sufficiently large $i$ all the words $u_i$
are $K$-letter zero words: $u_i=(0)^K$ for a suitable $K\in\N$. But then
$y=0.\mathfrak a((0)^K)$; thus $y\in[0,1)\cap\Q$. The last claim of the corollary
trivially follows from Proposition \ref{prop:auto-Q} since once $y\in\mathfrak
a(0)$ then $(0;y)\in\mathbf P(\mathfrak A)$ by the definition of $\mathbf P(\mathfrak A)$. 
\end{proof}

The following definition introduces an important  technical notion, the mark-up,
 which will be used
in further proofs: 
\begin{defn}
\label{def:mark-up}
Given a function $g\:D
\>[0,1]$ defined on the domain 
$D
\subset[0,1]$, 
an automaton $\mathfrak A$,  and a point $x\in D
$
consider  a right-infinite word $w$
such that $x=0.w$,
cf. \eqref{eq:base-p},
\eqref{eq:base-p_r}, \eqref{eq:base-p_l}. An infinite strictly increasing sequence $i_0,i_1,i_2,\ldots$
over $\N_0$ is called an \emph{$s$-mark-up of the 
\textup{(right-infinite)}  word $w=\gamma_0\gamma_1\ldots$
w.r.t. $g$ and $\mathfrak A$} (or briefly a \emph{mark-up} when it is clear what
$g$, $\mathfrak A$ and $s$ are meant) if there exists a state $s\in\Cal S$ of the automaton $\mathfrak A$ such
that $\lim_{k\to\infty}\rho(\mathfrak a_s(\gamma_0\gamma_1\ldots\gamma_{i_k}))=g(x)$. \end{defn}
\begin{rmk}
Given $x\in D
\subset\mathbb S$, the mark-ups of $x$ are defined
exactly in the same way. Note only that the point $x=0$ of $\mathbb S$ coincides
with the point $x=1$ and $1=0=0.000\ldots=0.(p-1)(p-1)(p-1)\ldots\in\mathbb
S$ as $\mathbb S=\R\md 1$. In a similar way we define the mark-up when $g\:D\>\mathbb
S$.
\end{rmk}
The following proposition shows, speaking loosely, that if a continuous real function
is finitely computable, then all base-$p$ expansions of all its arguments
can be marked-up:
\begin{prop}
\label{prop:mark-up}
Given a continuous function $g\colon(a,b)\>[0,1]$ \textup{(or, which makes no
difference, $g\colon
(a,b)
\>\mathbb S$ where
$
(a,b)
\subset\mathbb S$ is an arc of the unit circle $\mathbb
S$)}
let $\mathbf G(g)\subset\mathbf P(\mathfrak A)$ 
for a suitable finite automaton $\mathfrak
A$. Then for every $x\in(a,b)$ and every infinite word $w$ such
that $x=0.w$ there exists an  $s$-mark-up, for a suitable
state $s=s(w)\in\Cal S$ of the automaton $\mathfrak A$.
\end{prop}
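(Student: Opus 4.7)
The plan is to combine the characterization of $\mathbf{P}(\mathfrak{A})$ from Note \ref{note:plot-auto-in} with the continuity of $g$ and finiteness of the state set $\mathcal{S}$. The basic idea: although Note \ref{note:plot-auto-in} immediately gives, for each $x \in (a,b)$, a sequence of finite approximating words $(u_n)$ with $\rho(u_n) \to x$ and $\rho(\mathfrak{a}(u_n)) \to g(x)$, these $u_n$'s need not begin with any prescribed prefix of the given infinite expansion $w = \gamma_0 \gamma_1 \gamma_2 \ldots$. To force that prefix condition, I will approximate $x$ by auxiliary points $x^{(k)} \in (a,b)$ chosen on a specific side of $x$, and apply Note \ref{note:plot-auto-in} to $(x^{(k)}; g(x^{(k)})) \in \mathbf{P}(\mathfrak{A})$.

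Concretely, for each sufficiently large $k$ let $I_k$ be the half-open interval $[0.\gamma_0\ldots\gamma_k,\, 0.\gamma_0\ldots\gamma_k + p^{-k-1})$. If $x$ admits a unique base-$p$ expansion then $x$ lies in the interior of $I_k$; if $x$ has two expansions and $w$ is the \emph{right} (resp.\ \emph{left}) expansion, then $x$ is the left (resp.\ right) endpoint of $I_k$, and since $x \in (a,b)$ is an interior point of the domain, $I_k \cap (a,b)$ still has nonempty interior on the correct side. In every case, pick $x^{(k)} \in (a,b)$ strictly inside $I_k$ with $x^{(k)} \to x$. By continuity of $g$ we have $g(x^{(k)}) \to g(x)$; and since $(x^{(k)}; g(x^{(k)})) \in \mathbf{P}(\mathfrak{A})$, Note \ref{note:plot-auto-in} provides a finite word $v^{(k)}$ with $\rho(v^{(k)})$ arbitrarily close to $x^{(k)}$, with $\rho(\mathfrak{a}(v^{(k)}))$ arbitrarily close to $g(x^{(k)})$, and with length $\Lambda(v^{(k)}) \ge k+1$ (the length restriction is forced anyway when $x^{(k)}$ is chosen not to be a $p$-adic rational of small denominator). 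Choosing the tolerances small enough, $\rho(v^{(k)})$ lies strictly inside $I_k$, which combined with $\Lambda(v^{(k)}) \ge k+1$ forces the written word $v^{(k)}$ to begin (from the left) with $\gamma_0 \gamma_1 \ldots \gamma_k$.

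Decompose $v^{(k)} = \gamma_0\gamma_1\ldots\gamma_k \cdot r^{(k)}$ where $r^{(k)}$ is the remaining suffix. Feeding $v^{(k)}$ to $\mathfrak{A}$ in the prescribed right-to-left order processes $r^{(k)}$ first, driving the automaton from $s_0$ into some state $t_k \in \mathcal{S}$; the prefix $\gamma_0\ldots\gamma_k$ is then processed starting from $t_k$, so the output word is the concatenation $\mathfrak{a}_{t_k}(\gamma_0\ldots\gamma_k) \cdot \mathfrak{a}_{s_0}(r^{(k)})$. Using the identity $\rho(uv) = \rho(u) + p^{-\Lambda(u)} \rho(v)$, this yields
\[
\rho(\mathfrak{a}(v^{(k)})) = \rho\bigl(\mathfrak{a}_{t_k}(\gamma_0\gamma_1\ldots\gamma_k)\bigr) + p^{-(k+1)} \rho\bigl(\mathfrak{a}_{s_0}(r^{(k)})\bigr).
\]
The second summand is bounded by $p^{-(k+1)} \to 0$, while the left side tends to $g(x)$; hence $\rho(\mathfrak{a}_{t_k}(\gamma_0\ldots\gamma_k)) \to g(x)$ as $k \to \infty$.

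Since $\mathcal{S}$ is finite, the pigeonhole principle produces a single state $s \in \mathcal{S}$ with $t_k = s$ for infinitely many $k$, say along a strictly increasing subsequence $k_0 < k_1 < k_2 < \cdots$. Setting $i_j = k_j$ we obtain
$\lim_{j \to \infty} \rho\bigl(\mathfrak{a}_s(\gamma_0\gamma_1\ldots\gamma_{i_j})\bigr) = g(x)$,
which is exactly an $s$-mark-up of $w$. The main technical obstacle is producing, for each $k$, a finite word whose \emph{actual written prefix} matches $\gamma_0\ldots\gamma_k$ (not merely a word whose $\rho$-value is close to $x$); this is resolved by the ``strict interiority in $I_k$ plus length $\ge k+1$'' trick together with the continuity of $g$, which is the only place where the assumption that $g$ is continuous enters the argument.
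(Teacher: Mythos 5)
Your proof is correct, and it reaches the conclusion by a route that is organized differently from the paper's. The paper splits into two cases: for $x$ with a unique base-$p$ expansion it works with approximating words for $x$ itself and explicitly observes that continuity is not needed there; for $x=n/p^r$ it introduces auxiliary non-periodic infinite words with the prescribed prefixes, invokes the first case for each of them, uses continuity, and then runs a \emph{two-level} pigeonhole (first over the states $s(i)$ furnished by the first case, then over the states reached after the extra tail segments). You instead give a single unified argument: perturb $x$ to interior points $x^{(k)}$ of the half-open intervals $I_k=[0.\gamma_0\ldots\gamma_k,\,0.\gamma_0\ldots\gamma_k+p^{-k-1})$ chosen on the side dictated by whether $w$ is the left or right expansion, use continuity once, and extract a single prefix-matching word per $k$ from Note \ref{note:plot-auto-in}, so that one pigeonhole over the states $t_k$ suffices. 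Your mechanism for forcing the literal prefix --- strict interiority of $\rho(v^{(k)})$ in $I_k$ combined with $\Lambda(v^{(k)})\ge k+1$, the latter guaranteed by taking $x^{(k)}$ off the lattice $p^{-k}\Z$ --- is actually tighter than the corresponding step in the paper's first case, which infers the prefix from $|0.w_i-0.\bar w_j|<p^{-n}$ and implicitly relies on uniqueness of the expansion of $x$. What you give up is the paper's side observation that continuity is superfluous when the expansion of $x$ is unique (your argument uses continuity everywhere through $g(x^{(k)})\to g(x)$); what you gain is a shorter, case-free proof with one pigeonhole instead of two. The decomposition $\rho(\mathfrak a(v^{(k)}))=\rho(\mathfrak a_{t_k}(\gamma_0\ldots\gamma_k))+p^{-(k+1)}\rho(\mathfrak a_{s_0}(r^{(k)}))$ and the resulting error bound $p^{-(k+1)}$ are handled correctly, and the conclusion matches Definition \ref{def:mark-up} exactly.
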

\begin{proof}[Proof of Proposition \ref{prop:mark-up}]
The idea of the proof is as follows: Once feeding a finite automaton $\mathfrak
A$ by infinite
sequence of finite words $\gamma_0, \gamma_0\gamma_1, \gamma_0\gamma_1\gamma_2,
\ldots$ over $\Z_p$, the automaton reaches some of its states infinitely
many times; this state $s$ specifies a mark-up $(i(s))$ since corresponding sequences
of approximations $0.\gamma_0\ldots\gamma_{i(s)}$ and $\mathfrak a_s(0.\gamma_0\ldots\gamma_{i(s)})$
tend accordingly to $x=0.\gamma_0\gamma_1\gamma_2\ldots$ and to $g(x)$ due to the continuity of $g$ at $x$. Now we  prove the proposition rigorously.

Firstly consider the case when $x\in(a,b)$ 
has a unique base-$p$ expansion, say,
$x=0.\gamma_0\gamma_1\ldots=0.w$, where 
$w=\gamma_0\gamma_1\ldots$.
As $g(x)$ is $\mathfrak A$-computable, there exists a sequence $w_0,w_1,w_2,\ldots\in\Cal W$ of finite non-empty words such that 
$\rho(w_0),\rho(w_1),\rho(w_2),\ldots\in(a,b)$,
$\lim_{i\to\infty}\rho(w_i)=x$ and 
$\lim_{j\to\infty}\rho(\mathfrak a(w_i))=g(x)$.
Note that 
the sequence $(\Lambda(w_i))_{i=0}^\infty$
is increasing since otherwise $x=n/p^r$
for suitable $n,r\in\N_0$ as $\lim_{i\to\infty}\rho(w_i)=x$ and thus
$x$
has a non-unique base-$p$ expansion. We may
assume that $(\Lambda(\omega_i))_{i=0}^\infty$ is a strictly increasing sequence
since otherwise we just take a suitable subsequence of $(w_i)_{i=0}^\infty$.
Moreover, by the same reason we may assume that $\Lambda(w_i)>i$.

Consider a word sequence $\bar w_0=\gamma_0, \bar w_1=\gamma_0\gamma_1,
\bar w_2=\gamma_0\gamma_1\gamma_2, \ldots$. As $x\in(a,b)$, there exists $N\in\N_0$ 
such that $0.\bar w_i\in(a,b)$ once $i\ge N$. Without loss
of generality we may assume that $N=1$. 
As $\lim_{i\to\infty}0.w_i=0.\gamma_0\gamma_1\ldots=x$,
for every $n\in\N_0$ there exists $M(n)\in\N_0$, $M(n)\ge n$, such that 
$|0.w_i-0.\bar w_j|<p^{-n}$ provided $i,j\ge M(n)$; therefore 
as $\Lambda(w_i)>i\ge M(n)\ge n$, we conclude that 
 $w_i=\gamma_0\ldots\gamma_n v_i$ 
for $i\ge M(n)$ and suitable
finite  word $v_i$.
Given $n\in\N_0$, let $M(n)$ be the
smallest with the said property; this way we obtain an increasing sequence 
$M(0)<M(1)<M(2)<\ldots$. Considering the  subsequence $(w_{M(j)})_{j=0}^\infty$
of the word sequence $(w_i)_{i=0}^\infty$,
we see that $w_{M(j)}=\gamma_0\ldots\gamma_j r_{j}$ 
for $j=0,1,2,\ldots$ and suitable finite  non-empty words $r_j$, that
$\lim_{j\to\infty}\rho(w_{M(j)})=x$ and that $\lim_{j\to\infty}\rho(\mathfrak
a(w_{M(j)}))=g(x)$. Now to the word sequence $(r_{j})_{j=0}^\infty$
we put into the correspondence the sequence $(s(r_j)))_{j=0}^\infty$ of states of
the automaton $\mathfrak A$, where $s(r_j)$ is the state the automaton
$\mathfrak A$ reaches after being feeded by the input word $r_j$. As
the number of different states of $\mathfrak A$ is finite, in the sequence
$(s(r_j))_{j=0}^\infty$ at least one state, say $s$, occurs in the sequence
$(s(r_j))_{j=0}^\infty$ infinitely
many times; say, for $j=j_0,j_1,j_2,\ldots$ ($j_0<j_1<j_2<\ldots$). Therefore
\begin{align}
\label{eq:mark-up1}
\lim_{k\to\infty}\rho(\gamma_0\ldots\gamma_{j_k})&=0.\gamma_0\gamma_1\ldots=x\\
\label{eq:mark-up2}
\lim_{k\to\infty}\rho(\mathfrak a_s(\gamma_0\ldots\gamma_{j_k}))&= 
\lim_{k\to\infty}\rho(\mathfrak a(w_{M(i_k)}))=g(x)
\end{align}
Recall that $\mathfrak a=\mathfrak a_{s_0}$ where $s_0$ is the initial state
of the automaton $\mathfrak A=\mathfrak A(s_0)$.
Note that nowhere in the argument above we have used that $g$ is continuous.

Now consider the case when $x=n/p^r$
for suitable $n,r\in\N_0$, $n\in\{0,1,\ldots,p^r-1\}$. In this case, let
$0.\gamma_0\gamma_1\ldots$ be 
either of base-$p$ expansions 
$\wrd^r(x)$, $\wrd^l(x)$.
We will show that similarly to the case when a base-$p$ expansion is
unique, in the case under consideration there also exist $s\in\Cal S$ and sequences $i_0<i_1<i_2<\ldots$, $M(i_0)<M(i_1)<M(i_2)<\ldots$ over $\N_0$ such 
that $M(i_k)\ge i_k$ for all $k\in\N_0$ and both \eqref{eq:mark-up1}
and \eqref{eq:mark-up2} hold.

For this purpose, consider arbitrary sequence 
$w_0,w_1,w_2,\ldots$ of
right-infinite words over $\{0,1,\ldots,p-1\}$   which are not eventually
periodic and such that $\gamma_0\ldots\gamma_i$ is a prefix of $w_i$ for
all $i=0,1,2,\ldots$. Then $\lim_{i\to\infty}0.w_i=x$ and
therefore all $0.w_i\in (a,b)$ once $i\ge I$ where $I$ is large enough (we may assume that $I=0$; otherwise we consider a subsequence
$(w_i)_{i=I}^\infty$ rather than the whole sequence $(w_i)_{i=0}^\infty$).
Note that then $\lim_{i\to\infty}g(0.w_i)=g(x)$ as $g$ is continuous on $(a,b)$;  therefore there exists a sequence $(S(i))_{i=0}^\infty$
over $\N$ such that for all $i=0,1,2,\ldots$ the following inequality holds: 
\begin{equation}
\label{eq:lim-4}
|g(0.w_i)-g(x)|<p^{-S(i)}
\end{equation} 
Moreover, we may assume that the sequence $(S(i))_{i=0}^\infty$ is strictly
increasing (if not, we consider a corresponding infinite subsequence of the sequence
$(w_i)_{i=0}^\infty$ rather than the whole sequence).

Consider now a word $w_i=\gamma_{i0}\gamma_{i1}\ldots$ from
the above word sequence (note that $\gamma_{i\ell}=\gamma_\ell$ for $\ell=1,2,\ldots,i$).
As every $0.w_i$ 
is a unique
base-$p$ expansion of the corresponding real number from $(a,b)$, there exists
a state $s(i)$ of the automaton $\mathfrak A$ and a strictly increasing sequence $\Cal J(s(i))=
(j_{ik})_{k=0}^\infty$
of numbers from $\N_0$ such that
\begin{equation}
\label{eq:lim-1} 
\lim_{k\to\infty}\rho(\mathfrak a_{s(i)}(\gamma_{i0}\gamma_{i1}\ldots\gamma_{ij_{ik}}))=
g(0.w_i), 
\end{equation}
cf. the case we just have considered above
at the beginning of the proof of the proposition. Therefore, for any $k\in\N_0$
there exists $K(j_{ik})\in\N$ such that
\begin{equation}
\label{eq:lim-2}
|\rho(\mathfrak a_{s(i)}(\gamma_{i0}\gamma_{i1}\ldots\gamma_{ij_{ik}}))
-g(0.w_i)|<p^{-K(j_{ik})}, 
\end{equation}
and there exists a strictly increasing sequence of $k$ such that the corresponding
sequence of $K(j_{ik})$ is also strictly increasing in force of \eqref{eq:lim-1}.
Without loss of generality we may assume that the sequence $(K(j_{ik}))_{k=0}^\infty$
is strictly increasing (otherwise we consider a corresponding subsequence
of the sequence $\Cal J(s(i))$).

As a total number of states of $\mathfrak
A$ is finite, in the infinite sequence $(s(i))_{i=0}^\infty$ at least one
state, say $s$, occurs infinitely many times. We may assume that $s(i)=s$
for all $i\in\N_0$; 
otherwise we just consider respective subsequence of
$(w_i)_{i=0}^\infty$ rather than the whole sequence. As
the sequence $(j_{ik})_{k=0}^\infty$ is strictly increasing, all $j_{ik}>i$ once
$k$ is large enough. Given $i\in\N_0$, denote via $N(i)\in\N$  the smallest number
such that  $j_{ik}>i$ once $k\ge N(i)$. We again may assume that $N(i)=0$;
if otherwise we will just consider the subsequence $(j_{ik})_{k=N(i)}^\infty$
rather than the whole sequence $(j_{ik})_{k=0}^\infty$.  Then 
$\gamma_{i0}\gamma_{i1}\ldots\gamma_{ij_{ik}}=\gamma_0\ldots\gamma_ir_{ik}$
for all $k$ where $r_{ik}$ is a non-empty finite word.

Let $s^\prime(i,k)\in\Cal S$ be a
state the automaton $
\mathfrak A(s)$ reaches after being feeded by the input
word $r_{ik}$ 
(the latter state $s$ is
 defined above). 
As the total number of states of $\mathfrak A(s)$ is finite,
in the sequence $(s^\prime(i,k))_{k=0}^\infty$ at least one state, say $s^\prime(i)$,
occurs infinitely many times. Moreover, by the same reason at least one state,
say $s^\prime$, occurs in the sequence $(s^\prime(i))_{i=0}^\infty$ infinitely
many times. And again, without loss of generality we may assume that $s^\prime(i,k)=s^\prime$
for all $i,k$; otherwise we  consider corresponding subsequences of the sequences
$(w_i)_{i=0}^\infty$ and $(j_{ik})_{k=0}^\infty$. Note that being feeded
by the input word $\gamma_{i0}\gamma_{i1}\ldots\gamma_{ij_{ik}}=\gamma_0\ldots\gamma_ir_{ik}$,
the automaton $\mathfrak A(s)$ (rightmost letters are feeded prior to leftmost
ones), the automaton outputs a word of length $j_{ik}+1$ whose (left) suffix
of length $i+1$ is the word which outputs the automaton $\mathfrak A^\prime=\mathfrak
A(s^\prime)$
if being feeded by the word $\gamma_0\ldots\gamma_i$.  Therefore
\begin{equation}
\label{eq:lim-3}
|\rho(\mathfrak a^\prime(\gamma_0\ldots\gamma_i))-\rho(\mathfrak a_s(\gamma_{i0}\gamma_{i1}\ldots\gamma_{ij_{ik}}))|<p^{-i+1}
\end{equation}
Now combining \eqref{eq:lim-4}, \eqref{eq:lim-2} and \eqref{eq:lim-3}
we conclude that $\lim_{i\to\infty}\rho(\mathfrak a^\prime(\gamma_0\ldots\gamma_i))=g(x)$;
but
$\lim_{i\to\infty}\rho(\gamma_0\ldots\gamma_i)=x$ since $0.\gamma_0\gamma_1\ldots$
is a base-$p$ expansion of $x$. This finally proves the proposition.
%
%
%
%
%
\end{proof}
\begin{note}
\label{note:mark-up}
Actually during the proof of Proposition \ref{prop:mark-up} we have shown
that the following claim is true: \emph{Let the function $g\:U\>\mathbb S$ \textup{(or,
$g\:U\>\mathbb I$)} be
defined on an open neighbourhood $U\subset\mathbb S$ \textup{(or,
$U\subset\mathbb I$)} of a point $x$, let $g$ be continuous at $x$, and let
$\mathbf G(g)\subset\mathbf P(\mathfrak A)$ for a suitable
finite automaton $\mathfrak A$; 
then there exists a mark-up for every base-$p$ expansion of $x$.
Moreover, if $g\:[a,b]\>[0,1]$ is a continuous function on the closed segment
$[a,b]$ then there exist an $s$-mark-up for right base-$p$ expansion of $a$
and for left base-$p$ expansion of $b$.
}
\end{note}
\begin{cor}
\label{cor:mark-up}
In conditions of Proposition \ref{prop:mark-up}, if the automaton $\mathfrak
A$ is minimal then $\mathbf G(g)\subset\mathbf{LP}(\mathfrak A)$.
\end{cor}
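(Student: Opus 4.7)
The plan is to promote the mark-up supplied by Proposition \ref{prop:mark-up}, which is computed with respect to some auxiliary state $s$, into a genuine initial-state computation of $\mathfrak{A} = \mathfrak{A}(s_0)$, and then invoke Corollary \ref{cor:AP=LP} to conclude.

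Fix $x \in (a,b)$ and pick any base-$p$ expansion $w = \gamma_0\gamma_1\gamma_2\ldots$ with $x = 0.w$. Proposition \ref{prop:mark-up} yields a state $s \in \Cal S$ and a strictly increasing sequence $i_0 < i_1 < \cdots$ in $\N_0$ such that
\[
\lim_{k\to\infty}\rho(\gamma_0\gamma_1\ldots\gamma_{i_k}) = x\quad\text{and}\quad
\lim_{k\to\infty}\rho\bigl(\mathfrak a_s(\gamma_0\gamma_1\ldots\gamma_{i_k})\bigr) = g(x).
\]
By minimality of $\mathfrak{A}$ there is a finite word $u \in \Cal W_\phi$ of some length $\ell$ whose processing drives $\mathfrak{A}(s_0)$ to the state $s$; set $u' = \mathfrak a_{s_0}(u)$. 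Since the paper's convention is that rightmost letters are fed first, the concatenation $W_k := \gamma_0\gamma_1\ldots\gamma_{i_k}\, u$ makes $\mathfrak{A}(s_0)$ read $u$ first (arriving at state $s$) and then process $\gamma_0\ldots\gamma_{i_k}$ starting from $s$; consequently $\mathfrak a_{s_0}(W_k) = \mathfrak a_s(\gamma_0\ldots\gamma_{i_k})\, u'$.

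The lengths $\Lambda(W_k) = i_k + 1 + \ell$ are strictly increasing, and the elementary identities
\[
\rho(W_k) = \rho(\gamma_0\ldots\gamma_{i_k}) + p^{-(i_k+1)}\rho(u),\quad
\rho(\mathfrak a_{s_0}(W_k)) = \rho(\mathfrak a_s(\gamma_0\ldots\gamma_{i_k})) + p^{-(i_k+1)}\rho(u'),
\]
combined with $i_k \to \infty$, yield $\rho(W_k) \to x$ and $\rho(\mathfrak a_{s_0}(W_k)) \to g(x)$ (reducing modulo $1$ when viewing the graph on $\mathbb T^2$ or when $g$ takes values in $\mathbb S$). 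Corollary \ref{cor:AP=LP}, applied to the sequence $(W_k)_{k=0}^{\infty}$, then places $(x;g(x)) \in \mathbf{LP}(\mathfrak{A})$; since $x$ was arbitrary in the domain, this gives $\mathbf G(g) \subset \mathbf{LP}(\mathfrak{A})$.

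There is no substantive obstacle. The only care needed is around the reversed reading order---confirming that writing $u$ on the right of $\gamma_0\ldots\gamma_{i_k}$ is precisely what realises the composite computation ``first drive $s_0$ to $s$, then run the mark-up from $s$''---and around the modular reduction when working on $\mathbb T^2$ rather than $\mathbb I^2$. Once these conventions are set, the argument is a direct combination of Proposition \ref{prop:mark-up}, minimality (which guarantees existence of the connecting word $u$), and the sequential characterisation of $\mathbf{LP}(\mathfrak{A})$ provided by Corollary \ref{cor:AP=LP}.
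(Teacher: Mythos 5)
Your proof is correct and follows essentially the same route as the paper, which simply reduces the claim to the sequential characterisation of $\mathbf{LP}(\mathfrak A)$ in Corollary \ref{cor:AP=LP} via the definition of mark-up. The only addition is that you make explicit the passage from the $s$-mark-up to an $s_0$-computation by appending a connecting word $u$ (for which the standing reachability assumption already suffices; minimality is what licenses the appeal to Corollary \ref{cor:AP=LP} itself), a detail the paper leaves implicit.
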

\begin{proof}[Proof of Corollary \ref{cor:mark-up}]
Follows immediately from Corollary \ref{cor:AP=LP} by the definition of mark-up.
\end{proof}
The following proposition reduces examination of continuous functions computable by a finite
automaton $\mathfrak A$ for the case when the function is defined on a segment for which  there exists a state $s$ of the automaton such that the set of
all points from the segment that has $s$-mark-ups, is dense in the segment;
and so values at these  points completely specify the function on the segment.
\begin{prop}
\label{prop:cover}
Let  $g\:[a,b]\>[0,1]$, $[a,b]\subset[0,1]$, be a continuous function; let
$\mathbf G(g)\subset\mathbf P(\mathfrak A)$ for
a suitable automaton $\mathfrak A$ whose set of states $\Cal S$ is finite.
Then $[a,b]$ is a union of a countably
many
sub-segments $[a_j^\prime,b_j^\prime]\subset[a,b]$, $a_j^\prime <b_j^\prime$,
$j=1,2,\ldots$ having the following property: For every $j=1,2,\ldots$ 
there exists a state $s_q\in\Cal S$, $q=q(j)$, such that
the set  $M_q([a_j^\prime,b_j^\prime])$
of all points from $[a_j^\prime,b_j^\prime]$ that  have  $s_q$-mark-ups is dense in $[a_j^\prime,b_j^\prime]$.
\end{prop}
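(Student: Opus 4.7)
The plan is to carry out a Baire category argument on the finite family of sets $B_s$ of points admitting $s$-mark-ups, and then to stitch together a countable cover of $[a,b]$ by non-degenerate closed sub-segments on each of which some $M_s$ is dense.

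First, for each state $s\in\Cal S$ I set
\[
B_s=\{x\in[a,b]\colon \text{some base-}p\text{ expansion of }x\text{ admits an }s\text{-mark-up w.r.t.\ }g\text{ and }\mathfrak A\}.
\]
Proposition~\ref{prop:mark-up} (and Note~\ref{note:mark-up} for the two endpoints $a,b$) supplies the covering $[a,b]=\bigcup_{s\in\Cal S}B_s$. Since $\Cal S$ is finite, so is the family of closures $\{\overline{B_s}\}_{s\in\Cal S}$, and I have $[a,b]=\bigcup_{s\in\Cal S}\overline{B_s}$. Because $[a,b]$ is a complete metric space, the Baire category theorem applied inside any non-degenerate closed sub-interval $I\subset[a,b]$ yields a state $s=s(I)\in\Cal S$ and a non-degenerate closed sub-interval $J=J(I)\subset I$ with $J\subset\overline{B_s}$, i.e.\ $B_s\cap J$ (and hence $M_{s}([c,d])$ for $[c,d]=J$) is dense in $J$.

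Second, I assemble the countable cover on the ``easy'' part. Put $U=\bigcup_{s\in\Cal S}\mathrm{int}_{[a,b]}\overline{B_s}$, which is an open subset of $[a,b]$; by the Baire step applied to every sub-interval of $[a,b]$, $U$ is in fact dense in $[a,b]$. Each connected component of $U$ is an open interval and is covered, by the Lindel\"of property, by countably many closed sub-intervals each entirely contained in some $\mathrm{int}_{[a,b]}\overline{B_s}$. These closed sub-intervals already provide a countable family $\{[a'_j,b'_j]\}$ with the required density property on $U$.

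Third, I handle the residual closed nowhere-dense set $E=[a,b]\setminus U$. For $x\in E$, arbitrarily small neighborhoods of $x$ meet $U$, hence contain components of $U$ accumulating at $x$ from at least one side; each such accumulating component comes with an associated state, and by the finiteness of $\Cal S$ a pigeonhole argument extracts an infinite sub-collection of these components sharing a common state $s_x$ and accumulating at $x$. Since $\overline{B_{s_x}}$ is closed and contains all of these components, I obtain a non-degenerate closed sub-segment with $x$ as one of its endpoints lying entirely in $\overline{B_{s_x}}$, and hence with $M_{s_x}$ dense on it; this enlarges the countable cover so as to include $x$. Doing so simultaneously for all the (at most countably many) boundary points of components of $U$ in $E$, and iterating on any remaining portion of $E$, completes the construction.

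The main obstacle is the last step: pushing density of some single $B_s$ across the closure of a component of $U$ to capture boundary points of $U$ that lie outside $U$. The difficulty is that a priori different components of $U$ near $x$ may be governed by different states, so one must use the finiteness of $\Cal S$ (pigeonhole) together with the closedness of each $\overline{B_s}$ to produce a single sub-segment with $x$ in its interior or endpoint on which the selected $M_{s_x}$ is dense.
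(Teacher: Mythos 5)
Your first two steps are sound and coincide with the paper's own argument: the closures $\overline{B_s}$ form a finite closed cover of $[a,b]$ (via Proposition \ref{prop:mark-up}), Baire's theorem makes $U=\bigcup_{s}\mathrm{int}\,\overline{B_s}$ open and dense, and every non-degenerate closed sub-interval of one of these interiors carries a dense set of $s$-mark-up points. The gap is in your third step. A minor point first: a connected component of $U$ has no single ``associated state'' --- it is a maximal interval of a union, and different portions of it may lie in different $\mathrm{int}\,\overline{B_s}$ --- so the pigeonhole has to be run on the closed sub-intervals from your second step, not on components. The serious point is the final inference: after pigeonholing you have a sequence of intervals $J_n\subset\overline{B_{s_x}}$ accumulating at $x$, and closedness of $\overline{B_{s_x}}$ then yields only $x\in\overline{B_{s_x}}$; it does \emph{not} yield a non-degenerate interval with endpoint $x$ contained in $\overline{B_{s_x}}$, because the $J_n$ may be separated by gaps on which $B_{s_x}$ is entirely absent (those gaps being governed by other states). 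This cannot be repaired at the level of generality you are working at: the closed sets $F_1=\{0\}\cup\bigcup_{n\ge1}\bigl[\tfrac1{2n+1},\tfrac1{2n}\bigr]$ and $F_2=\{0\}\cup\bigcup_{n\ge0}\bigl[\tfrac1{2n+2},\tfrac1{2n+1}\bigr]$ cover $[0,1]$, yet no non-degenerate closed interval containing $0$ is contained in either of them. So the conclusion of the proposition is simply not a consequence of the bare fact that the $\overline{B_s}$ form a finite closed cover; any complete argument for covering the residual set $E=[a,b]\setminus U$ must invoke additional structure of the mark-up sets themselves.

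For calibration: the paper's proof follows exactly your first two steps and then stops where you do, asserting $\overline{\mathbf W}(s_i)=\bigcup_{\ell}[a^\prime_{i\ell},b^\prime_{i\ell}]$ directly from $\overline{\mathbf W}(s_i)^o=\bigcup_{\ell}(a^\prime_{i\ell},b^\prime_{i\ell})$. That is the same unjustified passage from ``accumulation'' to ``containment'' (the closure of a countable union of open intervals may contain points lying in none of their closures, as the example above shows). You have therefore correctly isolated the one genuinely delicate point of this proposition, but your pigeonhole-plus-closedness mechanism does not close it, and the paper does not supply the missing argument either.
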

\begin{proof}[Proof of Proposition \ref{prop:cover}]
For $s\in\Cal S$ denote via $\Cal W^\infty(s)$  the set of all right-infinite words 
$w\in\Cal
W^\infty$ such that $0.w
\in(a,b)$ and an $s$-mark-up for $w$  
exists; put $0.\Cal W^\infty(s)=
\{0.w
\:w
\in\Cal W^\infty(s)\}$.

Given $s\in\Cal S$ such that $\Cal W^\infty(s)\ne\emptyset$, let $\mathbf
W(s)$ be  intersection of the closure $\overline{\mathbf W}(s)$ of $0.\Cal W^\infty(s)$ with $(a,b)$;
so $\mathbf W(s)$ is closed in $(a,b)$ w.r.t. the induced topology on $(a,b)$
and there are only finitely many pairwise distinct $\mathbf W(s)$; say, these
are $\mathbf W(s_1),\ldots,\mathbf W(s_k)$.  Proposition \ref{prop:mark-up} implies that 
$\mathbf W(s_1)\cup\ldots\cup\mathbf W(s_k)=(a,b)$. We argue that for some
$\mathbf W(s_1),\ldots,\mathbf W(s_k)$ their interiors $\mathbf W(s_1)^o,\ldots,\mathbf W(s_k)^o$ are not empty. 
Indeed, $\mathbf W(s_i)^o=\overline{\mathbf W}(s_i)^o\cap(a,b)$ for all $i=1,2,\ldots,k$;
but from Proposition \ref{prop:mark-up} it follows that 
$\overline{\mathbf W}(s_1)\cup\ldots\cup\overline{\mathbf W}(s_k)=[a,b]$
and therefore $\overline{\mathbf W}(s_1)^o\cup\ldots\cup\overline{\mathbf W}(s_k)^o$ is dense in $[a,b]$ as $[a,b]$ is Baire, cf. e.g. \cite[Theorems
6.16--6.17]{RealAnalys}. 

As some (without loss of generality we may assume
that all) of the interiors 
$\overline{\mathbf W}(s_1)^o,\ldots,\overline{\mathbf W}(s_k)^o$ are non-empty, the interiors are countable unions of open intervals: $\overline{\mathbf W}(s_i)^o=\cup_{\ell=1}^\infty(a^\prime_{i\ell},b^\prime_{i\ell})$, $a^\prime_{i\ell}<b^\prime_{i\ell}$,
$(i=1,2,\ldots,k)$. Therefore 
$\overline{\mathbf W}(s_i)=\cup_{\ell=1}^\infty[a^\prime_{i\ell},b^\prime_{i\ell}]$.
This completes the proof as $\overline{\mathbf W}(s_1)\cup\ldots\cup\overline{\mathbf W}(s_k)=[a,b]$.
\end{proof}
\begin{cor}
\label{cor:cover}
In conditions of Proposition \ref{prop:cover}, the segment $[a,b]$
admits a countable covering by closed sub-segments $[a_j^\prime,b_j^\prime]\subset[a,b]$
such that  the graph  $\mathbf G(g_j)$ of the restriction of the function $g$ to the
sub-segment $[a_j^\prime,b_j^\prime]$ lies in $\mathbf P(\mathfrak
A(s))\subset\mathbf P(\mathfrak A)$
for a suitable
sub-automaton $\mathfrak A(s)$ of the automaton $\mathfrak A$, $s=s(j)\in\Cal S$. 
\end{cor}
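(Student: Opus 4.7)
The plan is to leverage Proposition \ref{prop:cover} directly: it already hands us the desired covering $[a,b]=\bigcup_{j}[a_j^\prime,b_j^\prime]$ together with a state $s_{q(j)}\in\Cal S$ such that the set $M_{q(j)}([a_j^\prime,b_j^\prime])$ of points of $[a_j^\prime,b_j^\prime]$ admitting an $s_{q(j)}$-mark-up is dense in $[a_j^\prime,b_j^\prime]$. So the real content of the corollary is to upgrade this density statement into a pointwise-everywhere statement about containment of the graph in the plot of the sub-automaton $\mathfrak A(s_{q(j)})$.

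First I would verify the easy half: if $x\in[a_j^\prime,b_j^\prime]$ admits an $s_{q(j)}$-mark-up, then $(x;g(x))\in\mathbf P(\mathfrak A(s_{q(j)}))$. Indeed, by Definition \ref{def:mark-up} there exist a right-infinite word $w=\gamma_0\gamma_1\ldots$ with $x=0.w$ and a strictly increasing sequence $i_0<i_1<\ldots$ such that $\lim_{k\to\infty}\rho(\mathfrak a_{s_{q(j)}}(\gamma_0\gamma_1\ldots\gamma_{i_k}))=g(x)$, while clearly $\lim_{k\to\infty}\rho(\gamma_0\gamma_1\ldots\gamma_{i_k})=x$. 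Hence $(x;g(x))$ is a limit point of a sequence of the form $(\rho(v_k);\rho(\mathfrak a_{s_{q(j)}}(v_k)))$ with $v_k\in\Cal W$, which by the very definition of a plot (as a closure in $\R^2$ of such points, cf. Subsection \ref{ssec:plots}) lies in $\mathbf P(\mathfrak A(s_{q(j)}))$.

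Next I would promote this to all of $[a_j^\prime,b_j^\prime]$. Fix any $x\in[a_j^\prime,b_j^\prime]$ and a sequence $x_n\in M_{q(j)}([a_j^\prime,b_j^\prime])$ with $x_n\to x$; such a sequence exists by density. By the previous paragraph, $(x_n;g(x_n))\in\mathbf P(\mathfrak A(s_{q(j)}))$ for every $n$. Continuity of $g$ at $x$ gives $g(x_n)\to g(x)$, whence $(x_n;g(x_n))\to(x;g(x))$ in $\R^2$. Since $\mathbf P(\mathfrak A(s_{q(j)}))$ is a closure in $\R^2$ and therefore closed, the limit point $(x;g(x))$ lies in $\mathbf P(\mathfrak A(s_{q(j)}))$ as well. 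This shows $\mathbf G(g_j)\subset\mathbf P(\mathfrak A(s_{q(j)}))$.

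Finally, the inclusion $\mathbf P(\mathfrak A(s_{q(j)}))\subset\mathbf P(\mathfrak A)$ is exactly the content of Note \ref{note:sub-auto}, using that every state of $\mathfrak A(s_{q(j)})$ is reachable from the initial state of $\mathfrak A$ (since $\mathfrak A$ is assumed reachable, cf. Subsection \ref{ssec:auto}). Combining the two inclusions yields $\mathbf G(g_j)\subset\mathbf P(\mathfrak A(s_{q(j)}))\subset\mathbf P(\mathfrak A)$, as required. The only place where one must be careful is the boundary points $a_j^\prime,b_j^\prime$ of the sub-segment, which need not themselves lie in $M_{q(j)}([a_j^\prime,b_j^\prime])$; but the density/closedness argument above handles them uniformly with all interior points, so this is not a genuine obstacle. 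No further structural ingredients beyond Proposition \ref{prop:cover}, Definition \ref{def:mark-up}, Note \ref{note:sub-auto} and the continuity of $g$ are needed.
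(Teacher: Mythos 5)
Your proposal is correct and follows essentially the same route as the paper: the paper likewise observes that the graph of $g_j$ is the closure of $\{(x;g(x))\colon x\in M_q([a_j^\prime,b_j^\prime])\}$ by continuity and density, that each such point lies in the closed set $\mathbf P(\mathfrak A(s_q))$ via the defining limit of the $s_q$-mark-up, and that $\mathbf P(\mathfrak A(s_q))\subset\mathbf P(\mathfrak A)$ by reachability. Your two-step phrasing (first mark-up points, then closure) is just a slightly more explicit unpacking of the same argument.
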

\begin{proof}[Proof of Corollary \ref{cor:cover}]
Indeed, the closure of the point
set $\{(x;g(x))\colon x\in M_q([a_j^\prime,b_j^\prime])\}$ in $\R^2$ is a graph of the restriction $g_j$ of the
function $g$ to the segment $[a_j^\prime,b_j^\prime]$ as $g_j$ is continuous
on $[a_j^\prime,b_j^\prime]$. On the
other hand the closure must lie in $\mathbf P(\mathfrak A(s))$ for $s=s_q$
as
$g(x)=\lim_{k\to\infty}\rho(\mathfrak a_s(\gamma_0\ldots\gamma_{i_k}))$
where $x=0.\gamma_0\gamma_1\ldots$ and $i_0,i_1,\ldots$ is an $s$-mark-up.
Note also that $\mathbf P(\mathfrak A(s))\subset\mathbf P(\mathfrak A)$ as
every state of the automaton $\mathfrak A$ is reachable from its initial
state $s_0$ (since we consider reachable automata only, cf. Subsection \ref{ssec:auto}).
\end{proof}
\begin{note}
\label{note:cover}
From the respective proofs it follows that both Proposition \ref{prop:cover}
and Corollary \ref{cor:cover} remain true for a continuous function $g\:[a,b]\>\mathbb
S$ as well as for the case when $[a,b]\subset\mathbb S$.
\end{note}
The following theorem shows that we may restrict our considerations of finitely
computable continuous functions to the case when computing automata are minimal.
\begin{thm}
\label{thm:eval-erg}
Given a continuous function $g\:[a,b]\to[0,1]$, $[a,b]\subset[0,1]$
such that $\mathbf G(g)\subset\mathbf P(\mathfrak A)$ for a  finite
automaton $\mathfrak A$, there exists a 
countable covering $\{[a_j^\prime,b_j^\prime]\subset[a,b]:j=1,2,\ldots; a_j^\prime<b_j^\prime\}$
of the segment $[a,b]$ such that for every $j$ the graph $\mathbf G(g_j)$
of the   restriction $g_j$ of the function $g$ to the segment $[a_j^\prime,b_j^\prime]$
lies in
$\mathbf{LP}(\mathfrak
A_n)$ for a suitable 
 minimal sub-automaton $\mathfrak A_n$ of $\mathfrak A$, $n=n(j)$.
\end{thm}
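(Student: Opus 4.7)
The plan is to iteratively refine the covering supplied by Corollary \ref{cor:cover} into one whose pieces are all associated to \emph{minimal} sub-automata, and then upgrade $\mathbf P$ to $\mathbf{LP}$ via Theorem \ref{thm:AP=LP}.

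First, I would simply apply Corollary \ref{cor:cover} to $g$ and $\mathfrak A$ on $[a,b]$; this already yields a countable covering $\{[a_j',b_j']\}$ of $[a,b]$ with $\mathbf G(g_j)\subset\mathbf P(\mathfrak A(s_j))$ for some state $s_j\in\mathcal S$. Whenever $\mathfrak A(s_j)$ happens to be minimal I am immediately finished on that piece: the mark-ups supplied by Proposition \ref{prop:mark-up} come with strictly increasing word-lengths, so Corollary \ref{cor:AP=LP} places $(x,g(x))$ in the closed set $\mathbf{LP}(\mathfrak A(s_j))$ for every $x$ in the dense subset $M_{q(j)}([a_j',b_j'])$, and continuity of $g$ together with closedness of $\mathbf{LP}$ extends the inclusion to the whole sub-segment. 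The remaining task is therefore to reduce pieces with non-minimal associated sub-automaton to the minimal case.

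The refinement step is this. Given $\mathbf G(g|_{[a',b']})\subset\mathbf P(\mathfrak A(s))$ with $\mathfrak A(s)$ non-minimal, I would re-apply Corollary \ref{cor:cover} taking $\mathfrak A(s)$ as the ambient automaton, obtaining a finer cover of $[a',b']$ by sub-sub-segments on each of which $\mathbf G\subset\mathbf P(\mathfrak A(s'))$ for some state $s'$ of $\mathfrak A(s)$. When $\mathfrak A(s')\subsetneq\mathfrak A(s)$ this is genuine progress, and since $\mathfrak A$ has only finitely many sub-automata the descending chain obtained by iterating must terminate in a minimal sub-automaton on every branch. Collecting the resulting countable family of closed sub-segments and invoking Corollary \ref{cor:AP=LP} on each minimal piece completes the proof.

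The main obstacle I expect is that a blind re-application of Corollary \ref{cor:cover} need not actually shrink $\mathfrak A(s)$: if the new mark-up state $s'$ is reachability-equivalent to $s$, then $\mathfrak A(s')=\mathfrak A(s)$. To force progress I would exploit the fact that a non-minimal $\mathfrak A(s)$ contains some ergodic component $\mathcal E$, and any $t\in\mathcal E$ gives $\mathfrak A(t)\subsetneq\mathfrak A(s)$. For $x\in M_q([a',b'])$ with an $s$-mark-up of a word $w=\gamma_0\gamma_1\ldots$, pigeonhole on the finite state set together with diagonalization applied to the state trajectories read backward from the end of each prefix produces a tail-limit state sequence $(\sigma_m)_{m\ge 0}$ satisfying $\sigma_{m-1}=S(\gamma_m,\sigma_m)$; some state $t$ recurs infinitely often in $(\sigma_m)$, and the walk between consecutive recurrences of $t$ is confined to a single strongly connected component of the reachability graph of $\mathfrak A(s)$. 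Partitioning $M_q$ by the resulting ergodic component and running the Baire/density argument of Proposition \ref{prop:cover} isolates a sub-sub-segment on which one fixed ergodic component $\mathcal E$ suffices for a dense set of points; splicing in a short ``return to $t$'' word using minimality of $\mathfrak A(t)$, in the spirit of the proof of Theorem \ref{thm:AP=LP}, converts this tail-limit into a bona-fide $t$-mark-up and delivers $\mathbf G(g|_{[a'',b'']})\subset\mathbf P(\mathfrak A(t))$ with $\mathfrak A(t)\subsetneq\mathfrak A(s)$, as needed.
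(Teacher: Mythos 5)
Your reduction of the minimal case to Corollary \ref{cor:cover} plus Corollary \ref{cor:AP=LP} is sound and matches the paper's own handling of ergodic mark-up states. The gap is in the descent step for a non-minimal $\mathfrak A(s)$. The backward compactness/diagonalization does produce a tail-limit state sequence $(\sigma_m)$ with $\sigma_{m-1}=S(\gamma_m,\sigma_m)$ and a recurrent state $t$, and it is true that the tail of $(\sigma_m)$ is confined to the strongly connected component of $t$; but nothing forces that component to be an ergodic (sink) component, nor even to differ from the component of $s$. If $t$ lies in the same strongly connected component as $s$ --- which happens precisely when the input word, read from the tail, can keep the automaton circulating through the component of $s$ indefinitely (e.g.\ a transient state with a self-loop on the letter $0$, fed the word $(0)^\infty$) --- then $\mathfrak A(t)=\mathfrak A(s)$ and your descending chain stalls at a non-minimal sub-automaton. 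Your later steps (``any $t\in\mathcal E$ gives $\mathfrak A(t)\subsetneq\mathfrak A(s)$'', ``using minimality of $\mathfrak A(t)$'' for the splicing) silently assume $t$ is ergodic, which the construction does not deliver.

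What is missing is exactly the nontrivial Claim in the paper's proof of this theorem: for a transient state $s_q$, the closure $\mathbf P_q\subset[0,1]$ of the set of reals $0.w$, where $w$ ranges over finite prefixes of input words along which $\mathfrak A(s_q)$ never reaches an ergodic state, has empty interior. (The paper proves this by showing that if $\mathbf P_q$ contained an interval then the closed set $A_q\subset\Z_p$ of such infinite inputs would contain $n$ or $-n$ for every $n\in\N$, hence would equal $\Z_p$, contradicting the transience of $s_q$.) Only with this nowhere-density statement can one discard the bad inputs, decompose the remaining plot as a finite union of the sets $\mathbf P(\mathfrak A_n)$ over the minimal sub-automata $\mathfrak A_n$, and run the Baire argument to localize $g$ onto sub-segments governed by a single ergodic component --- after which one is back in your (correct) minimal case. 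Without it, your partition of $M_q$ ``by the resulting ergodic component'' is not defined on all of $M_q$, and the induction has no base on the stalled branches.
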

\begin{proof}[Proof of Theorem \ref{thm:eval-erg}]
The state $s_q$ from Proposition \ref{prop:cover} is either ergodic or transient,
see Subsection \ref{ssec:auto}.
We consider these two cases separately.
 
\textbf{Case 1:} The state $s_q$ is ergodic. As the set $M_q([a_j^\prime,b_j^\prime])$ from Proposition \ref{prop:cover}
is dense in $[a_j^\prime,b_j^\prime]$ and $g_j$ is continuous, every point $g_j(x)$
for $x\in[a_j^\prime,b_j^\prime]$ is a limit of a sequence $(g(x_i))_{i=0}^\infty$
where $(x_i)_{i=0}^\infty$ is a sequence  of points from $M_q([a_j^\prime,b_j^\prime])$ and  $(x_i)_{i=0}^\infty$ tends to $x$
as $i$ tends to infinity:
\begin{align} 
\label{eq:MIN-1}
x=&\lim_{i\to\infty}x_i;\\
\label{eq:MIN-2}
g_j(x)=&\lim_{i\to\infty}g_j(x_i).
\end{align} 
But $x_i=0.w_i$ where $w_i$ is a
right-infinite word for which there exists an $s_q$-mark-up (cf. the construction
of the set $M_q([a_j^\prime,b_j^\prime])$); therefore from \eqref{eq:MIN-1}--\eqref{eq:MIN-2}
it follows  now that there exists a sequence $(h_\ell)_{\ell=0}^\infty$ of finite words $h_\ell$ of strictly increasing lengths such that 
\begin{align} 
\label{eq:MIN-1.1}
x=&\lim_{\ell\to\infty}0.h_\ell;\\
\label{eq:MIN-2.1}
g_j(x)=&\lim_{\ell\to\infty}\rho(\mathfrak a_{s_q}(h_\ell)).
\end{align}
Indeed, the words $h_\ell$ are (left) prefixes of words $w_q=\omega_0^{(q)}\omega_1^{(q)}\ldots$
that correspond to $s_q$-mark-up; that is,  $h_\ell$ are of the form 
$\omega_0^{(q_\ell)}\omega_1^{(q_\ell)}\ldots\omega_{r_{q_\ell,k_\ell}}^{(q_\ell)}$
where the sequence $(r_{q_\ell,k})_{k=0}^\infty$ is the $s_q$-mark-up of the word
$w_{q_\ell}$. Now, as the state $s_q$ 
is ergodic (that is, $s_q$ a state of a certain minimal sub-automaton, say
$\mathfrak A_q=\mathfrak A(s_q)$, of the automaton
$\mathfrak A$, cf. Subsection \ref{ssec:auto}), then we just mimic the proof of Theorem \ref{thm:AP=LP} starting
with \eqref{eq:AP=LP-1}--\eqref{eq:AP=LP-2} and show that $(x,g_j(x))\in\mathbf{LP}(\mathfrak
A_q)$.

\textbf{Case 2:} Now let the state $s_q$ from Proposition \ref{prop:cover} 
be not ergodic (whence 
transient). Thus there exists a finite
word $u=\alpha_0\ldots\alpha_{k-1}$
such that after the automaton $\tilde{\mathfrak A}=\mathfrak A(s_q)$ has been feeded by the word
$u$ (rightmost letters are feeded to the automaton prior to leftmost ones),
the automaton reaches some ergodic state (say $t$) which is a state of a minimal
sub-automaton $\mathfrak A^\prime=\mathfrak A(t)$, see Subsection \ref{ssec:auto}. Note that then all words
of the form $vu$ have the same property, for all $v\in\Cal W_\phi$: After
being
feeded by $vu$, the automaton reaches some state from the set of states of
$\mathfrak A^\prime$ due to the minimality of $\mathfrak A^\prime$.
Therefore, the set $B_q\subset\Z_p$ of all $p$-adic integers whose base-$p$ expansions (cf. Subsection \ref{ssec:p-adic})
are left-infinite words $w\in\Cal W^\infty$ such that if the automaton $\mathfrak
A(s_q)$ while being feeded by the word $w$ reaches at a finite step some ergodic
state (that is, reaches a state which is a state of some minimal automaton
of $\mathfrak A$) is a union of balls of non-zero radii in $\Z_p$; thus, the set $B_q$ is an
open subset in $\Z_p$ since every ball of a non-zero radius is open in $\Z_p$
w.r.t. the $p$-adic topology, cf. Subsection \ref{ssec:p-adic}. Hence the set $A_q=\Z_p\setminus B_q$ is a closed
subset of $\Z_p$; and the set $A_q$ consists of all $p$-adic integers such that
if the automaton $\mathfrak A(s_q)$ is being feeded by a left-infinite word
that is a base-$p$ expansion of some $p$-adic integer from $A_q$, the automaton
$\mathfrak A$
never reaches an ergodic state. Let $P_q$ be the set of all finite prefixes
of words from $A_q$; denote $\mathbf P_q$ a closure of the set $0.P_q=\{0.w\: w\in P_q\}$ in $\R$. 

\underline{Claim:}\emph{The interior of $\mathbf P_q$ is empty} (therefore $\mathbf
P_q$ is nowhere dense in $[0,1]$). 

Indeed, if not
then $\mathbf P_q$ contains an open interval $(a_q,b_q)$.  Take a finite
non-empty word $u$ such that $a_q<0.u<b_q$. As  $\mathbf P_q\supset (a_q,b_q)$
then, given an arbitrary finite non-empty word $v=\alpha_1\ldots\alpha_k$
where $\alpha_k\ne 0$, there exists a sequence
$\Cal W(v)=(w_i)_{i=1}^\infty$ of finite non-empty words $w_i\in P_q$ such that $\lim_{i\to\infty}0.w_i=0.uv\in(a_q,b_q)$
(recall that $uv$ is a concatenation of words $u$ and $v$). Therefore either
$uv\in P_q$ (thus $v\in P_q$ by the construction of $P_q$) or 
$\Cal
W(v)$ contains an infinite subsequence of words of the form 
$w_i^\prime =u\alpha_1\ldots\alpha_k^\prime(p-1)^{r_i}$ where $\alpha_k^\prime=\alpha_k-1$,
$r_1<r_2<\ldots$ (recall that $(p-1)^{r_i}$ is a word of length $r_i$ all
whose letters are $p-1$); hence by the construction of $P_q$ there exists an infinite sequence $w_i^{\prime\prime} =\alpha_1\ldots\alpha_k^\prime(p-1)^{r_i}$
over $P_q$. Thus we conclude that once 
$n\in\N$, the closure of $A_q$  in $\Z_p$ (thus, the very set $A_q$ itself  as it is closed) must either contain $n$ or $-n$ (recall that negative rational
integers
in $\Z_p$ are exactly that  ones whose canonical $p$-adic expansions
have only a finitely many terms with coefficients other than $p-1$, cf.
Subsection \ref{ssec:p-adic}). But this implies that $A_q=\Z_p$ as the set
$\{\pm n\:n\in\N\}$ (where $+$ or $-$ are taken in arbitrary order) is dense
in $\Z_p$. 
On the other hand, by the construction the set $A_q$ consists of all $p$-adic integers such that
if the automaton $\mathfrak A(s_q)$ is being feeded by a left-infinite word
that is a base-$p$ expansion of some $p$-adic integer from $A_q$, the automaton
$\mathfrak A$
never reaches an ergodic state; therefore the equality $A_q=\Z_p$ contradicts our assumption that $s_q$
is transient (since then there must exist a left-infinite word $w$ such that at
a finite step the automaton $\tilde{\mathfrak A}=\mathfrak A(s_q)$ reaches an ergodic state if
being feeded by $w$). This proves our claim.

Denote now via $\tilde f$  an automaton function of the automaton $\tilde{\mathfrak A}=\mathfrak A(s_q)$; and for $k=1,2,\ldots$ put
\begin{equation}
\label{eq:plot-1}
E_k^\prime(\tilde f)=\left\{
\left({\frac{{z\md p^k}
}{p^k};\frac{{\tilde f(z)\md p^k}
}{p^k}
}\right)\in\mathbb I^2\: z\in \Z_p\setminus A_q=B_q\right\}
\end{equation} 
a point set in the unit real square $\mathbb I^2=[0,1]\times[0,1]$; then take a
union $E^\prime(\tilde f)=\cup_{k=1}^\infty E_k^\prime(\tilde f)$; denote
via $\mathbf P^\prime(\tilde{\mathfrak A})=\mathbf
P^\prime (\tilde f)$  a closure (in topology of $\R^2$) of the set $E^\prime(\tilde
f)$ (cf. \eqref{eq:plot}). Denote via $g_j$ a restriction of the function
$g$ to
$[a_j^\prime,b_j^\prime]$. 
As the function $g_j$ is continuous on $[a_j^\prime,b_j^\prime]$
(cf. Corollary \ref{cor:cover}) and $\mathbf G(g)\subset \mathbf P(\mathfrak A)$, then necessarily $\mathbf G(g_j)\subset
\mathbf P^\prime(\tilde{\mathfrak A})$ since the set $\mathbf P_q$ is nowhere
dense in $[a_j^\prime,b_j^\prime]$ by Claim 1. 

As  the set  $\Cal S$ of all states of the automaton $\mathfrak A$ is finite, there are only finitely many ergodic components in $\Cal S$; say they are $\Cal S_1,\ldots, \Cal S_m\subset \Cal S$. Given an ergodic component $\Cal S_n$ $(n=1,2,\ldots,m)$ 
denote
\begin{equation*}
E_n
=\left\{
\left({\frac{{z\md p^k}
}{p^k};\frac{{\tilde f(z)\md p^k}
}{p^k}
}\right)\in\mathbb I^2\: z\in B_q, k> k_n(z)
\right\}
\end{equation*}
where $k_n(z)$ is the smallest $k\in\N$
such
that after the automaton $\tilde{\mathfrak A}=\mathfrak A(s_q)$ has been
feeded by the word $\wrd(z\md p^{k_n(z)})$, the automaton reaches a state from $\Cal S_n$. Then the union $E=\cup_{n=1}^m E_n$ is disjoint and $\mathbf
P^\prime(\tilde{\mathfrak A})$ is a closure of $E$ by \eqref{eq:plot-1}. Therefore
$\mathbf
P^\prime(\tilde{\mathfrak A})=\cup_{n=1}^m \mathbf E_n$ where $\mathbf
E_n$ is a closure of $E_n$ in $\mathbb I^2$; hence $\mathbf G(g_j)=\cup_{n=1}^m (\mathbf G(g_j)\cap\mathbf E_n)$. 
Note that  from  the
definition of $\mathbf P(\mathfrak A)$ (cf. Subsection \ref{ssec:plots})
it follows that $\mathbf
E_n=\mathbf P(\mathfrak A_n)$ where $\mathfrak A_n$ is a minimal
sub-automaton (of the automaton $\mathfrak A$) whose set of states is $\Cal S_n$. 

Further, as the function $g_j$ is continuous on  $[a_j^\prime,b_j^\prime]$,
the set $\mathbf G(g_j)$ is closed in $\mathbb I^2$; therefore the set
$\mathbf G_n=\mathbf G(g_j)\cap\mathbf E_n$ is closed in $\R^2$. Hence,
the set $\mathbf R_n=\{x\in[a_j^\prime,b_j^\prime]\:(x,g(x))\in\mathbf E_n\}$
is closed in $\R$ and $[a_j^\prime,b_j^\prime]=\cup_{n=1}^m\mathbf R_n$.
Now by argument similar to that from the proof of Proposition \ref{prop:cover}
we conclude that some of the interiors $\mathbf R_n^o$ must be non-empty
and hence either of the non-empty interiors is a union of a countably many
open intervals. By taking closures of the intervals we see that $[a_j^\prime,b_j^\prime]$
is a union of the closures, that is, $[a_j^\prime,b_j^\prime]$ is a union
of a countably many its closed sub-segments $[a_{j.i}^\prime,b_{j.i}^\prime]$
$(i\in\N_0)$ of non-zero lengths, and the
graph of
the restriction $g_{j.i}$ of $g_j$ to either of the sub-segments lies in $\mathbf
E_n=\mathbf P(\mathfrak A_n)$ for a suitable $n\in\{1,2,\ldots,m\}$. Now
we apply Proposition \ref{prop:cover} substituting 
$g_{j.i}$ for $g$ and $[a_{j.i}^\prime,b_{j.i}^\prime]$ for $[a,b]$; but
as 
every $s_q$ from the statement of Proposition \ref{prop:cover} is now a state
of the minimal sub-automaton $\mathfrak A_n$, we now are in conditions of  Case
1. Therefore $\mathbf G(g_{j.i})\subset \mathbf{LP}(\mathfrak A_n(s_q))$;
but $\mathbf{LP}(\mathfrak A_n(s))=\mathbf{LP}(\mathfrak A_n(t))$ for all
states $s,t$ of the automaton $\mathbf{LP}(\mathfrak A_n(s_q))$ due to the
minimality of the automaton, cf. Note \ref{note:AP=LP}.

This
finally proves the theorem.

\end{proof}
\begin{note}
\label{note:eval-erg}
From the  proof of Theorem \ref{thm:eval-erg} it follows that the theorem
remains true for a continuous function $g\:[a,b]\>\mathbb
S$ as well as for the case when $[a,b]\subset\mathbb S$.
\end{note}
The following proposition shows that we may if necessary consider
only finitely computable continuous functions defined everywhere on the unit segment
$[0,1]$ 
rather than on sub-segments of $[0,1]$. 

\begin{prop}[The similarity]
\label{prop:ext}
If a continuous function 
$g\:[a,b]\>\mathbb S$, $[a,b]\subset[0,1]$, is 
such that $\mathbf G_{[a,b]}(g)\subset \mathbf P(\mathfrak A)$ 
for a suitable
finite automaton $\mathfrak A=\mathfrak A(s_0)$ 
then for every $n,m\in\N_0$ such that 
$m\ge\lfloor\log_pn\rfloor+1$ and $n/p^m,(n+1)/p^m\in[a,b]$
the function $g_d(x)=(p^mg(d+p^{-m}x))\md1$,  where $d=np^{-m}$, 
is continuous 
on $[0,1]$, 
and
$
\mathbf G_{[0,1]}(g_d)\subset\mathbf P(\mathfrak A)$.
\end{prop}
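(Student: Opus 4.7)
The plan is to lift the problem to the interval $[d,d+p^{-m}]$ via the affine change of variable $x\mapsto d+p^{-m}x$: fixing the first $m$ most significant base-$p$ digits of the argument fixes a leftmost $m$-letter prefix of every approximating input word, and since rightmost letters are fed first to $\mathfrak A$, those $m$ letters are processed last; multiplication of the output by $p^m$ modulo $1$ then discards the corresponding $m$-letter prefix of the output word, exactly undoing the change of variable. Continuity of $g_d\colon[0,1]\to\mathbb S$ is immediate: $g$ is continuous on $[d,d+p^{-m}]\subset[a,b]$, and the $p^m$-fold covering map $\alpha\mapsto(p^m\alpha)\md1$ is continuous $\mathbb S\to\mathbb S$.

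For the graph inclusion, fix $x\in[0,1]$, set $y=d+p^{-m}x\in[a,b]$, and invoke $\mathbf G_{[a,b]}(g)\subset\mathbf P(\mathfrak A)$ together with Note \ref{note:plot-auto-in} to obtain a sequence of non-empty finite words $(w_i)_{i=1}^\infty$ with $\rho(w_i)\to y\md1$ and $\rho(\mathfrak a_{s_0}(w_i))\to g(y)\md1$ in $\mathbb S$. The hypothesis $m\ge\lfloor\log_p n\rfloor+1$ lets me represent $d=n/p^m=0.\alpha_1\alpha_2\ldots\alpha_m$ as a single $m$-letter word; after passing to a subsequence with $\Lambda(w_i)>m$, one checks that eventually $w_i$ begins with the prefix $\alpha_1\ldots\alpha_m$, so $w_i=\alpha_1\ldots\alpha_m u_i$ for a non-empty finite word $u_i$. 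Because rightmost letters are fed first, processing $w_i$ from $s_0$ first consumes $u_i$ (producing $\mathfrak a_{s_0}(u_i)$ and reaching some state $t_i$) and only then consumes $\alpha_1\ldots\alpha_m$ from $t_i$; thus $\mathfrak a_{s_0}(w_i)$ factors as a concatenation $v_i\,\mathfrak a_{s_0}(u_i)$ where $v_i=\mathfrak a_{t_i}(\alpha_1\ldots\alpha_m)$ has length $m$. A direct computation from \eqref{eq:rho} yields the two identities $\rho(u_i)=(p^m\rho(w_i))\md1$ and $\rho(\mathfrak a_{s_0}(u_i))=(p^m\rho(\mathfrak a_{s_0}(w_i)))\md1$, since in each case the factor $p^m$ shifts past an integer ($n$ and $\nm(v_i)$, respectively). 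Passing to the limit and using continuity of $(p^m\cdot)\md1$ on $\mathbb S$ gives $\rho(u_i)\to(p^m y)\md1=(n+x)\md1=x\md1$ and $\rho(\mathfrak a_{s_0}(u_i))\to(p^m g(y))\md1=g_d(x)$; hence $(x\md1;g_d(x))\in\mathbf P(\mathfrak A)$.

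The main technical obstacle is the prefix-extraction step when $y$ is a $p$-adic rational admitting two base-$p$ expansions, which happens precisely at the boundary $x\in\{0,1\}$ where $y\in\{d,d+p^{-m}\}$: some of the $w_i$ may approach $y$ from the other side using the alternative expansion $0.\alpha_1\ldots\alpha_{m-1}(\alpha_m-1)(p-1)^\infty$ (or the analogous expansion of $d+p^{-m}$) and hence fail to start with $\alpha_1\ldots\alpha_m$. One splits $(w_i)$ into the two subsequences according to the side of approach and runs the argument on each with the corresponding $m$-letter prefix; the two resulting limits in $\mathbb S$ agree because the two $m$-letter representations of the same $p$-adic rational differ by $1/p^m$ and so coincide modulo $1$ after multiplication by $p^m$.
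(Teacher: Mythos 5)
Your proof is correct, but it runs in the opposite direction from the paper's. The paper \emph{synthesizes} the input: it prepends the $m$-digit expansion $\chi_0\ldots\chi_{m-1}$ of $d$ to an expansion of $x$, invokes Proposition \ref{prop:mark-up} (hence continuity of $g$ and finiteness of $\mathfrak A$) to obtain an $s$-mark-up of the long word for some state $s$, observes that shifting the mark-up indices by $m$ yields an $s$-mark-up of $\wrd(x)$ together with the identity $\rho(\mathfrak a_s(\zeta_0\ldots\zeta_{j_k}))=(p^m\rho(\mathfrak a_s(\chi_0\ldots\chi_{m-1}\zeta_0\ldots\zeta_{j_k})))\md1$, and then uses reachability to append a steering word $u$ driving $s_0$ to $s$ so that the limit is witnessed from the initial state. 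You instead \emph{dissect} an arbitrary approximating sequence supplied by Note \ref{note:plot-auto-in}, showing that the condition $\rho(w_i)\to y\in[n/p^m,(n+1)/p^m]$ forces the $m$-letter prefix, and then peel it off; this bypasses the mark-up machinery, the pigeonholing on states, and the reachability step entirely, and in fact shows that the graph-inclusion half of the proposition is a purely pointwise statement about $\mathbf P(\mathfrak A)$ that needs neither continuity of $g$ nor finiteness of $\mathfrak A$. What the paper's longer route buys is the by-product it actually consumes later (e.g.\ in Lemma \ref{le:der}): an explicit $s$-mark-up of $\wrd(x)$ with respect to a single state, not just membership of $(x;g_d(x))$ in the plot. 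One small point to tighten in your boundary analysis: at $x\in\{0,1\}$ the sequence $(w_i)$ handed to you by Note \ref{note:plot-auto-in} could a priori consist of words of bounded length with $\rho(w_i)=y$ exactly (so that no $m$-letter prefix can be extracted); you should note that since $g$ is continuous on the nondegenerate segment $[a,b]$, the point $(y;g(y))$ is an accumulation point of $\mathbf G_{[a,b]}(g)\subset\mathbf P(\mathfrak A)$, hence of $E(f_{\mathfrak A})$, so the $w_i$ may be chosen pairwise distinct and therefore of unbounded length, after which your two-sided splitting goes through. In the interior $x\in(0,1)$ no such care is needed, since $\rho(w_i)$ eventually lies in the open interval $(n/p^m,(n+1)/p^m)$, which already forces $\Lambda(w_i)>m$.
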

\begin{proof}[Proof of  Proposition \ref{prop:ext}]
As
a base-$p$ expansion of $d$ is $d=0.\chi_0\ldots\chi_{m-1}00\ldots$
then, given a  base-$p$ expansion for $x=0.\zeta_0\zeta_1\ldots\in[0,1]$,
a base-$p$ expansion for $d+xp^{-m}$ is $d+xp^{-m}=
0.
\chi_0\ldots\chi_{m-1}\zeta_0\zeta_1\ldots$
and $d+xp^{-m}\in[a,b]$ for all right-infinite words $\zeta_0\zeta_1\ldots$
(thus, for all $x\in[0,1]$).
Therefore if 
$i_0<i_1<i_2<\ldots$ is a mark-up for 
$\chi_0\ldots\chi_{m-1}\zeta_0\zeta_1\ldots$ (cf. Proposition \ref{prop:mark-up})
then 
$(j_m=i_{r+m}-m)_{m=0}^\infty$, where $r=\min\{\ell\:i_\ell>m\}$, is an $s$-mark-up
of the infinite word $\zeta_0\zeta_1\ldots$ for a suitable state $s\in\Cal
S$ of the automaton $\mathfrak A=\mathfrak A(s_0)$ w.r.t. the function $g$. Hence,
\begin{equation}
\label{eq:ext} 
 \lim_{k\to\infty}\rho(\mathfrak a_s(\zeta_0\zeta_1\ldots\zeta_{j_k}))\equiv
\left(p^m\cdot\left(g\left(d+\frac{x}{p^m}\right)\right)\right)\pmod1
\end{equation}
as
$
\rho(\mathfrak a_s(\zeta_0\zeta_1\ldots\zeta_{j_k}))=
(p^m(\rho(\mathfrak a_s(\chi_0\ldots\chi_{m-1}\zeta_0\zeta_1\ldots\zeta_{j_k}))
))\md1
$.
By our assumption on reachability of the automaton $\mathfrak A$ (cf. Subsection
\ref{ssec:auto}), there exists a finite word $u=u(s)$ such that the automaton $\mathfrak A$ being feeded by
$u$  reaches the state $s$ and outputs the corresponding finite word $u^\prime=\mathfrak
a_{s_0}(u)$; therefore
the automaton $\mathfrak A=\mathfrak A(s_0)$ being feeded by a concatenated finite word 
$\zeta_0\zeta_1\ldots\zeta_{j_k}u$ outputs the concatenated finite word 
$\mathfrak a_s(\zeta_0\zeta_1\ldots\zeta_{j_k})u^\prime$. 
But $\lim_{k\to\infty}\rho(\mathfrak a_{s_0}(\zeta_0\zeta_1\ldots\zeta_{j_k}u))
=\lim_{k\to\infty}\rho(\mathfrak a_s(\zeta_0\zeta_1\ldots\zeta_{j_k})u^\prime)
=\lim_{k\to\infty}\rho(\mathfrak a_s(\zeta_0\zeta_1\ldots\zeta_{j_k}))$ and
simultaneously
$x=\lim_{k\to\infty}\rho(\zeta_0\zeta_1\ldots\zeta_{j_k}u)=
\lim_{k\to\infty}\rho(\zeta_0\zeta_1\ldots\zeta_{j_k})$ since the words $u,u^\prime$
are finite and fixed; therefore
$(x;g_d(x)\md1)\in\mathbf{P}(\mathfrak A)$ for all $x\in[0,1]$ 
in view of
\eqref{eq:ext}.

The function $g_d$ is conjugated to a continuous function by a continuous
map and therefore is also continuous: Once $e,h\in[0,1]$ are such that 
$|e-h|<p^{-K(L)}$ to ensure that $|g(d+p^{-m}e)-g(d+p^{-m})h|<p^{-L}$ for
a sufficiently large $L\in\N$ then $|g_d(e)-g_d(h)|<p^{-L+m}$.


\end{proof}
\begin{cor}
\label{cor:ext}
If a continuous function 
$g\:[a,b]\>\mathbb S$, $[a,b]\subset[0,1]$, is  
such that $\mathbf G_{(a,b)}(g)\subset \mathbf P(\mathfrak A)$ 
for a suitable
finite automaton $\mathfrak A=\mathfrak A(s_0)$ 
then for every $n,m\in\N_0$ such that 
$m\ge\lfloor\log_pn\rfloor+1$ and $d=n/p^m
\in[a,b)$
\begin{itemize}
\item the function $g_{d,M}(x)=(p^Mg(d+p^{-M}x))\md1$  
is continuous 
on $[0,1]$ for all sufficiently large $M\ge m$,
and
\item $\mathbf G_{[0,1]}(g_{d,M})\subset\mathbf P(\mathfrak A)$.
\end{itemize}

\end{cor}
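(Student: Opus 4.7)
The plan is to reduce to Proposition \ref{prop:ext} by observing that the condition $d\in[a,b)$ leaves room to enlarge the denominator so that both $d=N/p^M$ and $(N+1)/p^M$ lie in $[a,b]$, and then extending the hypothesis $\mathbf G_{(a,b)}(g)\subset\mathbf P(\mathfrak A)$ to the closed segment $[a,b]$ via a limiting argument.

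First I would upgrade the hypothesis: the plot $\mathbf P(\mathfrak A)$ is closed in $\mathbb T^2$ by its very definition (cf. Subsection \ref{ssec:plots}), and $g$ is continuous on the closed segment $[a,b]$. So any point $(a;g(a)\md1)$ is a limit of points $(x_n;g(x_n)\md1)$ with $x_n\in(a,b)$, $x_n\to a$; these points lie in $\mathbf G_{(a,b)}(g)\subset\mathbf P(\mathfrak A)$, hence their limit does too. The same reasoning applies at $b$, giving $\mathbf G_{[a,b]}(g)\subset\mathbf P(\mathfrak A)$.

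Next I would fix the denominator. Since $d=n/p^m<b$, we have $b-d>0$, so for all $M\ge m$ sufficiently large, $p^{-M}\le b-d$. For any such $M$, set $N=np^{M-m}\in\N_0$, so that $d=N/p^M$ and $d+p^{-M}=(N+1)/p^M\in[a,b]$. Moreover, as $d<1$ forces $N<p^M$, the digit-count bound $M\ge\lfloor\log_p N\rfloor+1$ is automatic (this is the inequality used in the hypothesis of Proposition \ref{prop:ext}; if $N=0$ it holds trivially). Thus $N$ and $M$ satisfy the hypotheses of Proposition \ref{prop:ext} applied to the restriction of $g$ to $[a,b]$.

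Finally, I would invoke Proposition \ref{prop:ext} directly: it yields that $g_{d,M}(x)=(p^Mg(d+p^{-M}x))\md1$ is continuous on $[0,1]$ and that $\mathbf G_{[0,1]}(g_{d,M})\subset\mathbf P(\mathfrak A)$, which is exactly the conclusion sought. The only genuine work is the step of passing from the open to the closed interval; the rest is bookkeeping about $p$-adic denominators, and the main obstacle to keep in mind is verifying that the appropriate sub-interval $[d,d+p^{-M}]\subset[a,b]$ really sits inside the domain of $g$ for large $M$, which is guaranteed precisely by $d\in[a,b)$.
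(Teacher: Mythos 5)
Your proof is correct and follows essentially the same route as the paper's: the paper likewise reduces to Proposition \ref{prop:ext} by replacing the expansion $0.\chi_0\ldots\chi_{m-1}$ of $d$ with the padded expansion $0.\chi_0\ldots\chi_{m-1}(0)^{M-m}$ (i.e.\ taking $N=np^{M-m}$ over denominator $p^M$) with $M$ large enough that $d+p^{-M}\in[a,b]$. Your explicit upgrade from $\mathbf G_{(a,b)}(g)$ to $\mathbf G_{[a,b]}(g)$ via closedness of $\mathbf P(\mathfrak A)$ and continuity of $g$ is a small step the paper leaves implicit, and it is valid.
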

\begin{proof}[Proof of Corollary \ref{cor:ext}]
Indeed, in the proof of Proposition \ref{prop:ext} as a base-$p$ expansion for $d=np^{-m}$ just use  
$0.\chi_0\ldots\chi_{m-1}(0)^{M-m}$ 
where
$M\ge m$ is large enough so that $0.\chi_0\ldots\chi_{m-1}(0)^{M-m-1}1\in[a,b]$.
Note that nowhere in the proof of the proposition we used that some  of $\chi_0,\ldots,\chi_{m-1}$ are not zero.
\end{proof}
\begin{note}
\label{note:ext}
Corollary \ref{cor:ext} shows that given any point $d^\prime\in[a,b)$ and a rational approximation $d=np^{-m}$ of
$d^\prime$, the graph
of the function $g$ on
a sufficiently small closed neighbourhood $[a^\prime,b^\prime]$ of the point $d^\prime\ne b^\prime$ is similar to
the graph of the function $g_{d,M}$ on $[0,1]$ where $d=np^{-m}$ and $M$ is large enough.
\end{note}
Summarizing results of the current subsection we may say that while considering
a continuous function $g\:[a,b]\>\mathbb S$ (where $[a,b]\subset[0,1]$ or
$[a,b]\subset\mathbb S$) whose graph $\mathbf G(g)$ lies in $\mathbf P(\mathfrak A)$ for some finite automaton $\mathfrak A$  one can if necessary assume that the function is defined and continuous 
on $[0,1]$ (or on $\mathbb S$ except for maybe a single point),  the automaton $\mathfrak A$ is minimal, the function $g$ is
ultimately computable by $\mathfrak A$ and that for some state $s$ of $\mathfrak
A$ the set of all points from $[0,1]$ which  have  base-$p$ expansions admitting
$s$-mark-ups is dense in $[0,1]$ (respectively, in $\mathbb S$).
\subsection{Finite computability of compositions}
It is clear that 
a composition of finitely computable continuous
functions
should be  a finitely computable continuous function.
The following proposition states this formally and gives some extra information
about the graph of a composite finitely computable function.
\begin{prop}
\label{prop:comp}
Let $[a,b],[c,d]\subset[0,1]$ and let $g\:[a,b]\>[0,1]$, $f\:[c,d]\>[0,1]$ be two continuous functions such
that $g([a,b])\subset[c,d]$ and there exist finite automata $\mathfrak A$
and $\mathfrak B$ such that $\mathbf G_{[a,b]}(g)\subset\mathbf P(\mathfrak A)$,
$\mathbf G_{[c,d]}(f)\subset\mathbf P(\mathfrak B)$. Then 
%
there exists a 
covering $\{[a_j^\prime,b_j^\prime]\subset[a,b]\:j\in J\}$ 
such that
if   $h_j$ is a restriction of the composite function $f(g)$ to the sub-interval
$[a_j^\prime,b_j^\prime]$ then $\mathbf G_{[a_j^\prime,b_j^\prime]}(h_j)\subset\mathbf P(\mathfrak
C_j)$ for every $j\in J$, where $\mathfrak C_j$ is a sequential composition of the automaton $\mathfrak
A(s_j)$ with the automaton  $\mathfrak B(t_j)$ and $s_j, t_j$ are suitable \textup{(depending
on $j$)}
states
of the automata $\mathfrak A$, $\mathfrak B$ accordingly.
%
\end{prop}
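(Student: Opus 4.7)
The plan is to refine a countable covering of $[a, b]$ using Proposition \ref{prop:cover} applied first to $g$ and then to $f$, and on each piece to verify the graph inclusion via the sequential-composition identity together with the mark-up machinery from Subsection \ref{ssec:mark-up}. First I apply Proposition \ref{prop:cover} to $g$ on $[a,b]$ to obtain a countable covering $\{[a_i^\ast, b_i^\ast]\}$ and states $s_i\in\Cal S_A$ such that the set of points $x\in[a_i^\ast, b_i^\ast]$ admitting an $s_i$-mark-up for $g$ is dense. Since $g$ is continuous, each image $g([a_i^\ast, b_i^\ast])$ is a sub-interval of $[c,d]$; I apply Proposition \ref{prop:cover} to $f$ on this image to obtain states $t_{ik}\in\Cal S_B$ with the analogous density property for $t_{ik}$-mark-ups of $f$. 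Pulling these sub-coverings back under $g$ and applying the Baire-category argument of the proof of Proposition \ref{prop:cover}, I refine the cover to a countable family $\{[a_j^\prime, b_j^\prime]\}$ of $[a,b]$ equipped with states $s_j\in\Cal S_A$, $t_j\in\Cal S_B$ such that on each piece both the set of $x$'s admitting an $s_j$-mark-up for $g$ and the set of $x$'s for which $g(x)$ admits a $t_j$-mark-up for $f$ are dense.

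On a fixed piece $[a_j^\prime, b_j^\prime]$ I set $\mathfrak C_j=\mathfrak B(t_j)\circ\mathfrak A(s_j)$; this is a finite automaton, so $\mathbf P(\mathfrak C_j)$ is closed in $\R^2$. The key identity is that, because the sequential composition $\mathfrak C_j$ processes symbols in lockstep (each output letter of $\mathfrak A(s_j)$ is fed immediately into $\mathfrak B(t_j)$), one has $\mathfrak c_j(w)=\mathfrak b_{t_j}(\mathfrak a_{s_j}(w))$ for every finite word $w$, hence
\begin{equation*}
\rho(\mathfrak c_j(\gamma_0\ldots\gamma_{i_k}))=\rho(\mathfrak b_{t_j}(u_k)),\qquad u_k:=\mathfrak a_{s_j}(\gamma_0\ldots\gamma_{i_k}).
\end{equation*}
Picking $x$ in the piece admitting both mark-ups, so that $x=0.\gamma_0\gamma_1\ldots$ has an $s_j$-mark-up $(i_k)$ for $g$ with $\rho(u_k)\to g(x)$ and some base-$p$ expansion $g(x)=0.\beta_0\beta_1\ldots$ has a $t_j$-mark-up $(j_\ell)$ for $f$ with $\rho(\mathfrak b_{t_j}(\beta_0\ldots\beta_{j_\ell}))\to f(g(x))$, the task reduces to extracting a subsequence along which $\rho(\mathfrak b_{t_j}(u_k))\to f(g(x))$.

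The hard part is this alignment step. Because $\rho(u_k)\to 0.\beta_0\beta_1\ldots$, for each fixed $N$ the leftmost $N$ letters of $u_k$ must, along a subsequence, coincide with $\beta_0\ldots\beta_{N-1}$ (with a two-case split handling a possibly non-unique expansion of $g(x)$, as in the proof of Proposition \ref{prop:mark-up}). However, $\mathfrak B$ processes $u_k$ starting from $t_j$ and reads the rightmost letter first, so the state of $\mathfrak B$ reached just before it starts processing these leftmost $N$ letters depends on the entire rightmost tail of $u_k$. Using finiteness of $\Cal S_B$ and a pigeonhole argument, I refine to a further subsequence along which this intermediate state is constant, and the choice of the sub-segments and of the states $s_j, t_j$ made in the first paragraph is engineered precisely so that this constant intermediate state matches the one required by the $t_j$-mark-up for $f$ at $g(x)$. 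With the alignment in place, the leftmost $N$ letters of $\mathfrak b_{t_j}(u_k)$ agree with those of $\mathfrak b_{t_j}(\beta_0\ldots\beta_{j_\ell})$ for a suitable $\ell$, and the $t_j$-mark-up for $f$ then forces $\rho(\mathfrak b_{t_j}(u_k))\to f(g(x))$ along the subsequence. Finally, the set of $x\in[a_j^\prime, b_j^\prime]$ satisfying $(x, h_j(x))\in\mathbf P(\mathfrak C_j)$ is closed by continuity of $h_j$ and closedness of $\mathbf P(\mathfrak C_j)$, so its density implies it equals all of $[a_j^\prime, b_j^\prime]$.
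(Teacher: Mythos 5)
Your overall architecture is the same as the paper's: produce, for each $x$, a single sequence of truncation positions that serves simultaneously as a mark-up of the expansion of $x$ w.r.t.\ $g$ and $\mathfrak A$ and as a mark-up of the expansion of $g(x)$ w.r.t.\ $f$ and $\mathfrak B$, then run the Baire-category refinement of Proposition \ref{prop:cover} to make the pair of states constant on a dense subset of a sub-segment, and finally pass to the closure. You also correctly isolate the genuinely delicate point, which the paper's own proof dispatches with a single ``Therefore'': the word $u_k=\mathfrak a_{s_j}(\gamma_0\ldots\gamma_{i_k})$ that the composition actually feeds to $\mathfrak B$ is \emph{not} a prefix of the expansion $\beta_0\beta_1\ldots$ of $g(x)$; only its leftmost letters stabilize, while $\mathfrak B$ consumes the unstable rightmost tail first.

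The gap is that your resolution of this point is asserted, not constructed. You fix $t_j$ \emph{in advance} by applying Proposition \ref{prop:cover} to $f$ on $g([a_i^\ast,b_i^\ast])$, and then claim the covering ``is engineered precisely so that'' the pigeonholed intermediate state of $\mathfrak B$ (reached after consuming the tail of $u_k$) ``matches the one required by the $t_j$-mark-up.'' But that intermediate state is dictated by the data --- by $x$, by $\mathfrak A(s_j)$, and by the tail of $u_k$ --- and nothing in your first paragraph gives any control over it; the order of quantifiers makes the match impossible to arrange a posteriori. If instead you define the relevant state of $\mathfrak B$ \emph{as} the pigeonholed intermediate state $t''$, you must still show that $\rho(\mathfrak b_{t''}(\beta_0\ldots\beta_{N_k-1}))\to f(g(x))$ along your truncation lengths $N_k$, i.e.\ that $(N_k)$ is a $t''$-mark-up of $\beta_0\beta_1\ldots$ w.r.t.\ $f$ --- and this does not follow from the existence of \emph{some} mark-up w.r.t.\ \emph{some} state, because the set of limit points of $(\rho(\mathfrak b_{t''}(\beta_0\ldots\beta_{N-1})))_N$ may contain points other than $f(g(x))$. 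The paper relocates this obstruction by building the simultaneous mark-up in the opposite order: it starts from the mark-up positions $j_m$ of $v=\wrd(g(x))$ w.r.t.\ $\mathfrak B(t)$, truncates the \emph{input} word at those positions, and re-chooses the state of $\mathfrak A$ (by pigeonhole on the states reached after the discarded suffix $\gamma_{j_{m_\ell}+1}\ldots\gamma_{i_{q(\ell)}}$) so that the same positions become a mark-up for $g$; the density argument is then run over the set of $x$ admitting such a \emph{simultaneous} $(s^\prime,t)$-mark-up, not over two independently dense sets. Your proof needs either this reordering or an explicit argument for the state match; as written, the key step would fail.
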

\begin{proof}[Proof of Proposition \ref{prop:comp}]
By Note \ref{note:mark-up}, for every right-infinite word
$w=\gamma_0\gamma_1\ldots\in\Cal
W^\infty$ such that $x=0.w\in(a,b)$ there
exists a mark-up (w.r.t. some state $s$ of the finite automaton $\mathfrak
A$) $i_0,i_1,i_2,\ldots$; i.e., $\lim_{k\to\infty}\rho(\mathfrak
a(w_k))=g(x)$, where $w_k=\gamma_0\gamma_1\ldots\gamma_{i_k}\in\Cal
W$; and if $x=a$ (respectively, $x=b$) then the mark-up exists at least for right (respectively, left) base-$p$ expansion.  By the same reason, for $y=g(x)=
0.v$, where 
$v=\nu_0\nu_1\ldots\in\Cal W^\infty$, there exists
a mark-up $j_0,j_1,\ldots$ (w.r.t. some state $t$ of the finite automaton
$\mathfrak B$) such that 
$\lim_{n\to\infty}\rho(\mathfrak
b(v_n))=g(y)$, where $v_n=\nu_0\nu_1\ldots\nu_{i_n}\in\Cal
W$. Now for $m\in\N_0$ denote $N(m)=\min\{k\:i_k\ge j_m\}$, consider the
sequence $(N(m))_{m=0}^\infty$ and let $q(0)=N(m_0),q(1)=N(m_1),\ldots$ be a strictly increasing
subsequence of $(N(m))_{m=0}^\infty$. Denote $s(\ell)$ the state the automaton $\mathfrak
A$ reaches after being feeded by the word 
$\gamma_{j_{m_\ell}+1}\gamma_{j_{m_\ell}+2}\ldots\gamma_{i_{q(\ell)}}$;
put $s(\ell)=s$ if the latter word is empty. As the automaton $\mathfrak
A$ is finite, there is a state, say $s^\prime$, that occurs in the sequence
$(s(\ell))$ infinitely often. Then the sequence $({j_{m(\ell)}}\: s(\ell)=s^\prime)$
is a mark-up of the word $w$ w.r.t. the automaton $\mathfrak
A(s^\prime)$, and simultaneously the same sequence is a mark-up of the word
$v$ w.r.t. the automaton $\mathfrak B(t)$. Therefore
$\lim_{\ell\to\infty}\rho(\mathfrak c^\prime(\gamma_0\gamma_1\ldots\gamma_{j_{m(\ell)}})))
=f(y)=f(g(x))$, where $\mathfrak C^\prime$ is a sequential composition of automata
$\mathfrak A(s^\prime)$ and $\mathfrak B(t)$. 
%

By Corollary \ref{cor:cover}, the segment $g([a,b])$ can be covered by a
countably many segments $[c_k,d_k]$, $k\in\N$ where for every $k$ there
exists a state $t_k$ of the automaton $\mathfrak B$ such that the set of
all points from $[c_k,d_k]$ whose base-$p$ expansions (w.r.t. the function
$f$) admit $t_k$-mark-ups
is dense in $[c_k,d_k]$. Given a real number $y\in[c_k,d_k]$ and its base-$p$
expansion, in view of Proposition \ref{prop:mark-up} there exists a $t_k$-mark-up
of the base-$p$-expansion. Having this mark-up and by acting as above, we,
given $x\in g^{-1}(y)$ find corresponding state $s^\prime_k$
of the automaton $\mathfrak A$  and construct a strictly increasing sequence
over $\N$ such that the sequence is simultaneously a mark-up for $y$ (w.r.t.
$t_k$ and the function $f$) and for $x$ (w.r.t. $s^\prime_k$ and the function
$g$).

Let $s^\prime_1,\ldots,s^\prime_r$ be all pairwise distinct states of
the automaton $\mathfrak A$ that satisfy the following condition:  For every
$w,v\in\Cal W^\infty$ such
that $0.v\in[a,b]$, 
$g(0.w)=0.v$
there exists an $s_i^\prime$-mark-up (for suitable $i\in\{1,2,\ldots,r\}$)
such that the mark-up is a mark-up both
for $w$ (w.r.t. $s_i^\prime$ and $g$) and for
$v$ (w.r.t. $t_k$ and $f$) simultaneously. 
For  $i\in\{1,2,\ldots,r\}$
denote via $\Cal W^\infty(s_i^\prime)$  the set of all infinite words 
$w\in\Cal
W^\infty$ such that 
there exists
an $s$-mark-up which is a mark-up both for $w$ and for $v$ simultaneously;
then proceeding in the same
way as in the proof of Proposition \ref{prop:cover} we conclude that
there exists $s^\prime=s^\prime_i$ and a closed subinterval $[a^\prime,b^\prime]$
such that  $W=\Cal W^\infty(s_i^\prime)\cap[a^\prime,b^\prime]$ is dense in
$[a^\prime,b^\prime]$. But then $g(W)$ is dense in $g([a^\prime,b^\prime])$
and $f(g(W))$ is dense in $[f(g([a^\prime,b^\prime]))$
as $g$ is continuous on $[a^\prime,b^\prime]$ and $f$ is continuous on $g([a^\prime,b^\prime])$.
Therefore for a finite automaton $\mathfrak C_{ik}^\prime$ which is a sequential composition of the automata $\mathfrak A(s_i^\prime)$ and  $\mathfrak B(t_k)$ we have that the graph of the restriction $h$ of
the function $f(g)$ to $[a^\prime,b^\prime]$ 
lies in $\mathbf P(\mathfrak C^\prime_{ik})$.  

\end{proof}

\begin{note}
\label{note:comp}
By arguing as in the proof of Proposition \ref{prop:comp} the following can
be shown: 
\emph{
Let $[a,b]\subset[0,1]$, let $g\:[a,b]\>\mathbb S$, $f\:[a,b]\>\mathbb
S$ be two continuous functions, and let
there exist finite automata $\mathfrak A$
and $\mathfrak B$ such that $\mathbf G_{[a,b]}(g)\subset\mathbf P(\mathfrak A)$,
$\mathbf G_{[a,b]}(f)\subset\mathbf P(\mathfrak B)$. Then 
there exists a
covering $\{[a_j^\prime,b_j^\prime]\subset[a,b]\:j\in J\}$ 
such that
if   $h_j$ is a restriction of the function $(f+g)\md1$ to the sub-interval
$[a_j^\prime,b_j^\prime]$ then $\mathbf G_{[a_j^\prime,b_j^\prime]}(h_j)\subset\mathbf P(\mathfrak
C_j)$ for every $j\in J$, where $\mathfrak C_j$ is a sum of the automaton $\mathfrak
A(s_j)$ with the automaton  $\mathfrak B(t_j)$ and $s_j, t_j$ are suitable \textup{(depending
on $j$)}
states
of the automata $\mathfrak A$, $\mathfrak B$ accordingly. 
}
Here by the sum of automata $\mathfrak A$ and $\mathfrak B$ we mean a sequential
composition of the automata by automaton which has two inputs and a single
output and performs addition of $p$-adic integers. The latter automaton is
finite, see Subsection \ref{ssec:a-map} and Proposition \ref{prop:fin-auto}.
Note also that we may assume that both $f$ and $g$ are defined on an arc
of $\mathbb S$ rather than on $[a,b]$.
\end{note}
\begin{cor}
\label{cor:sum}
Given $A,B\in\Z_p\cap\Q$ and continuous finitely computable functions $f,g\:[a,b]\>\mathbb
S$, there exists a covering $\{[a_j^\prime,b_j^\prime]\subset[a,b]\:j\in J\}$ such that the function $Af+Bg$ is finitely computable on every $[a^\prime_j,b^\prime_j]$.
\end{cor}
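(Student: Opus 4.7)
The plan is to build the desired covering in three stages by combining the scalar-multiplication and addition machinery already at our disposal. First, observe that by Proposition \ref{prop:fin-auto}(iv) the affine map $z\mapsto Az$ on $\Z_p$ is a finite automaton function, say of a finite automaton $\mathfrak{M}_A$; likewise $z\mapsto Bz$ is computed by some finite $\mathfrak{M}_B$. Viewed as real maps $\mathbb S\to\mathbb S$ (after reducing mod $1$), both are globally finitely computable in the sense of Definition \ref{def:eval}, and by Proposition \ref{prop:mult-p} (together with Theorem \ref{thm:mult-add-p}) their limit plots are torus cables.

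Next, treat the functions $Af$ and $Bg$ as compositions $(z\mapsto Az)\circ f$ and $(z\mapsto Bz)\circ g$. Since $f$ is a continuous, finitely computable function $[a,b]\to\mathbb S$, Proposition \ref{prop:comp} (in the $\mathbb S$-valued form mentioned in the discussion surrounding Note \ref{note:comp}) yields a countable covering $\{[\alpha_i,\beta_i]\}_{i\in I}$ of $[a,b]$ such that on each $[\alpha_i,\beta_i]$ the restriction of $Af$ is continuous and has its graph contained in $\mathbf P(\mathfrak C_i)$, where $\mathfrak C_i$ is the sequential composition of a suitable sub-automaton of $\mathfrak A$ with one of $\mathfrak M_A$; hence $\mathfrak C_i$ is finite. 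Applying the same argument to $g$ and $\mathfrak M_B$ produces a covering $\{[\gamma_j,\delta_j]\}_{j\in J}$ on each piece of which $Bg$ is continuous and finitely computable.

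Now form the common refinement $\{[a_k',b_k']\}_{k\in K}$ obtained by intersecting segments from the two coverings (discarding the at most countably many degenerate intersections). On each $[a_k',b_k']$ both restrictions $Af\!\restriction_{[a_k',b_k']}$ and $Bg\!\restriction_{[a_k',b_k']}$ are continuous $\mathbb S$-valued functions, each finitely computable by a suitable finite automaton. Applying Note \ref{note:comp} (the additive counterpart of Proposition \ref{prop:comp}, which relies on the fact that the addition automaton on $\Z_p$ is finite, see Subsection \ref{ssec:a-map} and Proposition \ref{prop:fin-auto}(i)), we obtain a further refinement of $\{[a_k',b_k']\}$ on each element of which $(Af+Bg)\md1$ is the graph of a continuous, finitely computable function. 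This refinement is the covering claimed by the corollary.

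The only subtle point is bookkeeping: each invocation of Proposition \ref{prop:comp} and of Note \ref{note:comp} produces its \emph{own} countable covering, and we must iterate the common-refinement step to keep control of both summands simultaneously. Since the class of countable coverings by closed non-degenerate subintervals is closed under finite intersection, this is routine and no analytic difficulty arises.
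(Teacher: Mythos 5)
Your proposal is correct and follows exactly the route the paper intends: the corollary is stated without proof precisely because it is the immediate combination of Proposition \ref{prop:comp} (composing $f$ and $g$ with the finite-automaton functions $z\mapsto Az$ and $z\mapsto Bz$ from Proposition \ref{prop:fin-auto}) with Note \ref{note:comp} (the additive counterpart), which is what you do. Your explicit common-refinement bookkeeping only makes visible a step the paper leaves implicit, and is handled at the same level of rigor as the paper's own covering arguments.
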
 

Comparing Theorem \ref{thm:eval-erg} with Proposition \ref{prop:comp} we
see that in the class of continuous
functions there is no big difference between finite computability and ultimate
finite computability since given a finitely computable continuous function
on a segment there exists a  covering of the segment by sub-segments
such that the function is ultimately finitely computable on either of the sub-segments.

\section{Main theorems}
\label{sec:main}
In this section we prove that a graph of any $C^2$-smooth finitely computable function $g\:[a,b]\>\mathbb
S$, $[a,b]\subset[0,1)$, lies (under a natural association of the half-open interval
$[0,1)$ with the unit circle $\mathbb S$) on a torus winding with a $p$-adic rational slope; and if
$\mathfrak A$ is a finite automaton that computes $g$ then necessarily the
graph of the automaton contains the whole winding. Moreover, we prove a generalization
of this theorem for  multivariate functions. To make further proofs  (which
are somewhat involved) more
transparent  we begin with a brief (and no too rigorous) outline  of  their general underlying idea.

Given $g$ as above, fix $x=n p^{-m}\in[a,b]$; then for $h\in[0,1]$ and all
sufficiently large $\ell$ from  the
differentiability of $g$ it follows  that
$g(x+p^{-m-\ell}h)=
g(x)+g^\prime(x)\cdot p^{-m-\ell}h+p^{-m-t(\ell)}\theta(\ell,h)$, where $|\theta(\ell,h)|\le 1$  
and $t$ is a map from $\N_0$ to $\N_0$ such
that $p^{t(\ell)}\to\infty$ faster than $p^{\ell}\to\infty$ while $\ell\to\infty$.
Once $h$ is fixed (say, $h=p^{-1}$) then the above equality for large $\ell$
implies (in view of Proposition \ref{prop:comp} and Corollary \ref{cor:sum}) that there exists a finite automaton  $\mathfrak B_{x}$ which computes
$(g^\prime(x))\md1=((g(x+p^{-m-\ell-1})-
g(x))p^{m+\ell+1}-p^{1+\ell-t(\ell)}\theta(\ell,p^{-1}))\md1$ being feeded by an   infinite sequence of zero words whose lengths increase unboundedly,
i.e.,  $g^\prime(x)\in\mathbf
P(\mathfrak B_{x})$: This is because, speaking loosely, the error term $p^{1+\ell-t(\ell)}\theta(\ell,h)$
makes no perturbations of the infinite output sequence due to the fast growth
of $t(\ell)$. But then necessarily $g^\prime(x)\in\Z_p\cap\Q$ by Proposition
\ref{prop:auto-Q}. 
Further Lemma \ref{le:der} proves this fact rigorously. 

We then (see Lemma \ref{le:der2} below) play similar trick with the second derivative $g^{\prime\prime}(x)$: As $g$ is two times differentiable and $g^\prime(x)\in\Z_p\cap\Q$,
the function $g_1(u)=g(u)-g^\prime(x)\cdot u+c$ 
of argument $u\in[a,b]$ 
is also a $C^2$-smooth finitely computable function for every $c\in\Q\cap\Z_p$.
As $g_1^\prime(x)=0$,
$g_1^{\prime\prime}(u)=g^{\prime\prime}(u)$,
we have (for all sufficiently large $\ell$) that 
$g_1(x+p^{-m-\ell}h)=g_1(x)+
\frac{g^{\prime\prime}(x)}{2}
\cdot p^{-2m-2\ell}h^2
+p^{ -2m-t_1(\ell)}\theta_1(\ell,h)$ where $|\theta_1(\ell,h)|\le 1$, $t(\ell)=2\ell+w(\ell)$,  and $w$ is a map from $\N_0$ to $\N_0$ such that
$w(\ell)\to\infty$ as $\ell\to\infty$. From here in a way similar to that of above we deduce that $\frac{g^{\prime\prime}(x)}{2}\in\Z_p\cap\Q$. But
then, if $g^{\prime\prime}(x)\ne 0$, the argument means that there exists a finite automaton which performs squaring $h\to
h^2$ of every $h\in[0,1]$ with arbitrarily high accuracy. However as it is well known (cf.
Subsection \ref{ssec:auto})
no finite automaton can do such squaring; so necessarily $g^{\prime\prime}(x)=0$
for all $x=n p^{-m}\in[a,b]$. But the set of these $x$ is dense in $[a,b]$;
therefore $g^{\prime\prime}(x)=0$ for all $x\in[a,b]$ as $g^{\prime\prime}$ is
continuous on $[a,b]$. Hence $g$ must be affine: $g(u)=g^{\prime}(x)u+e$
for all $u\in[a,b]$.
Note that then necessarily $e\in\Z_p\cap\Q$ since $e=g(0)$ and $g$ is finitely computable, cf. Proposition \ref{prop:auto-Q}.  After that by Proposition
\ref{prop:ext} we can `stretch' the graph of the function $g$ from $[a,b]$
to the whole unit circle $\mathbb S$ and thus finally obtain a whole cable
 which lies in the plot of the finite automaton which calculates $g$.
But then by Theorem \ref{thm:mult-add-p} the plot must contain the whole
link of torus windings; and the graph $\mathbf G_{[a,b]}(g)$ must lie completely
on some of these windings. The number of links is finite since every link
corresponds to some minimal sub-automaton  (see Subsection \ref{ssec:auto}
and Theorem \ref{thm:eval-erg})
of the automaton which computes
$g$; and the number of minimal sub-automata of a finite automaton is clearly
a finite. Finally, every such link corresponds to a finite family of complex-valued
exponential functions of the form $
\psi_k(y)=e^{i(Ay-2\pi p^kB)}$, 
$k=0,1,2,\ldots$,
for suitable $A,B\in\Z_p\cap\Q$ as shown in Corollary \ref{cor:mult-add-compl}.
Figures \ref{fig:2link-square} and \ref{fig:2link-torus} illustrate how
the graphs of $C^2$-functions from the plots of finite automata look like.

Now we proceed with rigorous assertions and proofs.
\subsection{The univariate case}
\label{ssec:main-uni}
Here we show that $C^2$-smooth finitely computable  functions defined on
$[a,b]\subset[0,1)$ and valuated in $[0,1)$ are only affine ones. Once we associate
the half-open interval
$[0,1)$ with a unit  circle $\mathbb S$ under a natural bijection we may
consider  graphs of the functions as  subsets on a surface of the unit torus
$\mathbb T^2=\mathbb S\times\mathbb S$. We show that then the graphs lie
only on cables of the torus $\mathbb T^2$, and the slopes of the cables must
be $p$-adic rational integers (i.e., must lie in $\Z_p\cap\Q$), see Subsection
\ref{ssec:knot} for definitions of torus knots, cables of torus,  and links
of knots.
\begin{thm}
\label{thm:main}
Consider a finite automaton $\mathfrak A$ 
and 
a continuous 
function $g$ with domain $[a,b]\subset[0,1)$,  valuated in $\mathbb [0,1)$.  Let 
$
\mathbf G(g)\subset \mathbf
{P}(\mathfrak A)$, 
let $g$ be two times 
differentiable on $[a,b]$, and let the second derivative $g^{\prime\prime}$ of $g$
be continuous on $[a,b]$. Then there exist $A,B\in\Q\cap\Z_p$
such that
$g(x)=(Ax+B)\md1$ for all
$x\in[a,b]$; moreover, the graph $\mathbf G_{[a,b]}(g)$ of the function $g$ lies completely
in the cable $\mathbf C(A,B)\subset\mathbf{LP}(\mathfrak A)$ and
$\mathbf C(A,\bar B)\subset\mathbf{LP}(\mathfrak A)$  for all $\bar B\in\mathbf
C(B\md1)$.

Given a finite automaton $\mathfrak A$, there are no more than a finite number of pairwise distinct  cables $\mathbf C(A,B)$ 
of the
unit torus
$\mathbb T^2$ such that $\mathbf C(A,B)\subset
\mathbf {P}(\mathfrak
A)$ 
\textup{(note that $A,B\in\Z_p\cap\Q$ then)}.
\end{thm}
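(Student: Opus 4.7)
The plan is to follow the strategy sketched by the author: first establish that $g'$ is rational $p$-adic at every base-$p$ rational point of $[a,b]$, then show that the second derivative must vanish at every such point, and finally package the resulting affine function via the similarity of Proposition \ref{prop:ext} and the link description of Theorem \ref{thm:mult-add-p}.

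First I would prove, as a lemma, that $g'(x) \in \mathbb{Q}\cap\mathbb{Z}_p$ whenever $x = np^{-m} \in [a,b]$. Fix such $x$ and pass (via Corollary \ref{cor:ext}) to the similarity $g_{d,M}(u) = p^M(g(x + p^{-M}u)) \md 1$, which is continuous and finitely computable on $[0,1]$. Differentiability of $g$ at $x$ yields
\[
 p^{m+\ell+1}\bigl(g(x+p^{-m-\ell-1}) - g(x)\bigr) = g'(x) + p \cdot \theta(\ell),
\]
with $\theta(\ell) \to 0$ as $\ell \to \infty$, and so $(g'(x)) \md 1$ is a limit of values of a finitely computable function (obtained by combining $g$, an additive shift, and a scaling by $p^{m+\ell+1}$ through Proposition \ref{prop:comp} and Corollary \ref{cor:sum}) at an infinite sequence of base-$p$ rational points. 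Proposition \ref{prop:auto-Q} together with Corollary \ref{cor:auto-Q} then forces $g'(x) \md 1 \in \mathbb{Z}_p \cap \mathbb{Q}$, hence $g'(x) \in \mathbb{Z}_p \cap \mathbb{Q}$.

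Second I would exploit the $C^2$ hypothesis and the rationality of $g'(x)$ at every $x = np^{-m} \in [a,b]$ to force $g''(x) = 0$ there. Since $g'(x) \in \mathbb{Z}_p \cap \mathbb{Q}$, the function $u \mapsto g(u) - g'(x)\cdot(u - x)$ is also $C^2$ and finitely computable on $[a,b]$ by Corollary \ref{cor:sum}; its Taylor expansion at $x$ reads
\[
 g(x + p^{-m-\ell}h) - g(x) - g'(x)\cdot p^{-m-\ell}h = \tfrac{1}{2}g''(x) \cdot p^{-2(m+\ell)} h^2 + p^{-2(m+\ell)}\cdot\eta(\ell,h),
\]
with $\eta(\ell,h) \to 0$ uniformly in $h \in [0,1]$. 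If $g''(x) \ne 0$, multiplying by $p^{2(m+\ell)}/( \tfrac{1}{2}g''(x))$ and reducing modulo $1$ would, by the same combination of Proposition \ref{prop:comp} and Corollary \ref{cor:sum}, exhibit a finite automaton computing $h \mapsto h^2 \md 1$ on $[0,1]$ to arbitrary $p$-adic precision — contradicting the well-known impossibility of squaring by a finite automaton (\cite[Thm.~2.2.3]{Bra}) cited in the excerpt. Packaging this rescaling so that a genuine squaring finite automaton is extracted from $\mathfrak{A}$ is the step I expect to be the main obstacle: one must absorb the vanishing error term $\eta(\ell,h)$ into the output and argue that the remaining automaton really computes the graph of a non-constant multiple of $h \mapsto h^2$ on a dense set, sufficient to invoke the squaring obstruction.

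Having $g''(x) = 0$ on the dense set $\{np^{-m}\} \cap [a,b]$, continuity of $g''$ forces $g'' \equiv 0$ on $[a,b]$, hence $g(x) = Ax + B$ for some $A \in \mathbb{R}$ and $B \in \mathbb{R}$. Taking $A = g'(x_0)$ at any base-$p$ rational $x_0 \in [a,b]$ gives $A \in \mathbb{Z}_p \cap \mathbb{Q}$ by the first lemma; since $g(x_0) = Ax_0 + B \in \mathbb{Z}_p \cap \mathbb{Q}$ as well (Corollary \ref{cor:auto-Q}), we conclude $B \in \mathbb{Z}_p \cap \mathbb{Q}$. Applying Proposition \ref{prop:ext} to a base-$p$ rational sub-interval of $[a,b]$ extends this affine map to a finitely computable map $y \mapsto (A'y + B') \md 1$ on all of $[0,1]$ with $A',B' \in \mathbb{Z}_p \cap \mathbb{Q}$, whose computing automaton is still (a sub-automaton of) $\mathfrak{A}$; Theorem \ref{thm:mult-add-p} and Corollary \ref{cor:mult-add-compl} then identify its limit plot as the finite torus link $\bigcup_{\bar B \in \mathbf{C}(B' \md 1)} \mathbf{C}(A', \bar B) \subset \mathbf{LP}(\mathfrak{A})$, and the graph $\mathbf{G}_{[a,b]}(g)$ is contained in one of its cables. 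Finally, the finiteness statement follows because each cable of $\mathbf{P}(\mathfrak{A})$ must lie in $\mathbf{LP}(\mathfrak{A}_n)$ for some minimal sub-automaton $\mathfrak{A}_n$ of $\mathfrak{A}$ (Theorem \ref{thm:eval-erg} and Note \ref{note:sub-auto}), there are only finitely many such sub-automata, and by Theorem \ref{thm:mult-add-p} each contributes only finitely many cables.
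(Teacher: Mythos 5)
Your outline follows the paper's own route (rationality of $g'$ at base-$p$ rational points, vanishing of $g''$ there via the squaring obstruction, then the stretch-and-link packaging), but two steps are not actually carried out, and one of them is argued incorrectly. First, in both lemmas you reduce to the statement that $(g'(x))\md1$ (resp.\ $(g''(x)/2)\md1$) is ``a limit of values of a finitely computable function at an infinite sequence of base-$p$ rational points'' and then invoke Proposition \ref{prop:auto-Q}. That inference does not close: Proposition \ref{prop:auto-Q} controls the cluster set $\mathfrak a(x)$ at a \emph{single} rational input, and a limit of elements of $\Z_p\cap\Q$ taken over varying inputs need not lie in $\Z_p\cap\Q$ (every real is a limit of rationals). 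What is actually needed --- and what the paper's Lemma \ref{le:der} does --- is to show that, because the Taylor error $p^{-t(\ell)}\theta(\ell,h)$ decays strictly faster than $p^{-\ell}$, the successive digits of $(g'(x))\md1$ are literally read off from output words of a \emph{fixed} state of a finite automaton fed by zero words of growing length; eventual periodicity of such outputs (Lemma \ref{le:per}) then gives rationality. This requires the careful choice between the left and right base-$p$ expansions of $(g'(x))\md1$ according to the eventual sign of $\theta(\ell,p^{-1})$, which your sketch omits; without it the error term can flip arbitrarily high-order digits when $(g'(x))\md1$ has a terminating expansion. The same mechanism, not a direct appeal to Proposition \ref{prop:auto-Q}, is what extracts the squaring automaton in the second lemma; you correctly flag this extraction as the main obstacle but leave it unresolved, and it is precisely Claim 2 of Lemma \ref{le:der2} (normalizing by the unit part $q^{-1}$ of $C(x)$, locating a recurring state $\bar s(v_n)$, and showing that $\mathfrak B(\bar s(v_n))$ fed by $0^{2n}$ outputs the base-$p$ expansion of $\nm(v_n)^2$).

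Second, your finiteness argument is wrong as stated. Theorem \ref{thm:mult-add-p} bounds the number of cables in the limit plot of an automaton \emph{whose automaton function is affine}; it says nothing about how many distinct cables the limit plot of an arbitrary finite minimal automaton can contain, so ``each minimal sub-automaton contributes only finitely many cables by Theorem \ref{thm:mult-add-p}'' is unsupported --- a priori infinitely many distinct affine graphs, each arising from Theorem \ref{thm:mult-add-p} applied to a \emph{different} affine map, could sit inside one minimal sub-automaton's plot. The paper's actual argument is different: every cable $\mathbf C(A,B)\subset\mathbf P(\mathfrak A)$ meets the zero meridian only in points $(0;e)$ with $e\in\mathfrak a(0)$, and $\mathfrak a(0)$ is finite by Proposition \ref{prop:auto-Q}; this bounds the constant terms $B\md1$ and the values $Ay\md1$ for $y\in\Z$, and assuming infinitely many cables one produces (via the representation $A=c+d/(p^t-1)$ and a limiting argument along slopes $D_i\to\infty$) the entire zero meridian inside $\mathbf{LP}(\mathfrak A)$, contradicting the finiteness of $\mathfrak a(0)$. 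You need some version of this meridian-crossing argument; the reduction to minimal sub-automata alone does not yield finiteness.
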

\begin{figure}[ht]
\begin{minipage}[b]{.45\linewidth}
\begin{quote}
\psset{unit=0.3cm}
 \begin{pspicture}(0,0)(15,15)
 \newrgbcolor{emerald}{.31 .78 .47}
 \newrgbcolor{el-blue}{.49 .98 1}
 \newrgbcolor{baby-blue}{.54 .81 .94}
 \newrgbcolor{copper}{.722 .451 .2}
 \newrgbcolor{ocher}{.8 .467 .133}
 \newrgbcolor{gold}{1 .843 .0}
 \newrgbcolor{erin}{0 1 .247}
 \newrgbcolor{r-carmine}{.95 0 .094}
  \psframe[
  linewidth=0.5pt,linecolor=green](-4.8,0)(13.2,18)
  \pscircle[fillstyle=solid,fillcolor=yellow,linewidth=.4pt](-4.8,0.1){.3}
  \psline[unit=0.359cm,linecolor=r-carmine,linewidth=2pt](-4,10)(1,0)
  \psline[unit=0.359cm,linecolor=r-carmine,linewidth=2pt](1,15)(8.5,0)
  \psline[unit=0.359cm,linecolor=r-carmine,linewidth=2pt](8.5,15)(11,10)
  \psline[unit=0.359cm,linecolor=erin,linewidth=2pt](-4,5)(-1.5,0)
  \psline[unit=0.359cm,linecolor=erin,linewidth=2pt](-1.5,15)(6,0)
  \psline[unit=0.359cm,linecolor=erin,linewidth=2pt](6,15)(11,5)
  \psline[unit=0.359cm,linecolor=baby-blue,linewidth=2pt](-4,1.71)(11,10.71)
  \psline[unit=0.359cm,linecolor=gold,linewidth=2pt](-4,0.43)(11,9.43)
  \psline[unit=0.359cm,linecolor=ocher,linewidth=2pt](-4,0.86)(11,9.86)
  \psline[unit=0.359cm,linecolor=baby-blue,linewidth=2pt](-4,4.71)(11,13.71)
  \psline[unit=0.359cm,linecolor=gold,linewidth=2pt](-4,3.43)(11,12.43)
  \psline[unit=0.359cm,linecolor=ocher,linewidth=2pt](-4,3.86)(11,12.86)
  \psline[unit=0.359cm,linecolor=baby-blue,linewidth=2pt](-4,7.71)(8.14,15)
  \psline[unit=0.359cm,linecolor=gold,linewidth=2pt](-4,6.43)(10.29,15)
  \psline[unit=0.359cm,linecolor=ocher,linewidth=2pt](-4,6.86)(9.3,15)
 \psline[unit=0.359cm,linecolor=baby-blue,linewidth=2pt](-4,10.71)(3.14,15)
  \psline[unit=0.359cm,linecolor=gold,linewidth=2pt](-4,9.43)(5.29,15)
  \psline[unit=0.359cm,linecolor=ocher,linewidth=2pt](-4,9.86)(4.3,15)
 \psline[unit=0.359cm,linecolor=baby-blue,linewidth=2pt](-4,13.71)(-1.86,15)
  \psline[unit=0.359cm,linecolor=gold,linewidth=2pt](-4,12.43)(0.29,15)
  \psline[unit=0.359cm,linecolor=ocher,linewidth=2pt](-4,12.86)(-0.3,15)
  \psline[unit=0.359cm,linecolor=baby-blue,linewidth=2pt](-1.86,0)(11,7.71)
  \psline[unit=0.359cm,linecolor=gold,linewidth=2pt](0.29,0)(11,6.43)
  \psline[unit=0.359cm,linecolor=ocher,linewidth=2pt](-0.3,0)(11,6.86)
  \psline[unit=0.359cm,linecolor=baby-blue,linewidth=2pt](3.14,0)(11,4.71)
  \psline[unit=0.359cm,linecolor=gold,linewidth=2pt](5.29,0)(11,3.43)
  \psline[unit=0.359cm,linecolor=ocher,linewidth=2pt](4.3,0)(11,3.86)
  \psline[unit=0.359cm,linecolor=baby-blue,linewidth=2pt](8.14,0)(11,1.71)
  \psline[unit=0.359cm,linecolor=gold,linewidth=2pt](10.29,0)(11,.43)
  \psline[unit=0.359cm,linecolor=ocher,linewidth=2pt](9.3,0)(11,.86)
 \end{pspicture}
\end{quote}
\caption{%
{The limit plot in $\R^2$ of an automaton that has two affine subautomata $\mathfrak
A$ and $\mathfrak B$;
$f_{\mathfrak A}(z)=-2z+\frac{1}{3}$ and $f_{\mathfrak B}(z)=\frac{3}{5}z+\frac{2}{7}$,
where $z\in\Z_2$.\qquad \qquad \qquad \qquad \qquad \qquad \qquad \qquad \qquad \qquad \qquad \qquad \qquad \qquad \qquad \qquad \qquad \qquad
  }}
\label{fig:2link-square}
\end{minipage}\hfill
\begin{minipage}[b]{.45\linewidth}
\includegraphics[width=0.92\textwidth,natwidth=610,natheight=642]{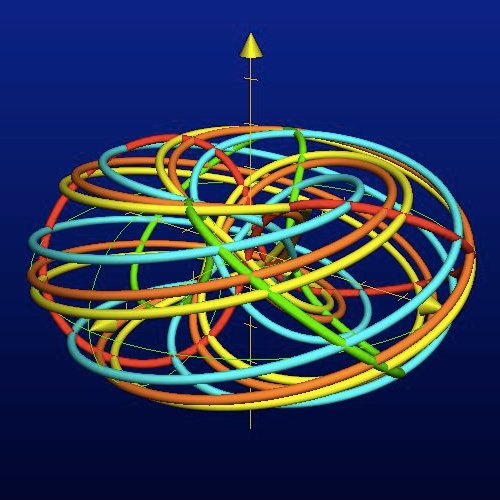}
\caption{The limit plot  of the same automaton on the torus $\mathbb T^2$ in $\R^3$. The
plot consists of two torus links; the links consist of 2 and of 3 knots accordingly.\qquad}
\label{fig:2link-torus}
\end{minipage}
\end{figure}

\begin{lem}
\label{le:der}
Consider a finite automaton $\mathfrak A$ 
and 
a continuous 
function $g$ with domain $[a,b]\subset[0,1)$  valuated in $\mathbb [0,1)$. Let 
$\mathbf G(g)\subset \mathbf
{P}(\mathfrak A)$ and let $g$ be differentiable at the point
$x=n p^{-m}\in[a,b)$ 
where 
$n\in\N_0$. 
Then 
$g^\prime(x)\in\Z_p\cap\Q$.
\end{lem}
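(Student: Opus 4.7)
The plan is to combine the $p$-shift invariance of $\mathbf{LP}(\mathfrak{A})$ (Proposition \ref{prop:p-shift}) with Lemma \ref{le:qint-LP}, which forces rationality of the $y$-coordinate above a $p$-adic integer abscissa. First I would reduce to a convenient setting. By Theorem \ref{thm:eval-erg}, cover $[a,b]$ by closed sub-segments on each of which the restriction of $g$ has its graph in $\mathbf{LP}(\mathfrak{A}')$ for a minimal sub-automaton $\mathfrak{A}'$ of $\mathfrak{A}$; pick one such sub-segment $[a',b']\ni x$ with $x<b'$ (possible since $x\in[a,b)$). Then apply the similarity Corollary \ref{cor:ext} at $d=x$ with $M\ge m$ large enough that $x+p^{-M}\le b'$: the function $\tilde g(u)=(p^Mg(x+p^{-M}u))\md 1$ is continuous on $[0,1]$ with its graph in $\mathbf{P}(\mathfrak{A}')$, upgraded to $\mathbf{LP}(\mathfrak{A}')$ by Corollary \ref{cor:mark-up} thanks to minimality; the chain rule (applied to a continuous real lift of $\tilde g$ near $0$) gives $\tilde g'(0^+)=g'(x)=A$. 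Renaming, I may assume henceforth $x=0$, $g\colon[0,1]\to\mathbb S$ is continuous, $\mathfrak{A}$ is minimal, $\mathbf{G}(g)\subset\mathbf{LP}(\mathfrak{A})$, and $g$ has right derivative $A$ at $0$.

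Next, I would exploit the $p$-shift. By the last clause of Proposition \ref{prop:auto-Q} applied at the boundary point $0$, $g(0)\in\Z_p\cap\Q$; by Corollary \ref{cor:r-repr} its base-$p$ expansion is purely periodic with some period length $t$. Hence $c_r:=(p^\ell g(0))\md 1$ depends only on $r:=\ell\md t$, and each $c_r$ remains purely periodic, so $c_r\in\Z_p\cap\Q$. Differentiability at $0$ yields
\[
p^\ell g(p^{-\ell})=p^\ell g(0)+A+\epsilon_\ell,\qquad\epsilon_\ell\to 0,
\]
so $(p^\ell g(p^{-\ell}))\md 1\to(c_r+A)\md 1$ as $\ell\to\infty$ through $\ell\equiv r\pmod{t}$. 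Meanwhile $(p^{-\ell},g(p^{-\ell}))\in\mathbf{LP}(\mathfrak{A})\subset\T^2$, and iterating Proposition \ref{prop:p-shift} exactly $\ell$ times collapses the $x$-coordinate to $0$ and produces
\[
(0,\,(p^\ell g(p^{-\ell}))\md 1)\in\mathbf{LP}(\mathfrak{A}).
\]

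Finally, minimality of $\mathfrak{A}$ makes $\mathbf{LP}(\mathfrak{A})$ closed in $\T^2$ (Corollary \ref{cor:AP=LP}); passing to the limit along $\ell\equiv r\pmod{t}$ puts the accumulation point $(0,(c_r+A)\md 1)$ into $\mathbf{LP}(\mathfrak{A})$. Since $0\in\Z_p\cap\Q$, Lemma \ref{le:qint-LP} forces $(c_r+A)\md 1\in\Z_p\cap\Q$; subtracting the rational $p$-adic integer $c_r$ gives $A\md 1\in\Z_p\cap\Q$, whence $A=g'(x)\in\Z_p\cap\Q$. The hard part is bookkeeping, not calculation: Lemma \ref{le:qint-LP} applies only to $\mathbf{LP}$, while the similarity a priori yields only $\mathbf{P}$, so the whole argument depends on arranging the reduction so that the $\mathbf{P}$-to-$\mathbf{LP}$ upgrade via Corollary \ref{cor:mark-up} (and the closedness supplied by Corollary \ref{cor:AP=LP}) remain available after passing to a minimal sub-automaton and rescaling via Corollary \ref{cor:ext}.
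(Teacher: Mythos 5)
Your argument is correct in substance but takes a genuinely different route from the paper's. The paper proves the lemma by a direct automaton-theoretic digit analysis: it rescales to $\bar g(y)=(p^mg(x+p^{-m}y)-p^mg(x))\md1$, feeds the computing automaton the words $0^\ell 1 0^{w(\ell)}$, shows via the Taylor remainder estimate that the output words carry the leading digits of the base-$p$ expansion of $(g^\prime(x))\md1$, and then invokes the eventual periodicity of outputs on zero inputs (the argument of Proposition \ref{prop:auto-Q}) to force those digits to be eventually periodic. You instead package the same underlying fact into Lemma \ref{le:qint-LP} and replace the digit bookkeeping by a dynamical argument: the $p$-shift invariance of $\mathbf{LP}$ (Proposition \ref{prop:p-shift}) collapses the abscissa of $(p^{-\ell},g(p^{-\ell}))$ to $0$, differentiability identifies the limit of the ordinates along $\ell\equiv r\pmod t$ as $(c_r+A)\md1$, closedness of $\mathbf{LP}$ (Corollary \ref{cor:AP=LP}) puts that limit into $\mathbf{LP}$, and Lemma \ref{le:qint-LP} at the rational abscissa $0$ yields $A\in\Z_p\cap\Q$. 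This is cleaner and more conceptual than the paper's proof, at the price of needing the full $\mathbf P$-to-$\mathbf{LP}$ upgrade machinery up front; you correctly identify that this upgrade is the crux, since Corollary \ref{cor:auto-Q} applied to $\mathbf P$ at abscissa $0$ would only give $A\in\Q$.

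There is one genuine (though easily repaired) gap in your reduction: after invoking Theorem \ref{thm:eval-erg} you ``pick one such sub-segment $[a^\prime,b^\prime]\ni x$ with $x<b^\prime$ (possible since $x\in[a,b)$).'' This is not automatic. The covering is merely countable, and $x$ could a priori lie only in segments having $x$ as their right endpoint, with the segments covering $(x,x+\delta)$ accumulating at $x$ from the right without containing it (Baire category gives density of the union of interiors, not that every point lies in some interior relative to $[x,b]$). The fix is to reverse the order of your two reductions: first apply Corollary \ref{cor:ext} at $d=x=np^{-m}$ to the original $g$ and $\mathfrak A$, obtaining $\tilde g$ on $[0,1]$ with $\mathbf G_{[0,1]}(\tilde g)\subset\mathbf P(\mathfrak A)$ and right derivative $A$ at $0$; then apply Theorem \ref{thm:eval-erg} to $\tilde g$ on $[0,1]$. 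Any covering segment containing the point $0$ must have $0$ as its left endpoint and positive length, so it is of the form $[0,b^\prime]$ with $b^\prime>0$, contains all $p^{-\ell}$ for $\ell$ large, and carries the graph of $\tilde g$ into $\mathbf{LP}(\mathfrak A_n)$ for a single minimal sub-automaton. With that reordering the rest of your argument goes through as written.
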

\begin{lem}
\label{le:der2}
Under conditions of Theorem \ref{thm:main} let $x$ be the same as in the
statement of Lemma \ref{le:der}%
; then $g^{\prime\prime}(x)=0$. 
\end{lem}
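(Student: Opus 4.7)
The plan is to follow the informal sketch in the introduction to Section~\ref{sec:main}: reduce to a function with vanishing first derivative at $x$, carry out a quadratic Taylor expansion there, and extract a contradiction with the classical impossibility of squaring on a finite automaton. First I would reduce to the case $g'(x)=0$. By Lemma~\ref{le:der}, $g'(x)\in\Z_p\cap\Q$, so by Proposition~\ref{prop:fin-auto} the linear map $u\mapsto g'(x)\cdot u\bmod 1$ is itself finitely computable. Corollary~\ref{cor:sum} then yields a covering of $[a,b]$ by sub-segments on each of which $g_1(u):=\bigl(g(u)-g'(x)\cdot u\bigr)\bmod 1$ is finitely computable. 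Restricting to a sub-segment $[a',b']$ containing $x=np^{-m}$ in its interior, $g_1\in C^{2}([a',b'])$ with $g_1'(x)=0$ and $g_1''(x)=g''(x)$, so it suffices to prove $g_1''(x)=0$.

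Next I would set up the second-order Taylor data. Continuity of $g_1''$ at $x$ provides a function $w\colon\N\to\N$ with $w(\ell)\to\infty$ such that, for all sufficiently large $\ell$ and every $h\in[0,1]$,
\[
g_1\bigl(x+p^{-m-\ell}h\bigr)\;=\;g_1(x)+A\,p^{-2m-2\ell}h^{2}+p^{-2m-2\ell-w(\ell)}\,\theta(\ell,h),
\]
with $|\theta(\ell,h)|\le 1$ and $A:=g_1''(x)/2$. Repeating the scheme of Lemma~\ref{le:der} in this setting—fix $h=p^{-1}$, apply the similarity (Proposition~\ref{prop:ext} and Corollary~\ref{cor:ext}) to zoom in at $x$ at the scale $p^{-m-\ell-1}$, subtract the finitely computable constant $g_1(x)$ via Corollary~\ref{cor:sum}, and rescale by $p^{2m+2\ell+2}$—shows that $A\bmod 1$ lies in the plot of a fixed finite automaton when that automaton is fed by words of unbounded length consisting only of zeros between the digits of $x$ and the digit $1$. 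Corollary~\ref{cor:auto-Q} forces $A\in\Q\cap[0,1)$, and inspection of the construction places $A\in\Z_p\cap\Q$.

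Finally, I would let $h$ vary over $[0,1]$ rather than fixing $h=p^{-1}$. The same chain of finite-automaton operations (similarity, shift by a $\Z_p\cap\Q$-constant, composition of automata) exhibits $h\mapsto(Ah^{2})\bmod 1$ as a finitely computable function on $[0,1]$ in the sense of Definition~\ref{def:eval}. If $A\ne 0$, then by Proposition~\ref{prop:fin-auto} one may clear the factor $A$ by further composition with a finite automaton performing multiplication by a suitable element of $\Z_p\cap\Q$ and a rescaling by a power of $p$, producing a finite automaton whose plot contains the graph of a nontrivial multiple of $h\mapsto h^{2}\bmod 1$. This contradicts the classical fact recalled at the end of Subsection~\ref{ssec:a-map} (cf.\ \cite[Theorem~2.2.3]{Bra}) that squaring of $p$-adic integers cannot be performed by any finite automaton. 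Hence $A=0$, i.e.\ $g''(x)=0$.

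The principal obstacle will lie in this final step: one must verify that the several operations used to extract the quadratic part of $g_1$ near $x$—iterated similarities at growing scales, constant subtractions, and affine rescalings—can be combined into a \emph{single} finite automaton whose plot contains the entire graph of $h\mapsto (Ah^{2})\bmod 1$, rather than merely, for each $\ell$, an automaton that approximates this graph to within $p^{-w(\ell)}$. Controlling the carries propagated from the $O(p^{-w(\ell)})$ remainder into the digit positions corresponding to the quadratic term, and managing the two base-$p$ expansions at dyadic-type points as in Section~\ref{sec:fin-comp}, are the technically delicate points.
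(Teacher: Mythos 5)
Your proposal follows the paper's proof essentially step for step: subtract the linear part using Lemma~\ref{le:der} and Corollary~\ref{cor:sum}, show $g''(x)/2\in\Z_p\cap\Q$ by rerunning the argument of Lemma~\ref{le:der} on the quadratic Taylor term, and derive a contradiction with the impossibility of squaring on a finite automaton when $g''(x)\ne 0$. The delicate point you flag at the end is exactly where the paper works hardest---rather than assembling a single automaton computing $h\mapsto (Ah^2)\bmod 1$ on all of $[0,1]$, it shows that for each input word $v_n$ some state of the fixed finite automaton, fed a zero word of length $2n$, must output the base-$p$ expansion of $\nm(v_n)^2$, which already contradicts the eventually-periodic output structure of Lemma~\ref{le:per}.
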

\begin{proof}[Proof of Lemma \ref{le:der}]

Under
conditions of the lemma,
the right base-$p$ expansion of $x$ is
$x=0.\gamma_0\ldots\gamma_{m-1}00\ldots$, for suitable 
$\gamma_0,\ldots,\gamma_{m-1}\in\{0,1,\ldots,p-1\}$. 
We claim that 
$(p^mg(x))\md1\in\Z_p\cap\Q$.
Indeed, as $g$ is continuous and as $x=0.v0^\infty$ where $v=\gamma_0\ldots\gamma_{m-1}$, 
there exists an $s$-mark-up
of the right-infinite word $v0^\infty$ w.r.t. some state $s\in\Cal S$ (cf. Note
\ref{note:mark-up}). That is, there exists a strictly increasing sequence
$k_0<k_1<\ldots$ over $\N$ such that for the  infinite sequence of words
$w_i=v0^{k_i-m}$ of strictly increasing lengths $k_i$ (where $i\ge K$ and
$K$ is large enough so that $k_i-m>0$) the following is true:
\begin{align*}
\lim_{i\to\infty}0.w_i&=x;\\
\lim_{i\to\infty}0.\mathfrak a_s(w_i)&=g(x).
\end{align*}
Therefore, the mark-up $(\bar k_j=k_{K+j}-m)_{j=0}^\infty$  of the
zero right-infinite word $0^\infty$ is such that (in the notation of Proposition
\ref{prop:ext}) the following equalities hold simultaneously
\begin{align*}
\lim_{j\to\infty}0.0^{\bar k_j}&=0;\\
\lim_{j\to\infty}0.\mathfrak a_s(0^{\bar k_j})&=g_d(0).
\end{align*} 
Now by combining Proposition \ref{prop:ext} (or Corollary \ref{cor:ext} if
necessary)  and Proposition \ref{prop:auto-Q}
we conclude that $g_d(0)\in\Z_p\cap\Q$ where $d=x=0.\gamma_0\ldots\gamma_{m-1}$; therefore $(p^mg(x))\md1\in\Z_p\cap\Q$
as $g_d(0)=(p^mg(d))\md1$. Note that if $(n+1)p^{-m}\not\in[a,b]$ then we apply Corollary
\ref{cor:ext} rather than Proposition \ref{prop:ext} and use $g_{d,M}$ instead of $g_d$ and $M$ instead of $m$ here
and after.  

Take $\ell\in\N$; then
by differentiability of $g$, for all $0\le h<1$  and all sufficiently large $\ell\in\N_0$ we can represent $g(x+p^{-m-\ell}h)$ as
\begin{equation}
\label{eq:der}
g(x+p^{-m-\ell}h)=
g(x)+c(x)\cdot p^{-m-\ell}h+p^{-m-t(\ell)}\theta(\ell,h),
\end{equation}
where $c(x)=g^\prime(x)$, $|\theta(\ell,h)|\le 1$ 
and $t$ is a map from $\N_0$ to $\N_0$ such
that $p^{t(\ell)}\to\infty$ faster than $p^{\ell}\to\infty$ while $\ell\to\infty$. That is, for all sufficiently
large $\ell$ we may represent $t(\ell)$ as
$t(\ell)=\ell+w(\ell)$, where $w$ is a map from $\N_0$ to $\N_0$ such that
$w(\ell)\to\infty$ as $\ell\to\infty$.

%
%

Further, by Proposition \ref{prop:ext}, the function $\tilde g(y)=(p^mg(x+p^{-m}y))\md1$
is continuous on $[0,1]$ and $\mathbf G_{[0,1]}(\tilde g)\subset\mathbf{P}(\mathfrak
A)$. From here  by combining Proposition \ref{prop:comp}
and Theorem \ref{thm:mult-add-p} we conclude that there exists a finite automaton
$\mathfrak C$ 
such that the graph $\mathbf G_{[0,1]}(\bar g)$ of the function 
$$\bar g(y)=(p^mg(x+p^{-m}y)-p^mg(x))\md1=((p^mg(x+p^{-m}y))\md1-(p^mg(x))\md1)\md1$$
lies
completely in  $\mathbf P({\mathfrak C})$. 

Indeed, as $(p^mg(x))\md1\in\Z_p\cap\Q$
then by \eqref{eq:thm:mult-add-p} the graph of the continuous function $y\mapsto
y-(p^mg(x))\md1$ on $[0,1]$ lies completely in  $\mathbf{LP}(\mathfrak B)\subset\mathbf{P}(\mathfrak
B)$ for  a finite automaton $\mathfrak B$ whose automaton function is $f_{\mathfrak
B}(z)=z-(p^mg(x))\md1$, $(z\in\Z_p)$, and therefore the composite function
$\bar g(y)$ is finitely computable on $[0,1]$, cf. 
Proposition \ref{prop:comp}. We proceed
with this in mind.

We  see from \eqref{eq:der}  that for all  sufficiently large $\ell$ and all $h$
the following is true:
\begin{equation}
\label{eq:der-m}
\bar g(p^{-\ell}h)=(p^mg(x+p^{-m-\ell}h)-
p^mg(x)
)\md1=c(x)\cdot p^{-\ell}h+p^{-t(\ell)}\theta(\ell,h).
\end{equation}


%


Now for the rest of the proof we take (and fix) $h=p^{-1}=0.1$. 
Let $0.\alpha_s\alpha_{s+1}\ldots$
be a base-$p$ expansion of $(c(x))\md1$;  so  $c(x)=\alpha_0\ldots\alpha_{s-1}.\alpha_{s}\alpha_{s+1}\ldots$
is a base-$p$ expansion of $c(x)$. We may assume that $c(x)\ge 0$ since if
otherwise we
consider the function $(-g)\md1$ which  satisfies conditions of the lemma
as $g$ satisfies these conditions.
If there exists two different base-$p$ expansions for $(c(x))\md1$
we will consider only one of these. Recall that
these expansions are  of the form $0.\zeta_1\ldots\zeta_n0^{\infty}$
and $0.\zeta_1\ldots\zeta_{n-1}\zeta_n^\prime(p-1)^{\infty}$ where $\zeta_1,\ldots,\zeta_n\in\{0,1,\ldots,p-1\}$,
$\zeta_n\ne 0$ and $\zeta_n^\prime=\zeta_n-1$. Now, if the function $\theta(\ell,p^{-1})$
is non-negative for an infinite number of $\ell\in\N$, then we take the first
of the base-$p$ expansions; and we take the second one in the opposite case.

We claim that in all cases mentioned above there exists a strictly
increasing sequence $\Cal L$ of $\ell\in\N$  such that, speaking loosely, the term $p^{-t(\ell)}\theta(\ell,h)$ has no affect on
higher
order digits of  the base-$p$ expansion of the right-hand part of \eqref{eq:der-m}.
In the case when $(c(x))\md1$ admits only one base-$p$ expansion this follows
from the fact that  $p^{-t(\ell)}$ tends to 0 faster than $p^{-\ell}$ as
we may take for $\Cal L$ all sufficiently large $\ell$.
In the case when $(c(x))\md1$
admits two base-$p$ expansions  the claim is also true since we 
consider a right
base-$p$ expansion  $(c(x))\md1=0.\zeta_1\ldots\zeta_n0^{\infty}$ and assume that the function $\theta(\ell,p^{-1})$
in \eqref{eq:der-m}
is non-negative for an infinite number of $\ell\in\N$: In that case we take for $\Cal
L$ all sufficiently large $\ell$ such that $\theta(\ell,p^{-1})\ge 0$.
When $(c(x))\md1$ admits two base-$p$ expansions and the function $\theta(\ell,p^{-1})$
is non-negative only for a finite number of $\ell\in\N$, we
consider a left base-$p$ expansion $(c(x))\md1=0.\zeta_1\ldots\zeta_{n-1}\zeta_n^\prime(p-1)^{\infty}$ where $\zeta_1,\ldots,\zeta_n\in\{0,1,\ldots,p-1\}$,
$\zeta_n\ne 0$ and $\zeta_n^\prime=\zeta_n-1$. Then there
exists infinitely many $\ell\in\N_0$ such that $\theta(\ell,p^-1)\le 0$,
and we take for $\Cal L$  all these sufficiently large
$\ell$. 

In other words, if we take $\ell\in\Cal L$, substitute $y=p^{-\ell-1}$ to $\bar g(y)$ and
apply \eqref{eq:der-m} then we  get 
\begin{equation}
\label{eq:right}
\bar g(0.(0)^{\ell}1(0)^\infty)=0.\underbrace{0\ldots0}_{\ell-s+1}\alpha_0\ldots\alpha_{t_1(\ell)}\delta_{t_1(\ell)+1}\delta_{t_1(\ell)+2}\ldots,
\end{equation}
where $\delta_{j}\in\{0,1,\ldots,p-1\}$ for $j\ge t_1(\ell)+1$,  $t_1(\ell)=-\ell+s+t(\ell)=s+w(\ell)$
(note that $\delta_{j}$ depends on $\ell$). 
Further,  by 
Note \ref{note:mark-up}  we
conclude now that given  a right-infinite word $u$ and  $\ell\in\N$ there exists an 
$s$-mark-up of the word $0^\ell u$ w.r.t. the function $\bar g$
where  $s$ is a suitable (depending on $u$ and
$\ell$) state of a finite automaton $\mathfrak C$ which is a sequential
composition of the automaton $\mathfrak A$ with the automaton $\mathfrak
B$ 
This means in particular that given an infinite word 
$v(\ell)=0^\ell100\ldots$, for any  $\ell\in\Cal L$ 
there exists a mark-up 
(w.r.t. a suitable state $s=s(v(\ell))$ of the automaton $\mathfrak
C$, cf. Proposition \ref{prop:mark-up}) 
 $i_0(\ell),i_1(\ell),i_2(\ell),\ldots$ 
of the word $v(\ell)$. As a total number of
states of the automaton $\mathfrak C$ is finite, at least one state, say
$s^\prime$, in
the sequence $(s(v(\ell))\:\ell\in\Cal L)$ occurs infinitely many times. Denote $\mathfrak
C^\prime=\mathfrak C(s^\prime)$ (then $\mathfrak C^\prime$ is a finite automaton as well) and consider
an infinite strictly increasing sequence 
$\Cal L^\prime=(\ell^\prime\:s(v(\ell^\prime))=s^\prime;\ell^\prime\in\Cal
L)$.

Given a term $\ell^\prime$ of the latter sequence $\Cal L^\prime$ take the smallest $k\in\N_0$ such that  $i_k(\ell^\prime)>t(\ell^\prime)$; denote via $s(\ell^\prime)$
the state the automaton $\mathfrak C^\prime$ reaches after being feeded by the word
$0^{i_k(\ell^\prime)-t(\ell^\prime)}$. As the number of states of the automaton
$\mathfrak C^\prime$ is finite, in
the infinite sequence $(s(\ell^\prime))$ at least one term, say $\bar s$, occurs infinitely
many times. Consider an infinite sequence $(\ell_j^\prime)_{j=0}^\infty$ such that $s(\ell_j^\prime)=\bar s$ and consider an automaton $\mathfrak C^\prime(\bar s)$ (whence the latter automaton is finite
also). 
If the automaton $\mathfrak C^\prime(\bar s)$ is being feeded
by the word $0^{\ell_j^\prime}10^{w(\ell^\prime_j)}$ then the automaton outputs the word 
$q_j=\sigma_{0}^{(j)}\ldots\sigma_{t(\ell^\prime_j)}^{(j)}$; therefore being
feeded by the word $0^{w(\ell^\prime_j)}$ the automaton outputs the word
$q_j^\prime=\sigma_{\ell^\prime_j-s+1}^{(j)}\ldots\sigma_{t(\ell^\prime_j)}^{(j)}$
of length $L_j=w(\ell_j^\prime)-s$. As $w(\ell)\to\infty$ while $\ell\to\infty$
we may assume without loss of generality that the sequence $(L_j)$ is strictly
increasing (since if otherwise we consider a subsequence $(j_i)_{i=0}^\infty$  of the sequence $(j)$ such that the sequence $(L_{j_i})_{i=0}^\infty$).
Now by mimic the proof of Proposition \ref{prop:auto-Q} we show that the
sequence $(0.q_j^\prime)$ has only finitely many limit points and all these
limit points are in $\Z_p\cap\Q$. But from   \eqref{eq:right}
it follows that $\lim_{j\to\infty}0.q_j^\prime=(c(x))\md1$; therefore 
$(c(x))\md1\in\Z_p\cap\Q$ and thus $c(x)=g^\prime(x)\in\Z_p\cap\Q$.
\end{proof}
%
%

\begin{proof}[Proof of Lemma \ref{le:der2}]
Let $x$ be as in the statement of Lemma \ref{le:der}; i.e.,
let the right
base-$p$ expansion of $x$ be as in the proof of  Lemma \ref{le:der}. 
Then   $g^\prime(x)\in\Q\cap\Z_p$ by Lemma \ref{le:der}.

Consider the function $g_1(u)=g(u)-g^\prime(x)\cdot u+c$ of argument $u\in[a,b]$
where $c\in\Q\cap\Z_p$;
then $g_1$ is two times differentiable on $[a,b]$ and $g_1^\prime(x)=0$,
$g_1^{\prime\prime}(u)=g^{\prime\prime}(u)$ for all $u\in[a,b]$.
As $g_1$ is continuous on $[a,b]$, the constant $c$ may be taken so that
$g_1(u)\in[0,1]$ for all $u$ from a sufficiently small  closed  neighborhood $[a_1,b_1]$ of $x$. 

We are going to prove that $g_1^{\prime\prime}(x)=0$ for all $x\in[a,b]$.
Note that if $g_1^{\prime\prime}(x)\ne 0$ for some $x\in[a,b]$ then 
$g_1^{\prime\prime}$ does not change its
sign  on a  sufficiently small neighborhood $(a_2,b_2)\subset[a_1,b_1]$ of $x$. Indeed, if not, then there exist two infinite sequences, $(\check x_i)_{i=0}^\infty$
and $(\hat x_i)_{i=0}^\infty$ such that all the terms of either sequence
are pairwise distinct, $\lim_{i\to\infty}\check x_i=\lim_{i\to\infty}\hat x_i=x$, and $g_1^{\prime\prime}(\check x_i)\ge0$, $g_1^{\prime\prime}(\hat x_i)\le0$ for all $i\in\N_0$. But as $\lim_{i\to\infty}
g_1^{\prime\prime}(\check x_i)=\lim_{i\to\infty}g_1^{\prime\prime}(\hat x_i)=g_1^{\prime\prime}(x)$
(since $g_1^{\prime\prime}$ is continuous at $x$) then necessarily $g_1^{\prime\prime}(x)=0$;
but this contradicts our assumption that $g_1^{\prime\prime}(x)\ne 0$.
We therefore may assume that $g_1^{\prime\prime}(u)\ge 0$ for all $u\in[a_2,b_2]$; otherwise
consider the function $1-g_1$ rather than $g_1$.  

%
Finally by
Corollary
\ref{cor:sum} we conclude that $g_1$ is finitely computable on a sufficiently
small closed neighbourhood $U\subset[a,b]$ of $x$.
Further we use $g$ for $g_1$
and $[a,b]$ for $U$
without risk of misunderstanding. Thus we have:
\begin{enumerate}
\item $g$ is finitely computable on $[a,b]\ni x$; 
\item $g$ is two times differentiable on $[a,b]$;
\item $g^{\prime\prime}$ is continuous on $[a,b]$;
\item $g^{\prime\prime}\ge 0$ on $[a,b]$;
\item $g^{\prime}(x)=0$;
\end{enumerate}

Now, since $g$ is two times differentiable on $[a,b]$, 
for all $0\le h\le1$  and all sufficiently large $\ell\in\N_0$ we can represent $g(x+p^{-m-\ell-1}h)$ as
\begin{equation}
\label{eq:der2}
g(x+p^{-m-\ell}h)=g(x)+
C(x)
\cdot p^{-2m-2\ell}h^2
+p^{-2m-t(\ell)}\theta(\ell,h),
\end{equation}
where $C(x)$ stands for $\frac{g^{\prime\prime}(x)}{2}$, $|\theta(\ell,h)|\le 1$, 
$t(\ell)=2\ell+w(\ell)$,  and $w$ is a map from $\N_0$ to $\N_0$ such that
$w(\ell)\to\infty$ as $\ell\to\infty$.

\underline{Claim 1:} $C(x)\in\Z_p\cap\Q$. We  prove that  by mimic of the respective part of the proof of Lemma \ref{le:der}. 
%
Firstly we show that $(p^mg(x))\md1
\in\Z_p\cap\Q$ 
as in the proof of Lemma \ref{le:der}; thus considering a function $\bar g(y)=(p^mg(x+p^{-m}y)-p^mg(x))\md1$
we see that
\begin{equation}
\label{eq:der-m-2}
\bar g(p^{-\ell}h)=(p^mg(x+p^{-m-\ell}h)-p^mg(x)
)\md1=C(x)\cdot p^{-m-2\ell}h^2+p^{-m-t(\ell)}\theta(\ell,h)
\end{equation}
for all sufficiently large $\ell$.

Let $0.\alpha_s\alpha_{s+1}\ldots$
be a base-$p$ expansion of $(C(x))\md1$ (we may take either of the expansions if
there exist two different ones); so  $C(x)=\alpha_0\ldots\alpha_{s-1}.\alpha_{s}\alpha_{s+1}\ldots$
is a base-$p$ expansion of $C(x)$.  Take $h=1$; then for all sufficiently
large $\ell$ the base-$p$ expansion of the right-hand part of \eqref{eq:der-m-2}
is of the form 
\begin{equation}
\label{eq:right2}
0.\underbrace{0\ldots0}_{m+2\ell-s}\alpha_0\ldots\alpha_{t_1(\ell)}\delta_{t_1(\ell)+1}\delta_{t_1(\ell)+2}\ldots
\end{equation}
where $\delta_{j}\in\{0,1,\ldots,p-1\}$ for 
$j\ge t_1(\ell)+1=t(\ell)-m-2\ell+s=w(\ell)-m+s$
depend on $\ell$. 

The function $(p^mg(x+p^{-m}z)-p^mg(x))\md1$ of argument $z$ is continuous and finitely computable on $[0,1]$ by 
a finite automaton $\mathfrak C$.  Now considering an infinite word
$0^\ell10^\infty$  with the corresponding mark-up we prove in the same way as in Lemma \ref{le:der} that the corresponding
sequence of finite output words of the automaton $\mathfrak C$ is a sequence of initial
finite sub-words of the infinite word   $0^{m+2\ell-s}\alpha_0\alpha_1\ldots$
and then deduce as in the proof of Lemma \ref{le:der} that $C(x)\in\Z_p\cap\Q$
(note that given $x=np^{-m}$ we may always take $m$ so that $m+2\ell-s>\ell$
without altering the value of $x$ just by multiplying both numerator and
denominator by a suitable power
of $p$). 

\underline{Claim 2:} Now we prove that $C(x)=0$.
Assume that $C(x)\ne0$; that is, that $g^{\prime\prime}(x)\ne0$. Let $p^c=|C(x)|_p$,
$c\in\Z$, be a $p$-adic absolute value of $C(x)$;
therefore $C(x)=qp^{-c}$, where $q\in\Z_p$, $q$ is a unity of $\Z_p$, cf. Subsection \ref{ssec:p-adic}. By
Claim 1, $C(x)\in\Z_p\cap\Q$, so necessarily $c\le 0$ and $q\in\Q$; whence
$q\in\Z_p\cap\Q$ and thus  $q^{-1}\in\Z_p\cap\Q$ as $q$ is a unity. Therefore
the function $\check g(y)=(q^{-1}(p^mg(x+p^{-m}y)+p^mg(x))\md1)\md 1$ of argument $y\in[0,1]$
is finitely computable
on $[0,1]$ (say, by the automaton $\mathfrak B$):
This follows from Propositions \ref{prop:ext}, \ref{prop:comp}, and Corollary
\ref{cor:sum}.
Now by \eqref{eq:der-m-2} we conclude that

%
%
\begin{equation}
\label{eq:der2-1}
(q^{-1}(p^mg(x+p^{-m-\ell}h)+p^mg(x))\md1)\md 1=
p^{-k
-2\ell
}h^2
+p^{-m-t_1(\ell)}\theta_1(\ell,h),
\end{equation}
where $k=c+m\ge0$ (we may assume that the inequality is true just by taking $m$ sufficiently large by multiplying both numerator and denominator by a
suitable power of $p$ and thus
without altering the value of $x$), 
$|\theta_1(\ell,h)|\le 1$, 
$t_1(\ell)=2\ell+w_1(\ell)$,  and $w_1$ is a map from $\N_0$ to $\N_0$ such that
$w_1(\ell)\to\infty$ as $\ell\to\infty$. 

Further, given $n\in \N$ and a word $v_n=\chi_0\ldots\chi_{n-1}\in\Cal W$
and taking $y=p^{-\ell}h$ with $h=0.v_n0^\infty$ we have that
$(p^{-\ell}h;\check g(p^{-\ell}h))\in\mathbf{P}(\mathfrak B)$. Let
$i_0(\ell,v_n)<i_1(\ell,v_n)<i_2(\ell,v_n)<\ldots$   be 
corresponding mark-up of the
infinite 
word $0^\ell v_n0^\infty$ (the latter infinite word corresponds to $y=p^{-\ell}h$
once $h=0.v_n0^\infty$).
Take $r$  the smallest so that $i_r=i_r(\ell,v_n)>t_1(\ell)+m$;  denote 
$s(\ell,v_n)$  the  state the automaton $\mathfrak B$ reaches after
being feeded by the word $0^{i_r(\ell,v_n)-k-2\ell-2n}$.  
As the number of states of the
automaton $\mathfrak B$ is finite, in the sequence $(s(\ell,v_n))_{\ell=1}^\infty$
at least one state, say $\bar s(v_n)$, occurs infinitely many times. Consider
a strictly increasing sequence $\Cal
L=(\ell_j\in\N\:s(\ell_j,v_n)=\bar s(v_n))_{j=0}^\infty$; then
 once $w_1(\ell_j)>2n+r$, the automaton $\mathfrak B(s(\ell_j,v_n))=\mathfrak
 B(\bar s(v_n))$, being
feeded by the word $0^{\ell_j}v_n0^{k+\ell_j+n}$, outputs the word
$
\zeta_0^{(j)}\ldots\zeta_{k+2\ell_j-1}^{(j)}\xi_0^{(j)}\ldots\xi_{2n-1}^{(j)}$.
From \eqref{eq:der2-1}
it follows  that $(\lim_{j\to\infty}
(p^{k+2\ell_j}\cdot0.\zeta_0^{(j)}\ldots\zeta_{k+2\ell_j-1}^{(j)}
\xi_0^{(j)}\ldots\xi_{2n-1}^{(j)}))\md1=h^2$.
Therefore $(\lim_{j\to\infty}
0.\xi_0^{(j)}\ldots\xi_{2n-1}^{(j)})\md1=h^2=0.\xi_0\ldots\xi_{2n-1}$ where $$
\xi_0p^{2n-1}+\xi_1p^{2n-2}+\cdots+\xi_{2n-1}=
\left(\chi_0p^{n-1}+\chi_1p^{n-2}+\cdots+\chi_{n-1}\right)^2
$$
(in other words,
$\xi_0\xi_1\ldots\xi_{2n-1}$ is a base-$p$ expansion of the square of the
number whose base-$p$ expansion is $v_n=\chi_0\ldots\chi_{n-1}$). Thus necessarily
$\xi_0^{(j)}=\xi_0,\ldots,\xi_{2n-1}^{(j)}=\xi_{2n-1}$ for all sufficiently
large $j$. But  $(p^{-\ell}h;\check g(p^{-\ell}h))\in\mathbf{P}(\mathfrak B)$ for all $\ell$; thus by Proposition \ref{prop:p-shift},  
$((p^{k+\ell_j}h)\md1;(p^{k+2\ell_j}\check g(p^{-\ell_j}h))\md1\in\mathbf{P}(\mathfrak B)$ for all $j\in\N_0$. Therefore for all sufficiently large $j$ (such that
$k+\ell_j>n$) we have
that $(0;0.\xi_0\ldots\xi_{2n-1})\in\mathbf{P}(\mathfrak B)$ since $h=0.v_n0^\infty$.
In other words, as the automaton $\mathfrak B(s(\ell_j,v_n))=\mathfrak
 B(\bar s(v_n))$, being
feeded by the word $0^{\ell_j}v_n0^{k+\ell_j+n}$, outputs the word
$
\zeta_0^{(j)}\ldots\zeta_{k+2\ell_j-1}^{(j)}\xi_0\ldots\xi_{2n-1}$ once $j$
is sufficiently large, $k+\ell_j>n$, the automaton  $\mathfrak B(\bar s(v_n))$, being feeded by the zero word $0^{2n}$, outputs the word $\xi_0\xi_1\ldots\xi_{2n-1}$.
This
means that, given an arbitrary number $N\in\N_0$ whose base-$p$ expansion
(where higher order digits might be 0) is of length $n$, $n$ being sufficiently large, there exists
a state $s$ of the finite automaton $\mathfrak B$ such that the automaton
$\mathfrak B(s)$, being feeded by a zero sequence of length $2n$, outputs a
word (of length $2n$) that is a base-$p$ expansion of $N^2$.
But this is not possible since it is well known that  squaring is not possible by a finite automaton (cf.,e.g., \cite[Theorem
2.2.3]{Bra} or \cite[Proposition 7.1.6]{AlgCombinWords});
however, a short proof follows.

%
%
%
%

As the automaton $\mathfrak B$
is finite, then there are only finitely many sub-automata  $\mathfrak B(s(\ell,v_n))$.
But any finite automaton, being feeded by a sufficiently long zero word $0^L$
outputs the word of the form $u_1(u_2)^M u_3$, where $M=M(L)\in\N$,
$u_2\in\Cal W$,
$u_1,u_3\in\Cal W_\phi$, and the words $u_1,u_2,u_3$ are completely determined
by the finite automaton, $u_1$ is a right prefix of $u_2$, cf. Lemma
\ref{le:per}. 
But given finitely many words $u_{1.i},u_{2.i},u_{3.i}$
of that sort, $i=0,1,2,\ldots,K$, there exist infinitely many words $\xi_0\xi_1\ldots\xi_{2n-1}$  which are base-$p$ expansions  of squares of numbers from $\N_0$ 
and which are not of the form $u_{1.i} u_{2.i}^M u_{3.i}$, $i=1,2,\ldots,K$,
$M=1,2,\ldots$. 

The  contradiction proves that $C(x)=0$; therefore,  
$g^{\prime\prime}(x)=0$.

 
\end{proof}

\begin{proof}[Proof of Theorem \ref{thm:main}]
We already
have proved that 
$g^{\prime\prime}(x)=0$ if
$x=n p^{-m}\in[a,b)$, where
$n\in\N_0$, $m\ge\lfloor\log_pn\rfloor+1$. But the set of all these $x$ is
dense in $[a,b]$, so, as the second 
derivative $g^{\prime\prime}$ is
is continuous
on $[a,b]$ by the condition of the theorem under proof, 
$g^{\prime\prime}$ must vanish
everywhere on $[a,b]$; therefore, $g^\prime=const$. But this implies that 
there exist $A,B\in\R$ such
that $g(x)=Ax+B$ for all $x\in[a,b]$ and $g^\prime(x)=A$ for all $x\in[a,b]$.
From Lemma \ref{le:der} it follows now that $A\in\Z_p\cap \Q$. Now taking
an arbitrary number $y\in(a,b)\cap\Z_p\cap\Q$ we see that $g(y)\in\Z_p\cap\Q$ by Proposition
\ref{prop:auto-Q}; 
hence $g(y)-Ay=B$ must
be also in $\Z_p\cap\Q$ as $Ay\in\Z_p\cap\Q$. 

Now we will prove that $\mathbf C(A,B)\subset\mathbf{LP}(\mathfrak A)$. To
begin with,
we note that by Theorem \ref{thm:eval-erg} there exists a minimal sub-automaton
$\mathfrak A^\prime$ and a segment $[a^\prime,b^\prime]\subset[a,b]$ such
that $\mathbf G_{[a^\prime,b^\prime]}(g)\subset\mathbf{LP}(\mathfrak A^\prime)$.
Taking $d\in(a^\prime,b^\prime)$ as in the statement of Proposition \ref{prop:ext},
we conclude that the graph $\mathbf G_{[0,1]}(g_d)$ of the function $g_d(x)=(Ax
+An
+p^mB)\md1$
 on $[0,1]$ lies completely in $\mathbf P(\mathfrak A^\prime)$; thus  $\mathbf G_{[0,1]}(g_d)=\{(x;g_d(x))\:x\in[0,1]\}\subset
\mathbf{LP}(\mathfrak A^\prime)$ by Corollary \ref{cor:mark-up}. 
$An\in\Z$; that is, $p^ma\le n<p^mb$. 
As $A\in\Z_p\cap\Q$ then $A=P/Q$ for
suitable $P\in\Z$, $Q\in\N$. 
Now given arbitrary $R\in\{0,1,\ldots,Q-1\}$ we take $n$ and $m$ so that
$d=p^{-m}n$ satisfies conditions of Proposition \ref{prop:ext} (that is, $p^ma\le n<p^mb$) and $n=LQ+R\in\{0,1,\ldots,p^m-1\}$ for a suitable $L\in\N_0$
and conclude that
$$
\{(x;g_d(x))\:x\in[0,1]\}=\{(x;(Ax+AR+p^mB)\md1)\:x\in[0,1]\} \subset
\mathbf{LP}(\mathfrak A^\prime)
$$ 
Given arbitrary $R\in\{0,1,\ldots,Q-1\}$, the above inclusion holds for all sufficiently large $m$; therefore due the structure of $\mathbf C (B)$ (cf. Subsection
\ref{ssec:const}) the following inclusion holds for every $R\in\{0,1,\ldots,Q-1\}$
and every $B^\prime\in\mathbf C (B)$:
$$
\{(x;(Ax+AR+B^\prime)\md1)\:x\in[0,1]\} \subset\mathbf{LP}(\mathfrak A^\prime).
$$
But $\cup_{R=0}^{Q-1}\{(x;(Ax+AR+B^\prime)\md1)\:x\in[0,1]\}=\{((x\md1;(Ax+B^\prime)\md1)\:x\in\R\}
=\mathbf C(A,B^\prime)$; therefore we have shown that $\mathbf C(A,B^\prime)\subset\mathbf{LP}(\mathfrak A^\prime)$ for all $B^\prime\in\mathbf C(B\md1)$. That is,  $\mathbf{LP}(\mathfrak A^\prime)$ contains the whole link of cables $\mathbf C(A,B^\prime)$ for all $B^\prime\in\mathbf C(B)$ (i.e., contains $\mathbf{LP}(F)$ where $F\:z\mapsto
Az+B$, $z\in\Z_p$, cf. Theorem \ref{thm:mult-add-p}) and
$\mathbf G_{[a,b]}(g)$ lies completely in a suitable cable
of the link.
This proves the first
claim of Theorem \ref{thm:main} since $\mathbf{LP}(\mathfrak A^\prime)
\subset\mathbf{LP}(\mathfrak A)$, cf. Note \ref{note:sub-auto}.

To prove the second claim, given a finite automaton $\mathfrak
A$ consider all cables $\mathbf C(A,B)$ 
such that $\mathbf C(A,B)=\{(y\md1;(Ay+B)\md1)\colon
y\in\R\}\subset\mathbf{P}(\mathfrak A)$; whence by the first claim of the theorem all these cables lie in $\mathbf{LP}(\mathfrak A)$. Moreover, as
we have shown during the proof of the first claim of the theorem, for
either
of the cables $\mathbf C(A,B)$ there exists a minimal sub-automaton $\mathfrak A^\prime_{A,B}$ of the automaton $\mathfrak A$
such that $\mathbf C(A,B)\subset\mathfrak A^\prime_{A,B}$.
The cables cross  zero meridian $\mathbf O=\{(0;t\md1)\:t\in\R\}\subset\T^2$
of the torus $\T^2$ only when $y\md1=0$; therefore the point set $\mathbf
S$ of all
the points where the cables
cross   zero meridian consists of the points of the form $(0;e)$ where $e\in\mathfrak a(0)$ and $\mathbf S$ contains all the points of the
form $(0;B\md1)$ where $B$ are constant terms of the cables. As  $\mathfrak
a(0)$ is a finite set (cf. Proposition \ref{prop:auto-Q}), there are no more
than a finite number of pairwise distinct numbers $B\md1$ (note that cables with
equal slopes whose
constant terms are congruent modulo 1 coincide). Now taking $y\in\Z$ we see
that all the points of the form $(0;(Ay+B)\md1)$ of the cables belong to zero
meridian and therefore to the finite set $\{(0;r)\:r\in\mathfrak a(0)\}$; hence, there exist no more
than a finite number of pairwise distinct numbers $Ay\md 1$ where $y$ ranges over
rational integers $\Z$ and $A$ are slopes of the cables from $\mathbf{P}(\mathfrak
A)$. Thus if there exists an infinite number of cables in $\mathbf{P}(\mathfrak A)$ then there exists a minimal sub-automaton $\mathfrak A^\prime$ of the
automaton $\mathfrak A$ such that $\mathbf {LP}(\mathfrak A^\prime)$ contains 
an infinite number of cables of the form $\mathbf C(AC,B)$ with $A,B$
fixed and $C$ ranging through an infinite subset $\mathbf C$ of $\Z$ so that $AC\md1$
are all equal one to another. Therefore for the rest of the proof we may
(and will)
assume that the automaton $\mathfrak A$ is minimal. 

By the first claim of the theorem, $A\in\Z_p\cap\Q$;
so there exists a unique representation of $A$ in the form
$$
A=c+\frac{d}{p^t-1}
$$
where $t\in\N$ is a period length of $A$,  $c\in\Z$, and $d\in\{0,1,\ldots,p^t-2\}$,
cf. Proposition \ref{prop:p-repr} and Note \ref{note:pq-frac-rep}. Therefore
 $\mathbf C$ must contain an infinite subset of numbers from the coset
$q+(p^t-1)\cdot\Z$ for a suitable $q\in\{0,1,\ldots,p^t-2\}$
since $AC_1\equiv AC_2\pmod1$
implies $A(C_1-C_2)\equiv 0\pmod 1$, i.e., $A(C_1-C_2)\in\Z$.  Thus from
the assumption that there are infinitely many pairwise distinct
cables in $\mathbf{P}(\mathfrak A)$ it follows that then in $\mathbf{LP}(\mathfrak A)$ there exist infinitely many cables of the form $\mathbf C(D+E,B)$
with $B,E$ fixed ($B\in\Z_p\cap\Q$, $E\in\Z_p\cap\Q\cap[0,1)$) and $D$ running
through an infinite subset $\mathbf D\subset\Z$. 
By considering $-f_{\mathfrak A}$ (and the
corresponding finite automaton) if necessary
we may assume 
that $\mathbf D$ is an infinite subset of $\N$. 
Therefore,  $\mathbf D$ constitutes a strictly increasing sequence $(D_i)_{i=0}^\infty$
of natural numbers.
Now take arbitrary $u\in[0,1)$ and consider a sequence $x_i=uD_i^{-1}$.
As  the sequence $(D_i)$ is strictly increasing, $\lim_{i\to\infty}x_i=0$;
therefore $\lim_{i\to\infty}(x_i;(D_ix_i+Ex_i+B)\md1)=(0;(u+B)\md1)
\in\mathbf{LP}(\mathfrak A)$ as  
$(x_i;(u+Ex_i+B)\md1)\in\mathbf C(D_i+E,B)\subset\mathbf{LP}(\mathfrak A)$ and $\mathbf{LP}(\mathfrak A)$ is closed
in $\T^2$, cf. Corollary \ref{cor:AP=LP}. Thus we have proved that zero meridian $\mathbf O=\{(0;y)\:y\in[0,1)\}$
of the torus $\T^2$ lies completely in $\mathbf{LP}(\mathfrak A)$.

On the other hand, if $(0;y)\in\mathbf{LP}(\mathfrak A)$ then $y\in\mathfrak
a(0)$ by definitions of  $\mathbf{LP}(\mathfrak A)$ and $\mathfrak a(0)$,
see Subsections \ref{ssec:plots} and \ref{ssec:mark-up}; but there are only finitely many points
in $\mathfrak a(0)$ by Proposition \ref{prop:auto-Q}. The contradiction proves
the second claim of the theorem.


%

\end{proof}
\subsection{The multivariate case}
\label{ssec:main-mult}
In this subsection we are going to extend Theorem \ref{thm:main} for the
case of finite automata with multiply inputs/outputs. Note that actually
an automaton over alphabet $\F_p=\{0,1,\ldots, p-1\}$ with $m$ inputs and
$n$ outputs can be considered as a letter-to-letter transducer with a single input over the
alphabet
$\{0,1,\ldots,p^m-1\}$ and a single output over the alphabet $\{0,1,\ldots,p^n-1\}$;
therefore 
the plot of that automaton is a closed subset of the unit square $\mathbb I^2$. We
however are going to consider plots of automata of that sort as subsets of multidimensional
unit hypercube $\mathbb I^{m+n}$. Therefore automata functions of such automata
are 1-Lipschitz mappings from $\Z_p^m$ to $\Z_p^n$, see Subsection \ref{ssec:a-map};
and vice versa, every 1-Lipschitz mapping from $F\:\Z_p^m\to\Z_p^n$ is an
automaton function of a suitable automaton $\mathfrak A$ with $m$ inputs and $n$ outputs
over the alphabet $\F_p$. Note that $F=(F_1;\ldots;F_m)$ where $F_k\:\Z_p^m\>\Z_p$
($k=1,2,\ldots,m$)
is 1-Lipschitz and therefore is an automaton function of an automaton with $m$ inputs and
a single output.

Now we re-state
our definition of a (limit) plot for that case of automata with $m$ inputs
and $n$ outputs.

\begin{defn}[Automata plots, the multivariate case]
\label{def:plot-auto-mult}
Given an automaton function $F=F_{\mathfrak A}\:\Z_p^m\>\Z_p^n$ define a  set
$\mathbf P(F_{\mathfrak A})$ of points of
 $\R^{n+m}$ 
as follows: For $k=1,2,\ldots$ denote
\begin{equation}
\label{eq:plot-mult}
E_k(F)=\left\{
\left({\frac{{\mathbf z\md p^k}
}{p^k};\frac{{F(\mathbf z)\md p^k}
}{p^k}
}\right)\in\mathbb I^{m+n}\: \mathbf z\in \Z_p^m\right\}
\end{equation} 
a point set in a unit real hypercube $\mathbb I^{m+n}$; here given $\mathbf
y=(y_1;\ldots;y_q)\in\Z_p^q$ we put
$$
\frac{\mathbf y\md p^k}{p^k}=\left(\frac{y_1\md p^k}{p^k};\ldots;\frac{y_q\md p^k}{p^k}\right)\in(\Z/p^k\Z)^q.
$$
Then take a
union $E(F)=\cup_{k=1}^\infty E_k(f)$ and denote via $\mathbf
P(F)=\mathbf P(\mathfrak A)$  a closure (in topology of $\R^{m+n}$) of the set $E(F)$.

Given an 
automaton $\mathfrak A$,
we call a \emph{plot of the automaton} $\mathfrak A$ 
the set $\mathbf P(\mathfrak A)$. We call a \emph{limit plot} 
of the automaton $\mathfrak A$
the point set $\mathbf{LP}(\mathfrak A)$ 
which is defined as follows: A point $(\mathbf x;\mathbf y)\in\R^{m+n}$ lies in $\mathbf{LP}(\mathfrak A)$ if and only if there exist $\mathbf z\in\Z_p^m$ and a strictly increasing
infinite sequence $k_1<k_2<\ldots$ of numbers from $\N$ such that simultaneously
\begin{equation}
\label{eq:def-LP-mult}
\lim_{i\to\infty}\frac{\mathbf z\md p^{k_i}}{p^{k_i}}=\mathbf x;\ \lim_{i\to\infty}\frac{F_\mathfrak
A(\mathbf z)\md p^{k_i}}{p^{k_i}}=\mathbf y.
\end{equation}  
\end{defn}
To put it in other words, at every step a letter-to-letter transducer $\mathfrak A$ (which has $m$
inputs and $n$ outputs over a $p$-symbol alphabet $\F_p$)  
\begin{itemize}
\item obtains
a vector $\mathbf a=(\alpha^{(1)};\ldots,\alpha^{(m)})\in\F_p^m$  (each $i$-th letter
$\alpha^{(i)}$ is sent accordingly to the $i$-th input of the automaton, $i=1,2,\ldots,m$), 
\item outputs a vector $\mathbf b=(\beta^{(1)};\ldots,\beta^{(n)})\in\F_p^n$  (each $j$-th output
of the automaton outputs accordingly the letter
$\beta^{(j)}$, $i=1,2,\ldots,n$) which depends both on the current state
and on the input vector $\mathbf a$,
\item reaches the next state (which depends both on $\mathbf
a$ and on the current state).
\end{itemize}
Then the routine repeats. Therefore after $k$ steps the automaton $\mathfrak
A$ transforms the input $m$-tuple $\mathbf w=(w_1;\ldots;w_m)$ of $k$-letter
words $w_i=\alpha_k^{(i)}\ldots\alpha_1^{(i)}$ ($i=1,2,\ldots,m$) into the
output $n$-tuple $\mathbf v=\mathfrak a(\mathbf w)=(v_1;\ldots;v_n)$ of $k$-letter
words $v_j=\mathfrak a^{(j)}(\mathbf w)=\beta_k^{(j)}\ldots\beta_1^{(j)}$ ($j=1,2,\ldots,n$). For $\mathbf w$ running over all $m$-tuples of $k$-letter
words, $k=1,2,\ldots$ we consider the set $E(\mathfrak A)$ of all points $(0.\mathbf w;0.\mathfrak a(\mathbf
w))\in\R^{m+n}$; here $0.\mathbf u$ stands for $(0.u_1;\ldots;0.u_\ell)$
where $u_1,\ldots,u_\ell$ are $k$-letter words. Then we define $\mathbf P(\mathfrak
A)$ as a closure in $\R^{m+n}$ of the set $E(\mathfrak A)$. Following the
lines of Note \ref{note:plot-auto-in} it can be shown that $\mathbf P(\mathfrak
A)=\mathbf P(F_{\mathfrak A})$. We stress that $\mathfrak A$ is a synchronous
letter-to-letter
transducer; that is why in the definition of the plot all $m$ input words
as well as corresponding $n$ output words of the automaton must have pairwise equal lengths.

Given a real function $G\colon D\>\R^n$ with the domain $D\subset \R^m$,
by the
\emph{graph 
of the function} (on the torus $\mathbb T^{m+n}$) we mean
the point subset $\mathbf G_D(g)=\{(\mathbf x\md1;G(\mathbf x)\md1)\colon \mathbf x\in D\}\subset\mathbb
T^{m+n}$. Note that if $\mathbf y=(y_1;\ldots;y_k)\in\R^k$ then $\mathbf
y\md1$ stands for $(y_1\md 1;\ldots;y_k\md 1)$.

\begin{thm}
\label{thm:main-mult}
Let  $\mathfrak A$ be a finite automaton over  the alphabet $\{0,1,\ldots,p-1\}$,
let $\mathfrak A$ have $m$ inputs and $n$ outputs, and let $G=(G_1;\ldots;G_n)\:[\mathbf a,\mathbf
b]= [a_1,b_1]\times\cdots\times[a_m,b_m]\>[0,1)^n$
\textup(where $[a_i,b_i]\subset[0,1)$, $G_i\:[\mathbf a,\mathbf b]\>[0,1)$,
$i=1,2,\ldots,m$\textup) be a two times differentiable
function such that all its second partial derivatives are continuous on $[\mathbf
a,\mathbf b]$.
If $\mathbf G(G)\subset \mathbf
{P}(\mathfrak A)\subset \T^{m+n}$ then there exist an $m\times n$ matrix $\mathbf A=(A_{ij})$
and a vector $\mathbf B=(B_1;\ldots; B_n)$ such that $A_{ij}\in\Q\cap\Z_p$,
$B_j\in\Q\cap\Z_p\cap[0,1)$ \textup($i=1,2,\ldots,m$;
$j=1,2,\ldots,n$\textup) 
and
$G(\mathbf x)=(\mathbf x\mathbf A+\mathbf B)\md1$ for all
$\mathbf x\in[\mathbf a,\mathbf b]$. There are not more than a
finitely many  $\mathbf A$ and $\mathbf B$  such that $A_{ij}\in\Q\cap\Z_p$,
$B_j\in\Q\cap\Z_p\cap[0,1)$ \textup($i=1,2,\ldots,m$;
$j=1,2,\ldots,n$\textup)  and $\mathbf G_{[\mathbf a,\mathbf b]}((\mathbf
x \mathbf
A+\mathbf B)\md1)\subset\mathbf P(\mathfrak
A)$ for some $[\mathbf a,\mathbf b]\subset[0,1)^m$; moreover, if 
$\mathbf G_{[\mathbf a,\mathbf b]}(\mathbf
x \mathbf
A+\mathbf B)\subset\mathbf P(\mathfrak
A)$ for some $[\mathbf a,\mathbf b]\subset[0,1)^m$ then $\mathbf G_{\R^m}((\mathbf
x \mathbf
A+\mathbf B)\md1)\subset\mathbf P(\mathfrak
A)\subset\T^{n+m}$. 

%
%
\end{thm}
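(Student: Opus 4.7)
The plan is to reduce the multivariate statement to the univariate Theorem \ref{thm:main} by projection on outputs and restriction on inputs. First I would split the automaton $\mathfrak A$ with $m$ inputs and $n$ outputs into $n$ automata $\mathfrak A_1,\ldots,\mathfrak A_n$, each with $m$ inputs and a single output, by keeping the full state transition function but projecting the output function on its $j$-th coordinate. Since $\mathbf G(G)\subset\mathbf P(\mathfrak A)$ in the natural embedding $\mathbb T^{m+n}\hookrightarrow\mathbb T^m\times\mathbb T$, one checks that $\mathbf G(G_j)\subset\mathbf P(\mathfrak A_j)\subset\mathbb T^{m+1}$ for every $j=1,\ldots,n$ (each $\mathfrak A_j$ is finite since $\mathfrak A$ is). Thus it suffices to prove the theorem for $n=1$ and then re-assemble.

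Next, for a single-output finite automaton $\mathfrak A_j$ with $m$ inputs, fix an index $i\in\{1,\ldots,m\}$ and rational $p$-adic integers $c_\ell\in\mathbb Z_p\cap\mathbb Q$ with $c_\ell\md 1\in[a_\ell,b_\ell]$ for $\ell\ne i$. I would feed the $\ell$-th input of $\mathfrak A_j$ with the (eventually periodic) $p$-adic canonical expansion of $c_\ell$; since all these expansions are eventually periodic (Proposition \ref{prop:p-adic-rat}), the resulting closed system is again a finite automaton $\mathfrak A_j^{(\mathbf c,i)}$ with a single input ($x_i$) and a single output, whose state space is contained in $\mathcal S\times\prod_{\ell\ne i}(\text{period positions for }c_\ell)$, a finite set. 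Mimicking the proof of Proposition \ref{prop:comp} and using the ``slicing'' argument of Corollary \ref{cor:cover}, one shows that on $[a_i,b_i]$ the univariate $C^2$ function
\[
 g_{\mathbf c,i}(x_i)=G_j(c_1,\ldots,c_{i-1},x_i,c_{i+1},\ldots,c_m)
\]
has its graph in $\mathbf P(\mathfrak A_j^{(\mathbf c,i)})$. Theorem \ref{thm:main} then forces $g_{\mathbf c,i}$ to be affine of slope $A_{ij}(\mathbf c)\in\mathbb Z_p\cap\mathbb Q$ and constant term in $\mathbb Z_p\cap\mathbb Q$.

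Having this, continuity of $\partial G_j/\partial x_i$ on $[\mathbf a,\mathbf b]$ combined with density of rational $p$-adic points in $[\mathbf a,\mathbf b]$ forces $\partial G_j/\partial x_i$ to be the constant $A_{ij}$ (independent of $\mathbf c$): any two values $A_{ij}(\mathbf c)$ and $A_{ij}(\mathbf c')$ arise as limits of the same continuous partial derivative along sequences of rational $p$-adic slices, hence coincide. Therefore $G_j(\mathbf x)=\sum_i A_{ij}x_i+B_j$ on $[\mathbf a,\mathbf b]$; plugging in a rational $p$-adic point in the interior and applying the obvious multivariate analog of Proposition \ref{prop:auto-Q} (points with all coordinates in $\mathbb Z_p\cap\mathbb Q$ map to points with all coordinates in $\mathbb Z_p\cap\mathbb Q$) yields $B_j\in\mathbb Z_p\cap\mathbb Q$. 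Assembling $j=1,\ldots,n$ gives the asserted matrix form.

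For the remaining two assertions I would proceed by analogy with the end of the proof of Theorem \ref{thm:main}. The ``similarity'' Proposition \ref{prop:ext} generalizes coordinate-wise: if the affine graph lies in $\mathbf P(\mathfrak A)$ on some sub-box of rational-dyadic corners, then feeding prefixes of canonical expansions of such corners converts the sub-box to the full cube $[0,1]^m$ while keeping the graph inside $\mathbf P(\mathfrak A)$. Iterating over all rational $p$-adic shifts and passing to the closure (together with the multivariate analog of Theorem \ref{thm:mult-add-p}, which describes plots of affine maps $\mathbf z\mapsto\mathbf z\mathbf A+\mathbf B$ on $\mathbb Z_p^m\to\mathbb Z_p^n$ as finite unions of torus windings of $\mathbb T^{m+n}$) gives the extension $\mathbf G_{\mathbb R^m}((\mathbf x\mathbf A+\mathbf B)\md 1)\subset\mathbf P(\mathfrak A)$. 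Finiteness of admissible $(\mathbf A,\mathbf B)$ then reduces, through the density argument used at the end of Theorem \ref{thm:main}, to finiteness of $\mathfrak a_j(\mathbf 0)$ for each output $j$, which itself follows from Proposition \ref{prop:auto-Q}. The main obstacle will be Step 2: making precise that feeding the $\ell\ne i$ inputs with eventually periodic sequences produces a genuinely finite automaton whose plot still captures the corresponding slice of $G_j$; once this reduction is set up rigorously, the rest is routine bookkeeping on top of the univariate theorem.
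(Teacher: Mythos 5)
Your reduction to single\--output automata and your construction of the finite single\--input automaton $\mathfrak A_j^{(\mathbf c,i)}$ by feeding the inputs $\ell\ne i$ with eventually periodic words match the paper's construction of $\bar{\mathfrak A}_{ij}$, and correctly yield (via Theorem \ref{thm:main}) that every coordinate slice of $G_j$ through a rational $p$-adic point is affine, i.e.\ that $\partial^2 G_j/\partial x_i^2=0$ everywhere. But the step where you pass from this to affinity of $G_j$ is a genuine gap. Your claim that ``continuity of $\partial G_j/\partial x_i$ combined with density of rational $p$-adic points forces $\partial G_j/\partial x_i$ to be the constant $A_{ij}$, independent of $\mathbf c$'' is a non sequitur: two values of a continuous function at two different points do not coincide merely because both are limits along rational $p$-adic slices. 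The function $G_j(x_1,x_2)=x_1x_2$ (suitably normalized) has every coordinate slice affine, has $\partial^2 G_j/\partial x_1^2=\partial^2 G_j/\partial x_2^2=0$, and has slice slopes $A_{1j}(c)=c$ varying continuously with $c$ and lying in $\Z_p\cap\Q$ on the dense set of rational $p$-adic $c$ --- yet it is not affine. What is missing is precisely the vanishing of the mixed partials $\partial^2 G_j/\partial x_i\partial x_t$ for $i\ne t$, and separate affinity in each variable gives no control over these.

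The paper closes this gap with an idea your proposal does not contain: assuming $\partial^2 G_j/\partial x_i\partial x_t\ne0$ at some point, it has constant sign on a neighbourhood; one then \emph{glues together} the $i$-th and $t$-th inputs of $\mathfrak A$ (feeding both with the same word $v$, and the remaining inputs with periodic words as before), obtaining a finite single\--input automaton $\tilde{\mathfrak A}$ that computes the restriction of $G_j$ to the diagonal $h\mapsto(\ldots,x_i+h,\ldots,x_t+h,\ldots)$. Since the pure second partials already vanish, the second derivative of this univariate restriction equals $2\,\partial^2 G_j/\partial x_i\partial x_t\ne0$, contradicting Theorem \ref{thm:main}. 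Without this (or an equivalent) argument your proof establishes only that $G_j$ is affine in each variable separately, which is strictly weaker than the theorem's conclusion. The remaining parts of your sketch (finiteness of $(\mathbf A,\mathbf B)$ and the extension to $\mathbf G_{\R^m}$) follow the paper's strategy and are plausible once affinity is actually in hand.
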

\begin{proof}[Proof of Theorem \ref{thm:main-mult}]
Let $F_\mathfrak A=(F_1;\ldots;F_n)\:\Z_p^m\>\Z_p^n$ be automaton function of the automaton
$\mathfrak A$. 
Having  $i\in\{1,2,\ldots,m\}$
and $j\in\{1,2,\ldots,n\}$ fixed, take arbitrary numbers 
$z_k\in\Z_p\cap\Q\cap[a_k,b_k]$, $k=1,2,\ldots,i-1,i+1,\ldots,m$,
consider the map
$\bar F_{ij}(z)=F_j(z_1;\ldots;z_{i-1};z; z_{i+1};\ldots;z_m)$ and the function
$\bar G_{ij}(x)=G(z_1;\ldots;z_{i-1};x;z_{i+1};\ldots;z_m)$. 

As 
$z_k\in\Z_p\cap\Q\cap[a_k,b_k]$ and $[a_k,b_k]\subset[0,1)$ then the map $\bar F_{ij}\:\Z_p\>\Z_p$
is a finite automaton function: Actually the corresponding automaton $\bar{\mathfrak
A}_{ij}$ is a sequential
composition of the automaton $\mathfrak A$ with autonomous automata $\mathfrak
B_k$ 
which produce accordingly purely periodic output words
$\wrd(z_k)
\in\Cal W^\infty$
(cf. Corollary \ref{cor:r-p-repr-qz})
and feed accordingly $k$-th inputs ($k=1,\ldots,{i-1},{i+1},\ldots,m$) of the automaton
$\mathfrak A$ while the output of the automaton $\bar{\mathfrak
A}_{ij}$ is the $j$-th output of the automaton $\mathfrak A$. 

\underline{Claim:} We assert that 
$\mathbf G_{[a_i,b_i]} (\bar G_{ij})\subset\mathbf P(\bar{\mathfrak
A}_{ij})$. 

To prove the claim, firstly note that by Corollary \ref{cor:r-p-repr-qz}, for every $k=1,\ldots,{i-1},{i+1},\ldots,m$
we have that $z_k=0.(\zeta_{T_k-1}^{(k)}\zeta_{T_k-2}^{(k)}\ldots\zeta_{0}^{(k)})^\infty$
where $T_k$ is a period length of $z_k$ (see Subsection \ref{ssec:p-adic}).
Let $T$  be the least common multiple of all $t_i$; then 
$z_k=0.(\eta_{T-1}^{(k)}\eta_{T-2}^{(k)}\ldots\eta_{0}^{(k)})^\infty$ for
all $k=1,\ldots,{i-1},{i+1},\ldots,m$. Denote the right-infinite purely periodic word
$\eta_{T-1}^{(k)}\eta_{T-2}^{(k)}\ldots\eta_{0}^{(k)})^\infty$ via 
$u(z_k)=\tau_1^{(k)}\tau_2^{(k)}\ldots$ for suitable $\tau_q^{(\ell)}\in\F_p$.

Take arbitrary $x\in[a_i,b_i]$ and put $\mathbf x=(z_1;\ldots;z_{i-1};x; z_{i+1};\ldots;z_m)\in[\mathbf
a,\mathbf b]$; then $(\mathbf x;G(\mathbf
x))\in\mathbf P(\mathfrak A)$. Let $x=0.\chi_1\chi_2\ldots$ be a base-$p$
expansion of $x$ (the word $u(x)=\chi_1\chi_2\ldots$ is right-infinite); then from the definition of the plot it follows that there exists a strictly  increasing sequence $\bar r_1<\bar r_2<\ldots$ over $\N$ such that 
\begin{align}
\label{eq:thm:mai-mult1}
\lim_{\ell\to\infty}&0.\bar\chi_1\bar\chi_2\ldots\bar\chi_{\bar r_\ell}=x; \\
\label{eq:thm:mai-mult2}
\lim_{\ell\to\infty}&0.\bar\tau_1^{(k)}\bar\tau_2^{(k)}\ldots\bar\tau_{\bar
r_\ell}^{(k)}=
z_k\ \ (k=1,\ldots,{i-1},{i+1},\ldots,m);\\
\label{eq:thm:mai-mult3}
\lim_{\ell\to\infty}&0.\mathfrak a(\mathbf u_{\bar r_\ell}(\bar x))=G(\mathbf x),  
\end{align}
where 
\begin{align*}
&\mathbf u_{\bar r_\ell}(\bar x)=(u_{\bar r_\ell}(\bar z_1);\ldots;u_{\bar
r_\ell}(\bar
z_{i-1});u_{\bar r_\ell}(\bar x);
u_{\bar r_\ell}(\bar z_{i+1});\ldots;u_{\bar r_\ell}(\bar z_m));\\
&u_{\bar r_\ell}(\bar z_k)=\bar\tau_1^{(k)}\bar\tau_2^{(k)}\ldots\bar\tau_{r_\ell}^{(k)} \ \ (k=1,\ldots,{i-1},{i+1},\ldots,m);\\
&u_{\bar r_\ell}(\bar x)=\bar\chi_1\bar\chi_2\ldots\bar\chi_{\bar r_\ell},\\
\end{align*}
see remarks which follow Definition \ref{def:plot-auto-mult} above.  Moreover,
since base-$p$ of all $z_k$ are unique, the arguing like in the first part of the proof of Proposition \ref{prop:mark-up}
we conclude that there exists a state $s$ of the automaton $\mathfrak A$
and a strictly increasing sequence $r_1< r_2<\ldots$ over $\N$
such that 
\begin{align}
\label{eq:thm:mai-mult1.1}
\lim_{\ell\to\infty}&0.\chi_1\chi_2\ldots\chi_{r_\ell}=x; \\
\label{eq:thm:mai-mult2.1}
\lim_{\ell\to\infty}&0.\tau_1^{(k)}\tau_2^{(k)}\ldots\tau_{r_\ell}^{(k)}=
z_k\ \ (k=1,\ldots,{i-1},{i+1},\ldots,m);\\
\label{eq:thm:mai-mult3.1}
\lim_{\ell\to\infty}&0.\mathfrak a_s(\mathbf u_{r_\ell}(x))=G(\mathbf x),  
\end{align}
where $\mathfrak A_s$ is  the automaton which differs from $\mathfrak A$ only maybe by the initial state (which is $s$ rather than $s_0$). Now recall
that $\tau_1^{(k)}\tau_2^{(k)}\ldots=(\eta_{T-1}^{(k)}\eta_{T-2}^{(k)}\ldots\eta_{0}^{(k)})^\infty$ for
all $k=1,\ldots,{i-1},{i+1},\ldots,m$; so given $\ell\in\N$ let $q(\ell)\in\N$
be the largest such that $q_\ell< r_\ell$ and $\tau_{q_\ell}^{(k)}=\eta_0^{(k)}$
for some (thus, for all) $k=1,\ldots,{i-1},{i+1},\ldots,m$. Since all the
words $\tau_1^{(k)}\tau_2^{(k)}\ldots$ are periodic with a period of length
$T$ such $q_\ell$ exists for all sufficiently large $\ell\ge N$.  Denote via $s_\ell$
the state the automaton $\mathfrak A(s)$ reaches after being feeded (via
respective inputs) by words
$\tau_{q_\ell+1}^{(k)}\ldots\tau_{r_\ell}^{(k)}$ ($k=1,\ldots,{i-1},{i+1},\ldots,m$)
and $\chi_{q_\ell+1}^{(k)}\ldots\chi_{r_\ell}^{(k)}$. By the finiteness of
the automaton, in the sequence $(s_\ell)_{ell=N}^\infty$ at least one state,
say $\hat s$, occurs infinitely many times; therefore from 
\eqref{eq:thm:mai-mult1.1}--\eqref{eq:thm:mai-mult3.1} it follows that
\begin{align}
\label{eq:thm:mai-mult1.2}
\lim_{\ell\to\infty}&0.\chi_1\chi_2\ldots\chi_{q_\ell}=x; \\
\label{eq:thm:mai-mult2.2}
\lim_{\ell\to\infty}&0.(\eta_{T-1}^{(k)}\eta_{T-2}^{(k)}\ldots\eta_{0}^{(k)})^{q_\ell/T}
=
z_k\ \ (k=1,\ldots,{i-1},{i+1},\ldots,m);\\
\label{eq:thm:mai-mult3.2}
\lim_{\ell\to\infty}&0.\mathfrak a_{\hat s}(\mathbf u_{q_\ell}(x))=G(\mathbf x),  
\end{align}
where $\mathfrak A_{\hat s}$ is  the automaton which differs from $\mathfrak A$ only maybe by the initial state (which is $\hat s$ rather than $s_0$).
Note that $T$ is a divisor of $q_\ell$ by the construction of $q_\ell$ since
all the
words $\tau_1^{(k)}\tau_2^{(k)}\ldots$ are periodic with a period of length
$T$. By the definition of the plot we conclude that \eqref{eq:thm:mai-mult1.2}--\eqref{eq:thm:mai-mult3.2}
prove our claim.

Thus the function $\bar G_{ij}$ satisfies all conditions of Theorem \ref{thm:main};
therefore  the second derivative of $\bar G_{ij}$ is zero. But this means by the
construction of $G_{ij}$ that every second partial derivative $\partial^2 G_j/\partial^2 x_i$ is zero for all $z_k\in\Z_p\cap\Q\cap[a_k,b_k]$
($k=1,\ldots,{i-1},{i+1},\ldots,m$) and all $x\in[a_i,b_i]$. As $\Z_p\cap\Q\cap[a_k,b_k]$
is dense in $[a_k,b_k]$ for all $k=1,\ldots,{i-1},{i+1},\ldots,m$ we conclude
that $\partial^2 G_j/\partial x_i^2=0$ everywhere on $[\mathbf a,\mathbf
b]$ and for all $j=1,2,\ldots,n$, $i=1,2,\ldots,m$.

Now we are going to prove that $\partial^2 G_j/\partial x_i\partial x_t$
vanishes everywhere on  $[\mathbf a,\mathbf
b]$ and for all $j=1,2,\ldots,n$, $i,t=1,2,\ldots,m$, $i\ne t$ (without loss
of generality, let $t>i$ in what follows). Assume that
the opposite is true, that is, that there exist $i,j,t$ and a point $\mathbf
x\in[\mathbf a, \mathbf b]$ such that $\partial^2 G_j(\mathbf x)/\partial x_i\partial x_t\ne 0$. Then due to the continuity of second partial derivatives of the function $G$, 
by using the argument similar to that  from the beginning of the proof of Lemma \ref{le:der2} we conclude that there exist a point
(which without risk of misunderstanding we denote by the same symbol $\mathbf x$)
in $(\mathbf a,\mathbf b)=(a_1,b_1)\times\cdots\times(a_m,b_m)$ and a neighborhood
$U$ of that point  such that $\partial^2 G_j/\partial x_i\partial x_t>0$
everywhere on $U$. Therefore we always may take $z_k\in\Z_p\cap\Q\cap[a_k,b_k]$;
$k\ne i,t$; $M\in\N$ and $c,d\in\{0,1,\ldots, p^M-1\}$ such that 
the point $\mathbf z(x,y)=(z_1;\ldots;z_{i-1};x;z_{i+1};\ldots;z_{t-1};y;z_{t+1};\ldots;z_m)$
lies in $U$ for all $x=p^{-M}(c+e)$, $y=p^{-M}(d+h)$ and all
$e,h\in[0,1)$. Arguing like in the proof of Proposition \ref{prop:ext}
we see that
the following inclusion holds:
$$
\left\{(\bar{\mathbf z}(e,h);(p^MG(\bar{\mathbf z}(e,h)))\md1)\:e,h\in[0,1]\right\}\subset\mathbf
P(\mathfrak A),
$$
where  $\bar{\mathbf z}(e,h)=(\mathbf z(x,y))\md1$ (we reduce all coordinates
modulo 1).

Consider a finite automaton $\tilde{\mathfrak A}$ which is obtained by `gluing
together' the $i$-th and the $t$-th inputs of the automaton $\mathfrak A$ while feeding
the rest $k$-th inputs with infinite words $\wrd((p^Mz_k)\md1)$; that is, the
automaton function of the automaton $\tilde{\mathfrak A}$ is 
$$
f_{\tilde{\mathfrak A}}(v)=f_{\mathfrak A}(w_1;\ldots;w_{i-1};v;w_{i+1};\ldots;w_{t-1};v;w_{t+1};\ldots;w_m)
$$ 
where $w_\ell=\wrd((p^Mz_k)\md1)\in\Cal W^\infty$, $\ell\in\{1,2,\ldots,m\}\setminus\{i,t\}$.
By argument similar to that for the case $i=t$ (see
the proof of the Claim above) we conclude that the automaton $\tilde{\mathfrak A}$ is
finite and that the graph of
the function $\bar G_j(h,h)=(p^MG_j(\mathbf z(p^{-M}(c+h),p^{-M}(d+h)))\md1\:[0,1]^2\>[0,1)$
when  $h$ is running through $[0,1)$ lies in $\mathbf P(\tilde{\mathfrak A})$. But on the other hand we have that 
$$
\partial^2\bar G_j(h,h)/\partial h^2=(\partial/\partial
x_i+\partial/\partial x_t)^2 G_j({\mathbf z}(x,y))=2\cdot\partial^2G_j(\mathbf z(x,y))/\partial
x_i\partial x_t
$$
since $\partial^2G_j(\mathbf z(x,y))/\partial x_i^2=\partial^2G_j(\mathbf z(x,y))/\partial x_t^2=0$ by what we have already proved above. But this
is a contradiction to Theorem \ref{thm:main} since  the function $\bar G_j(h,h)$
of argument $h$
satisfies all conditions of the theorem and has a non-zero second derivative.
Thus we have proved that under conditions of Theorem \ref{thm:main-mult}
the function $G$ must be affine:  $G(\mathbf x)=\mathbf x\mathbf A+\mathbf
B$ for all $\mathbf x\in[\mathbf a,\mathbf b]$.

Now fix arbitrary $i\in\{1,2,\ldots,m\}$, $j\in\{1,2\ldots,n\}$, and 
$z_k\in[a_k,b_k]\cap\Z_p\cap\Q$ for $k=1,2,\ldots,m$, $k\ne
i$; consider the function $\bar G_{ij}$ and the automaton $\bar{\mathfrak
A}_{ij}$ as in the beginning of the proof of Theorem \ref{thm:main-mult}.
Then from the affinity of the function $G$ it follows that $\bar G_{ij}(x)=xA_{ij}
+B_j$. Since $\mathbf G_{[a_i,b_i]}(\bar G_{ij})\subset\mathbf
P(\bar{\mathfrak A}_{ij})$ by the Claim above, Theorem \ref{thm:main} implies
that $A_{ij}, B_j\in\Z_p\cap\Q$.

Further, arguing like in the proof of Proposition \ref{prop:ext}
we conclude that for  suitable $M\in\N$ and $\mathbf h\in\{0,1,\ldots,p^M-1\}^m$ the graph $\mathbf G_{[0,1]^m}(H(\mathbf v))$ 
of the function $H(\mathbf v)=H_{\mathbf h,M}(\mathbf v)=(p^M((p^{-M}(\mathbf h+\mathbf
v)\mathbf A+\mathbf B)\md1=(\mathbf v\mathbf A+(\mathbf h\mathbf A+p^M\mathbf
B))\md1$ lies completely in $\mathbf P(\mathfrak A)$. Now considering the
function $H$ and the corresponding
automaton $\bar{\mathfrak A}_{ij}$ as above  for $z_k=0$, $k=1,2,\ldots,m$, $k\ne
i$, $G=H$, we conclude by Theorem \ref{thm:main} that there are only finitely many
$A_{ij}$; whence finitely many $\mathbf A$. 

If for some of these $\mathbf A$ there were infinitely many
$\mathbf B\md1$ 
such that $\mathbf G_{[0,1]^m}(H(\mathbf v))\subset\mathbf P(\mathfrak A)$
then  for some $j\in\{1,2,\ldots,n\}$ there were infinitely many pairwise
distinct $B_j\md1$.
But given arbitrary $z_k\in[a_k,b_k]\cap\Z_p\cap\Q$ for $k=2,3,\ldots,m$ and considering
corresponding automata $\bar {\mathfrak A}_{1j}$ for various $(m-1)$-tuples
$(z_2,\ldots,z_m)$ (cf. the beginning of the proof of Theorem \ref{thm:main-mult}), from the construction of  $\bar {\mathfrak A}_{1j}$ it follows (cf. the proof
of the Claim) that there are only finitely many these automata
$\bar {\mathfrak A}_{1j}$ since the automaton $\mathfrak A$ is finite. Therefore
applying Theorem \ref{thm:main} to every automaton $\bar {\mathfrak A}_{1j}$
we finally conclude that there are only finitely many $B_j\md 1$; a contradiction
to our assumption.

 Therefore
there are only finitely many pairwise distinct functions $H_{\mathbf h,M}$ as above. Now by mimic the respective part of the proof of the first assertion of Theorem \ref{thm:main} we conclude that given an $(m\times n)$-matrix
$\mathbf A$ and a vector $\mathbf B_j$ over $\Z_p\cap\Q$ such that
the graph of the function $G(\mathbf x)=\mathbf x\mathbf A+\mathbf B$ on
$[\mathbf a,\mathbf b]\subset[0,1]^m$ lies completely in $\mathbf P(\mathfrak
A)$ then necessarily 
$\mathbf G_{\R^m}((\mathbf
x \mathbf
A+\mathbf B)\md1)\subset\mathbf P(\mathfrak
A)\subset\T^{n+m}$.

\end{proof}

\begin{note}
\label{note:main-alph}
An automaton with a single input and a single output over  respective
alphabets 
$\{0,1,\ldots,p^n-1\}$ and $\{0,1,\ldots,p^k-1\}$, $(n,k\ge1)$, can be considered as an automaton
with $n$ inputs and $k$ outputs over an alphabet $\{0,1,\ldots,p-1\}$ and
therefore
Theorem \ref{thm:main-mult} can be applied to automata of that sort as well.
\end{note}

\section{Discussion: \emph{It from bit}, indeed}
\label{sec:Concl}
Now we are going to outline possible relations of main results of preceding
section to quantum theory leaving apart applications to cryptography (the
latter are subject of future paper).
Although further physical interpretation of the results is highly speculative, it
reveals deep analogies between automata and quantum systems and thus
worth a short discussion to explain a direction in which it is reasonable
to develop the results in order to derive some physically meaningful assertions (and maybe
models) from mathematical theorems of the paper.

We start with some remarks on what is `physical law'. Let us (somewhat naively) think of a physical law as of  mathematical correspondence
 between
quantities which express  impacts  a physical system is exposed to and quantities which express responses the system exhibits. Suppose for simplicity that both impacts and responses are scalars.
As  the measured experimental values of physical quantities are rational numbers (since there is no possibility to obtain during
measurements an exact value of irrational number, cf.
\cite{Vladimirov/Volovich/Zelenov:1994,Khrennikov:1996A,Khrennikov:1997}) the result of measurements are points
in $\R^2$, the experimental points.  To find  a particular physical law one seeks
 for a correspondence between cluster points (w.r.t. the metrics in $\R$) of experimental values and tries to draw
an experimental curve. The latter curve is a (piecewise) smooth curve (the $C^2$-smoothness is common)  which is the
best approximation of the set of the experimental points.
A physical law is then  a curve which  approximate with the highest achievable accuracy (w.r.t. metric
in $\R^2$) the experimental curves obtained during series of measurements.

Let physical quantities which correspond to impacts and reactions 
be quantized; i.e, let they take only values (measured in suitable units
and properly normalized), say, $0,1,\ldots,p-1$, where $p>1$ is an
integer. Then, once the system is exposed to
a sequence of  $k$ of impacts, it produces corresponding sequence
of  $k$ reactions. Every impact changes current state of the system to a
new one; therefore provided the systems is causal, both the next state and the reaction (effect) depends only on impacts
(causes) the system has already  been exposed to; so an automaton $\mathfrak A$ is an adequate model
of the system\footnote{We stress that we are  \emph{not} speaking  here about the so-called memory effect of
the macroscopic measurement equipment which may `remember' its previous interactions
with particles, cf. \cite{Khren-mem-effect}; we only say that every interaction
(impact)
forces the system (e.g. a particle) to change its state to some another one.
We do  \emph{not} discuss the nature of these states which are \emph{not}
necessarily quantum states; we just say that every interaction changes something in a
system and refer to this `something' as to a `state' of the system, and nothing
more.}. 
Every finite  sequence $\alpha_{k-1},\ldots,\alpha_0$ of impacts/reactions corresponds to
a base-$p$ expansion of natural number $z=\alpha_{k-1}p^{k-1}+\cdots+\alpha_0$
to which after normalization there corresponds a rational number $\frac{z}{p^k}$.
Every measurement is a sequence of interactions $\alpha_{k-1},\ldots,\alpha_0$ of the measurement instrument
with the system, and if the accuracy of the instrument is not
better than $p^{-N}$, then the result of a single measurement lies within
the segment $[\frac{z}{p^k}-p^{-N},\frac{z}{p^k}+p^{-N}]$. Assuming that
$k\gg N$ we see that even if the system before every measurement has been prepared in a fixed
state $s_0$ (the initial state of the automaton)  during a single measurement
the system $\mathfrak A(s_0)$ will be exposed to a random sequences of impacts $\alpha_{k-M-1},\ldots,\alpha_0$
which switches the system to a new state $s=s(\alpha_{k-1},\ldots,\alpha_0)$;
so actually as a result of the measurement  due  to its limited accuracy we obtain an experimental point
 $(0.\alpha_k\ldots\alpha_{k-M};0.\beta_k\ldots\beta_{k-M})\in\R^2$
where $\beta_k\ldots\beta_{k-M}$ is the output of the automaton $\mathfrak
A(s)$ (whose initial state is $s=s(\alpha_{k-1},\ldots,\alpha_0)$) feeded
by the sequence $\alpha_k,\ldots,\alpha_{k-M}$.


Theorem \ref{thm:main} shows that if the number of states of the system $\mathfrak
A$ is much less than the length
of input sequence of impacts then experimental curves necessarily tend
to straight
lines (or torus windings, under a natural map of the unit square onto a torus),
cf. Figures \ref{fig:Plot-16}, \ref{fig:Plot-17}, and \ref{fig:Plot}.
This may be judged  as linearity of corresponding physical law and, what
is even more important, the way experimental points are clustering  on the unit square
is very much alike  to that of  the points where electrons hit target screen in a double-slit experiment, cf. Figures \ref{fig:Plot-16}--\ref{fig:Plot-17} and Figure \ref{fig:2slit}.
\begin{figure}
\includegraphics[width=0.8\textwidth,natwidth=610,natheight=642]{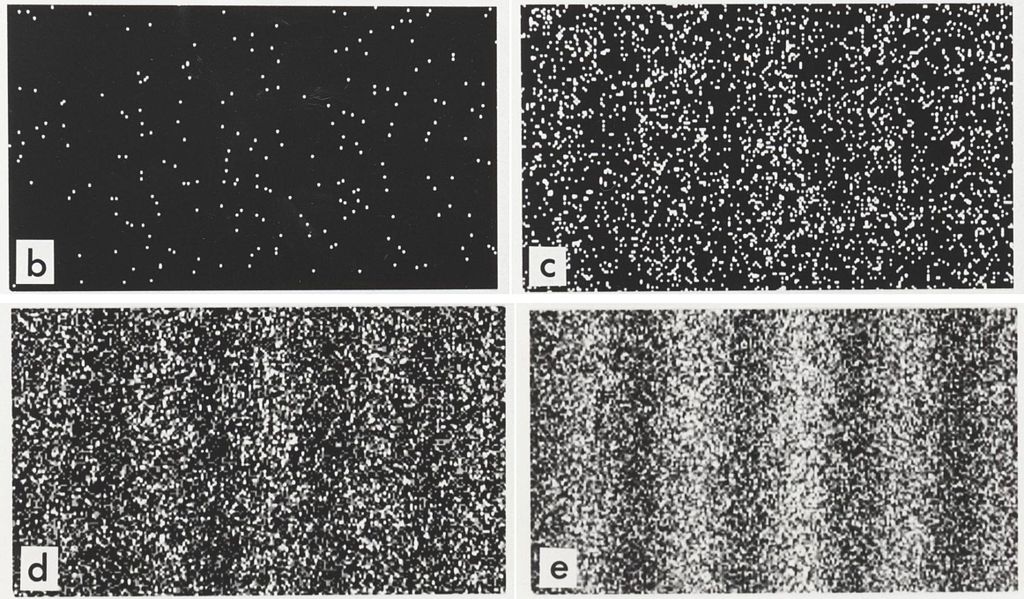}
\caption{Interference pattern of the double slit experiment.
{\tiny From  Wikimedia Commons, the free media repository
http://commons.wikimedia.org/wiki/File:Double-slit experiment results Tanamura
four.jpg
}
}
\label{fig:2slit}
\end{figure}
We are not going here to discuss further parallels of the computer experiments
with automata and behaviour of quantum systems such as analogies between
transition and ergodic states of automata and mixed and pure quantum states
respectively, or probabilities of Markov chain related to an automaton and
probabilities in quantum systems, etc.: Although we believe that the analogies
are not  external but reflect deep relations between quantum systems and
automata, the
issues are far from the subject of the paper and that's why the discussion
is postponed to further relevant papers. Here we briefly touch only an interesting
analogy between smooth curves in plots of finite automata and matter waves
of quantum theory.

By Theorem \ref{thm:main}, the smooth curves from the plot of a finite automaton
$\mathfrak A$
can be described by families of complex-valued exponential functions of the form $
\psi_k(y)=e^{i(Ay-2\pi p^kB)}$, 
$k=0,1,2,\ldots$,
for suitable $A,B\in\Z_p\cap\Q$, cf. Corollary \ref{cor:mult-add-compl}.
The wave function of a particle is of the form $ce^{i(mx-t\omega)}$ where
$m$ is momentum, $x$ position, $\omega$ angular frequency, and $c$ is a complex
amplitude. Comparing
the two expressions we see that $p^k$ may serve as a time  for the
automaton $\mathfrak A$ since multiplication by $p^k$ is a $k$-step shift
of a base-$p$ expansion of a number. But can we someway associate it to physical
time $t$ of quantum theory? In what follows we argue that yes, there is a
natural way to do this.

Let us forget for a moment that $p$ is a positive integer and suppose that $p=1+\tau$
where $1\gg\tau>0$ is a small real number; then $p^k\approx 1+k\tau$ and
if $\tau$ is a small time interval which is out of accuracy of measurements
(e.g., let $\tau$ be  Planck time which is approximately $10^{-43}$ s.).
Therefore the torus link $
\psi_k(y)=e^{i(Ay-2\pi p^kB)}$, 
$k=0,1,2,\ldots$ can be approximately described by $
\Psi(y,t)=e^{-i\cdot 2\pi B}e^{i(Ay-2\pi tB)}$, 
$y,t\in\R$ since it is reasonable to assume that $k\tau$  is just a time $t$
as $\tau$ is a small time interval, a time quantum, the Planck time.
But  $\Psi(y,t)$ is a wave function of a particle with
momentum $A$, angular frequency $2\pi B$ and amplitude $e^{-i\cdot 2\pi B}$.
Is this mathematically correct to substitute $1+\tau$ for $p$ in our reasoning?
Yes, this is correct; but to explain why this is correct we need to recall a notion of \emph{$\beta$-expansion}
of real number.   

The $\beta$-expansions are radix expansions in non-integer
bases; they were first introduced more than half-century
ago, see \cite{Renyi-beta,Parry-beta}, and now $\beta$-expansions are a substantial
part of
dynamics, see e.g. survey  \cite{Sidorov-beta}. Following \cite{Sidorov-beta},
given $x\in[0,$ and $\beta\in\R$, $\beta>1$ we call a sequence $(\chi_i)_{i=1}^\infty$
over the alphabet $\{0,1,\ldots, \lfloor\beta\rfloor\}$ a \emph{$\beta$-expansion}  of $x$ once $x=\sum_{i=-N}^\infty
\chi_i\beta^{-1}$ for suitable $N\in\Z$. Note that sometimes the term $\beta$-expansion is used in
a narrower meaning, when the `digits' $\chi_i$ are obtained by the so-called
`greedy algorithm' only, cf. \cite[Section 7.2]{AlgCombinWords} but this
is not important at the moment: In what follows we just sketch the way how the results of current paper can
be modified to handle the case of $\beta$-expansions rather than the case
of base-$p$ expansions only. We leave details and rigorous
proofs for further paper.

From the definition we see that the notion of $\beta$-expansion is a generalization
of the notion of base-$p$ expansion: It is clear that for $\beta=p$ the $\beta$-expansion
of $x$ is just base-$p$ expansion of $x$, and that is why both  $\beta$-expansions
and base-$p$ expansions share some common properties. For instance, given
$\beta$-expansion
of reals it is
possible to perform arithmetic operations with reals in a way similar to
that of school-textbook algorithms for base-$p$ expansions of reals. However,
differences between base-$p$ expansions and $\beta$-expansions should
also be taken into the account
since when $\beta$ is not an integer, a $\beta$-expansion of a real number
is generally
not unique; moreover a real number may have a continuum of different $\beta$-expansions for $\beta$ fixed. Nonetheless, we can perform arithmetic operations with
numbers represented by $\beta$-expansions, i.e., with words over
the alphabet $\{0,1,\ldots,\lfloor\beta\rfloor\}$. These operations for
some non-integer $\beta$ may be represented by finite automata as well. For
instance, if $\beta=\sqrt[n]2$ then  arithmetic operations with numbers  represented
by $\sqrt[n]2$-expansions $\ldots\alpha_2\alpha_1\alpha_0$ and 
$\ldots\gamma_2\gamma_1\gamma_0$ (which are binary words over the alphabet $\{0,1\}$
since $\lfloor\sqrt[n]2\rfloor=1$)
can be  performed in a manner  similar to that when one applies school-textbook 
algorithms for base-$p$ expansions, with the only difference: A  `carry' from $i$-th
position should
be added to $(n+i+1)$-th position; e.g. for $\beta=\sqrt 2$ we have that $11+01=110$ while in the case $\beta=2$ we have that $11+01=100$. Note 
that $01=1$, $11=\sqrt2+1$ (and thus $110=(\sqrt 2)^2+(\sqrt 2)^1+0=2+\sqrt
2$) when $\beta=\sqrt 2$; and $01=1$, $11=3$ when $\beta=2$.

When an automaton  $\mathfrak A$ proceeds a word (or, a corresponding system reacts to impacts) it just
evaluates step-by-step a $p$-adic 1-Lipschitz function  $f_\mathfrak A\:\Z_p\to\Z_p$
(cf. Subsection \ref{ssec:a-map}),
and no $\beta$ appears at this moment. But we need to specify $\beta$ when
we `visualize' the function $f_\mathfrak A$ in $\R^2$: To every  word 
$\alpha_{k-1}\ldots\alpha_0$
over
the alphabet $\F_p=\{0,1,\ldots,p-1\}$ we put into the correspondence a point
$(\beta^{-k}(\alpha_{k-1}\beta^{k-1}+\cdots+\alpha_1\beta+\alpha_0))\md1\in[0,1)$;
thus to every pair of input/output words of the automaton there corresponds
a point in the unit square $\mathbb I^2$(or, on the unit torus $\T^2\subset\R^3$). We then take a closure of all these
points and obtain a \emph{$\beta$-plot} of the automaton $\mathfrak A$ in
a way similar to that when we constructed a plot of the automaton (which
corresponds to the case when $\beta=p$), cf. Definition
\ref{def:plot-auto}. We then  consider smooth curves in the $\beta$-plots
of finite automata, in particular, the curves which correspond to affine
automata functions $z\mapsto Az+B$. To these functions there correspond torus
windings which can be expressed in a form of complex-valued functions
 $\psi_k(y)=e^{i(Ay-2\pi \beta^kB)}$, $k=0,1,2\ldots$, $y\in\R$;
and these functions can by approximated  with arbitrarily high accuracy by functions $\Psi(y,t)=e^{-i\cdot 2\pi B}e^{i(Ay-2\pi tB)}$, $t,y\in\R$, just
by taking $\beta>1$ sufficiently close to 1. Moreover, the case when $\beta$ is close to 1 is the only case when  approximations are of the form
of wave functions. But this means that the \emph{corresponding
automata must necessarily be binary}; i.e., their input/output alphabets are 
$\{0,1,\ldots,\lfloor\beta\rfloor\}=\{0,1\}$. So these automata (which are
just models of causal discrete systems) indeed produce
waves, the \emph{its}, \emph{from bits}.

From this view, main results of the current paper may be considered as a contribution to informational
interpretation
of quantum theory, namely, to
J.~A.~Wheeler's \emph{It from bit} doctrine which suggests that all things physical (`its') are
information-theoretic in origin (`from bits'), \cite{Wheeler_IFB}: We have
given
some evidence above that this is indeed so regarding particular `its', the
matter waves. We stress once again that our conclusion is based on the following
assumptions only: A quantum system is causal and discrete, whence is an
automaton; and the number of states of the automaton is finite.

\end{document}